\newcommand{\psurj}{\rightarrowtail}
\newcommand{\Langle}{\langle\!\langle}
\newcommand{\Rangle}{\rangle\!\rangle}
\newcommand{\fin}{\mathrm{fin}}
\renewcommand{\top}{\mathrm{top}}
\title{Symmetric subvarieties of infinite affine space}
\author{Rohit Nagpal}
\address{Department of Mathematics, University of Michigan, Ann Arbor, MI}
\email{\href{mailto:rohitna@umich.edu}{rohitna@umich.edu}}
\urladdr{\url{http://www-personal.umich.edu/~rohitna/}}
\author{Andrew Snowden}
\address{Department of Mathematics, University of Michigan, Ann Arbor, MI}
\email{\href{mailto:asnowden@umich.edu}{asnowden@umich.edu}}
\urladdr{\url{http://www-personal.umich.edu/~asnowden/}}
\thanks{RN was partially supported by NSF DMS-1638352. AS was supported by NSF DMS-1453893.}
\date{\today}
\begin{document}

\begin{abstract}
We classify the subvarieties of infinite dimensional affine space that are stable under the infinite symmetric group. We determine the defining equations and point sets of these varieties as well as the containments between them.
\end{abstract}

\maketitle

\tableofcontents

\section{Introduction}

Cohen \cite{cohen,cohen2} proved that ideals in the infinite variable polynomial ring $R=\bC[\xi_1,\xi_2,\ldots]$ that are stable under the infinite symmetric group $\fS$ satisfy the ascending chain condition; in other words, $R$ is $\fS$-noetherian. This suggests that $\fS$-equivariant commutative algebra over $R$ should be well-behaved. We have undertaken a detailed study of this theory, and found that this is indeed the case. In this paper, we classify the radical $\fS$-ideals of $R$. In subsequent papers \cite{sideals,smod}, we will build on the results of this paper to determine the structure of arbitrary $\fS$-ideals of $R$, and establish results such as primary decomposition. Our work is motivated by the wide variety of applications Cohen's theorem has found in the last decade (e.g., \cite{AschenbrennerHillar, DraismaEggermont, DraismaKuttler, GunturkunNagel, HillarSullivant, KLS, LNNR, NagelRomer}); see \cite{draisma-notes} for a good introduction.

\subsection{The classification theorem} \label{ss:descr}

Let $\fX=\Spec(R)=\bA^{\infty}$ be infinite dimensional affine space over the complex numbers (we allow arbitrary noetherian coefficient rings in the body of the paper). The goal of this paper is to classify the $\fS$-stable Zariski closed subsets of $\fX$ and understand their structure.

We say that a $\bC$-point $x=(x_i)_{i \ge 1}$ of $\fX$ is \textbf{finitary} if the coordinates of $x$ assume only finitely many values, that is, $\{x_i \mid i \ge 1 \}$ is a finite subset of $\bC$. This notion is closely related to discriminants. Indeed, let $\Delta_r=\prod_{1 \le i<j \le r} (\xi_i-\xi_j)$ be the $r$th discriminant. Then the point $x$ is finitary if and only if there is some $r$ such that $\Delta_r(\sigma x)=0$ for all $\sigma \in \fS$. It is not difficult to show that any non-zero $\fS$-stable ideal of $R$ contains some discriminant (Proposition~\ref{prop:discriminant}). Thus any proper $\fS$-stable closed subset of $\fX$ is contained in the finitary locus $\fX_{\fin}$. For this reason, finitary points play a prominent role in this paper.

Suppose that $x$ is a finitary point. Let $a_1, \ldots, a_r$ be the distinct values its coordinates assume, and let $\lambda_i$ be the number of times $a_i$ appears. Relabeling if necessary, we assume that $\lambda_1 \ge \cdots \ge \lambda_r$; note that $\lambda_1$ is necessarily $\infty$. We call $\lambda=(\lambda_1, \ldots, \lambda_r)$ the {\bf type} of $x$, and generally refer to a tuple of this sort as an {\bf $\infty$-parition}. For example, the point
\begin{displaymath}
(3,3,3,5,5,6,7,6,7,\ldots,6,7,\ldots)
\end{displaymath}
has type $(\infty,\infty,3,2)$, since 6 and 7 occur infinitely, 3 occurs thrice, and 5 occurs twice. Let $\fX_{[\lambda]}$ be the set of all finitary points of type $\lambda$. We have $\fX_{\fin}=\bigsqcup \fX_{[\lambda]}$, where the union is over all $\infty$-partitions $\lambda$, and so we can regard the $\fX_{[\lambda]}$ as defining a stratification of $\fX_{\fin}$.

Before stating our theorem, we need to introduce one more concept. We say that a topological space $X$ with an action of a group $G$ is {\bf $G$-irreducible} if it cannot be expressed as a union of two $G$-stable proper closed subsets. It is an easy consequence of Cohen's theorem that any $\fS$-stable closed subvariety of $\fX$ is a finite union of $\fS$-irreducible closed subvarieties. It therefore suffices to study the $\fS$-irreducible subvarieties of $\fX$. We note that if $G$ is a finite group acting on a variety $X$ then any $G$-irreducible subvariety of $X$ is simply the union of the $G$-translates of an irreducible subvariety.

We can now state our classification theorem:

\begin{theorem}[Theorem~\ref{thm:classification-fZ}] \label{mainthm2}
The following two sets are in natural bijection:
\begin{enumerate}
\item The set of $\fS$-irreducible proper closed subvarieties $\fZ$ of $\fX$.
\item The set of pairs $(\lambda, Z)$, where $\lambda=(\lambda_1, \ldots, \lambda_r)$ is an $\infty$-partition and $Z$ is an $\Aut(\lambda)$-irreducible closed subvariety of $\bA^{[r]}$. Here $\Aut(\lambda)$ is the subgroup of $\fS_r$ stabilizing $\lambda$, and $\bA^{[r]}$ is the open subvariety of $\bA^r$ consisting of points with distinct coordinates.
\end{enumerate}
\end{theorem}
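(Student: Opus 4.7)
The plan is to construct an explicit bijection by defining maps in both directions. For the forward direction, given $(\lambda, Z)$ with $\lambda = (\lambda_1, \ldots, \lambda_r)$, I fix any surjection $f \colon \{1, 2, \ldots\} \to \{1, \ldots, r\}$ with $|f^{-1}(i)| = \lambda_i$ for each $i$, let $i_f \colon \bA^r \to \fX$ send $(a_1, \ldots, a_r)$ to the point with $j$th coordinate $a_{f(j)}$, and define $\fZ(\lambda, Z)$ as the Zariski closure of $\fS \cdot i_f(Z)$. Since $\fS$ acts transitively on the set of such surjections $f$, the closure is independent of $f$, and $i_f(\bA^{[r]}) \subseteq \fX_{[\lambda]}$. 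To verify $\fS$-irreducibility, suppose $\fZ(\lambda, Z) = \fZ_1 \cup \fZ_2$ with each $\fZ_i$ a proper closed $\fS$-stable subvariety, and set $Y_i = i_f^{-1}(\fZ_i) \cap Z$. These are closed subsets of $Z$ covering it, and the $\Aut(\lambda)$-action on $\bA^r$ is intertwined by $i_f$ with the $\fS$-action on $\fX$, so each $Y_i$ is $\Aut(\lambda)$-stable. By $\Aut(\lambda)$-irreducibility of $Z$, some $Y_i$ equals $Z$; then $\fZ_i$ contains $\fS \cdot i_f(Z)$ and hence its closure $\fZ(\lambda, Z)$, contradicting properness of $\fZ_i$.

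For the inverse direction, given an $\fS$-irreducible proper closed $\fZ \subseteq \fX$, Proposition~\ref{prop:discriminant} places $\fZ$ inside $\fX_{\fin} = \bigsqcup_\mu \fX_{[\mu]}$. I would introduce a specialization order on $\infty$-partitions, where $\lambda \succeq \mu$ when $\fX_{[\mu]} \subseteq \overline{\fX_{[\lambda]}}$; concretely this should be generated by merging parts of $\lambda$ (reducing $r$ and combining the corresponding multiplicities). The aim is to show there is a unique $\succeq$-maximal type $\lambda$ occurring in $\fZ$, and that $\fZ$ equals the closure of $\fZ \cap \fX_{[\lambda]}$. Uniqueness is forced by $\fS$-irreducibility: two distinct maximal types $\lambda, \lambda'$ in $\fZ$ would decompose $\fZ$ as $\overline{\fZ \cap \fX_{[\lambda]}} \cup \overline{\fZ \cap \fX_{[\lambda']}}$ into proper $\fS$-stable closed subvarieties. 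Existence requires Cohen's $\fS$-noetherianity to preclude an infinite strictly descending chain of specializations in $\fZ$. With $\lambda$ in hand, set $Z = i_f^{-1}(\fZ) \cap \bA^{[r]}$, which is closed and $\Aut(\lambda)$-stable; $\Aut(\lambda)$-irreducibility follows by the reverse of the forward argument, in that an $\Aut(\lambda)$-decomposition of $Z$ would yield an $\fS$-decomposition of $\fZ$ via the Zariski closures of $\fS$-orbits of the image pieces.

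The core technical obstacle is the geometric analysis of the stratification: one must describe the closures $\overline{\fX_{[\lambda]}}$ in $\fX$ and verify that specialization between strata is indeed controlled by the combinatorics of merging parts of the $\infty$-partition. Paired with the use of $\fS$-noetherianity to guarantee that generic types exist, this constitutes the main content of the proof. Once the stratification is understood, the remaining checks that the two constructions are mutually inverse become formal, resting on the transitivity of $\fS$ on surjections $\bN \to \{1, \ldots, r\}$ with prescribed fiber sizes and the resulting correspondence between $\fS$-orbits in $\fX_{[\lambda]}$ and $\Aut(\lambda)$-orbits in $\bA^{[r]}$.
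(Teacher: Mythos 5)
Your high-level picture is on the right track — attach $(\lambda,Z)$ to the closure of an $\fS$-orbit, and in the reverse direction extract a maximal type — and your verification of $\fS$-irreducibility of the forward construction via the intertwining of $\Aut(\lambda)$ and $\fS$ is essentially what the paper does (Proposition~\ref{prop:gamma-aux2}). But there are two real gaps in the inverse direction.

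First, the specialization order you describe is incorrect. You propose that $\fX_{[\mu]} \subseteq \overline{\fX_{[\lambda]}}$ is ``generated by merging parts of $\lambda$,'' but the correct order (denoted $\preceq$ in \S\ref{ss:part}) allows both \emph{combining} and \emph{decreasing} parts. For instance the type $(\infty,2)$ specializes to $(\infty,1)$: placing the two equal nonzero coordinates farther and farther apart gives a $\Pi$-limit with only one nonzero coordinate, so $\fX_{[(\infty,1)]} \subset \overline{\fX_{[(\infty,2)]}}$, and $(\infty,1) \npreceq (\infty,2)$ under merging alone. The paper establishes $\fX_{\lambda}=\bigcup_{\mu\preceq\lambda}\fX_{[\mu]}$ as Theorem~\ref{thm:type-loci}, using a combinatorial lemma about $\lambda$-fillings and an explicit generating set $I_\lambda$; this is not trivial and your picture of it is wrong in a way that would corrupt the identification of the ``maximal type.''

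Second, your uniqueness argument for the maximal type is circular. You assert that two maximal types $\lambda,\lambda'$ would decompose $\fZ$ as $\overline{\fZ \cap \fX_{[\lambda]}}\,\cup\,\overline{\fZ \cap \fX_{[\lambda']}}$, but for this to be a decomposition of all of $\fZ$ you must already know that every point of $\fZ$ lies in the closure of its top stratum -- that $\fZ$ is \emph{supported} on its maximal types. This needs the wqo property of $(\wt\Lambda,\preceq)$ (Proposition~\ref{prop:min-are-finite}) to ensure finitely many maximal types exist, and it needs a further argument that is far from formal -- in the paper it occupies most of the proof of Theorem~\ref{thm:classification-cZ}(a), and relies on the $\Gamma_\lambda$ construction, Proposition~\ref{prop:continuity}, and a noetherian induction on $\cX$ (Proposition~\ref{prop:noeth-C3}). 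Likewise, the claim that $\fZ$ equals the Zariski closure of $\fZ\cap\fX_{[\lambda]}$ for the maximal $\lambda$ is precisely the hard direction; the paper proves it not by direct orbit-closure analysis in $\fX$, but by passing to the system $\cZ=\Phi(\fZ)$ of finite-dimensional pieces, establishing that $\Phi$/$\Psi$ give a bijection with C3 subvarieties (Theorem~\ref{thm:C3-corr}), and then classifying C3 subvarieties via $\Gamma_\lambda$. That detour through $\cX$ is what makes the ``formal checks'' tractable, and skipping it leaves the core step unproven rather than merely unwritten.
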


It is not difficult to describe the bijection in Theorem~\ref{mainthm2}. To do this, it will be useful to introduce a refinement of the type stratification of $\fX_{\fin}$. Let $\cU=\{\cU_1,\ldots,\cU_r\}$ be a partition of the index set $[\infty]=\{1,2,\ldots\}$ into finitely many disjoint and non-empty pieces. Define $\fX_{\cU}$ (resp.\ $\fX_{[\cU]}$) be the subset of $\fX$ consisting of points $x$ such that $x_i=x_j$ if $i$ and $j$ belong to the same part of $\cU$ (resp.\ $x_i=x_j$ if and only if $i$ and $j$ belong to the same part). The space $\fX_{\cU}$ is isomorphic to $\bA^r$, and under this isomorphism $\fX_{[\cU]}$ corresponds to $\bA^{[r]}$. Let $\lambda_i=\# \cU_i$, relabeling if necessary so that $\lambda=(\lambda_1, \ldots, \lambda_r)$ is an $\infty$-partition; we call this the $\infty$-partition associated to $\cU$, and say $\cU$ is of type $\lambda$. Then any point of $\fX_{[\cU]}$ has type $\lambda$, and $\fX_{[\lambda]}$ is the union of the $\fS$-translates of $\fX_{[\cU]}$.

We now describe the bijection in Theorem~\ref{mainthm2}. Suppose that $\fZ$ is an $\fS$-irreducible closed subvariety of $\fX$. Let $\Lambda_{\fZ}$ be the set of all $\infty$-partitions $\lambda$ such that $\fZ$ has a point of type $\lambda$. We show that $\Lambda_{\fZ}$ contains a unique maximal element with respect to a partial order $\preceq$ on $\infty$-partitions. Let $\lambda=(\lambda_1, \ldots, \lambda_r)$ be this maximal element, and pick a partition $\cU$ of $[\infty]$ of type $\lambda$. Let $Z=\fZ \cap \fX_{[\cU]}$, where we identify $\fX_{[\cU]}$ with $\bA^{[r]}$. Then $\fZ$ corresponds to $(\lambda, Z)$. In the reverse direction, suppose that $\lambda=(\lambda_1, \ldots, \lambda_r)$ and $Z$ are given. Again, pick $\cU$ of type $\lambda$. Then the corresponding $\fS$-irreducible closed subvariety of $\fX$ is the Zariski closure of the $\fS$-orbit of $Z$, where we regard $Z$ as a subset of $\fX$ via the identification $\bA^{[r]}=\fX_{[\cU]}$.

\subsection{C3 varieties}

While Theorem~\ref{mainthm2} does give a complete classification of $\fS$-subvarieties of $\fX$, it is not powerful enough to answer some finer questions, such as:
\begin{itemize}
\item How does one describe the entire point set of $\fZ$ from $(\lambda,Z)$?
\item How does one determine the equations of $\fZ$ from those of $Z$?
\item How does one determine a containment $\fZ \subset \fZ'$ in terms of the data $(\lambda,Z)$ and $(\lambda',Z')$?
\end{itemize}
In fact, Theorem~\ref{mainthm2} is a corollory of a stronger result, that we now describe, which allows us to address these questions (and more like them).

For an $\infty$-partition $\lambda=(\lambda_1, \ldots, \lambda_r)$, put $\cX_{\lambda}=\bA^r$. We regard the system $\cX=\{\cX_{\lambda}\}_{\lambda}$ as a single object. Given an $\fS$-stable closed subset $\fZ$ of $\fX$, we define a subsystem $\cZ=\Phi(\fZ)$ of $\cX$ as follows. For an $\infty$-partition $\lambda$, let $\cU$ be a partition of $[\infty]$ of type $\lambda$, and put $\cZ_{\lambda}=\fZ \cap \fX_{\cU}$, where, as usual, we idenify $\fX_{\cU}$ with $\bA^r=\cX_{\lambda}$. This is well-defined (i.e., independent of the choice of $\cU$) since $\fZ$ is $\fS$-stable. Note that $\cZ_{\lambda}$ is Zariski closed in $\cX_{\lambda}$, and so we can regard $\cZ$ as a system of finite dimensional varieties.

Since $\Phi(\fZ)$ records all the finitary points of $\fZ$, it follows that it completely determines $\fZ$. In other words, the construction $\Phi$ is injective. Much of the work in this paper goes into determining the image of $\Phi$, or, in other words, determining the precise conditions that the system $\cZ$ of varieties must satisfy.

We now give an example of one of the simplest conditions that $\cZ$ must satisfy. Suppose that $x=(b,a,a,\ldots)$ is a $\bC$-point of $\fZ$. Let $x_n=(a,\ldots,a,b,a,a,\ldots)$, where all coordinates except the $n$th are equal to $a$. Then $x_n$ belongs to the $\fS$-orbit of $x$, and therefore belongs to $\fZ$. The sequence $\{x_n\}_{n \ge 1}$ converges coordinate-wise to $y=(a,a,\ldots)$, and so $y$ also belongs to $\fZ$. (This is a simple compactness argument, see \S \ref{ss:pi}.) We can rephrase the above observation in terms of $\cZ$ as follows. The point $x$ shows that $\cZ_{(\infty,1)} \subset \bA^2$ contains the point $(a,b)$, while the point $y$ shows that $\cZ_{(\infty)} \subset \bA^1$ contains the point $a$. We thus see that if $p_1 \colon \bA^2 \to \bA^1$ is the projection map onto the first coordinate then $p_1(\cZ_{(\infty,1)}) \subset \cZ_{(\infty)}$. This is a non-trivial condition on $\cZ$.

By generalizing the above argument, we are led to a number of natural conditions that $\cZ$ must satisfy. We define a ``C3 subvariety'' of $\cX$ to be a subsystem satisfying these conditions. The following is the main theorem of this paper:

\begin{theorem}[Theorem~\ref{thm:C3-corr}] \label{mainthm1}
The construction $\Phi$ defines a bijection
\begin{displaymath}
\{ \text{$\fS$-stable closed subsets of $\fX$} \} \to \{ \text{C3 subvarieties of $\cX$} \}
\end{displaymath}
\end{theorem}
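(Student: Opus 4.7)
The plan is to establish the bijection in three stages: well-definedness of $\Phi$, injectivity, and surjectivity. For well-definedness, I would verify axiom by axiom that $\Phi(\fZ)$ satisfies each C3 condition for any $\fS$-stable closed subset $\fZ \subseteq \fX$. The axioms are visibly patterned on natural geometric compatibilities of the slices $\fZ \cap \fX_\cU$; the prototype is the compactness argument sketched in the introduction, where the coordinate-wise limit of a sequence in $\fZ$ lies in $\fZ$ because $\fZ$ is Zariski closed. Each C3 axiom should be verifiable by a direct manipulation of this kind combined with the $\fS$-action on partitions.

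For injectivity, suppose $\fZ$ is $\fS$-stable, closed, and proper in $\fX$. By Proposition~\ref{prop:discriminant}, the defining ideal of $\fZ$ contains some discriminant $\Delta_r$, hence $\fZ(\bC) \subseteq \fX_{\fin} = \bigcup_\cU \fX_\cU$. The data $\Phi(\fZ) = \{\fZ \cap \fX_\cU\}_\cU$ therefore recovers $\fZ(\bC)$, and since $\fZ$ is a closed subvariety it equals the Zariski closure of its $\bC$-points, so $\fZ$ is determined. The improper case is captured separately by $\Phi(\fX) = \cX$.

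For surjectivity, given a C3 subvariety $\cZ$, I would define
\begin{displaymath}
\fZ_0 = \bigcup_{\cU} \iota_\cU\bigl(\cZ_{\mathrm{type}(\cU)}\bigr) \subseteq \fX,
\end{displaymath}
where $\cU$ ranges over all finite partitions of $[\infty]$ and $\iota_\cU \colon \cX_{\mathrm{type}(\cU)} = \bA^r \xrightarrow{\sim} \fX_\cU \hookrightarrow \fX$. The $\Aut(\lambda)$-equivariance axiom of C3 makes $\iota_\cU(\cZ_{\mathrm{type}(\cU)})$ independent of the labeling of parts, and permuting $\cU$ by $\fS$ makes $\fZ_0$ an $\fS$-stable set; set $\fZ = \overline{\fZ_0}$. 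The inclusion $\cZ \subseteq \Phi(\fZ)$ is immediate. The hard step, and the main obstacle, is the reverse inclusion $\fZ \cap \fX_\cU \subseteq \iota_\cU(\cZ_\lambda)$. Two phenomena must be controlled: (i) $\fZ_0$ itself already contains $\iota_{\cU'}(\cZ_{\mathrm{type}(\cU')})$ for every $\cU'$ coarsening $\cU$, and the refinement-compatibility axioms of C3 must force these to lie in $\iota_\cU(\cZ_\lambda)$; and (ii) passage to the Zariski closure can add further points, and to exclude unwanted ones I would start from a polynomial $f$ on $\bA^r$ vanishing on $\cZ_\lambda$ and construct, via an $\fS$-symmetrization or orbit-sum argument, an element of $R$ that vanishes on $\fZ_0$ and restricts on $\fX_\cU$ back to $f$. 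Orchestrating these two maneuvers simultaneously is precisely what the C3 axioms are engineered to make possible, and where I expect the bulk of the technical work to sit.
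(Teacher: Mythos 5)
Your three-part scaffold (well-definedness, injectivity, surjectivity) matches the shape of the paper's argument, and your handling of the improper case is fine. But the surjectivity step contains a gap that your sketch does not close, and the route you propose to close it would not work as stated.

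Your plan is to form $\fZ_0 = \Psi(\cZ) = \bigcup_{\cU} \iota_\cU(\cZ_{\mathrm{type}(\cU)})$, set $\fZ = \overline{\fZ_0}$, and then show that passing to the Zariski closure does not enlarge the slices $\fZ \cap \fX_\cU$. You acknowledge this is the hard step and suggest producing, from a polynomial $f$ cutting out $\cZ_\lambda$ in $\bA^r$, an $\fS$-symmetrized element of $R$ that vanishes on $\fZ_0$ and restricts on $\fX_\cU$ back to $f$. This cannot work in the naive form: an orbit sum of $f$ over $\fS$ does not restrict to $f$ on $\fX_\cU$; it picks up unwanted contributions from orbit elements whose variables collide on the diagonal $\fX_\cU$, and moreover nothing in an orbit sum forces vanishing on slices $\fX_{\cU'}$ of smaller type. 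The paper itself confronts this when writing down defining equations in Theorem~\ref{thm:defining-equations}, and the construction required is genuinely more delicate: one multiplies the lifted equations by the discriminant-type products $h_{T_\mu}$, which kill the lower strata, and the $I_\lambda$ generators handle the type bound. You cannot get this for free out of a symmetrization argument alone.

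The paper actually sidesteps the closure-control problem entirely. Rather than proving that $\overline{\fZ_0}\setminus\fZ_0$ is small, it proves directly that $\fZ_0=\Psi(\cZ)$ is \emph{already} Zariski closed when $\cZ$ is C3. The key maneuver is Proposition~\ref{prop:C2-Psi}, which rewrites the union $\Psi(\cZ)$ as an \emph{intersection}
\begin{displaymath}
\Psi(\cZ) \;=\; \fX_{\fin} \cap \bigcap_{f \colon [n] \to [\infty]} \alpha_f^{-1}\bigl(\cZ^+_{1^n}\bigr),
\end{displaymath}
and then Proposition~\ref{prop:C3-plus} shows each $\cZ^+_{1^n}$ is Zariski closed for $n\gg 0$, using the $\cX^+$ extension, $\End(\lambda)$-closure, and the correspondence/composition machinery of \S\S 4--5 (in particular Propositions~\ref{prop:finite-union} and \ref{prop:corr-preserves-closure-alt}). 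That union-into-intersection rewriting is the real content of the theorem and is exactly what is missing from your proposal. Without it or an equivalent device, the claim that ``the C3 axioms are engineered to make this possible'' is an aspiration rather than an argument. Separately, your injectivity argument via $\bC$-points and Zariski density is morally right but relies on $A=\bC$; the paper works over an arbitrary noetherian $A$ and instead establishes $\Psi\circ\Phi=\id$ already at the C1 level (Proposition~\ref{prop:C1-corr}), which gives the result in full generality.
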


The inverse bijection $\Psi$ is described excplitily. We use this theorem to answer the questions posed above. Additionally, we apply the theorem in \S \ref{s:support}, where we prove a result on the support of $\fS$-equivariant $R$-modules. This result will play an important role in the follow-up paper \cite{sideals}.

\subsection{Some examples}

We give some simple examples to illustrate Theorem~\ref{mainthm2}.

\begin{example}
Let $\lambda=(\infty,\infty)$ and let $Z \subset \cX_{[\lambda]}$ be the 0-dimensional closed subvariety $\{(0,1),(1,0)\}$. Note that $\Aut(\lambda)=\fS_2$ acts on $\cX_{[\lambda]} \subset \bA^2$ by permuting the two coordinates. We thus see that $Z$ is stable under $\Aut(\lambda)$ and is $\Aut(\lambda)$-irreducible. Let $\fZ$ be the $\fS$-irreducible closed subvariety of $\fX$ corresponding to $(\lambda, Z)$. Then $\fZ$ consists of those points $(x_i)_{i \ge 1}$ of $\fX$ such that $x_i \in \{0,1\}$ for all $i$. See \S \ref{ss:example-C} for details.
\end{example}

\begin{example} \label{example2}
Let $\lambda=(\infty,n)$, with $n$ finite, and let $Z=\{(0,1)\}$. In this case, $\Aut(\lambda)$ is trivial, and so $Z$ is clearly $\Aut(\lambda)$-irreducible. Let $\fZ$ be the $\fS$-irreducible closed subvariety of $\fX$ corresponding to $(\lambda, Z)$. Then $\fZ$ consists of those points $(x_i)_{i \ge 1}$ of $\fX$ such that $x_i \in \{0,1\}$ for all $i$, and $\# \{i \mid x_i=1\} \le n$.
\end{example}

\begin{example}
Once again, take $\lambda=(\infty,n)$. Let $Z$ be an irreducible curve in $\cX_{[\lambda]}$, such as $y^2=x^3+1$. Then $Z$ is $\Aut(\lambda)$-irreducible, as in the previous example. Let $\fZ$ correspond to $(\lambda, Z)$. Then $\fZ$ consists of those points $(x_i)_{i \ge 1}$ satisfying the following condition: there is a point $(a,b) \in \ol{Z}$ such that $x_i \in \{a,b\}$ for all $i$, and $\# \{ i \mid x_i \ne a \} \le n$. Here $\ol{Z}$ denotes the Zariski closure of $Z$ in $\cX_{\lambda}=\bA^2$.
\end{example}

\subsection{Further directions}

In this paper, we classify the $\fS$-stable subvarieties of $\bA^{\infty}$, or, equivalently, the $\fS$-stable radical ideals of $R$. We are currently pursuing two natural generalizations of this result. First, in forthcoming papers \cite{sideals,smod}, we determine (in a certain sense) the structure of arbitrary $\fS$-stable ideals of $R$, and establish results about equivariant modules as well. And second, in ongoing joint work with Vignesh Jagathese, we are classifying the $\fS$-stable subvarieties of $X^{\infty}$ for an arbitrary variety $X$ (the present paper treating the case $X=\bA^1$).

\subsection{Notation}

We list the most important notation used throughout the paper:

\begin{description}[align=right,labelwidth=2cm,leftmargin=!]
\item[ ${[n]}$ ] the set $\{1, \ldots, n\}$ (we allow $n=\infty$)
\item[ $\fS$ ] the (small) symmetric group on $[\infty]$
\item[ $A$ ]  the coefficient ring (often noetherian)
\item[ $W$ ] the spectrum of $A$
\item[ $R$ ] the polynomial ring over $A$ in variables $\{\xi_i\}_{i\ge 1}$ 
\item[ $\fX$ ] the spectrum of $R$
\item[ $\fZ$ ] a subset of $\fX$
\item[ $\cX$ ] the system $\{\cX_{\lambda}\}_{\lambda}$ indexed by $\infty$-compositions $\lambda$
\item[ $\cZ$ ] a subset (really subsystem) of $\cX$
\end{description}

\section{The space $\fX$} \label{s:fS}

\subsection{Definitions}

Let $A$ be a commutative ring and let $W=\Spec(A)$ be its spectrum. Let $R=R_A=A[\xi_i]_{i \ge 1}$ be the infinite variable polynomial ring over $A$ in the variables $\xi_i$. Let $\fX=\fX_W=\Spec(R)$. We regard $\fX$ as a topological space (under the Zariski topology), and include its scheme-theoretic (non-closed) points.

We typically work with points in $\fX$ by using $K$-points, for variable fields $K$. Every point of $\fX$ comes from a $K$-point of $\fX$ for some field $K$, and a $K$-point $x$ and a $K'$-point $x'$ define the same point of $\fX$ if and only if there is a field $K''$ and embeddings $K \to K''$ and $K' \to K''$ such that $x$ and $x'$ define the same $K''$-points of $\fX$. We can therefore define a condition on points of $\fX$ by defining a condition on $K$-points that is invariant under field extension. For a subset $\fZ$ of $\fX$, we let $\fZ(K)$ be the subset of $\fX(K)$ consisting of $K$-points whose image lies in $\fZ$. We note that $X(K)=W(K) \times K^{\infty}$. For readers not familiar with scheme theory, one can assume that $A$ is an algebraically closed field and work only with closed points without losing much.

Let $\fS=\bigcup_{n \ge 1} \fS_n$ be the ``small'' infinite symmetric group and let $\fS^{\rm big}=\Aut([\infty])$ be the ``big'' infinite symmetric group; of course, $\fS$ is a subgroup of $\fS^{\rm big}$. The group $\fS^{\rm big}$ (and therefore $\fS$ as well) acts on $R$ by permuting the variables, and thus on $\fX$ as well. We prefer to work with $\fS$, since it is more algebraic in nature; however, the difference between these two groups is largely irrelevant for algebraic questions, as we see below (e.g., Proposition~\ref{prop:Pi-closure-is-big-stable}).

\subsection{The noetherian property} \label{ss:noeth}

By an \textbf{$\fS$-ideal} of $R$, we mean an ideal of $R$ that is stable under the group $\fS$. Given a subset $S$ of $R$, we let $\Langle S \Rangle$ be the ideal generated by the $\fS$-orbit of $S$; this is the minimal $\fS$-ideal containing $S$. By an \textbf{$\fS$-subset} of $\fX$, we mean a subset of $\fX$ that is stable under $\fS$. The following result is due to Cohen \cite{cohen,cohen2}:

\begin{theorem}
Suppose that the coefficient ring $A$ is noetherian. Then:
\begin{enumerate}
\item The ascending chain condition holds for $\fS$-ideals of $R$.
\item Any $\fS$-ideal of $R$ is generated by finitely many $\fS$-orbits of elements.
\item The descending chain condition holds for Zariski closed $\fS$-subsets of $\fX$.
\end{enumerate}
\end{theorem}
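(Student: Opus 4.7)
First, the three parts are related as follows. Statements (a) and (b) are formally equivalent: in any poset with finitary joins, the ascending chain condition is equivalent to the statement that every element is a finite join of compact elements, and here the join in the poset of $\fS$-ideals is given by the finitary operation $\Langle \cdot \Rangle$, with compact elements being those of the form $\Langle f \Rangle$. For (c), note that closed $\fS$-subsets of $\fX$ are in inclusion-reversing bijection with radical $\fS$-ideals of $R$; the ACC on $\fS$-ideals restricts to ACC on the sub-poset of radical $\fS$-ideals, which reverses to give DCC on closed $\fS$-subsets.

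To prove (a), I would follow the standard well-partial-order approach. Define a preorder $\preceq$ on the set of monomials of $R$ by declaring $m \preceq m'$ iff $\sigma m$ divides $m'$ in $R$ for some $\sigma \in \fS$. Since the $\fS$-orbit of a monomial is determined by the multiset of its nonzero exponents, $\preceq$ is isomorphic to the Higman multiset embedding order on finite multisets of positive integers. By Higman's lemma, applied to the well-ordered set $(\mathbb{Z}_{>0}, \leq)$, this embedding order is a well-partial-order: every infinite sequence of monomials contains a pair $m_i \preceq m_j$ with $i < j$.

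The WPO immediately yields the monomial case of (a)/(b): for any $\fS$-stable monomial ideal $I$ of $R$, the set of monomials lying in $I$ is $\preceq$-upward closed, hence is the upward closure of its finitely many $\preceq$-minimal elements, whose $\fS$-orbits then generate $I$. For general $\fS$-ideals (and a general noetherian coefficient ring $A$), I would reduce to the monomial case by a Gr\"obner-style argument: fix a term order, and for an ascending chain of $\fS$-ideals $I_1 \subset I_2 \subset \cdots$, combine the WPO property of leading exponent multisets with the Noetherianity of $A$ (applied to coefficient ideals indexed by monomials) to force stabilization. The main obstacle is Higman's lemma itself, i.e., the WPO property of $\preceq$; once this is granted, the ascent from the monomial case to the general case is routine commutative-algebra bookkeeping.
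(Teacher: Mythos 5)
The paper does not prove this theorem; it states it as a result due to Cohen \cite{cohen,cohen2} and gives a citation rather than an argument, so there is no in-paper proof to compare against. Your sketch is a correct outline of the now-standard approach (essentially that of Aschenbrenner--Hillar, Hillar--Sullivant, and Draisma's notes): the equivalence of (a) and (b) is the general lattice-theoretic fact about algebraic lattices (compact = finitely $\fS$-generated), (c) follows from (a) via the inclusion-reversing bijection with radical $\fS$-ideals, and the core combinatorial input is Higman's lemma for the divisibility preorder on $\fS$-orbits of monomials, which you correctly identify with the multiset-embedding order on finite multisets of positive integers. The monomial case then falls out immediately, as you say.

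The one place where I would push back on the wording is the claim that passing from the monomial case to arbitrary $\fS$-ideals over a general noetherian $A$ is ``routine commutative-algebra bookkeeping.'' Over a field this is indeed the familiar Gröbner reduction, but over a noetherian ring one cannot simply pass to leading terms: the initial ideal carries $A$-coefficient data, and the relevant poset is not the WQO of monomials alone but a more elaborate structure pairing exponent multisets with chains of coefficient ideals in $A$. Making this precise (via leading-coefficient ideals indexed along a term order, or by noetherian induction on $A$, or by passing to a quotient of $\bZ[s_1,\ldots,s_k]$) is where Cohen's original care and the modern refinements actually live. So the step is well known and your instinct is right, but it is a real argument rather than bookkeeping, and a reader should be pointed to a reference such as \cite{AschenbrennerHillar}, \cite{HillarSullivant}, or \cite{draisma-notes} for the details.
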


\begin{corollary} \label{cor:fX-components}
Let $\fZ$ be a Zariski closed $\fS$-subset of $\fX$. Then $\fZ=\fZ_1 \cup \cdots \cup \fZ_r$ where each $\fZ_i$ is an $\fS$-irreducible closed subset of $\fX$.
\end{corollary}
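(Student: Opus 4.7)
The plan is to imitate the standard argument that a noetherian topological space has finitely many irreducible components, replacing ``irreducible'' by ``$\fS$-irreducible'' and using part (c) of the preceding theorem (the DCC on Zariski closed $\fS$-subsets of $\fX$) in place of the usual noetherianity. The key observation is that the collection $\cF$ of closed $\fS$-subsets of $\fX$ that cannot be written as a finite union of $\fS$-irreducible closed subsets is either empty or, by DCC, contains a minimal element; we will derive a contradiction in the latter case.

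First I would suppose for contradiction that $\cF$ is non-empty and pick $\fZ \in \cF$ minimal. Since the empty set is vacuously a (empty) union of $\fS$-irreducibles, $\fZ$ is non-empty. Moreover $\fZ$ itself cannot be $\fS$-irreducible, for otherwise it would be the one-term union $\fZ = \fZ$ and thus not lie in $\cF$. By the definition of $\fS$-irreducibility recalled in \S\ref{ss:descr}, we can therefore write $\fZ = \fY_1 \cup \fY_2$ with each $\fY_i$ a proper closed $\fS$-subset of $\fZ$. By minimality of $\fZ$ in $\cF$, each $\fY_i$ admits a finite decomposition $\fY_i = \bigcup_{j} \fZ_{i,j}$ into $\fS$-irreducible closed subsets. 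Combining these gives a finite decomposition of $\fZ$, contradicting $\fZ \in \cF$. Hence $\cF = \emptyset$, which is precisely the statement of the corollary.

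I do not anticipate any real obstacle here: the argument is a purely formal consequence of the DCC, and the only point that deserves a moment's care is to verify that the $\fY_i$ produced by failure of $\fS$-irreducibility are genuinely $\fS$-stable closed proper subsets of $\fZ$; this is immediate from the definition. If one wants an irredundant decomposition, one can additionally discard any $\fZ_{i,j}$ that is contained in another term of the union, and this process terminates after finitely many steps since the decomposition has finitely many terms.
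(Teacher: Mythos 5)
Your proof is correct and is exactly the "usual argument (from the non-equivariant case)" that the paper invokes: use the DCC to extract a minimal counterexample, split it using the failure of $\fS$-irreducibility, and derive a contradiction from minimality. This matches the paper's intended approach.
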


\begin{proof}
The usual argument (from the non-equivariant case) applies. (In fact, by passing to the quotient space $\fX/\fS$ one reduces to that case.)
\end{proof}

\subsection{The $\Pi$-topology} \label{ss:pi}

The space $\fX$ carries the usual Zariski topology. However, it also has a second topology, what we call the {\bf $\Pi$-topology}, that will be useful in this paper. The definition is as follows: a subset $\fZ$ of $\fX$ is $\Pi$-closed if $\fZ(K)$ is closed in $\fX(K) = W(K) \times K^{\infty}$ for all fields $K$; here $W(K)$ and $K$ are given the discrete topology and $\fX(K)$ is given the product topology. Concretely, a sequence (or net) $\{x_i\}$ in $\fX(K)$ converges to a point $x$ if each coordinate of $x_i$ (including the $W$ coordinate) is equal to the corresponding coordinate of $x$ for $i \gg 0$.

\begin{proposition}
\label{prop:Zariski-is-coarser}
Every Zariski closed subset of $\fX$ is also $\Pi$-closed.
\end{proposition}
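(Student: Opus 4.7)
The plan is a short topological argument that reduces the statement to the fact that every polynomial $f \in R$ involves only finitely many of the variables $\xi_i$.

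First, I would write $\fZ = V(I)$ for some ideal $I \subseteq R$, so that for every field $K$,
\begin{displaymath}
\fZ(K) = \bigcap_{f \in I} V(f)(K).
\end{displaymath}
Since arbitrary intersections of closed sets are closed, it suffices to show that $V(f)(K)$ is closed in $\fX(K) = W(K) \times K^{\infty}$ for every single $f \in R$.

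Fix such an $f$. By construction of $R = A[\xi_i]_{i \ge 1}$, the element $f$ involves only finitely many variables $\xi_{i_1}, \ldots, \xi_{i_k}$. Let
\begin{displaymath}
\pi \colon W(K) \times K^{\infty} \longrightarrow W(K) \times K^{k}
\end{displaymath}
be the projection onto the $W$-coordinate together with the coordinates indexed by $i_1, \ldots, i_k$. Then $V(f)(K) = \pi^{-1}(C)$, where $C \subseteq W(K) \times K^{k}$ is the set of tuples $(w, y_1, \ldots, y_k)$ with $f(w, y_1, \ldots, y_k) = 0$. By definition of the $\Pi$-topology, both $W(K)$ and $K$ carry the discrete topology, and a finite product of discrete spaces is discrete; hence every subset of $W(K) \times K^{k}$ is closed. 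In particular $C$ is closed. The projection $\pi$ is continuous for the product topology, so $V(f)(K) = \pi^{-1}(C)$ is closed, as required.

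There is essentially no obstacle; the whole content is the observation that polynomials use only finitely many variables, which turns the defining equations into conditions on finitely many coordinates, and finite products of the discrete spaces $W(K)$ and $K$ are automatically discrete. The argument is uniform in $K$, which is exactly what the definition of $\Pi$-closed requires.
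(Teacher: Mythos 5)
Your proof is correct and rests on the same key observation as the paper's, namely that each $f \in R$ involves only finitely many variables, so its vanishing locus depends only on finitely many coordinates. The paper phrases the argument dually, via nets converging in the $\Pi$-topology, whereas you argue via preimages of continuous projections onto finite (hence discrete) products; both are valid and essentially the same argument.
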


\begin{proof}
Let $\fZ$ be a Zariski-closed subset of $\fX$ and let $\{x_i\}$ be a net in $\fZ(K)$ converging to a point $x \in \fX(K)$ in the $\Pi$-topology. Suppose $f \in R$ vanishes on $\fZ$, and uses the variables $\xi_1, \ldots, \xi_n$. By definition, there is some $i_0$ such that the first $n$ coordinates of $x_i$ agree with those of $x$ for all $i \ge i_0$. We thus see that $f(x)=f(x_i)=0$ for any $i \ge i_0$. Since all functions that vanish on $\fZ$ vanish on $x$, it follows that $x \in \fZ$. Thus $\fZ$ is $\Pi$-closed.
\end{proof}

\begin{proposition}
\label{prop:Pi-closure-is-big-stable}
Let $\fZ$ be a $\Pi$-closed $\fS$-subset of $\fX$. Then $\fZ$ is $\fS^{\rm big}$-stable.
\end{proposition}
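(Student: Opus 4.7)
The plan is to show that any $\sigma \in \fS^{\rm big}$ can be approximated by elements of $\fS$ in a way that makes $\sigma x$ a $\Pi$-limit of $\fS$-translates of $x$, for any $K$-point $x \in \fZ(K)$.

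Fix $\sigma \in \fS^{\rm big}$ and a $K$-point $x \in \fZ(K)$. Since every point of $\fX$ comes from some $K$-point and the action of $\fS^{\rm big}$ commutes with field extension, it is enough to prove $\sigma x \in \fZ(K)$. For each $n \ge 1$, I will construct an element $\sigma_n \in \fS$ such that $\sigma_n x$ agrees with $\sigma x$ on the first $n$ coordinates of $\fX(K) = W(K)\times K^\infty$. The $W$-component is automatic since the symmetric group acts trivially on $W$. For the $K^\infty$-component, recalling that $(\tau y)_i = y_{\tau^{-1}(i)}$, it suffices that $\sigma_n^{-1}(i) = \sigma^{-1}(i)$ for all $i \le n$.

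The construction of $\sigma_n$ is elementary: let $T = \sigma^{-1}\{1,\ldots,n\}$ and $U = \{1,\ldots,n\} \cup T$, both finite. Define $\sigma_n$ to equal $\sigma$ on $T$ (which sends $T$ bijectively onto $\{1,\ldots,n\}$), choose any bijection $U \setminus T \to U \setminus \{1,\ldots,n\}$ for the rest of $U$, and set $\sigma_n$ to be the identity outside $U$. Then $\sigma_n \in \fS$ and $\sigma_n$ agrees with $\sigma$ on $T$, hence $\sigma_n^{-1}(i) = \sigma^{-1}(i)$ for $i \le n$ as required. Since $\fZ$ is $\fS$-stable, $\sigma_n x \in \fZ(K)$, and by construction $\sigma_n x \to \sigma x$ in the $\Pi$-topology. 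The $\Pi$-closedness of $\fZ$ then gives $\sigma x \in \fZ(K)$. Applying the same argument to $\sigma^{-1}$ (or just noting that $\fS^{\rm big}$ is a group) yields full $\fS^{\rm big}$-stability.

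There is no real obstacle here; the only point requiring a bit of care is the bookkeeping with left vs.\ right action to make sure the finite permutation $\sigma_n$ agrees with $\sigma$ on the correct finite set (namely $\sigma^{-1}\{1,\ldots,n\}$ rather than $\{1,\ldots,n\}$), since otherwise the first $n$ coordinates of $\sigma_n x$ and $\sigma x$ need not coincide.
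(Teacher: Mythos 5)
Your proof is correct and follows essentially the same approach as the paper: approximate $\sigma\in\fS^{\rm big}$ by finitely supported permutations whose translates of $x$ converge to $\sigma x$ in the $\Pi$-topology, then invoke $\Pi$-closedness. The only difference is that the paper simply picks $\tau_n\in\fS$ agreeing with $\sigma$ on $[n]$ rather than on $\sigma^{-1}([n])$; your worry about the left/right bookkeeping is not actually an obstacle there, since for any fixed coordinate $j$, once $n\ge\sigma^{-1}(j)$ the agreement $\tau_n|_{[n]}=\sigma|_{[n]}$ forces $\tau_n^{-1}(j)=\sigma^{-1}(j)$, so $\tau_n x\to\sigma x$ coordinatewise in either setup.
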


\begin{proof}
Let $x \in \fZ(K)$, and let $\sigma \in \fS^{\rm big}$. For each $n \ge 1$, pick $\tau_n \in \fS$ such that $\tau_n$ agrees with $\sigma$ on $[n]$. Then $\tau_n x \in \fZ$ and $\tau_n x$ converges to $\sigma x$ in the $\Pi$-topology. Since $\fZ$ is $\Pi$-closed, we see that $\sigma x \in \fZ$. Thus $\fZ$ is    $\fS^{\rm big}$-stable.
\end{proof}

%We now explain another little trick that is similar to the above proposition. Let $\fZ$ be a $\Pi$-closed $\fS$-subset of $\fX$, and suppose for simplicity $A=K$ is a field. Let $x=(b,a,a,a,a,\ldots)$ be a point of $\fX$. Let $x_n=(a,\ldots,a,b,a,a,\ldots)$, where all coordinates except the $n$th are equal to $a$. Then $x_n$ belongs to the $\fS$-orbit of $x$, and thus belongs to $\fZ$ by hypothesis. The sequence $x_n$ converges in the $\Pi$-topology to the point $y=(a,a,a,\ldots)$. Since $\fZ$ is $\Pi$-closed, we see that $y$ belongs to $\fZ$. Recall the subset $\cZ=\Phi(\fZ)$ defined before Theorem~\ref{mainthm1}. The above observation can be phrased equivalently as follows: if $(a,b) \in \cZ_{(\infty,1)} \subset \bA^2$ then $a \in \cZ_{(\infty)} \subset \bA^1$. In other words, the projection onto the first coordinate $\bA^2 \to \bA^1$ maps $\cZ_{(\infty,1)}$ into $\cZ_{(\infty)}$. This exhibits one simple but non-trivial condition that the system $\cZ$ satisfies.

%The above argument underlies the proof of Proposition~\ref{prop:C2-map} below.
%More generally, suppose $x$ is a finitary $K$-point of $\fZ$ and $a \in K$ occurs infinitely often as a coordinate of $x$. Let $y$ be obtained from $x$ by changing the $i$th coordinate to $a$, for some $i$. Then the same method shows that $y \in \fZ$. This gives a more general condition that $\cZ$ satisfies. We will not use this trick directly in the rest of the paper, but the same idea underlies Proposition~\ref{prop:C2-map}, which is important in out analysis of subvarieties of $\cX$.

\subsection{Discriminants}

Given a commutative ring $B$ and elements $b_1, \ldots, b_n \in B$, we let $\Delta(b_1, \ldots, b_n)=\prod_{1 \le i < j \le n}(b_j-b_i)$ be their discriminant. We put $\Delta_n=\Delta(\xi_1, \ldots, \xi_n)$, which we regard as an element of $R$.

\begin{proposition}
We have
\begin{displaymath}
\sum_{\sigma \in \fS_n/\fS_{n-1}} \sgn(\sigma) \sigma \left( \xi_n^k \cdot \Delta_{n-1} \right) =
\begin{cases}
\Delta_n & \text{if $k=n-1$} \\
0 & \text{if $k<n-1$} \end{cases}
\end{displaymath}
\end{proposition}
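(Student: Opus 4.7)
The plan is to choose explicit coset representatives, identify the summand with a matrix minor, and recognize the resulting sum as a cofactor expansion of a Vandermonde-type determinant.

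For $i=1,\ldots,n$, let $T_i \in \fS_n$ be the unique element that sends $n$ to $i$ and is order-preserving on $\{1,\ldots,n-1\}$. Explicitly, $T_i$ is the cycle $(i,i+1,\ldots,n)$, and the $T_i$'s form a set of coset representatives for $\fS_n/\fS_{n-1}$, with $\sgn(T_i)=(-1)^{n-i}$. Since $T_i$ maps the set $\{1,\ldots,n-1\}$ bijectively and order-preservingly onto $\{1,\ldots,n\}\setminus\{i\}$, one computes
\begin{displaymath}
T_i\bigl(\xi_n^k\cdot\Delta_{n-1}\bigr)=\xi_i^k\cdot\!\!\prod_{\substack{1\le a<b\le n\\ a,b\ne i}}\!\!(\xi_b-\xi_a).
\end{displaymath}
Thus the sum in the statement equals
\begin{displaymath}
S_k\;=\;\sum_{i=1}^{n}(-1)^{n-i}\,\xi_i^k\,\,\prod_{\substack{a<b\\ a,b\ne i}}(\xi_b-\xi_a).
\end{displaymath}

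Next I recognize $S_k$ as a determinant. Let $M_k$ be the $n\times n$ matrix whose $j$-th row is $(\xi_1^{j-1},\ldots,\xi_n^{j-1})$ for $j=1,\ldots,n-1$, and whose last row is $(\xi_1^k,\ldots,\xi_n^k)$. The cofactor expansion of $\det(M_k)$ along the bottom row gives
\begin{displaymath}
\det(M_k)=\sum_{i=1}^n(-1)^{n+i}\xi_i^k\cdot\det(M_k^{(n,i)}),
\end{displaymath}
and $M_k^{(n,i)}$ is the standard $(n-1)\times(n-1)$ Vandermonde matrix in the variables $\{\xi_j:j\ne i\}$, whose determinant is $\prod_{a<b,\ a,b\ne i}(\xi_b-\xi_a)$. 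Since $(-1)^{n+i}=(-1)^{n-i}$, we conclude $S_k=\det(M_k)$.

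It remains to evaluate $\det(M_k)$ in the two cases. When $k=n-1$, the matrix $M_{n-1}$ is the full Vandermonde matrix, so $\det(M_{n-1})=\Delta_n$. When $k<n-1$, the bottom row of $M_k$ equals the $(k+1)$-st row, so $\det(M_k)=0$. This completes the proof.

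The argument is almost entirely bookkeeping; the only mildly delicate point is the sign computation $\sgn(T_i)=(-1)^{n-i}$ and matching it with the cofactor sign $(-1)^{n+i}$, but these agree modulo $2$, so no real obstacle arises.
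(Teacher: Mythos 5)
Your proof is correct, and it takes a genuinely different route from the paper. The paper's argument is a symmetry/degree argument: it observes that the sum $F$ is skew-invariant, hence divisible by $\Delta_n$ (a step that requires $2$ to be a non-zerodivisor, which is why the paper first reduces to $A=\bZ$), then either $\deg F < \deg \Delta_n$ forces $F=0$, or for $k=n-1$ it compares top coefficients to pin down the scalar. Your argument instead chooses explicit coset representatives (the cycles $(i,i+1,\ldots,n)$), computes each summand directly, and recognizes the sum as the Laplace expansion along the bottom row of a Vandermonde-type determinant; the case $k<n-1$ is then a repeated row, and $k=n-1$ is the full Vandermonde matrix. This is more concrete and has the small advantage of working over any commutative ring with no auxiliary hypothesis on $2$, since it is a literal algebraic identity rather than a divisibility argument. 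The only things worth double-checking are the sign computations, and both are fine: the $(n-i+1)$-cycle $T_i$ has sign $(-1)^{n-i}$, which matches the cofactor sign $(-1)^{n+i}$ modulo $2$, so the identification $S_k=\det(M_k)$ holds exactly.
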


\begin{proof}
It suffices to treat the case where $A=\bZ$, so we assume this is the case. Call the sum $F$. Since $F$ is a skew-invariant polynomial in $\xi_1, \ldots, \xi_n$ it is therefore divisible by $\Delta_n$. (Here we have used that~2 is a non-zerodivisor in $A$.) If $k<n-1$ then $\deg(F)<\deg(\Delta_n)$, and so $F=0$. Now suppose $k=n-1$. No monomial appearing in $\Delta_{n-1}$ has an $n-1$ power in it: indeed, in the product description for $\Delta_{n-1}$, the variable $\xi_i$ appears in only $n-2$ factors. It follows that the coefficient of $\xi_n^{n-1}$ in $F$ is $\Delta_{n-1}$, and, in particular, non-zero. Since $\deg(F)=\deg(\Delta_n)$, it follows that $F$ is a non-zero scalar multiple of $\Delta_n$. Comparing the coefficients of $\xi_n^{n-1}$, we see that the scalar is~1.
\end{proof}

\begin{proposition}
\label{prop:discriminant}
Let $\fa$ be a non-zero $\fS$-ideal of $R$. Then $\fa$ contains $c \cdot \Delta_n$ for some $n$ and some non-zero $c \in A$. In fact, if $f \in \fa$ is non-zero then there is some non-zero coefficient $c$ of $f$ such that $c \cdot \Delta_n$ belongs to $\fa$.
\end{proposition}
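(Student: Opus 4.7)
The plan is to prove the stronger second assertion, from which the first is immediate (take any nonzero $f \in \fa$ and any of its nonzero coefficients). Fix a nonzero $f \in \fa$; after applying an element of $\fS$, I may assume $f$ involves only $\xi_1, \ldots, \xi_m$. The idea is to peel off the variables of $f$ one at a time via Vandermonde interpolation on $\fS$-shifts of $f$, accumulating a discriminant on a fresh set of variables at each step.

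Concretely, I would construct a sequence $F_i = D_i \cdot g_i \in \fa$ for $i = 0, 1, \ldots, m$ with the invariants that $D_i$ is a product of discriminants on pairwise disjoint finite sets of variables, $g_i$ is a nonzero polynomial in $m - i$ of the original variables, and the variables used by $D_i$ and $g_i$ are disjoint. Start with $D_0 = 1$, $g_0 = f$. To pass from $F_i$ to $F_{i+1}$: pick a variable $\xi_\ell$ of $g_i$, expand $g_i = \sum_{k=0}^d u_k \xi_\ell^k$ with $u_d \ne 0$, choose fresh variables $\eta_1, \ldots, \eta_d$ (not appearing in $D_i$ or $g_i$), and set $\eta_0 := \xi_\ell$. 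Letting $\sigma_j \in \fS$ be the transposition swapping $\xi_\ell$ and $\eta_j$ (with $\sigma_0 = \mathrm{id}$), I get $\sigma_j(F_i) = D_i \sum_{k=0}^d u_k \eta_j^k \in \fa$, since neither $D_i$ nor any $u_k$ involves $\xi_\ell$ or any $\eta_j$. Applying Cramer's rule to the Vandermonde matrix $(\eta_j^k)_{0 \le j, k \le d}$ then yields $\Delta(\eta_0, \ldots, \eta_d) \cdot D_i \cdot u_k \in \fa$ for every $k$; take $k = d$ and set $D_{i+1} := \Delta(\eta_0, \ldots, \eta_d) \cdot D_i$ and $g_{i+1} := u_d$.

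After $m$ steps, $g_m$ is a nonzero constant $c \in A$. Unwinding the construction shows that $c$ is the coefficient in $f$ of the monomial $\xi_1^{a_1} \cdots \xi_m^{a_m}$, where the multi-degree $(a_1, \ldots, a_m)$ is obtained by iterated leading-coefficient projections ($a_m = \deg_{\xi_m}(f)$, $a_{m-1} = \deg_{\xi_{m-1}}(g_1)$, and so on), so $c$ really is a nonzero coefficient of $f$. Meanwhile $D_m = \prod_{i=1}^m \Delta(S_i)$ is a product of discriminants on pairwise disjoint subsets $S_1, \ldots, S_m$ of variables; let $T = \bigsqcup_i S_i$, of size $N$. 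Multiplying $c D_m \in \fa$ by the missing cross-factors $\prod (\xi_t - \xi_s)$, taken over pairs $s < t$ lying in distinct $S_i$'s, reassembles $\prod_i \Delta(S_i)$ into the full discriminant $\Delta(T)$, giving $c \Delta(T) \in \fa$; a final $\fS$-relabeling of $T$ as $\{1, \ldots, N\}$ then yields $c \Delta_N \in \fa$ (the possible sign is absorbed since $\fa$ is an ideal). The only real content is the Vandermonde inversion, which is just Cramer's rule over the commutative ring $R$ (valid without inverting the Vandermonde determinant); everything else is bookkeeping that works because $\{\xi_i\}_{i \ge 1}$ is infinite, so fresh variables are always available.
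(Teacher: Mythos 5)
Your proof is correct and takes essentially the same approach as the paper's: you peel off one variable at a time, shift to fresh variables, and extract the leading coefficient along with a freshly created Vandermonde factor, then merge the accumulated disjoint discriminants into a single one via cross factors. The only cosmetic difference is that you invoke Cramer's rule on the Vandermonde system where the paper uses an explicit skew-symmetrization identity $\sum_{\sigma \in \fS_n/\fS_{n-1}} \sgn(\sigma)\,\sigma(\xi_n^{k}\Delta_{n-1})$, and you iterate directly where the paper phrases the same recursion as induction on the number of variables --- these are equivalent framings of the same argument.
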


\begin{proof}
Let $f \in \fa$ be a non-zero element. Suppose $f$ uses the variables $\xi_1, \ldots, \xi_r$, and write
\begin{displaymath}
f  = \sum_{i=0}^d g_i(\xi_1, \ldots, \xi_{r-1}) \xi_r^i
\end{displaymath}
with $g_d \ne 0$. By induction on the number of variables, the $\fS$-ideal generated by $g_d(\xi_1, \ldots, \xi_{r-1})$ contains $c \Delta_m$ for some $m$ and some non-zero $c \in \bk$. Let $X \in A[\fS]$ be such that
\begin{displaymath}
X g_d(\xi_1, \ldots, \xi_{r-1}) = c \Delta(\xi_{a_1}, \ldots, \xi_{a_m})
\end{displaymath}
for some indices $a_1, \ldots, a_m$. We assume, without loss of generality, that $X$ avoids the index $r$, that is, all polynomials appearing in $X$ do not use $\xi_r$, and all permutations appearing in $X$ fix $r$.
	
Now, let $\{b_1, \ldots, b_d\}\subset [\infty]$ be disjoint from $\{1,\ldots,r\}$ and any index appearing in $X$. Let $G \subset \fS$ be the symmetric group on $\{r,b_1,\ldots,b_d\}$, and let $G' \subset G$ be the stabilizer of $r$. Put $Y = \sum_{\sigma \in G/G'} \sgn(\sigma) \sigma$, so that, by the previous lemma, we have
\begin{displaymath}
Y f = g_d(\xi_1, \ldots, \xi_{r-1}) \Delta(\xi_r, \xi_{b_1}, \ldots, \xi_{b_d}).
\end{displaymath}
Therefore,
\begin{displaymath}
XY(f) = c \Delta(\xi_{a_1}, \ldots, \xi_{a_m}) \Delta(\xi_r, \xi_{b_1}, \ldots, \xi_{b_d}).
\end{displaymath}
(Note that $\Delta(\xi_r, \xi_{b_1}, \ldots, \xi_{b_d})$ commutes with $X$ by our choices.) Let $h$ be the product of all linear forms of the form $\xi_{a_i}-\xi_{b_j}$ and $\xi_{a_i}-\xi_r$. Then
\begin{displaymath}
h XY(f) = c \Delta(\xi_{a_1}, \ldots, \xi_{a_m}, \xi_r, \xi_{b_1}, \ldots, \xi_{b_d}).
\end{displaymath}
We thus see (after applying an element of $\fS$) that $c \Delta_n \in \fa$, with $n=m+d+1$.
\end{proof}

\begin{proposition}
\label{prop:extension-contraction}
Let $S=R[(\xi_i-\xi_j)^{-1}]_{i \ne j}$. Then extension and contraction give mutually inverse bijections
\begin{displaymath}
\{ \text{ideals of $A$} \} \leftrightarrow \{ \text{$\fS$-ideals of $S$} \}.
\end{displaymath}
\end{proposition}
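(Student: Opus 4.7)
My plan is to prove the two directions of the bijection separately. For the first direction---that $IS \cap A = I$ for every ideal $I$ of $A$---I would identify $S/IS$ with the localization of the polynomial ring $(A/I)[\xi_i]_{i \ge 1}$ at the multiplicative set generated by the differences $\{\xi_i-\xi_j\}_{i \ne j}$. Since each $\xi_i-\xi_j$ is monic as a polynomial in $\xi_i$, it is a non-zerodivisor in the polynomial ring, so the localization map is injective. Composing with the obvious inclusion $A/I \hookrightarrow (A/I)[\xi_i]$ shows that $A/I \to S/IS$ is injective, which is exactly the assertion $IS \cap A = I$.

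For the reverse direction---that $J = (J \cap A) S$ for every $\fS$-ideal $J$ of $S$---I would set $I = J \cap A$ and note that $IS \subseteq J$ is immediate. To get the reverse containment, I would pass to the base change $A \to A/I$. The image $\bar J$ of $J$ in $S/IS$ is an $\fS$-ideal, and by the previous direction it intersects $A/I$ in the zero ideal. So it is enough to prove the following standalone statement over an arbitrary coefficient ring: if $J$ is an $\fS$-ideal of $S$ with $J \cap A = 0$, then $J = 0$.

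This last reduction is where the real content lies, and it is a short application of Proposition~\ref{prop:discriminant}. Suppose $J \ne 0$ and pick a non-zero $f \in J$; after multiplying by a suitable product of differences (each a unit in $S$) I may arrange $f \in \fa := J \cap R$, so $\fa$ is a non-zero $\fS$-ideal of $R$. Proposition~\ref{prop:discriminant} then yields a non-zero $c \in A$ and an $n$ with $c \Delta_n \in \fa \subseteq J$. But $\Delta_n = \prod_{i<j}(\xi_j - \xi_i)$ is a unit of $S$ by construction, so $c = (c\Delta_n) \cdot \Delta_n^{-1} \in J \cap A = 0$, a contradiction. The only real obstacle is recognizing that inverting the differences promotes Proposition~\ref{prop:discriminant} from a statement about discriminants in $R$ to a statement about scalars in $S$; once this is seen, the rest of the argument is formal bookkeeping.
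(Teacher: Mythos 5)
Your proof is correct and follows essentially the same route as the paper's: both hinge on Proposition~\ref{prop:discriminant} together with the observation that the discriminants $\Delta_n$ are units in $S$. The only difference is organizational---you reduce to the case $J \cap A = 0$ by base-changing along $A \to A/I$, while the paper works directly with $\fa^e \subset \fb$ and adjusts $f$ modulo $\fa^e$ so that none of its coefficients lies in $\fa$; these steps accomplish the same thing, and your explicit verification of the injectivity of $A/I \to S/IS$ (via monicity of $\xi_i-\xi_j$) fills in what the paper dismisses as clear.
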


\begin{proof}
It is clear that every ideal of $A$ is the contraction of its extension. Conversely, let $\fb$ be an $\fS$-ideal of $S$ and let $\fa$ be its contraction to $A$. Of course, $\fa^e \subset \fb$, where $(-)^e$ denotes extension. Suppose $\fb$ is strictly larger than $\fa$, and let $f \in \fb \setminus \fa^e$. Clearing denominators, we can assume that $f$ belongs to $R$. Furthermore, modifying $f$ by an element of $\fa^e$, we can assume that no coefficient of $f$ belongs to $\fa$. Proposition~\ref{prop:discriminant} then implies that $c \Delta_n$ belongs to $\fb$ for some $n$ where $c$ is some non-zero coefficient of $f$. But $\Delta_n$ is a unit of $S$, and so $c \in \fb \cap A = \fa$, a contradiction.
\end{proof}

\subsection{Finitary points}

Let $x=(w, (x_i)_{i \in [\infty]})$ be a $K$-point of $\fX$. We define the {\bf width} of $x$ to be the cardinality of the subset $\{x_i \mid i \in [\infty] \}$ of $K$, and we say that $x$ is {\bf finitary} if it has finite width. It is clear that width is invariant under field extension, and can thus be defined for points of $\fX$. We let $\fX_{\le n}$ be the set of points of width $\le n$, and $\fX_{\fin}$ the set of points of finitary points. We say that a subset of $\fX$ is {\bf bounded} if it is contained in $\fX_{\le n}$ for some $n$. Let $\fd_n=\Langle \Delta_n \Rangle$ be the ideal of $R$ generated by the $\fS$-orbit of $\Delta_n$.

\begin{proposition}
We have $\fX_{\le n}=V(\fd_{n+1})$. In particular, $\fX_{\le n}$ is Zariski closed.
\end{proposition}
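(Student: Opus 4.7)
The plan is to verify the equality by working $K$-point by $K$-point, since every scheme-theoretic point of $\fX$ comes from a $K$-point for some field $K$, and checking a condition on $K$-points for all $K$ determines a subset of $\fX$. Closedness of $\fX_{\le n}$ is then automatic from the equality, because $V(\fd_{n+1})$ is Zariski closed by construction.

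First I would unwind what it means for a $K$-point $x = (w, (x_i)_{i \ge 1})$ to lie in $V(\fd_{n+1})$. Since $\fd_{n+1}$ is generated by the $\fS$-orbit of $\Delta_{n+1}$, this is equivalent to $\sigma(\Delta_{n+1})(x) = 0$ for every $\sigma \in \fS$. Concretely, $\sigma(\Delta_{n+1}) = \Delta(\xi_{\sigma(1)}, \ldots, \xi_{\sigma(n+1)})$, so the condition is that for every choice of indices $i_1, \ldots, i_{n+1} \in [\infty]$, the values $x_{i_1}, \ldots, x_{i_{n+1}}$ fail to be pairwise distinct.

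Next I would use pigeonhole to translate this into the width condition. If the width of $x$ is at most $n$, then among any $n+1$ coordinates of $x$ two must coincide, so every $\sigma(\Delta_{n+1})$ vanishes at $x$; this gives $\fX_{\le n}(K) \subseteq V(\fd_{n+1})(K)$. Conversely, if $x$ has width $\ge n+1$, one can pick indices $i_1 < \cdots < i_{n+1}$ with $x_{i_1}, \ldots, x_{i_{n+1}}$ all distinct, and then the corresponding $\sigma(\Delta_{n+1})$ does not vanish at $x$, so $x \notin V(\fd_{n+1})(K)$. This gives the reverse inclusion.

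There is no real obstacle here; the only mildly subtle point is being careful that working with $K$-points really does detect the set-theoretic equality in $\fX$, which is guaranteed by the convention set up in \S\ref{s:fS} that every point of $\fX$ arises from some $K$-point and that a condition invariant under field extension defines a subset of $\fX$. The width condition is plainly field-extension invariant.
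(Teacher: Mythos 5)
Your proof is essentially the paper's own argument: the paper simply observes that a $K$-point $x$ has width $\le n$ if and only if $\Delta_{n+1}(\sigma x)=0$ for all $\sigma \in \fS$, and you have fleshed out the pigeonhole reasoning behind that equivalence and the reduction to $K$-points. No difference in strategy.
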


\begin{proof}
It is clear that a $K$-point $x$ has width $\le n$ if and only if $\Delta_{n+1}(\sigma x)=0$ for all $\sigma \in \fS$, and so the result follows.
\end{proof}

\begin{proposition}
\label{prop:decomposition-fX}
Suppose that $A$ is noetherian. Then every Zariski closed $\fS$-subset $\fZ$ of $\fX=\fX_W$ has the form $\fZ_1 \cup \fZ_2$ where $\fZ_1=\fX_{W'}$ for some closed subset $W'$ of $W$ and $\fZ_2$ is a closed and bounded $\fS$-subset of $\fX_W$. 
\end{proposition}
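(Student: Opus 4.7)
The plan is to find an ideal $J\subset A$ and an integer $n\geq 1$ such that, writing $\fa=I(\fZ)$, we have $\fa\subset JR$ and $J\cdot\fd_n\subset\fa$. Granted this, setting $W'=V(J)$, $\fZ_1=\fX_{W'}=V(JR)$, and $\fZ_2=\fZ\cap\fX_{\le n-1}$ does the job: the first containment gives $\fZ_1\subset\fZ$, the second gives $V(\fa)\subset V(JR)\cup V(\fd_n)=\fZ_1\cup\fX_{\le n-1}$, and intersecting with $\fZ$ yields $\fZ=\fZ_1\cup\fZ_2$. Both pieces are automatically closed and $\fS$-stable, and $\fZ_2$ is bounded by construction.

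The natural candidate is $J=\fa S\cap A$, where $S=R[(\xi_i-\xi_j)^{-1}]_{i\ne j}$. By Proposition~\ref{prop:extension-contraction}, $\fa S=JS$ inside $S$. To conclude $\fa\subset JR$ I would verify that $JS\cap R=JR$. Each $\xi_i-\xi_j$ is monic in one of its two variables, hence a non-zero-divisor in the polynomial ring $(A/J)[\xi_k]_k$, so the localization map $R/JR\to S/JS$ is injective. Thus $\fa\subset \fa S\cap R=JS\cap R=JR$.

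For the second containment $J\cdot\fd_n\subset\fa$, the noetherian hypothesis is essential: $J$ is finitely generated, say by $a_1,\ldots,a_k$. Since each $a_i$ lies in $\fa S\cap A$, there is a product of differences $D_i$ with $a_iD_i\in\fa$. Choose $n$ large enough that for every $i$ some $\fS$-translate $\sigma_i(D_i)$ divides $\Delta_n$ in $R$; this is possible because each $D_i$ involves only finitely many indices, which can be moved into $\{1,\ldots,n\}$. As $a_i\in A$ is $\fS$-fixed and $\fa$ is $\fS$-stable, $a_i\sigma_i(D_i)\in\fa$, and multiplying by $\Delta_n/\sigma_i(D_i)\in R$ gives $a_i\Delta_n\in\fa$. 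Summing over $i$ and invoking $\fS$-equivariance then yields $J\cdot\fd_n\subset\fa$, as desired.

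The main obstacle is the uniform choice of $n$: individually each generator of $J$ only forces some product of differences into $\fa$, but the combination of noetherianity (giving finitely many generators) with the fact that any finite product of differences can be absorbed into a sufficiently large discriminant via an $\fS$-action promotes this pointwise data to a uniform bound. The remaining subtlety, the identity $JS\cap R=JR$, is a clean consequence of the monicity of the $\xi_i-\xi_j$.
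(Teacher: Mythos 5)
Your proof is correct and takes a genuinely different route from the paper's. The paper proceeds by noetherian induction on $W$: using Proposition~\ref{prop:discriminant}, the radical ideal of $\fZ$ contains $a\Delta_n$ for some non-nilpotent $a\in A$, so $\fZ = (\fZ\cap V(a))\cup(\fZ\cap\fX_{\le n-1})$, and one applies the inductive hypothesis to $V(a)\subsetneq W$. This is shorter and requires no auxiliary lemma, but it only produces $W'$ at the end of an inductive unwinding. Your approach is direct and constructive: it identifies $W'=V(J)$ explicitly via $J=\fa S\cap A$ and, in a single clean step, shows that this $J$ simultaneously witnesses both containments $\fa\subset JR$ and $J\cdot\fd_n\subset\fa$. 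The argument that $JS\cap R = JR$ via the fact that each $\xi_i-\xi_j$ is a non-zero-divisor in $(A/J)[\xi_k]_k$ is correct and is exactly the extra ingredient that makes this non-inductive route work. As a side benefit, your $W'$ is visibly the ``ever-present'' locus that the paper only identifies later (Lemma~\ref{lem:bd4}) when working on the $\cX$ side.

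One small point you should make explicit: from $a_i\in\fa S$ you get $a_i D_i\in\fa$ for some product of differences $D_i$ that may a priori have repeated factors, in which case $\sigma_i(D_i)$ need not divide $\Delta_n$. This is easily repaired using radicality of $\fa = I(\fZ)$: if $a_iD_i^{k}\in\fa$ with $D_i$ squarefree, then $(a_iD_i)^k = a_i^{k-1}\cdot a_iD_i^k\in\fa$, so $a_iD_i\in\fa$. With this adjustment the uniform choice of $n$ proceeds exactly as you describe. The noetherianity of $A$ enters precisely, as you say, to give finitely many generators of $J$ and hence a single $n$ working for all of them.
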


\begin{proof}
We proceed by noetherian induction on $W$; thus we assume the result holds whenever $W$ is replaced by a proper closed subset. Let $\fZ$ be given, and let $I \subset R$ be its radical ideal which we may assume is not nilpotent. Then, by Proposition~\ref{prop:discriminant}, $I$ contains $a \Delta_n$ for some non-nilpotent $a \in A$ and some $n$. We thus have
\begin{displaymath}
\fZ=(\fZ \cap V(a)) \cup (\fZ \cap \fX_{\le n}).
\end{displaymath}
Now, $V(a)$ is a proper closed subset of $W$. Thus, by the inductive hypothesis, we have $\fZ \cap V(a) = \fZ_1 \cup \fZ_2'$ where $\fZ_1=\fX_{W'}$ for some closed subset $W'$ of $W$, and $\fZ_2'$ is a closed and bounded $\fS$-subset of $\fX$. Of course, $\fZ_2''=\fZ \cap \fX_W^{\le n}$ is also a closed and bounded $\fS$-subset of $\fX$. We thus have $\fZ=\fZ_1 \cup \fZ_2$, where $\fZ_2=\fZ_2' \cup \fZ_2''$, as required.
\end{proof}

\subsection{The type of a point} \label{ss:type}

We now introduce an invariant that is finer than width. Recall that an $\infty$-partition is a sequence $\lambda=(\lambda_1, \lambda_2, \ldots)$ such that $\lambda_1=\infty$, $\lambda_i \ge \lambda_{i+1}$, and $\lambda_i=0$ for $i \gg 0$. Let $x=(w,(x_i)_{i \in [\infty]})$ be a finitary $K$-point of $\fX$ of width $n$. Let $a_1, \ldots, a_n$ be the $n$ values that the $x_i$ assume, and let $\cU_k \subset [\infty]$ be the set of indices $i$ such that $x_i=a_k$. Permuting the $a$'s if necessary, we assume that $\# \cU_k \ge \# \cU_{k+1}$. Then putting $\lambda_k=\# \cU_k$ for $1 \le k \le n$, and $\lambda_k=0$ for $k>n$, we see that $\lambda$ is an $\infty$-partition. We define the {\bf type} of $x$ to be $\lambda$. This quantity is clear invariant under field extension, and can thus be associated to fintary points of $\fX$. It is also clearly invariant under the action of the big symmetric group.

\section{The space $\cX$}

\subsection{Affine spaces}

For a finite set $S$, we let $\bA^S_W$ be the affine space over the base scheme $W$ with coordinates indexed by the set $S$. We let $\bA^{[S]}_W$ be the open subscheme of $\bA^S_W$ where the coordinates are all distinct. Given a function $f \colon S \to T$ of finite sets, we let $\alpha_f \colon \bA_W^T \to \bA_W^S$ be the map that takes $(x_j)_{j \in T}$ to $(x_{f(i)})_{i \in S}$. We note that if $f$ is an injection then $\alpha_f$ is a projection, while if $f$ is a surjection then $\alpha_f$ is a multi-diagonal map (and thus a closed immersion).

\subsection{Generalized partitions and compositions} \label{ss:part}

A {\bf generalized partition} is a tuple $\lambda=(\lambda_1,\lambda_2,\ldots)$ where $\lambda_i \in \bN \cup \{\infty\}$ such that $\lambda_i \ge \lambda_{i+1}$ for all $i$ and $\lambda_i=0$ for $i \gg 0$. We define length of $\lambda$, denoted $\ell(\lambda)$, to be the largest $i$ such that $\lambda_i >0$. A generalized partition $\lambda$ is an $\infty$-partition if $\lambda_1 = \infty$, otherwise $\lambda$ is called a {\bf finite partition}.  We denote the set of generalized partition by $\wt{\Lambda}$, and the set of partitions of infinity by $\Lambda$.

A {\bf generalized composition} is a pair $(S, \lambda)$ consisting of a finite set $S$ and a function $\lambda \colon S \to \bZ_{>0} \cup \{\infty\}$. We typically simply write $\lambda$ for the generalized composition, and denote the index set $S$ by $\langle \lambda \rangle$. Given a generalized composition $\lambda$, we let $\vert \lambda \vert = \sum_{i \in \langle \lambda \rangle} \lambda_i$ and $\ell(\lambda)=\# \langle \lambda \rangle$. We say that $\lambda$ is a {\bf composition of $\infty$} or an {\bf $\infty$-composition} if $\vert \lambda \vert=\infty$, or equivalently, if $\lambda_i=\infty$ for some $i \in \langle \lambda \rangle$. We regard any generalized partition $\lambda$ as a generalized composition on the index set $[\ell(\lambda)]$.

Let $\lambda$ and $\mu$ be generalized compositions. A {\bf map} $f \colon \lambda \to \mu$ is a function $f \colon \langle \lambda \rangle \to \langle \mu \rangle$ such that $f_*(\lambda) \le \mu$, i.e., for all $j \in \langle \mu \rangle$ we have $\sum_{i \in f^{-1}(j)} \lambda_i \le \mu_j$. We say that $f$ is a {\bf principal surjection} if $f_*(\lambda)=\mu$, which implies that $f$ is a surjection. We write $f \colon \lambda \psurj \mu$ to indicate that $f$ is a principal surjection. Every map $f$ can be factored as $g \circ h$ where $h$ is a principal surjection and $g$ is an injection. The composition of two maps is again a map, and so we have a category of generalized compositions.

Let $\lambda$ and $\mu$ be generalized partitions. We define $\lambda \le \mu$ if $\lambda$ can be obtained from $\mu$ by decreasing (or removing) parts, and we define $\lambda \preceq \mu$ if $\lambda$ can be obtained from $\mu$ by combining and decreasing (or removing) parts. We note here that if $f \colon \lambda \to \mu$ is an injection then $\lambda \le \mu$ and if $f$ is a principal surjection then $\mu \preceq \lambda$. A poset is called a {\bf well-quasi-order (wqo)} if it satisfies the descending chain condition and has no infinite antichains. We refer to \cite[\S 2]{catgb} for general facts about wqo's, though note there the term ``noetherian'' is used in place of ``well-quasi-order.''

\begin{proposition}
	\label{prop:min-are-finite}
	The posets $(\wt{\Lambda}, \le)$ and $(\wt{\Lambda}, \preceq)$ are wqo's. 
\end{proposition}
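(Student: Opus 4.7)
The plan is to deduce both wqo statements from Higman's lemma: finite words over a wqo, ordered by the subword-embedding order, form a wqo; see \cite[\S 2]{catgb}.

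For $(\wt{\Lambda}, \le)$, I would first observe that the alphabet $\bN \cup \{\infty\}$ is well-ordered (of order type $\omega+1$) and hence a wqo. I would then identify each generalized partition $\lambda$ with the finite word $(\lambda_1, \ldots, \lambda_{\ell(\lambda)})$ over $\bN \cup \{\infty\}$, and verify that the partition order $\le$ agrees with the Higman subword-embedding order on weakly-decreasing words. In one direction, an order-preserving injection $f \colon [\ell(\lambda)] \to [\ell(\mu)]$ with $\lambda_i \le \mu_{f(i)}$ satisfies $f(i) \ge i$, so by weak decrease of $\mu$ we have $\lambda_i \le \mu_{f(i)} \le \mu_i$, whence $\lambda$ is obtained from $\mu$ by decreasing the first $\ell(\lambda)$ parts to $\lambda_1, \ldots, \lambda_{\ell(\lambda)}$ and removing the rest. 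Conversely, given any injection $f$ witnessing $\lambda \le \mu$, a bubble-sort argument applies: whenever $f(i) > f(i+1)$, the weak decrease of $\mu$ and $\lambda$ allows us to swap the values of $f(i)$ and $f(i+1)$ while preserving all the inequalities $\lambda_j \le \mu_{f(j)}$, and iterating makes $f$ order-preserving. Thus $(\wt{\Lambda}, \le)$ is a subposet of a Higman wqo, so it is itself a wqo.

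For $(\wt{\Lambda}, \preceq)$, I would piggyback on the first part. Since $\lambda \le \mu$ implies $\lambda \preceq \mu$, any $\preceq$-antichain is also an $\le$-antichain, hence finite. For the descending chain condition, the key observation is that combining two parts strictly decreases the length $\ell$, while decreasing or removing parts cannot increase $\ell$. Thus in any $\preceq$-descending chain $\lambda^{(1)} \succeq \lambda^{(2)} \succeq \cdots$, the non-negative integer sequence $\ell(\lambda^{(i)})$ is non-increasing and stabilizes, say for $i \ge N$. Past $N$ no combining step can occur, so each step is actually an $\le$-step, and the tail stabilizes by the DCC for $\le$.

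The only mildly technical point is the bubble-sort identification of the partition order with the Higman order; everything else is either a direct invocation of Higman's lemma or a clean length-counting argument, so I do not anticipate a serious obstacle.
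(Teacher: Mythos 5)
Your proof is correct and follows essentially the same route as the paper: deduce $(\wt{\Lambda},\le)$ from Higman's lemma, then reduce $(\wt{\Lambda},\preceq)$ to $(\wt{\Lambda},\le)$ by observing that the length $\ell$ is non-increasing along a $\preceq$-descending chain and that once $\ell$ stabilizes the steps are $\le$-steps. The one addition you make is the explicit bubble-sort verification that $\le$ on weakly-decreasing sequences coincides with the Higman subword-embedding order; the paper silently asserts that $\wt\Lambda$ is a subposet of the Higman poset, and your argument fills in exactly that (the ``one direction'' half, showing Higman-embeddability implies $\le$, is all that is actually needed for the wqo conclusion, though establishing both inclusions as you do is harmless and makes the subposet claim precise).
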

\begin{proof}
	Since $\bN \cup \{\infty\}$ is a wqo, we conclude that the poset of sequences on alphabet $\bN \cup \{\infty\}$ is also a wqo (Higman's lemma, or see \cite[Theorem~2.5]{catgb}). Thus $(\wt{\Lambda}, \le)$, being a subposet of this poset of sequences, is also a wqo.  Since $\lambda \le \mu \implies \lambda \preceq \mu$, the poset $(\wt{\Lambda}, \preceq)$ does not have infinite antichains. Now suppose $\lambda_1 \succeq \lambda_2 \succeq \cdots$ is a decreasing chain. Then $\ell(\lambda_{i+1}) \le \ell(\lambda_i)$ for each $i$. So we may as well assume that $\ell(\lambda_i)$ is independent of $i$. But then $\lambda_i \ge \lambda_{i+1}$ for all $i$. Since $(\wt{\Lambda}, \le)$ is a wqo, this chain stabilizes. This completes the proof. 
\end{proof}

\subsection{The space $\cX$ and its subspaces} \label{ss:subspaces}

For an $\infty$-composition $\lambda$, we put $\cX_{W,\lambda}=\bA^{\langle \lambda \rangle}_W$ and $\cX_{W,[\lambda]}=\bA^{[\langle \lambda \rangle]}_W$. We regard $\cX_W=\{\cX_{W,\lambda}\}_{\lambda}$ as a single object. We omit the $W$ from the notation when possible.

By a ``subset'' of $\cX$ we mean a rule $\cZ$ assigning to every $\infty$-composition $\lambda$ a subset $\cZ_{\lambda}$ of $\cX_{\lambda}$; here we simply regard $\cX_{\lambda}$ as set of points in its underlying topological space (including its non-closed points). We consider several conditions on such a subset $\cZ$:
\begin{itemize}
\item We say that $\cZ$ is a {\bf C1 subset} if for every principal surjection $f \colon \lambda \psurj \mu$ of $\infty$-compositions we have $\alpha_f^{-1}(\cZ_{\lambda})=\cZ_{\mu}$.
\item We say that a point $x \in \cX_{\lambda}$ is {\bf $N$-approximable} by $\cZ$, for a positive integer $N$, if there exists an $\infty$-composition $\mu$ and an injective function $f \colon \langle \lambda \rangle \to \langle \mu \rangle$ such that $\mu_{f(i)} \ge \min(\lambda_i, N)$ for all $i \in \langle \lambda \rangle$ and $x$ belongs to the image of $\cZ_{\mu}$ under the projection map $\alpha_f \colon \cX_{\mu} \to \cX_{\lambda}$. (Note: $f$ need not define a map of $\infty$-compositions $\lambda \to \mu$.) We say that $x$ is {\bf approximable} by $\cZ$ if it is $N$-approximable for all $N$. We say that $\cZ$ is a {\bf C2 subset} if it is C1 and contains all of its approximable points (i.e., whenever $x \in \cX_{\lambda}$ is approximable by $\cZ$ we have $x \in \cZ_{\lambda}$).
\item We say that $\cZ$ is a {\bf C3 subvariety} if it is C2 and $\cZ_{\lambda}$ is Zariski closed in $\cX_{\lambda}$ for all $\lambda$.
\end{itemize}
We note that if $\cZ$ is a C1 subset then for any isomorphism $f \colon \lambda \to \mu$ of $\infty$-compositions we have $\alpha_f(\cZ_{\mu})=\cZ_{\lambda}$, since an isomorphism is a principal surjection. Thus $\cZ$ is determined by $\cZ_{\lambda}$ with $\lambda$ an $\infty$-partition. Ultimately, we are most interested in C3 subvarieties, but C1 and C2 subsets will be helpful intermediate objects.

\begin{remark}
Let us try to motivate the above conditions. Suppose that $\fZ$ is a Zariski closed $\fS$-stable subset of $\fX$, and let $\cZ=\Phi(\fZ)$ as defined in \S \ref{ss:descr}. Then $\cZ_{(\infty)} \subset \bA^1$ consists of all points $a$ such that $(a,a,a,\ldots) \in \fZ$, while $\cZ_{(\infty,1)} \subset \bA^2$ consists of all points $(a,b)$ such that $(b,a,a,a,\ldots) \in \fZ$.

It is clear that $\cZ_{(\infty)}$ is simply the pullback of $\cZ_{(\infty,1)}$ along the diagonal map $\bA^1 \to \bA^2$. This is a special case of the C1 condition. The general case is not much different (see Proposition~\ref{prop:C1-corr}), and so $\cZ$ is a C1 subset.

Suppose now that $(a,b) \in \cZ_{(\infty,1)}$. Then $a \in \cX_{(\infty)}$ is approximable by $\cZ$: indeed, if $f \colon (\infty) \to (\infty,1)$ denotes the natural inclusion then $a=\alpha_f(b,a)$. We have already seen (in \S \ref{ss:descr}) that $a$ belongs to $\cZ_{(\infty)}$ in this case. This verifies the C2 condition in a special case. Again, the general case is not much more difficult (see Proposition~\ref{prop:C2-corr}), and so $\cZ$ is a C2 subset.

By definition, $\cZ_{\lambda}$ is in the inverse image of $\fZ$ under the multi-diagonal map $\cX_{\lambda} \to \fX$ corresponding to some parition $\cU$ of $[\infty]$ with $\# \cU_i=\lambda_i$. Since the multi-diagonal map is continuous and $\fZ$ is Zariski closed, it follows that $\cZ_{\lambda}$ is Zariski closed in $\cX_{\lambda}$. Thus $\cZ$ is a C3 subvariety.
\end{remark}

\subsection{First properties of subspaces of $\cX$}

We now establish some simple properties of C1, C2, and C3 subsets of $\cX$. For a subset $\cZ$ of $\cX$, we let $\cZ_{[\lambda]}=\cZ_{\lambda} \cap \cX_{[\lambda]}$.

\begin{proposition} \label{prop:C1-contain}
Let $\cZ$ and $\cZ'$ be two C1 subsets of $\cX$. Then $\cZ \subset \cZ'$ if and only if $\cZ_{[\lambda]}\subset \cZ'_{[\lambda]}$ for all $\infty$-compositions $\lambda$.
\end{proposition}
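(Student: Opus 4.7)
The forward implication is immediate from the definition, so the work is in the converse. The plan is to show that every point $x \in \cZ_\lambda$ lies in the image under some $\alpha_q$ (for a principal surjection $q \colon \lambda \psurj \mu$) of a point of $\cX_{[\mu]}$, and then transport the containment $\cZ_{[\mu]} \subset \cZ'_{[\mu]}$ back along $\alpha_q$ using C1 on both sides.

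Concretely, given a (scheme-theoretic) point $x \in \cZ_\lambda$ corresponding to a prime $\fp \subset A[\xi_i]_{i \in \langle\lambda\rangle}$, I would define an equivalence relation on $\langle\lambda\rangle$ by $i \sim j$ iff $\xi_i - \xi_j \in \fp$ (equivalently, using $K$-points, $x_i = x_j$ in the residue field). Let $q \colon \langle\lambda\rangle \to \langle\mu\rangle$ be the quotient map, and set $\mu_j = \sum_{i \in q^{-1}(j)} \lambda_i \in \bZ_{>0} \cup \{\infty\}$. Since $\sum_j \mu_j = \sum_i \lambda_i = |\lambda| = \infty$, the composition $\mu$ is again an $\infty$-composition, and by construction $q$ is a principal surjection $\lambda \psurj \mu$.

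The closed immersion $\alpha_q \colon \cX_\mu \hookrightarrow \cX_\lambda$ is cut out precisely by the relations $\xi_i = \xi_{i'}$ for $q(i) = q(i')$, which by construction all lie in $\fp$; thus $x$ factors as $\alpha_q(y)$ for a unique $y \in \cX_\mu$, and the construction of $\sim$ guarantees that $y$ lies in the open stratum $\cX_{[\mu]}$ (no further coordinate coincidences among the distinct classes). Now the C1 condition for $\cZ$ gives $\alpha_q^{-1}(\cZ_\lambda) = \cZ_\mu$, so $y \in \cZ_\mu \cap \cX_{[\mu]} = \cZ_{[\mu]}$. By hypothesis, $y \in \cZ'_{[\mu]} \subset \cZ'_\mu$, and the C1 condition for $\cZ'$ gives $\alpha_q(y) = x \in \cZ'_\lambda$, as desired.

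The main conceptual point --- which I expect to be the only real obstacle --- is articulating the ``coincidence stratification'' in a way that works for arbitrary scheme-theoretic points and producing an honest principal surjection of $\infty$-compositions, rather than just of underlying index sets; the verification that $\mu$ remains an $\infty$-composition is where the totality $\sum \mu_j = |\lambda|$ matters. Everything after that is a direct two-sided application of C1.
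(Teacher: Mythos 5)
Your proof is correct and follows essentially the same route as the paper's: define the coincidence equivalence relation on $\langle\lambda\rangle$, form the principal surjection $q \colon \lambda \psurj \mu$ with $\mu = q_*(\lambda)$, factor $x = \alpha_q(y)$ with $y \in \cX_{[\mu]}$, and apply C1 on both sides to transport the hypothesis back to $\lambda$. The added remarks (the prime-ideal formulation of $\sim$ and the verification that $\mu$ is again an $\infty$-composition) are correct fleshing-out of details the paper leaves implicit.
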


\begin{proof}
If $\cZ \subset \cZ'$ then obviously $\cZ_{[\lambda]} \subset \cZ'_{[\lambda]}$ for all $\lambda$. Thus suppose $\cZ_{[\lambda]} \subset \cZ'_{[\lambda]}$ for all $\lambda$. Let $\lambda$ be an $\infty$-composition and let $x \in \cZ_{\lambda}$. Define an equivalence relation $\sim$ on $\langle \lambda \rangle$ by $i \sim j$ if $x_i=x_j$. Let $\langle \mu \rangle=\langle \lambda \rangle/\sim$, let $f \colon \langle \lambda \rangle \to \langle \mu \rangle$ be the quotient map, and let $\mu=f_*(\lambda)$; thus $f \colon \lambda \psurj \mu$ is a principal surjection. Then $x=\alpha_f(y)$ for some $y \in \cX_{[\mu]}$. Since $y \in \alpha_f^{-1}(\cZ_{\lambda})$ and $\cZ$ is C1, we have $y \in \cZ_{[\mu]}$. Thus $y \in \cZ'_{[\mu]}$ by our hypothesis. Therefore $x \in \alpha_f(\cZ'_{\mu}) \subset \cZ'_{\lambda}$ since $\cZ'$ is C1, and so $\cZ \subset \cZ'$.
\end{proof}

\begin{proposition} \label{prop:C2-map}
Let $\cZ$ be a C2 subset of $\cX$ and let $f \colon \lambda \to \mu$ be a map of $\infty$-compositions. Then $\alpha_f(\cZ_{\mu}) \subset \cZ_{\lambda}$.
\end{proposition}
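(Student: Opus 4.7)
The plan is to reduce to two cases using the factorization of maps of generalized compositions stated in \S \ref{ss:part}: every map $f \colon \lambda \to \mu$ factors as $\lambda \xrightarrow{h} \nu \xrightarrow{g} \mu$, where $h$ is a principal surjection and $g$ is an injection (so in particular $g_*(\nu) \le \mu$, i.e.\ $\nu_i \le \mu_{g(i)}$ for all $i \in \langle \nu \rangle$). A quick check from the definition of $\alpha_{(-)}$ shows $\alpha_f = \alpha_h \circ \alpha_g$, so it suffices to prove the inclusion separately for principal surjections and for injections.

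For the principal surjection $h \colon \lambda \psurj \nu$, the C1 axiom gives $\alpha_h^{-1}(\cZ_\lambda) = \cZ_\nu$, which immediately yields $\alpha_h(\cZ_\nu) \subset \cZ_\lambda$. So this case is essentially free.

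The injection case is where the C2 axiom enters, and it is the only real content of the proposition. Given $x \in \cZ_\mu$, set $y = \alpha_g(x) \in \cX_\nu$. I claim $y$ is approximable by $\cZ$, which by C2 forces $y \in \cZ_\nu$. To verify approximability, I simply take the same witness for every $N$: the $\infty$-composition $\mu$ itself together with the injection $g \colon \langle \nu \rangle \hookrightarrow \langle \mu \rangle$. The hypothesis that $g$ is a map of $\infty$-compositions gives $\mu_{g(i)} \ge \nu_i \ge \min(\nu_i, N)$, and by construction $y = \alpha_g(x)$ lies in $\alpha_g(\cZ_\mu)$, which is what the definition of $N$-approximability demands. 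Hence $y$ is $N$-approximable for all $N$, so approximable, so in $\cZ_\nu$.

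Combining, $\alpha_f(\cZ_\mu) = \alpha_h(\alpha_g(\cZ_\mu)) \subset \alpha_h(\cZ_\nu) \subset \cZ_\lambda$. There is no real obstacle here; the only thing to be careful about is verifying that the $f$ appearing in the definition of $N$-approximability is only required to be an injective set map (not itself a map of $\infty$-compositions), so that $g$ qualifies as a witness. This is explicitly noted in the parenthetical remark in the definition of approximability, so the argument goes through.
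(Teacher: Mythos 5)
Your proof is correct and matches the paper's approach: the paper also factors $f$ through a principal surjection followed by an injection, handles the surjection case via C1, and handles the injection case by noting that $\alpha_g(x)$ is approximable by $\cZ$ with $x$ itself serving as the witness for every $N$. You have simply spelled out the approximability check that the paper leaves as a one-line remark.
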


\begin{proof}
First suppose that $f$ is an injective map, and let $x \in \cZ_{\mu}$. Then $\alpha_f(x) \in \cX_{\lambda}$ is approximable by $\cZ$: indeed, this is witnessed by the point $x \in \cZ_{\mu}$. Since $\cZ$ is C2, it follows that $\alpha_f(x) \in \cZ_{\lambda}$. If $f$ is a principal surjection, then $\alpha_f(\cZ_{\mu}) \subset \cZ_{\lambda}$ by the C1 property. The general case now follows.
\end{proof}

\begin{proposition} \label{prop:C2-descending}
Let $\cZ$ be a C2 subset of $\cX$, and let $\lambda$ and $\mu$ be $\infty$-compositions with $\langle \lambda \rangle = \langle \mu \rangle$ and $\lambda_i \le \mu_i$ for all $i \in \langle \lambda \rangle$. Then $\cZ_{\mu} \subset \cZ_{\lambda}$.
\end{proposition}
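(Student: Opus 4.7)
The plan is to apply the C2 approximability condition directly, taking the witnessing composition to be $\mu$ itself.

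Fix $x \in \cZ_{\mu}$; we want $x \in \cZ_{\lambda}$. Since $\cZ$ is C2, it suffices to check that $x$, viewed as a point of $\cX_{\lambda}$ (the same affine space, as $\langle \lambda \rangle = \langle \mu \rangle$), is $N$-approximable by $\cZ$ for every $N \ge 1$. For the witnessing data I will take the $\infty$-composition $\mu$ itself together with the injection $f \colon \langle \lambda \rangle \to \langle \mu \rangle$ given by the identity map of the underlying index set (which is indeed injective since the two index sets are literally equal).

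The required inequality $\mu_{f(i)} \ge \min(\lambda_i, N)$ follows immediately from the hypothesis $\mu_i \ge \lambda_i \ge \min(\lambda_i, N)$. Moreover the map $\alpha_f \colon \cX_{\mu} \to \cX_{\lambda}$ is the identity map on the common underlying affine space, so $x = \alpha_f(x)$ lies in $\alpha_f(\cZ_{\mu})$. Hence $x$ is $N$-approximable by $\cZ$ for every $N$, so $x \in \cZ_{\lambda}$ by the C2 property.

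There is really no obstacle here: the proposition is essentially a special case of the C2 axiom in which the approximating composition is equal to (rather than merely larger than) the target. The only things to verify are that the identity function qualifies as the injection $f$ in the definition of $N$-approximability, and that the hypothesis $\lambda_i \le \mu_i$ covers the required inequality $\mu_{f(i)} \ge \min(\lambda_i, N)$; both are immediate.
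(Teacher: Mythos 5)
Your proof is correct and is essentially the paper's proof with one level of indirection unfolded: the paper simply observes that the identity on the underlying index set is a map of $\infty$-compositions $\lambda \to \mu$ and cites Proposition~\ref{prop:C2-map}, whose proof for injective maps is exactly the $N$-approximability argument you write out. So there is no substantive difference.
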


\begin{proof}
The identity map $f \colon \lambda \to \mu$ is a map of $\infty$-compositions, and so $\cZ_{\mu}=\alpha_f(\cZ_{\mu}) \subset \cZ_{\lambda}$ by Proposition~\ref{prop:C2-map}.
\end{proof}

\begin{proposition}
	\label{prop:continuity}
	Let $\cZ$ be a C2 subset of $\cX$, and let $\lambda^1 \le \lambda^2 \le \ldots$ be $\infty$-partitions, all of length $r$, with limit $\lambda$.   Then $\cZ_{\lambda} = \bigcap_{i \ge 1} \cZ_{\lambda^i}$.
\end{proposition}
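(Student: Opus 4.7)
\smallskip

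The plan is to prove the two inclusions separately. The easy inclusion $\cZ_\lambda \subset \bigcap_i \cZ_{\lambda^i}$ is immediate from Proposition~\ref{prop:C2-descending}: since all $\lambda^i$ and $\lambda$ share the same index set $[r]$ and $\lambda^i_j \le \lambda_j$ for every $j$, we get $\cZ_\lambda \subset \cZ_{\lambda^i}$ for all $i$.

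For the reverse inclusion, suppose $x \in \bigcap_{i \ge 1} \cZ_{\lambda^i}$. Since $\cZ$ is C2, it suffices to show that $x$ is approximable by $\cZ$, i.e., that for every positive integer $N$ we can find an $\infty$-composition $\mu$ and an injection $f \colon \langle \lambda \rangle \to \langle \mu \rangle$ with $\mu_{f(j)} \ge \min(\lambda_j, N)$ and $x \in \alpha_f(\cZ_\mu)$. Here I will just take $\mu = \lambda^i$ for sufficiently large $i$ and let $f$ be the identity on $[r]$, in which case $\alpha_f$ is the identity map and $x = \alpha_f(x) \in \alpha_f(\cZ_{\lambda^i})$ automatically.

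The content of the argument is choosing $i$ large enough so that $\lambda^i_j \ge \min(\lambda_j, N)$ for every $j \in [r]$. Since $\lambda^i$ is an $\infty$-partition, $\lambda^i_1 = \infty$, so there is nothing to check for $j=1$. For $j \ge 2$, I split into cases: if $\lambda_j$ is finite, then $\lambda^i_j \le \lambda_j$ together with $\lambda^i_j \nearrow \lambda_j$ forces $\lambda^i_j = \lambda_j$ eventually; if $\lambda_j = \infty$, then $\lambda^i_j \nearrow \infty$ gives $\lambda^i_j \ge N$ eventually. Choosing $i$ past the maximum of these finitely many thresholds gives the desired bound. This yields $N$-approximability for every $N$, hence $x \in \cZ_\lambda$ by the C2 condition.

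The only subtlety is the meaning of ``limit,'' which I am interpreting as coordinatewise limit in $(\bN \cup \{\infty\})^r$, together with the fact that $\lambda^i \le \lambda^{i+1}$ makes each coordinate sequence monotone. No serious obstacle is expected; the argument is essentially just unwinding the definitions of $N$-approximability and C2 in the simplest possible case, where the index set does not change.
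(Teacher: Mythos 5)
Your proposal is correct and follows essentially the same argument as the paper: the easy inclusion via Proposition~\ref{prop:C2-descending}, and for the reverse inclusion, taking $\mu=\lambda^i$ for $i$ large enough to witness $N$-approximability. The paper states the choice of $i$ without the case-split you spell out, but the content is identical.
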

\begin{proof}
	By the previous proposition, we have $\cZ_{\lambda} \subset \bigcap_{i \ge 1} \cZ_{\lambda^i}$. Conversely, let $x \in \cZ_{\lambda^i}$ for each $i$. Then $x \in \cX_{\lambda}$, and for each $N$ we may find an $n$ large enough such that $\lambda^n_i \ge \min(\lambda_i, N)$ for all $i \in [r]$. Thus $x$ is $N$-approximable for each $N$, and so $x \in \cZ_{\lambda}$.
\end{proof}

\section{Constructing subvarieties of $\cX$} \label{s:construct}

\subsection{Correspondences}

Let $\lambda$ and $\mu$ be generalized compositions. A {\bf correspondence} $f \colon \lambda \dashrightarrow \mu$ is a pair $(f_1, f_2)$ where $f_1 \colon \rho \psurj \lambda$ is a principal surjection and $f_2 \colon \rho \to \mu$ is an arbitrary map; here $\rho$ denotes another generalized composition. Suppose that $\lambda$ and $\mu$ are $\infty$-compositions; note that $\rho$ is then as well. For $S \subset \cX_{\mu}$, we let $\alpha_f(S)=\alpha_{f_1}^{-1}(\alpha_{f_2}(S))$, a subset of $\cX_{\lambda}$. Note that $x \in \alpha_f(S)$ if and only if $\alpha_{f_1}(x) \in \alpha_{f_2}(S)$. We say that a subset $\cZ$ of $\cX$ is {\bf closed under correspondences} if $\alpha_f(\cZ_{\mu}) \subset \cZ_{\lambda}$ for all correspondences $f \colon \lambda \dashrightarrow \mu$.

\begin{proposition} \label{prop:corr-closed}
We have the following implications for subsets of $\cX$:
\begin{displaymath}
\text{C2} \implies \text{closed under correspondences} \implies \text{C1}.
\end{displaymath}
\end{proposition}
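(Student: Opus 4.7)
The plan is to prove each implication separately, exploiting that a correspondence packages together a principal surjection (to be handled by the C1 condition) and an arbitrary map (to be handled by Proposition~\ref{prop:C2-map}).

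For the first implication, suppose $\cZ$ is a C2 subset and let $f=(f_1,f_2) \colon \lambda \dashrightarrow \mu$ be a correspondence, with $f_1 \colon \rho \psurj \lambda$ and $f_2 \colon \rho \to \mu$. Given $x \in \alpha_f(\cZ_\mu)$, the definition gives $\alpha_{f_1}(x) \in \alpha_{f_2}(\cZ_\mu)$. Since $f_2$ is a map of $\infty$-compositions, Proposition~\ref{prop:C2-map} yields $\alpha_{f_2}(\cZ_\mu) \subset \cZ_\rho$, so $\alpha_{f_1}(x) \in \cZ_\rho$. Applying the C1 property to the principal surjection $f_1$ gives $\alpha_{f_1}^{-1}(\cZ_\rho) = \cZ_\lambda$, so $x \in \cZ_\lambda$, as required.

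For the second implication, suppose $\cZ$ is closed under correspondences and fix a principal surjection $g \colon \lambda \psurj \mu$; the goal is $\alpha_g^{-1}(\cZ_\lambda) = \cZ_\mu$. I would verify the two inclusions by exhibiting correspondences whose associated operators are $\alpha_g$ and $\alpha_g^{-1}$, respectively, both with intermediate composition $\rho = \lambda$. The correspondence $(\mathrm{id}_\lambda, g) \colon \lambda \dashrightarrow \mu$ has operator $\alpha_{\mathrm{id}_\lambda}^{-1} \circ \alpha_g = \alpha_g$, so the hypothesis yields $\alpha_g(\cZ_\mu) \subset \cZ_\lambda$, i.e.\ $\cZ_\mu \subset \alpha_g^{-1}(\cZ_\lambda)$. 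The correspondence $(g, \mathrm{id}_\lambda) \colon \mu \dashrightarrow \lambda$ has operator $\alpha_g^{-1} \circ \alpha_{\mathrm{id}_\lambda} = \alpha_g^{-1}$, giving $\alpha_g^{-1}(\cZ_\lambda) \subset \cZ_\mu$.

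I do not anticipate any real obstacle: both implications are essentially definition chases. The only point worth a moment's attention is that the identity $\mathrm{id}_\lambda$ is simultaneously a principal surjection and a map, so it can legitimately play either role in a correspondence; this is precisely what allows the single notion of correspondence to recover both inclusions in the C1 equality.
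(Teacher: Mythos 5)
Your proof is correct and follows essentially the same route as the paper: the first implication decomposes the correspondence into its principal surjection and map components and applies Proposition~\ref{prop:C2-map} plus C1 exactly as in the paper, and the second implication uses the two correspondences $(\id_\lambda, g)$ and $(g, \id_\lambda)$, the latter being precisely the correspondence $g \colon \mu \dashrightarrow \lambda$ the paper invokes. The only difference is cosmetic: you make explicit the correspondence $(\id_\lambda, g)$ behind the paper's terse ``$\alpha_f(\cZ_\mu)\subset\cZ_\lambda$ by assumption.''
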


\begin{proof}
Suppose $\cZ$ is C2. Let $f \colon \lambda \dashrightarrow \mu$ be a correspondence of $\infty$-compositions, given by $f=(f_1 \colon \rho \to \lambda, f_2 \colon \rho \to \mu)$. Then $\alpha_f(\cZ_{\mu})=\alpha_{f_1}^{-1}(\alpha_{f_2}(\cZ_{\mu}))$. By Proposition~\ref{prop:C2-map}, we have $\alpha_{f_2}(\cZ_{\mu}) \subset \cZ_{\rho}$, and by the C1 property we have $\alpha_{f_1}^{-1}(\cZ_{\rho}) \subset \cZ_{\lambda}$. Thus $\cZ$ is closed under correspondences.

Now suppose that $\cZ$ is closed under correspondences, and let $f \colon \lambda \psurj \mu$ be a principal surjection. Then $\alpha_f(\cZ_{\mu}) \subset \cZ_{\lambda}$ by assumption. Let $g \colon \mu \dashrightarrow \lambda$ be the correspondence $(f, \id_{\lambda})$. Again $\alpha_f^{-1}(\cZ_{\lambda})=\alpha_g(\cZ_{\lambda}) \subset \cZ_{\mu}$ by assumption. Thus $\alpha_f^{-1}(\cZ_{\lambda})=\cZ{\mu}$, and so $\cZ$ is C1.
\end{proof}

\subsection{Composing correspondences}

%Let $\lambda$ and $\mu$ be $\infty$-compositions of lengths $n$ and $m$. A {\bf map} $f \colon \lambda \to \mu$ is a function $f \colon [n] \to [m]$ such that $f_*(\lambda) \le \mu$. The composition of two maps is a map, and so we have a category of $\infty$-compositions. We say $f$ is a {\bf principal injection} if $f_*(\lambda)=\mu$ (which implies $f$ is surjective). Every map $f$ factors as $g \circ h$ where $h$ is a principal surjection and $g$ is an injection. For a map $f$, we let $\alpha_f \colon \cX_{\mu} \to \cX_{\lambda}$ be the induced map of schemes.

In general, composition of correspondences is carried out using fiber products. Unfortunately, the category of $\infty$-compositions does not have fiber products (see Example~\ref{ex:no-fiber-product}). However, the following proposition provides us with an approximate fiber product that will be good enough.

\begin{proposition} \label{prop:pullback}
For $i=1,2$, let  $f_i \colon \mu^i \to \mu$ be a map of $\infty$-weightings. Then there exists an $\infty$-composition $\wt{\mu}$ and maps $g_i \colon \wt{\mu} \to \mu^i$ of $\infty$-compositions such that the following diagram commutes:
\begin{displaymath}
\xymatrix{
\wt{\mu} \ar[r]^{g_2} \ar[d]_{g_1} & \mu^2 \ar[d]^{f_2} \\
\mu^1 \ar[r]^{f_1} & \mu.
}
\end{displaymath}
Moreover, the following holds:
\begin{enumerate}
\item If $f_2$ is a principal surjection then so is $g_1$.
\item If $f_2$ is an injection then so is $g_1$. 
\end{enumerate}
\end{proposition}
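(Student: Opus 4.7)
The plan is to realize $\wt\mu$ as a weighted version of the set-theoretic fiber product of $f_1$ and $f_2$. I would take $\langle \wt\mu \rangle$ to be a subset of $\{(s,t) \in \langle \mu^1 \rangle \times \langle \mu^2 \rangle : f_1(s) = f_2(t)\}$ equipped with positive weights $\wt\mu_{(s,t)} \in \bZ_{>0} \cup \{\infty\}$, and let $g_1, g_2$ be the two projections; commutativity of the square is then tautological. The conditions that $g_1$ and $g_2$ be maps of $\infty$-compositions translate into the row bound $\sum_{t} \wt\mu_{(s,t)} \le \mu^1_s$ and the column bound $\sum_{s} \wt\mu_{(s,t)} \le \mu^2_t$.

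The problem decouples over $\langle \mu \rangle$. For each $u$, put $I_u = f_1^{-1}(u)$ and $J_u = f_2^{-1}(u)$; the hypotheses on $f_1, f_2$ give $\sum_{s \in I_u} \mu^1_s \le \mu_u$ and $\sum_{t \in J_u} \mu^2_t \le \mu_u$, and one must allocate weights on $I_u \times J_u$ satisfying the row and column bounds. I would do this by a \emph{northwest-corner} greedy rule: order $I_u$ and $J_u$ so that $\infty$-valued indices come first, then fill $\wt\mu_{(s_i, t_j)} := \min(\text{residual }\mu^1_{s_i},\ \text{residual }\mu^2_{t_j})$ in lexicographic order, updating residuals at each step. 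The row and column bounds hold by construction since residuals never go negative. To make $\wt\mu$ a genuine $\infty$-composition, I would pick some $s_0$ with $\mu^1_{s_0} = \infty$; its image $u_0 = f_1(s_0)$ then satisfies $\mu_{u_0} = \infty$, and one may force the greedy step to place an $\infty$ weight in the first cell of $I_{u_0} \times J_{u_0}$.

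Parts (a) and (b) then drop out of the structure of the allocation. For (a), if $f_2$ is a principal surjection then $\sum_{t \in J_u} \mu^2_t = \mu_u \ge \sum_{s \in I_u} \mu^1_s$, so the greedy rule has enough column capacity to fully exhaust each row, giving $\sum_{t} \wt\mu_{(s,t)} = \mu^1_s$ and hence $g_1 \colon \wt\mu \psurj \mu^1$. For (b), if $f_2$ is an injection then $|J_u| \le 1$, so each row contains at most one nonzero entry and $g_1$ is injective on underlying sets; combined with the row bound this shows $g_1$ is an injective map of compositions. The main obstacle is the bookkeeping around $\infty$-arithmetic when both a row and a column have value $\infty$, together with verifying that an $\infty$ weight can always be placed so that $\wt\mu$ is honestly an $\infty$-composition and not merely a finite composition; the infinite-first ordering is what isolates these issues to a single cell of each $I_u \times J_u$, after which the remaining allocation is a standard finite transportation problem.
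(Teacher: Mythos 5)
Your construction is essentially the same as the paper's: both do a fiberwise northwest-corner allocation (the paper phrases it as a recursion on lengths, peeling off $\min(\mu^1_1,\mu^2_1)$ at each step, which is exactly your lex-ordered greedy with parts sorted non-increasing). So the approach is not a genuinely different route, just a different packaging.

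The caveat you flag about $\infty$-arithmetic is in fact the crux, not mere bookkeeping, and the proposal as written does not resolve it; worse, the most natural reading would break part (a). If one updates residuals by ordinary subtraction with the convention $\infty-\infty=0$, take $\mu=(\infty)$, $\mu^1=(\infty,\infty)$, $\mu^2=(\infty)$: the NW rule puts $\infty$ in cell $(1,1)$, kills both residuals, and then cell $(2,1)$ gets $0$, so $\wt\mu=(\infty)$ and $g_1$ is an injection rather than the required principal surjection. The correct rule is that an infinite residual is \emph{never} decremented (so $\infty - a = \infty$ for all $a$, including $a=\infty$): then a single $\infty$-column serves every $\infty$-row in its block, each such row gets exactly filled, and your claim ``enough column capacity exhausts each row'' goes through. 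Your argument for (a) as written quietly assumes the finite transportation fact; with infinities you must separately note that an $\infty$-row in $I_u$ forces $\mu_u=\infty$, hence (since $f_2$ is principal) $J_u$ contains an $\infty$-column, and that column absorbs the row in full. There is a second point your sketch glosses over: you claim one can force an $\infty$ weight into a cell of $I_{u_0}\times J_{u_0}$, but $J_{u_0}$ may be empty or contain no $\infty$-column; indeed, if neither $f_1$ nor $f_2$ is surjective the set-theoretic fiber product can be empty (e.g.\ $\mu=(\infty,\infty)$, $\mu^i=(\infty)$ mapping to distinct targets), and then there is no $\infty$-composition $\wt\mu$ at all. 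You should either assume (as is true in every use of this proposition downstream) that one of the $f_i$ is a principal surjection, or explain which block is guaranteed to produce an $\infty$-cell.
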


\begin{proof}
Note that we can prove the result fiberwise, in other words, it is enough to deal with the case when $\mu$ is a singleton (but possibly with finite nonzero weight). So let $\mu = (\mu_1)$. Suppose $\mu^i = (\mu^i_1, \mu^i _2, \ldots, \mu^i _{r_i})$ where $\mu^i_j$ is a non-increasing sequence with values in $\bN \cup \{\infty\}$. We now construct $\wt{\mu}$, $g_1$ and $g_2$ by induction on the lengths $r_1, r_2$ of the $\infty$-compositions $\mu^1, \mu^2$. Set $\wt{\mu}_1 = \min\{\mu^1_1, \mu^2_1 \}$. The base case is when $\wt{\mu}_1  = \mu^i_1$ for some $i \in \{1,2\}$ and $\mu^i$ has only one part. In this case, we take $\wt{\mu} = \mu^i$. In general, by induction, there exists a diagram
\begin{displaymath}
\xymatrix{
	\wt{\lambda} \ar[r]^{g'_2} \ar[d]_{g'_1} & \lambda^2 \ar[d]^{f'_2} \\
	\lambda^1 \ar[r]^{f'_1} & \lambda,
}
\end{displaymath}
where $\lambda = (\mu_1 - \wt{\mu}_1)$, $\lambda^i = (\mu^i_1 - \wt{\mu}_1, \mu^i _2, \ldots, \mu^i _{r_i})$ (where the elements may need to be rearranged to make them non-increasing), and $f_1', f_2'$ are obtained from restricting $f_1, f_2$. Now set $\wt{\mu} = (\wt{\mu}_1, \wt{\lambda})$. Then $g_i'$ can be extended to $g_i \colon \wt{\mu} \to \mu^i$ by sending $\wt{\mu}_1$ to $\mu^i_1$. It is clear that $f_2g_2 = f_1g_1$, which finishes the construction of $\wt{\mu}$, $g_1$ and $g_2$.  

For (a), assume that $f_2$ is a principal surjection. In the base case, we must have $\wt{\mu} =  \mu^1$ and $g_1 = \id$, so the result follows. In the inductive case, we see that $f_2'$ is a principal surjection. By induction $g_1'$ is a principal surjection. It follows that $g_2$ is a principal surjection.

For (b), assume that $f_2$ is an injection. In the base case, either $\wt{\mu} = \mu^1$ and $\mu^1$ has only one part, or $\wt{\mu} = \mu^2$ and $\mu^1_1 \ge \mu^2_1$. In both cases $g_1$ is an injection. In the inductive case, note that  $f_2'$ is an injection. By induction, $g_1'$ is an injection. Moreover, in this case we must have $\wt{\mu}_1 = \mu^1_1$ (as we are not in the base case), and so $\lambda^1$ has $r_1 - 1$ parts. It follows that $g_1$ is an injection. This finishes the proof.
\end{proof}

Let $f \colon \lambda \dashrightarrow \mu$ and $g \colon \mu \dashrightarrow \nu$ be correspondences, given by data $(f_1 \colon \rho \psurj \lambda, f_2 \colon \rho \to \mu)$ and $(g_1 \colon \sigma \psurj \mu, g_2 \colon \sigma \to \nu)$. We say that a correspondence $h \colon \lambda \dashrightarrow \nu$, given by data $(h_1 \colon \tau \to \lambda, \tau \to \nu)$ is a {\bf composition} of $f$ and $g$ if there exists a commutative diagram
\begin{displaymath}
\xymatrix{
&& \tau \ar@{>..>}[ld]_{h_1'} \ar@{..>}[rd]^{h_2'}\\
& \rho \ar@{>->}[ld]_{f_1} \ar[rd]^{f_2} && \sigma \ar@{>->}[ld]_{g_1} \ar[rd]^{g_2} \\
\lambda && \mu && \nu
}
\end{displaymath}
such that $h_1'$ is a principal surjection, $h_1=f_1 \circ h_1'$, and $h_2=h_2 \circ h_2'$. We note that a composition need not be unique.

\begin{proposition} \label{prop:comp-corr}
Let $f \colon \lambda \dashrightarrow \mu$ and $g \colon \mu \dashrightarrow \nu$ be correspondences. Then a composition exists. Moreover, for any composition $h$ and any $S \subset \cX_{\nu}$, we have $\alpha_f(\alpha_g(S)) \subset \alpha_h(S)$.
\end{proposition}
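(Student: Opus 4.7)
\medskip

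\noindent\textbf{Proof proposal.} My plan is to split the proof into two parts: (i) constructing the composition, and (ii) verifying the inclusion of subsets.

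For existence, I would apply Proposition~\ref{prop:pullback} to the pair of maps $f_2 \colon \rho \to \mu$ and $g_1 \colon \sigma \to \mu$. Since $g_1$ is a principal surjection, part (a) of that proposition produces an $\infty$-composition $\tau$ together with a principal surjection $h_1' \colon \tau \psurj \rho$ and a map $h_2' \colon \tau \to \sigma$ such that $f_2 \circ h_1' = g_1 \circ h_2'$. I would then set $h_1 = f_1 \circ h_1'$ and $h_2 = g_2 \circ h_2'$; since a composite of principal surjections is a principal surjection, $h_1 \colon \tau \psurj \lambda$ is a principal surjection, and so the pair $(h_1, h_2)$ defines a correspondence $h \colon \lambda \dashrightarrow \nu$ which is by construction a composition of $f$ and $g$.

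For the inclusion $\alpha_f(\alpha_g(S)) \subset \alpha_h(S)$, I would unwind definitions. Let $x \in \alpha_f(\alpha_g(S))$. Then $\alpha_{f_1}(x) \in \alpha_{f_2}(\alpha_g(S))$, so there exists $y \in \alpha_g(S)$ with $\alpha_{f_2}(y) = \alpha_{f_1}(x)$. Since $y \in \alpha_g(S) = \alpha_{g_1}^{-1}(\alpha_{g_2}(S))$, there exists $z \in S$ with $\alpha_{g_2}(z) = \alpha_{g_1}(y)$. To show $x \in \alpha_h(S)$, I need $\alpha_{h_1}(x) \in \alpha_{h_2}(S)$. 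Using contravariance of $\alpha_{(-)}$ and the commutativity $f_2 \circ h_1' = g_1 \circ h_2'$, I compute
\begin{displaymath}
\alpha_{h_1}(x) = \alpha_{h_1'}\bigl(\alpha_{f_1}(x)\bigr) = \alpha_{h_1'}\bigl(\alpha_{f_2}(y)\bigr) = \alpha_{h_2'}\bigl(\alpha_{g_1}(y)\bigr) = \alpha_{h_2'}\bigl(\alpha_{g_2}(z)\bigr) = \alpha_{h_2}(z),
\end{displaymath}
which lies in $\alpha_{h_2}(S)$ as required.

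I do not expect any serious obstacle: the existence step reduces cleanly to the previous proposition (which is the nontrivial ingredient), and the containment step is a diagram chase in which the key ingredient is that $\alpha_{(-)}$ converts composition to composition in reverse order. The one point to be attentive to is verifying that the relevant composites of maps of $\infty$-compositions remain maps of $\infty$-compositions, but this is built into how $\tau$, $h_1'$, and $h_2'$ are produced.
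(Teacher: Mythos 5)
Your proof is correct and follows essentially the same route as the paper: the existence step invokes Proposition~\ref{prop:pullback} applied to $f_2$ and $g_1$ (using part (a) to get that $h_1'$ is a principal surjection, hence so is $h_1 = f_1 \circ h_1'$), and the inclusion step is the same diagram chase using contravariance of $\alpha_{(-)}$ and the commutativity $f_2 \circ h_1' = g_1 \circ h_2'$. The only difference is that you spell out the existence step in more detail, which is harmless.
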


\begin{proof}
Existence follows from Proposition~\ref{prop:pullback}. Now suppose that $h$ is any composition of $f$ and $g$. Use notation as in the previous paragraph. Suppose $x \in \alpha_f(\alpha_g(S))$. By definition, this means $\alpha_{f_1}(x)=\alpha_{f_2}(y)$ for some $y \in \alpha_g(S)$; again, by definition, we have $\alpha_{g_1}(y) \in \alpha_{g_2}(S)$. We thus see that
\begin{displaymath}
\alpha_{h_1}(x)=\alpha_{h_1'}(\alpha_{f_1}(x))=\alpha_{h_1'}(\alpha_{f_2}(y))=\alpha_{h_2'}(\alpha_{g_1}(y)) \in \alpha_{h_2'}(\alpha_{g_2}(S)) = \alpha_{h_2}(S),
\end{displaymath}
and so $x \in \alpha_h(S)$.
\end{proof}

\begin{example} \label{ex:no-fiber-product}
Fiber products do not exist in the category of $\infty$-compositions, in general. For example, let $\mu = (\infty, 8), \wt{\mu}^1 = (\infty, 6,2), \wt{\mu}^2 = (\infty, 4,4)$, and let $\pi_i \colon  \wt{\mu}^i \psurj \mu$ be the unique principal surjections. Then the natural candidate for $\wt{\mu}^1 \times_{\mu} \wt{\mu}^2$ is $(\infty, 4,2,1,1)$. But there are two maps  $(\infty, 4,2,1, 1) \to \wt{\mu}^1$ -- one that combines $4$ and $2$ together, and the second one that combines $4$ and the two $1$s together. These two maps do not factor through each other and so the fiber product  $\wt{\mu}^1 \times_{\mu} \wt{\mu}^2$ doesn't exist.
\end{example}

\subsection{Construction of C2 subsets}

Let $\lambda$ be a $\infty$-composition and let $Z$ be a subset of $\cX_{\lambda}$. Define a subset $\Gamma_{\lambda}^{\circ}(Z)$ of $\cX$ by
\begin{displaymath}
\Gamma^{\circ}_{\lambda}(Z)_{\mu} = \bigcup_{f \colon \mu \dashrightarrow \lambda} \alpha_f(Z),
\end{displaymath}
where the union is over all correspondences $f$.

\begin{theorem} \label{thm:gamma0}
Let $Z$ be a subset of $\cX_{\lambda}$. Then $\cZ=\Gamma^{\circ}_{\lambda}(Z)$ is a C2 subset of $\cX$. Moreover, if $\cZ'$ is any C2 subset of $\cX$ such that $\cZ'_{\lambda}$ contains $Z$ then $\cZ'$ contains $\cZ$.
\end{theorem}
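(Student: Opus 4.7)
The plan is to verify three things: (i) $\cZ := \Gamma^{\circ}_{\lambda}(Z)$ is C1, (ii) it contains all of its approximable points (so it is C2), and (iii) it is minimal among C2 subsets containing $Z$ at index $\lambda$. For (i) and (iii), I would work via closure under correspondences. Given a correspondence $g \colon \mu \dashrightarrow \nu$ and a point $y \in \cZ_{\nu}$, by construction $y \in \alpha_f(Z)$ for some correspondence $f \colon \nu \dashrightarrow \lambda$. Proposition~\ref{prop:comp-corr} then yields a composition $h \colon \mu \dashrightarrow \lambda$ with $\alpha_g(\alpha_f(Z)) \subset \alpha_h(Z) \subset \cZ_{\mu}$, so $\alpha_g(\{y\}) \subset \cZ_{\mu}$. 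Thus $\cZ$ is closed under correspondences, and by Proposition~\ref{prop:corr-closed} is C1. For minimality, any C2 subset $\cZ'$ with $Z \subset \cZ'_{\lambda}$ is closed under correspondences by Proposition~\ref{prop:corr-closed}, so $\alpha_f(Z) \subset \alpha_f(\cZ'_{\lambda}) \subset \cZ'_{\mu}$ for every $f \colon \mu \dashrightarrow \lambda$, giving $\cZ_{\mu} \subset \cZ'_{\mu}$. The approximability step (ii) is the main obstacle.

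For (ii), fix $x \in \cX_{\mu}$ approximable by $\cZ$ and choose $N$ large (specified below). The $N$-approximability witness gives $\mu^N$, an injection $f_N \colon \langle \mu \rangle \hookrightarrow \langle \mu^N \rangle$ with $\mu^N_{f_N(i)} \ge \min(\mu_i, N)$, and $y_N \in \cZ_{\mu^N}$ with $\alpha_{f_N}(y_N) = x$. Unpacking $y_N \in \cZ_{\mu^N}$, there exist $\tau_N$, a principal surjection $g_N^1 \colon \tau_N \psurj \mu^N$, a map $g_N^2 \colon \tau_N \to \lambda$, and $z_N \in Z$ with $\alpha_{g_N^1}(y_N) = \alpha_{g_N^2}(z_N)$; hence $x_i = (y_N)_{f_N(i)} = (z_N)_{g_N^2(a)}$ for any $a \in (g_N^1)^{-1}(f_N(i))$. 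I would build the desired correspondence $h \colon \mu \dashrightarrow \lambda$ as follows. Form the bipartite graph on $\langle \mu \rangle \sqcup \langle \lambda \rangle$ with an edge $(i, k)$ whenever $x_i = (z_N)_k$; view $\mu_i$ as a demand at source $i$ and $\lambda_k$ as a capacity at sink $k$ (both allowed to be $\infty$). A feasible integer flow $\rho_{(i, k)} \in \bN \cup \{\infty\}$ with $\sum_k \rho_{(i, k)} = \mu_i$ and $\sum_i \rho_{(i, k)} \le \lambda_k$ then defines an $\infty$-composition $\rho$ on the index set $\{(i, k) : \rho_{(i, k)} > 0\}$ with weights $\rho_{(i, k)}$, a principal surjection $h_1 \colon \rho \psurj \mu$ by $(i, k) \mapsto i$, and a map $h_2 \colon \rho \to \lambda$ by $(i, k) \mapsto k$. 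The edge condition forces $\alpha_{h_1}(x) = \alpha_{h_2}(z_N)$, so $x \in \alpha_h(Z) \subset \cZ_{\mu}$ for $h = (h_1, h_2)$.

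Existence of the flow reduces to the per-value Hall condition $\sum_{k : (z_N)_k = v} \lambda_k \ge \sum_{i : x_i = v} \mu_i$ for each value $v$. Chasing weights through $g_N^2$ (which gives $\sum_{a \in (g_N^2)^{-1}(k)} (\tau_N)_a \le \lambda_k$) and $g_N^1$ (which gives $\sum_{a \in (g_N^1)^{-1}(j)} (\tau_N)_a = \mu^N_j$), together with the value identifications above, yields $\sum_{k : (z_N)_k = v} \lambda_k \ge \sum_{i : x_i = v} \min(\mu_i, N)$. Taking $N$ larger than both the largest finite part of $\mu$ and the finite sum $M_\infty := \sum_{k : \lambda_k < \infty} \lambda_k$ upgrades this to Hall's condition: finite $\mu_i$ give equality, and for $\mu_i = \infty$ the bound $\sum \ge N > M_\infty$ forces some $k^*$ with $(z_N)_{k^*} = v$ and $\lambda_{k^*} = \infty$, using that $\lambda$, being an $\infty$-composition, has at least one infinite part. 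A feasible integer flow admitting $\infty$-values then exists by routing infinite demand through the infinite-capacity sink and solving the residual finite transportation problem. The main obstacle is precisely this upgrade from ``$\ge N$'' to ``$= \infty$'' when $\mu_i = \infty$, which is exactly the place where the hypothesis that $\lambda$ is an $\infty$-composition is essential.
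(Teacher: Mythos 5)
Your proof is correct, and parts (i) and (iii) — C1 via closure under correspondences, and minimality — coincide with the paper's argument. For part (ii) you take a genuinely different route. The paper keeps the witnessing data $(\nu, y, f=(f_1,f_2))$ and surgically modifies it: it shows that for each $i$ with $\nu_i \ge N$ the fiber $f_1^{-1}(i)$ must contain a $j$ with $\lambda_{f_2(j)}=\infty$ (same counting as your Hall bound), promotes the corresponding parts of $\nu$ and $\rho$ to $\infty$ to get a correspondence $\ol{\nu} \dashrightarrow \lambda$ still witnessing $y$, and then projects along the injection $\mu \to \ol{\nu}$ to finish. You instead discard the intermediate composition entirely and build a fresh correspondence $\mu \dashrightarrow \lambda$ from scratch by solving a transportation problem on the bipartite value-graph between $\langle\mu\rangle$ and $\langle\lambda\rangle$; the per-value Hall inequality you derive (total $\lambda$-capacity at value $v$ exceeds $\sum_{x_i=v}\min(\mu_i,N)$) is precisely the estimate behind the paper's claim, and your choice of $N$ upgrades it to a genuine Hall condition, after which the flow exists by routing infinite demand through an infinite-capacity sink and filling the finite residuals greedily. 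The paper's route is more economical because it reuses the existing correspondence and leans on the already-proved composition machinery (Proposition~\ref{prop:comp-corr}); your route is more self-contained and exposes the combinatorics (it makes explicit that the only data actually needed from the approximation is the matching of values between $x$ and $z_N$), at the cost of setting up the flow formalism and checking the harmless but fiddly $\infty$-arithmetic edge cases. Both hinge on the same key observation, namely that choosing $N$ larger than the sum of the finite parts of $\lambda$ forces an infinite part of $\lambda$ to appear whenever $\mu_i=\infty$.

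One small point worth making explicit in a polished write-up: the identity $\alpha_{g_N^1}(y_N)=\alpha_{g_N^2}(z_N)$ lets you choose a common field $K$ over which $x$, $y_N$, and $z_N$ are all defined (and over which they share the same $W$-coordinate), so that the bipartite value-graph and the coordinate equalities defining its edges genuinely make sense, and the resulting $h$ satisfies $\alpha_{h_1}(x)=\alpha_{h_2}(z_N)$ as points of $\cX_\rho$, not merely coordinatewise over vaguely varying fields.
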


\begin{proof}
We first claim that $\cZ$ is closed under the action of correspondences. Indeed, suppose $x \in \cZ_{\nu}$ and $f \colon \mu \dashrightarrow \nu$ is a correspondence. Then, by definition of $\cZ_{\nu}$, there exists a correspondence $g \colon \nu \dashrightarrow \lambda$ such that $x \in \alpha_g(Z)$. Let $h \colon \mu \dashrightarrow \lambda$ be a composition of $f$ and $g$. Then
\begin{displaymath}
\alpha_f(x) \in \alpha_f(\alpha_g(Z)) \subset \alpha_h(Z) \subset \cZ_{\mu},
\end{displaymath}
where we have used Proposition~\ref{prop:comp-corr}. This proves the claim. In particular, $\cZ$ is C1 by  Proposition~\ref{prop:corr-closed}.

We now claim that $\cZ$ is a C2 subset of $\cX$. Without loss of generality, suppose $\langle \lambda \rangle=[n]$ and that $\lambda_1, \ldots, \lambda_r$ are infinite and $\lambda_{r+1}, \ldots, \lambda_n$ are finite. Suppose $x \in \cX_{\mu}$ is approximable by $\cZ$. Let $N$ be larger than $\lambda_{r+1}+\cdots+\lambda_n$ and all finite parts of $\mu$. Since $x$ is $N$-approximable by $\cZ$, we can find an $\infty$-composition $\nu$ with $\langle \mu \rangle \subset \langle \nu \rangle$ and $\nu_i \ge \min(N, \mu_i)$ for all $i \in \langle \mu \rangle$ such that $x$ is the image of some point $y \in \cZ_{\nu}$ under the projection map $\cX_{\nu} \to \cX_{\mu}$. Now, since $y \in \cZ_{\nu}$, there is some correspondence $f \colon \nu \dashrightarrow \lambda$ such that $y \in \alpha_f(Z)$. Let $f=(f_1 \colon \rho \psurj \nu, f_2 \colon \rho \to \lambda)$ where $f_1$ is a principal surjection.

Suppose $\nu_i \ge N$. We claim that there is some $j \in f_1^{-1}(i)$ such that $f_2(j) \le r$. Indeed, suppose this were not the case. Then $f_2$ would map $f_1^{-1}(i)$ into $\{r+1, \ldots, n\}$, and so $f_1^{-1}(i)$ would be contained in $\bigcup_{k=r+1}^n f_2^{-1}(k)$. But the total weight of the set $f_2^{-1}(k)$ is at most $\lambda_k$, and so the total weight of the set $\bigcup_{k=r+1}^n f_2^{-1}(k)$ is $\lambda_{r+1}+\cdots+\lambda_n<N$. However, the total weight of the set $f_1^{-1}(i)$ is $\nu_i$, which is $\ge N$, a contradiction.

For each $i \in \langle \nu \rangle$ with $\nu_i \ge N$ pick $a(i) \in f_1^{-1}(i)$ such that $f_2(a(i)) \le r$. Let $\ol{\nu}$ be the $\infty$-composition obtained from $\nu$ by changing any entry that is $\ge N$ to to $\infty$; similarly, let $\ol{\rho}$ be the $\infty$-composition obtained from $\rho$ by changing the entries at $a(1), \ldots, a(r)$ to $\infty$. Then the function $f_1 \colon \langle \rho \rangle \to \langle \nu \rangle$ defines a principal surjection $g_1 \colon \ol{\rho} \psurj \ol{\nu}$ and the function $f_2 \colon \langle \rho \rangle \to [n]$ defines a map of $\infty$-compositions $g_2 \colon \ol{\rho} \to \lambda$. Let $g \colon \ol{\nu} \dashrightarrow \lambda$ be the correspondence $(g_1, g_2)$. Then $y \in \alpha_g(Z)$, and so $y \in \cZ_{\ol{\nu}}$. (Note that $\alpha_{g_1}$ and $\alpha_{g_2}$ are the same maps between affines spaces as $\alpha_{f_1}$ and $\alpha_{f_2}$.)

The standard inclusion $\langle \mu \rangle \to \langle \nu \rangle$ defines a map of $\infty$-compositions $\mu \to \ol{\nu}$. Thus by the first paragraph, the projection map $\cX_{\ol{\nu}} \to \cX_{\mu}$ maps $\cZ_{\ol{\nu}}$ into $\cZ_{\lambda}$. Since the image of $y \in \cZ_{\ol{\nu}}$ under this map is $x$, we see that $x \in \cZ_{\mu}$. Thus $\cZ$ is C2, as claimed.

Finally, suppose $\cZ'$ is a second C2 subset of $\cX$ such that $\cZ'_{\lambda}$ contains $Z$. Since $\cZ'$ is closed under the action of correspondences (Proposition~\ref{prop:corr-closed}), we thus have $\cZ \subset \cZ'$.
\end{proof}

We need two additional properties of the $\Gamma_{\lambda}^{\circ}$ construction. We say that a correspondence $f \colon \mu \dashrightarrow \lambda$ given by  $f = (f_1 \colon \rho \psurj \mu, f_2 \colon \rho \to \lambda)$ is {\bf good} if for each $i \in \langle \mu \rangle$ the size of $f_1^{-1}(i)$ is bounded by the number of parts in $\lambda$, and if $\mu_i$ is larger than the sum of finite parts of $\lambda$ then the fiber $f_1^{-1}(i)$ is a singleton. We note that the number of good correspondences with fixed domain and target is finite.

\begin{proposition} \label{prop:finite-union}
Let $Z$ be a subset of $\cX_{\lambda}$. Then
\begin{displaymath}
\Gamma_{\lambda}^{\circ}(Z)_{\mu} =  \bigcup_{\substack{f \colon \mu \dashrightarrow \lambda \\ \text{$f$ is good}}} \alpha_f(Z).
\end{displaymath}
\end{proposition}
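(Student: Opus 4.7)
My plan is to establish the non-trivial inclusion $\subseteq$; the reverse is immediate from the definition of $\Gamma_\lambda^\circ$. So I fix an arbitrary correspondence $f = (f_1 \colon \rho \psurj \mu,\, f_2 \colon \rho \to \lambda)$ and a point $x \in \alpha_f(Z)$, witnessed by some $z \in Z$ with $x_{f_1(j)} = z_{f_2(j)}$ for every $j \in \langle \rho \rangle$. The idea is to modify $(\rho, f_1, f_2)$ by finitely many ``merging'' operations on the index set $\langle \rho \rangle$, producing a good correspondence $f'$ for which the same $z$ still witnesses $x \in \alpha_{f'}(Z)$.

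The basic move takes two distinct indices $j_1, j_2$ in a common fiber $f_1^{-1}(i)$ and replaces them by a single index $j$, assigning $f_1'(j) := i$ together with a choice of new weight and of $f_2'(j) \in \{f_2(j_1), f_2(j_2)\}$. I would apply this in two stages. First, whenever $f_2(j_1) = f_2(j_2) = k$ I merge with $\rho'_j := \rho_{j_1} + \rho_{j_2} \le \lambda_k$ and $f_2'(j) := k$; this strictly decreases $|\langle \rho \rangle|$, so after finitely many steps the restriction of $f_2$ to every $f_1$-fiber is injective, and hence $|f_1^{-1}(i)| \le \ell(\lambda)$ for all $i$. Second, for each $i$ with $\mu_i$ exceeding the sum $S$ of the finite parts of $\lambda$ and with $|f_1^{-1}(i)| > 1$, the injectivity just established together with the estimate $\mu_i = \sum_{j \in f_1^{-1}(i)} \rho_j \le \sum_{j \in f_1^{-1}(i)} \lambda_{f_2(j)}$ forces some $j_0 \in f_1^{-1}(i)$ to satisfy $\lambda_{f_2(j_0)} = \infty$, since otherwise the right-hand side would be bounded by $S$. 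I then collapse the entire fiber to a single index $j$ of weight $\mu_i$ with $f_2'(j) := f_2(j_0)$; the witnessing equality $x_i = z_{f_2(j_0)}$ is inherited from the first stage, where every $j \in f_1^{-1}(i)$ already satisfied $z_{f_2(j)} = x_i$.

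The main obstacle is the bookkeeping: after every merge one must verify that $f_1'$ is still a principal surjection of the modified $\rho'$ onto $\mu$, that $f_2'$ is still a legitimate map of $\infty$-compositions, that $z$ still witnesses $x \in \alpha_{f'}(Z)$ through the surviving indices, and that the second stage does not undo what the first achieved. The last concern is why the merges are confined to a single $f_1$-fiber: fibers over indices other than $i$ are untouched, and the collapsed fiber over $i$ becomes a singleton, trivially respecting the first condition of goodness. The second-stage choice $f_2'(j) = f_2(j_0)$ must hit an infinite part of $\lambda$ precisely so that $\lambda_{f_2(j_0)} = \infty \ge \mu_i$, and this is exactly where the hypothesis $\mu_i > S$ is used. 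Granting these routine verifications, $f'$ is good and $x \in \alpha_{f'}(Z)$, so the union over good correspondences already exhausts $\Gamma_\lambda^\circ(Z)_\mu$.
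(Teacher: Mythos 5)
Your proposal is correct and follows essentially the same two-stage strategy as the paper: first merge indices within a single $f_1$-fiber whenever $f_2$ agrees on them (the paper phrases this as quotienting $\langle\rho\rangle$ by the equivalence relation $f_1(j)=f_1(j')$ and $f_2(j)=f_2(j')$), which makes $f_2$ injective on each fiber and bounds fiber sizes by $\ell(\lambda)$; then, for each $i$ with $\mu_i$ exceeding the sum of finite parts of $\lambda$, collapse the whole fiber to the chosen index landing on an infinite part, exactly as the paper does by picking $a(i) \in g_1^{-1}(i)$ with $\lambda_{g_2(a(i))}=\infty$. The bookkeeping you flag (that $f_1'$ stays principal, $f_2'$ stays a map of compositions, and the witness $z$ survives) checks out for the reasons you give.
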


\begin{proof}
Let $f \colon \mu \dashrightarrow \lambda$ be an arbitrary correspondence, given by $f=(f_1 \colon \rho \psurj \mu, f_2 \colon \rho \to \lambda)$ with $f_1$ a principal surjection. Define an equivalence relation $\sim$ on $\langle \rho \rangle$ by $i \sim j$ if $f_1(i)=f_1(j)$ and $f_2(i)=f_2(j)$. Let $\langle \tau \rangle = \langle \rho \rangle/\sim$, let $\pi \colon \langle \rho \rangle \to \langle \tau \rangle$ be the quotient map, and let $\tau=\pi_*(\rho)$, so that $\pi \colon \rho \psurj \tau$ is a principal surjection. Write $f_1=g_1 \circ \pi$ and $f_2=g_2 \circ \pi$. Then we have a correspondence $g \colon \mu \dashrightarrow \lambda$ given by $(g_1 \colon \tau \psurj \mu, g_2 \colon \tau \to \lambda)$. One easily sees that $\alpha_f(Z)=\alpha_g(Z)$. Note that $\# g_1^{-1}(i) \le \ell(\lambda)$ for all $i \in [\ell(\mu)]$.

Let $e$ be the sum of finite parts of $\lambda$. For each $i \in \langle \mu \rangle$ with $\mu_i >e$, choose $a(i) \in g_1^{-1}(i)$ such that $\lambda_{g_2(a(i))} = \infty$, which must exist. Let $\sigma$ be the composition on the underlying set $\langle \sigma \rangle \coloneq \{ a(i) \in \langle \mu \rangle \colon \mu_i >e  \} \sqcup \{ i \in \langle \mu \rangle \colon \mu_i \le e  \}$ where $\sigma_{a(i)} = \sum_{j \in g_1^{-1}(i)} \tau_j$ for each $i$ with $\mu_i >e$. Let $h_1 \colon \sigma \psurj \mu$  and $h_2 \colon \sigma \to \rho$ be the induced maps. Then we have a correspondence $h \colon \mu \dashrightarrow \lambda$ given by $(h_1, h_2)$, which is good. Furthermore, it is clear that $\alpha_g(Z) \subset \alpha_h(Z)$.

We have thus shown that for every correspondence $f \colon \mu \dashrightarrow \lambda$ there is a good correspondence $h \colon \mu \dashrightarrow \lambda$ such that $\alpha_f(Z) \subset \alpha_h(Z)$. The result follows.
\end{proof}

\begin{proposition} \label{prop:gamma-aux3}
Let $Z \subset \cX_{\lambda}$ and $\cZ=\Gamma^{\circ}_{\lambda}(Z)$. Then
\begin{displaymath}
\cZ_{[\lambda]} = \bigcup_{f \in \Aut(\lambda)} \alpha_f(Z \cap \cX_{[\lambda]}).
\end{displaymath}
\end{proposition}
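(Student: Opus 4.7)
My plan is to prove the equality by establishing the two containments separately.

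The containment $\supseteq$ is routine: any $f \in \Aut(\lambda)$ is in particular a map of $\infty$-compositions $\lambda \to \lambda$, so the pair $(\id_\lambda, f)$ defines a correspondence $\lambda \dashrightarrow \lambda$ with $\alpha_{(\id_\lambda, f)} = \alpha_f$ as a map $\cX_\lambda \to \cX_\lambda$. Therefore $\alpha_f(Z) \subset \Gamma^\circ_\lambda(Z)_\lambda = \cZ_\lambda$ by definition, and since $f$ is a bijection of index sets, $\alpha_f$ preserves $\cX_{[\lambda]}$, giving $\alpha_f(Z \cap \cX_{[\lambda]}) \subset \cZ_{[\lambda]}$.

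For the containment $\subseteq$, I would pick a $K$-point $x$ of $\cZ_{[\lambda]}$. Unfolding the definition of $\Gamma^\circ_\lambda$, there is a correspondence $f = (f_1 \colon \rho \psurj \lambda,\, f_2 \colon \rho \to \lambda)$ and a $K$-point $y \in Z$ with $x_{f_1(i)} = y_{f_2(i)}$ for all $i \in \langle \rho \rangle$. Since $x$ has pairwise distinct coordinates and $f_1$ is surjective, the set equality $\{x_k : k \in \langle\lambda\rangle\} = \{y_j : j \in f_2(\langle\rho\rangle)\}$ together with the obvious cardinality bound gives
\begin{displaymath}
\ell(\lambda) \;=\; \bigl|\{y_j : j \in f_2(\langle\rho\rangle)\}\bigr| \;\le\; |f_2(\langle\rho\rangle)| \;\le\; \ell(\lambda),
\end{displaymath}
which forces $f_2$ to be surjective and the coordinates $y_j$ for $j \in \langle\lambda\rangle$ to be pairwise distinct, i.e., $y \in Z \cap \cX_{[\lambda]}$.

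The distinctness of the $y_j$'s is what unlocks the rest: if $f_1(i) = f_1(i')$ then $y_{f_2(i)} = x_{f_1(i)} = x_{f_1(i')} = y_{f_2(i')}$ forces $f_2(i) = f_2(i')$, so $f_2$ factors as $f_2 = g \circ f_1$ for a unique function $g \colon \langle\lambda\rangle \to \langle\lambda\rangle$, which is a bijection by surjectivity and finiteness. From $f_2$ being a map of $\infty$-compositions and $f_1$ being a principal surjection,
\begin{displaymath}
\lambda_{g^{-1}(j)} \;=\; \sum_{i \in f_1^{-1}(g^{-1}(j))} \rho_i \;=\; \sum_{i \in f_2^{-1}(j)} \rho_i \;\le\; \lambda_j
\end{displaymath}
for every $j \in \langle\lambda\rangle$; iterating this inequality along the finite orbits of the permutation $g^{-1}$ collapses it to equality, so $g \in \Aut(\lambda)$. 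Since $x_k = y_{f_2(i)} = y_{g(f_1(i))} = y_{g(k)}$ for any $i \in f_1^{-1}(k)$, we obtain $x = \alpha_g(y)$ with $y \in Z \cap \cX_{[\lambda]}$, as desired.

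The main technical step, and the only real obstacle, is the factorization $f_2 = g \circ f_1$: everything around it is bookkeeping, and once $g$ is produced, the verification that $g$ lies in $\Aut(\lambda)$ is a short finite-orbit argument driven by the fact that $f_1$ is principal while $f_2$ is only required to be a map of $\infty$-compositions.
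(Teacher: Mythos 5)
Your proposal is correct and takes essentially the same route as the paper's proof: both unfold the correspondence as a pair $(f_1 \colon \rho \psurj \lambda,\, f_2 \colon \rho \to \lambda)$, use the distinctness of the coordinates of $x$ to force $y$ to have distinct coordinates and to extract a permutation $g$ of $\langle\lambda\rangle$ (your factorization $f_2 = g\circ f_1$ is the same permutation the paper writes as $f$ with $x_i = y_{f(i)}$), then combine the principality of $f_1$ with the map condition on $f_2$ to deduce $\lambda_k \le \lambda_{g(k)}$ and promote this to equality via bijectivity. Your write-up makes explicit the finite-orbit argument that the paper leaves implicit, but there is no difference in method.
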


\begin{proof}
Let $Z'=Z \cap \cX_{[\lambda]}$. It is clear that $\cZ$ contains $\alpha_f(Z')$ for each automorphism $f$ of $\lambda$. Conversely, suppose that $x \in \cZ_{[\lambda]}$. Then $x \in \alpha_g(Z)$ for some correspondence $g \colon \lambda \dashrightarrow \lambda$. Write $g=(g_1 \colon \rho \psurj \lambda, g_2 \colon \rho \to \lambda)$. Then $\alpha_{g_1}(x)=\alpha_{g_2}(y)$ for some $y \in Z$. Each coordinate of $x$ must appear as a coordinate of $y$. Since $x$ has distinct coordinates, the same is true for $y$. Thus $y \in Z'$ and there is a permutation $f$ of $\langle \lambda \rangle$ such that $x_i=y_{f(i)}$ for each $i$. We claim that $f$ is an automorphism of the $\infty$-composition $\lambda$. Let $i \in \langle \lambda \rangle$. Given any $j \in g_1^{-1}(i)$, we have $x_i=y_{g_2(i)}$, and so $g_2(i)=f(i)$. It follows that $g_2$ maps all of $g_1^{-1}(i)$ to $f(i)$. Since $g_1^{-1}(i)$ has total weight $\lambda_i$ (as $g_1$ is a principal surjection), it follows that $\lambda_i \le \lambda_{f(i)}$ (since $g_2$ is a map of $\infty$-compositions). Since $f$ is bijective, we must have $\lambda_i=\lambda_{f(i)}$ for all $i$, and so $f$ is an automorphism of $\lambda$. This completes the proof.
\end{proof}

\subsection{$\End$-stable subsets}

Let $\lambda$ be an $\infty$-composition. We say that a set $Z \subset \cX_{\lambda}$ is {\bf $\End(\lambda)$-stable} if $\alpha_f(Z) \subset Z$ for every map $f \colon \lambda \to \lambda$ of $\infty$-compositions. We define $Z^e$ to be the Zariski closure of the set $\bigcup_{f \in \End(\lambda)} \alpha_f(Z)$. It is clear that $Z^e$ is $\End(\lambda)$-stable. Moreover, if $Z$ is $\End(\lambda)$-stable and Zariski closed then $Z^e=Z$. In particular, we see that $(Z^e)^e=Z^e$ for any $Z$.

%\begin{proposition} \label{prop:monad}
%Let $Z \subset \cX_{\lambda}$ be an arbitrary subset.
%\begin{enumerate}
%\item $Z^e$ is Zariski closed and $\End(\lambda)$-stable.
%\item If $Z$ is Zariski closed and $\End(\lambda)$-stable then $Z^e=Z$.
%\end{enumerate}
%\end{proposition}
%
%\begin{proof}
%Let $f \colon \mu \to \mu$ be a map of $\infty$-weightings. Suppose $z \in Z^e$. Our aim is to show that $f^*(z) \in Z^e$. 	For this it suffices to treat the case when $W$ is affine. Since closure commutes with finite unions, there exists a $g \in \End(\mu)$ such that $z$ is in the Zariski closure of $g^*(Z)$. So if $s$ is a global section that vanishes on $g^*(Z)$, then $s$ vanishes on $z$. But this shows that if $s(f^*(g^*(Z)) = 0$ then $s(f^*(z)) =0$. So $f^*(z)$ is in the Zariski closure of $(gf)^*(Z)$, completing the proof.   
%\end{proof} 

\begin{proposition} \label{prop:corr-preserves-closure}
Suppose $Z$ is $\End(\lambda)$-stable Zariski closed subset of $\cX_{\lambda}$. Let $f \colon \mu \dashrightarrow \lambda$ be a correspondence of $\infty$-compositions. Then $\alpha_f(Z)$ is Zariski closed in $\cX_{\mu}$.
\end{proposition}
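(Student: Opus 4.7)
The plan is to reduce the problem to a single geometric fact about injections. First, write the correspondence as $f = (f_1 \colon \rho \psurj \mu, f_2 \colon \rho \to \lambda)$, so that $\alpha_f(Z) = \alpha_{f_1}^{-1}(\alpha_{f_2}(Z))$. Since $f_1$ is a principal surjection of finite index sets, $\alpha_{f_1} \colon \cX_\mu \to \cX_\rho$ is a multi-diagonal, hence a closed immersion; therefore preimages of closed sets are closed, and it suffices to show $\alpha_{f_2}(Z)$ is Zariski closed in $\cX_\rho$. Next, factor $f_2 = g \circ h$ where $h \colon \rho \psurj \sigma$ is a principal surjection and $g \colon \sigma \to \lambda$ is an injection; then $\alpha_{f_2}(Z) = \alpha_h(\alpha_g(Z))$, and since $\alpha_h$ is again a closed immersion, we are reduced to showing that $\alpha_g(Z) \subset \cX_\sigma$ is Zariski closed.

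This reduced case is where the real content lies, since $\alpha_g$ is a coordinate projection, and projections of Zariski closed sets are not closed in general; so the $\End(\lambda)$-stability of $Z$ must be used in an essential way. The idea is to produce a ``retraction'' endomorphism of $\lambda$ whose image defines a closed linear slice of $\cX_\lambda$ on which $\alpha_g$ restricts to an isomorphism. Since $\sigma$ is an $\infty$-composition (as $\rho$ is, and principal surjections preserve total weight), we may choose $i_0 \in \langle \sigma \rangle$ with $\sigma_{i_0} = \infty$; then $\lambda_{g(i_0)} = \infty$ because $g$ is a map of $\infty$-compositions. I would define $e \colon \langle \lambda \rangle \to \langle \lambda \rangle$ to be the identity on $\mathrm{im}(g)$ and to send every $j \notin \mathrm{im}(g)$ to $g(i_0)$. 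The only nontrivial part of verifying that $e$ is a valid endomorphism of $\lambda$ is the inequality $e_*(\lambda)(g(i_0)) \le \lambda_{g(i_0)}$, which is automatic since $\lambda_{g(i_0)} = \infty$. This infinite-absorbing step is the reason the hypothesis that $\sigma$ is an $\infty$-composition is essential, and it is the main technical obstacle.

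Set $V = \alpha_e(\cX_\lambda)$, the closed linear subvariety of $\cX_\lambda$ cut out by the equations $z_j = z_{g(i_0)}$ for $j \notin \mathrm{im}(g)$. The restriction $\alpha_g|_V \colon V \to \cX_\sigma$ is an isomorphism, with inverse sending $(y_i)_{i \in \langle \sigma \rangle}$ to the point whose coordinate at $g(i)$ is $y_i$ and at any index outside $\mathrm{im}(g)$ is $y_{i_0}$. The core claim is that $\alpha_g(Z) = \alpha_g(Z \cap V)$: the inclusion $\supset$ is trivial, while for $\subset$, given $y = \alpha_g(x)$ with $x \in Z$, the point $\alpha_e(x)$ lies in $Z$ by $\End(\lambda)$-stability and in $V$ by construction, and using $e(g(i)) = g(i)$ one checks $\alpha_g(\alpha_e(x)) = \alpha_g(x) = y$. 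Since $Z \cap V$ is closed in $V$ and $\alpha_g|_V$ is an isomorphism, the image $\alpha_g(Z \cap V) = \alpha_g(Z)$ is Zariski closed in $\cX_\sigma$, completing the reduction and hence the proof.
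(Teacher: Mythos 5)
Your proof is correct and follows essentially the same strategy as the paper: reduce to the case of an injection by factoring through closed immersions, then exploit the presence of an infinite part in the image to build a retraction endomorphism $e$ of $\lambda$ whose $\End(\lambda)$-stability makes the projection of $Z$ manifestly closed. The only (cosmetic) difference is in the final step — the paper exhibits $\alpha_g(Z)$ as the preimage $\alpha_s^{-1}(Z)$ for a suitable section $s$ of $g$, whereas you exhibit it as $\alpha_g(Z \cap V)$ with $V = \alpha_e(\cX_\lambda)$ closed and $\alpha_g|_V$ an isomorphism; these are two phrasings of the same underlying retraction.
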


\begin{proof}
First suppose that $f$ is an injective map of $\infty$-compositions. Choose $i \in \im(f)$ such that $\lambda_i=\infty$. Define $g \colon \langle \lambda \rangle \to \langle \mu \rangle$ by
\begin{displaymath}
g(j) = \begin{cases}
f^{-1}(j) & \text{if $j \in \im(f)$} \\
f^{-1}(i) & \text{if $j \not\in \im(f)$.} \end{cases}
\end{displaymath}
Note that $g \circ f = \id_{[n]}$. Let $h \colon \langle \lambda \rangle \to \langle \lambda \rangle$ be the composition $f \circ g$; explicitly,
\begin{displaymath}
h(j) = \begin{cases}
j & \text{if $j \in \im(f)$} \\
i & \text{if $j \not\in \im(f)$.} \end{cases}
\end{displaymath}
Note that $h$ defines a map of $\infty$-compositions $\lambda \to \lambda$.

We claim that $\alpha_f(Z)=\alpha_g^{-1}(Z)$. Indeed, suppose $x \in \alpha_f(Z)$. Then $x=\alpha_f(y)$ for some $y \in Z$. We thus have $\alpha_g(x)=\alpha_g(\alpha_f(y))=\alpha_h(y) \in Z$ since $Z$ is stable under $\alpha_h$. Thus $x \in \alpha_g^{-1}(Z)$. Conversely, suppose $x \in \alpha_g^{-1}(Z)$. Since $g \circ f=\id_{[n]}$, it follows that $\alpha_f \circ \alpha_g$ is the identity. Thus $x=\alpha_f(\alpha_g(z)) \in \alpha_f(Z)$, as $\alpha_g(z) \in Z$. This proves the claim. It follows that $\alpha_f(Z)$ is closed.

Now suppose that $f$ is an arbitrary correspondence. Let $f=(f_1 \colon \rho \psurj \mu, f_2 \colon \rho \to \lambda)$ where $f_1$ is a principal projection. Factor $f_2$ as $g \circ h$ where $h$ is a principal projection and $g$ is an injection. Thus $\alpha_f(Z)=\alpha_{f_1}^{-1}(\alpha_h(\alpha_g(Z)))$. By the first paragraph, $\alpha_g(Z)$ is closed. Since $\alpha_h$ is a closed immersion, it follows that $\alpha_h(\alpha_g(Z))$ is closed. Thus $\alpha_f(Z)$ is closed.
\end{proof}

\subsection{Construction of C3 subvarieties}

For $Z \subset \cX_{\lambda}$ define $\Gamma_{\lambda}(Z)=\Gamma_{\lambda}^{\circ}(Z^e)$.  The following theorem describes the most important properties of this construction.

\begin{theorem}
\label{thm:gamma}
Let $Z \subset \cX_{\lambda}$ be an arbitrary subset. Then $\cZ=\Gamma_{\lambda}(Z)$ is a C3 subvariety of $\cX$. Furthermore, if $\cZ'$ is any C3 subvariety such that $\cZ'_{\lambda}$ contains $Z$ then $\cZ'$ contains $\cZ$.
\end{theorem}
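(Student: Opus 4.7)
The plan is to assemble the theorem from three ingredients already developed: Theorem~\ref{thm:gamma0} (which gives C2 and a universal property for $\Gamma^\circ_\lambda$), Proposition~\ref{prop:finite-union} (which rewrites $\Gamma^\circ_\lambda(Z)_\mu$ as a finite union over good correspondences), and Proposition~\ref{prop:corr-preserves-closure} (which preserves Zariski closedness provided the input is $\End(\lambda)$-stable and closed). The role of passing from $Z$ to $Z^e$ is precisely to put ourselves in the hypotheses of Proposition~\ref{prop:corr-preserves-closure}.

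First I would verify that $\cZ = \Gamma^\circ_\lambda(Z^e)$ is a C2 subset of $\cX$; this is immediate from Theorem~\ref{thm:gamma0} applied to the set $Z^e \subset \cX_\lambda$. The main step is to check that each $\cZ_\mu$ is Zariski closed in $\cX_\mu$. Here I apply Proposition~\ref{prop:finite-union} (with $Z$ replaced by $Z^e$) to get
\begin{displaymath}
\cZ_\mu \;=\; \bigcup_{\substack{f \colon \mu \dashrightarrow \lambda \\ \text{$f$ is good}}} \alpha_f(Z^e),
\end{displaymath}
a \emph{finite} union by the remark preceding that proposition. Since $Z^e$ is Zariski closed and $\End(\lambda)$-stable by construction, Proposition~\ref{prop:corr-preserves-closure} tells us that each $\alpha_f(Z^e)$ is Zariski closed in $\cX_\mu$. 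A finite union of closed sets is closed, so $\cZ_\mu$ is Zariski closed, and therefore $\cZ$ is a C3 subvariety.

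For the universal property, let $\cZ'$ be any C3 subvariety with $Z \subset \cZ'_\lambda$. By Proposition~\ref{prop:C2-map} applied to every map $f \colon \lambda \to \lambda$ of $\infty$-compositions (i.e., every element of $\End(\lambda)$), we have $\alpha_f(\cZ'_\lambda) \subset \cZ'_\lambda$, so $\cZ'_\lambda$ is $\End(\lambda)$-stable. Since $\cZ'_\lambda$ is also Zariski closed and contains $Z$, it contains the Zariski closure $Z^e$ of $\bigcup_{f \in \End(\lambda)} \alpha_f(Z)$. Now apply the universal property of $\Gamma^\circ_\lambda$ from Theorem~\ref{thm:gamma0}: since $\cZ'$ is a C2 subset whose $\lambda$-component contains $Z^e$, we get $\cZ = \Gamma^\circ_\lambda(Z^e) \subset \cZ'$, as required.

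The only genuinely delicate step is the closedness of $\cZ_\mu$, and the work lies entirely in the two ingredients we are citing. The finiteness in Proposition~\ref{prop:finite-union} is essential — without it the union could fail to be Zariski closed — and the passage from $Z$ to $Z^e$ is essential for the hypothesis of Proposition~\ref{prop:corr-preserves-closure}. Everything else is a routine invocation of the universal property already established for $\Gamma^\circ_\lambda$.
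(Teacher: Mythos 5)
Your proof is correct and follows essentially the same route as the paper: Theorem~\ref{thm:gamma0} for the C2 property and the universal property of $\Gamma^\circ_\lambda$, Proposition~\ref{prop:finite-union} for the finite good-correspondence decomposition, and Proposition~\ref{prop:corr-preserves-closure} for Zariski closedness. The only superficial difference is that where you invoke Proposition~\ref{prop:C2-map} to get $\End(\lambda)$-stability of $\cZ'_\lambda$, the paper cites closure under correspondences (Proposition~\ref{prop:corr-closed}); these are interchangeable here.
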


\begin{proof}
By Theorem~\ref{thm:gamma0}, we know that $\cZ$ is a C2 subset of $\cX$. It thus suffices to show that $\cZ_{\mu}$ is Zariski closed for all $\mu$. By Proposition~\ref{prop:finite-union}, we have
\begin{displaymath}
\cZ_{\mu}
= \Gamma^{\circ}_{\lambda}(Z^e)_{\mu}
= \bigcup_{\substack{f \colon \mu \dashrightarrow \lambda \\ \text{$f$ is good}}} \alpha_f(Z^e).
\end{displaymath}
Since each $\alpha_f(Z^e)$ is Zariski closed (Proposition~\ref{prop:corr-preserves-closure}) and the union is finite, we see that $\cZ_{\mu}$ is Zariski closed. Thus $\cZ$ is a C3 subvariety.

Now, suppose $\cZ'$ is a C3 subvariety such that $\cZ'_{\lambda}$ contains $Z$. Since $\cZ'$ is closed under correspondences (Proposition~\ref{prop:corr-closed}), we see that $\cZ'_{\lambda}$ contains $\bigcup_{f \in \End(\lambda)} \alpha_f(Z)$. Since $\cZ'_{\lambda}$ is also Zariski closed, it follows that it contains $Z^e$. Since $\cZ'$ is a C2 subset such that $\cZ'_{\lambda}$ contains $Z^e$, Theorem~\ref{thm:gamma0} shows that $\cZ'$ contains $\Gamma^{\circ}_{\lambda}(Z^e)=\cZ$. This completes the proof.
\end{proof}

We will require some additional properties of the $\Gamma_{\lambda}$ construction:

\begin{proposition} \label{prop:gamma-aux}
Let $Z \subset \cX_{[\lambda]}$ and put $\cZ=\Gamma_{\lambda}(Z)$. Then $\cZ_{[\lambda]}$ is the Zariski closure of $\bigcup_{f \in \Aut(\lambda)} \alpha_f(Z)$ in $\cX_{[\lambda]}$.
\end{proposition}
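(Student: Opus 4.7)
The plan is to combine Proposition~\ref{prop:gamma-aux3} with a careful analysis of how the ``extra'' endomorphisms of $\lambda$ interact with the open locus $\cX_{[\lambda]}$.

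First, by definition $\cZ = \Gamma_\lambda(Z) = \Gamma^\circ_\lambda(Z^e)$, and Proposition~\ref{prop:gamma-aux3} applied to $Z^e$ gives
\[
\cZ_{[\lambda]} \;=\; \bigcup_{f \in \Aut(\lambda)} \alpha_f\bigl(Z^e \cap \cX_{[\lambda]}\bigr).
\]
So the problem reduces to understanding $Z^e \cap \cX_{[\lambda]}$.

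Next, I would separate the endomorphisms of $\lambda$ into bijective and non-bijective ones. Since $\langle \lambda \rangle$ is finite, any $g \in \End(\lambda) \setminus \Aut(\lambda)$ is non-injective, so for any $z \in Z \subset \cX_{[\lambda]}$ the point $\alpha_g(z)$ has a repeated coordinate, i.e.\ lies in the closed subvariety $D := \cX_\lambda \setminus \cX_{[\lambda]}$. Writing $Y = \bigcup_{f \in \Aut(\lambda)} \alpha_f(Z)$ and $Y' = \bigcup_{g \in \End(\lambda) \setminus \Aut(\lambda)} \alpha_g(Z)$, we have $Y \subset \cX_{[\lambda]}$ and $Y' \subset D$. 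By definition $Z^e = \overline{Y \cup Y'} = \overline{Y} \cup \overline{Y'}$ (closure taken in $\cX_\lambda$), and since $D$ is closed, $\overline{Y'} \subset D$. Hence
\[
Z^e \cap \cX_{[\lambda]} \;=\; \overline{Y} \cap \cX_{[\lambda]},
\]
which is precisely the Zariski closure of $Y$ inside $\cX_{[\lambda]}$.

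Finally, I would observe that $Y$ is itself $\Aut(\lambda)$-stable: using $\alpha_f \circ \alpha_g = \alpha_{g \circ f}$ and the fact that $\Aut(\lambda)$ is a group, we get $\alpha_f(Y) = Y$ for each $f \in \Aut(\lambda)$. Since each $\alpha_f$ is an automorphism of $\cX_{[\lambda]}$ (hence a homeomorphism), it preserves the closure of $Y$ in $\cX_{[\lambda]}$. Therefore the union over $f$ in the displayed formula for $\cZ_{[\lambda]}$ collapses to a single term, giving
\[
\cZ_{[\lambda]} \;=\; \overline{Y}^{\,\cX_{[\lambda]}} \;=\; \overline{\bigcup_{f \in \Aut(\lambda)} \alpha_f(Z)}^{\,\cX_{[\lambda]}},
\]
which is the desired conclusion.

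The only step that requires any real care is the separation argument: one must justify that a non-bijective $g \in \End(\lambda)$ genuinely sends $\cX_{[\lambda]}$ into $D$ (immediate from non-injectivity and the definition of $\alpha_g$) and that closure commutes with finite unions (standard). Everything else is formal manipulation of Proposition~\ref{prop:gamma-aux3} together with the $\Aut(\lambda)$-invariance of $Y$.
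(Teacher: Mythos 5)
Your proof is correct and takes essentially the same route as the paper: apply Proposition~\ref{prop:gamma-aux3} to $Z^e$, observe that the non-automorphic endomorphisms of $\lambda$ carry $\cX_{[\lambda]}$ into the closed complement, and deduce that $Z^e \cap \cX_{[\lambda]}$ is the closure of the $\Aut(\lambda)$-orbit of $Z$ inside $\cX_{[\lambda]}$. You also make explicit a small point the paper leaves implicit, namely that this closure is $\Aut(\lambda)$-stable so the outer union over $\Aut(\lambda)$ collapses.
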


\begin{proof}
Let $E=\cX_{\lambda} \setminus \cX_{[\lambda]}$; this is a Zariski closure subset of $\cX_{\lambda}$. Also, let $Z'$ be the Zariski closure of $\bigcup_{f \in \Aut(\lambda)} \alpha_f(Z)$. If $f \colon \lambda \to \lambda$ is any endomorphism that is not an automorphism then $\alpha_f(\cX_{\lambda}) \subset E$. It follows that $Z' \subset Z^e \subset Z' \cup E$, and so $Z^e \cap \cX_{[\lambda]}=Z'$. Since $\cZ=\Gamma_{\lambda}^{\circ}(Z^e)$, the result now follows from Proposition~\ref{prop:gamma-aux3}.
\end{proof}

\begin{proposition} \label{prop:gamma-aux2}
Let $Z$ be an $\Aut(\lambda)$-irreducible closed subvariety of $\cX_{[\lambda]}$. Then $\cZ=\Gamma_{\lambda}(Z)$ is an irreducible C3-subvariety of $\cX$.
\end{proposition}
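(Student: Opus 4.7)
The plan is to combine Theorem~\ref{thm:gamma}, which already establishes that $\cZ = \Gamma_\lambda(Z)$ is a C3-subvariety, with a short irreducibility argument. So the only thing to prove is that $\cZ$ cannot be written as the union of two proper C3-subvarieties. Suppose, for contradiction, that $\cZ = \cZ^1 \cup \cZ^2$ where each $\cZ^i$ is a C3-subvariety strictly contained in $\cZ$.

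The first step is to identify $\cZ_{[\lambda]}$ explicitly. By Proposition~\ref{prop:gamma-aux}, $\cZ_{[\lambda]}$ is the Zariski closure in $\cX_{[\lambda]}$ of $\bigcup_{f \in \Aut(\lambda)} \alpha_f(Z)$. Since $Z$ is $\Aut(\lambda)$-stable (this is implicit in saying $Z$ is an $\Aut(\lambda)$-space) and already Zariski closed, this union collapses to $Z$ itself. Hence $\cZ_{[\lambda]} = Z$, and consequently $Z = \cZ^1_{[\lambda]} \cup \cZ^2_{[\lambda]}$.

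Next I would verify that each $\cZ^i_{[\lambda]}$ is a closed $\Aut(\lambda)$-stable subset of $Z$. Closedness is immediate: $\cZ^i_{[\lambda]} = \cZ^i_\lambda \cap \cX_{[\lambda]}$ and $\cZ^i_\lambda$ is Zariski closed in $\cX_\lambda$. For stability, note that any $f \in \Aut(\lambda)$ is in particular a principal surjection $\lambda \psurj \lambda$, so the C1 property of $\cZ^i$ gives $\alpha_f^{-1}(\cZ^i_\lambda) = \cZ^i_\lambda$; since $\alpha_f$ is a self-bijection of $\cX_\lambda$ preserving $\cX_{[\lambda]}$, it preserves $\cZ^i_{[\lambda]}$. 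Then $\Aut(\lambda)$-irreducibility of $Z$ forces $\cZ^i_{[\lambda]} = Z$ for some index, say $i=1$.

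In particular $\cZ^1_\lambda \supseteq \cZ^1_{[\lambda]} = Z$, so $\cZ^1$ is a C3-subvariety whose $\lambda$-th slice contains $Z$. By the universal property in Theorem~\ref{thm:gamma}, $\cZ^1 \supseteq \Gamma_\lambda(Z) = \cZ$, and combined with $\cZ^1 \subseteq \cZ$ we conclude $\cZ^1 = \cZ$, contradicting properness. The main conceptual step is the translation between the two irreducibility notions: a decomposition of $\cZ$ into C3-subvarieties restricts at the level $[\lambda]$ to a decomposition of $Z$ into $\Aut(\lambda)$-stable closed pieces, and conversely the universal property of $\Gamma_\lambda$ lets us recover $\cZ$ from any C3-subvariety whose $\lambda$-slice already contains $Z$. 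Everything else is a direct application of tools already developed.
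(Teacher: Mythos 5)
Your proof is correct and follows essentially the same route as the paper's: reduce the question of irreducibility to the $[\lambda]$-slice via Proposition~\ref{prop:gamma-aux}, use $\Aut(\lambda)$-irreducibility of $Z$ to pin down one component there, and then invoke the minimality clause of Theorem~\ref{thm:gamma} to pull that back to a containment of C3 subvarieties. You add a couple of routine verifications that the paper leaves implicit (that the $\Aut(\lambda)$-translates of $Z$ all equal $Z$, and that the restrictions $\cZ^i_{[\lambda]}$ are closed and $\Aut(\lambda)$-stable), which are worthwhile to spell out; otherwise the argument is the same.
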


\begin{proof}
Suppose $\cZ$ can be written as a union $\cZ^1 \cup \cZ^2$ of C3 subvarieties of $\cX$.  By Proposition~\ref{prop:gamma-aux}, we have $\cZ_{[\lambda]} = Z$; on the other hand, $\cZ_{[\lambda]}=\cZ^1_{[\lambda]} \cup \cZ^2_{[\lambda]}$. Since $Z$ is $\Aut(\lambda)$-irreducible, we have $\cZ^1=Z$ or $\cZ^2=Z$, say the former. Since $\cZ^1$ is a C3 subvariety containing $Z$ it must contain $\cZ$ by Theorem~\ref{thm:gamma}, and so $\cZ=\cZ^1$, which completes the proof.
\end{proof}

\begin{remark}
We note that if $Z$ is $\End(\lambda)$-stable closed subvariety of $\cX_{\lambda}$, then $\Gamma_{\lambda}(Z)_{\lambda}$ may strictly contain $Z$. To see this, let $W = \Spec(\bC)$,  $\lambda = (\infty, 2, 1, 1)$, and let $Z$ be the $\End(\lambda)$-closure of $\{ (1,2,3,3) \}$. Then it is easy to see that $(1, 3, 2,2)$ is not in $Z$. On the other hand, let $f = (f_1, f_2)$ be the correspondence where $f_1 \colon (\infty, 1, 1, 1, 1) \to \lambda$ is the principal surjection which combines the second and the third coordinates and $f_2 \colon (\infty, 1, 1, 1, 1) \to \lambda$ is the principal surjection which combines the third and the fourth coordinates. Now it is easy to see that $(1, 3, 2,2) \in \alpha_f(\{ (1,2,3,3) \})$. Thus $\Gamma_{\lambda}(Z)_{\lambda}$ strictly contain $Z$.
\end{remark}

%\begin{remark}
%\label{rem:generalized-weightings}
%We note here that the theorem above also holds when we replace $\infty$-weightings with generalized weightings. This can be seen by checking that our proof below doesn't use the fact that weightings take the value $\infty$ at some point. This remark is crucial for our main result in \S \ref{sec:correspondence}.
%\end{remark}

\section{Classification of subvarieties of $\cX$}

\textit{We assume $A$ is noetherian throughout this section}

\subsection{Bounded subvarieties of $\cX$}

Let $\cZ$ be a subset of $\cX$. We say that $\cZ$ is {\bf bounded} if there is some $n$ such that $\ell(\lambda) \le n$ for all $\lambda$ with $\cZ_{[\lambda]}$ non-empty.  For an $\infty$-composition $\lambda$, let $\cX^{\le n}_{\lambda} \subset \cX_{\lambda}$ be the subset where the coordinates take at most $n$ distinct values. This is a Zariski closed subset, as it is defined by the $n$-variable discriminants. One easily sees that $\cX^{\le n}$ is a C3-subvariety of $\cX$. Essentially by definition, a subset of $\cX$ is bounded if and only if it is contained in $\cX^{\le n}$ for some $n$.

The purpose of this section is to establish the following result:

\begin{proposition} \label{prop:bd}
Let $\cZ$ be a C3 subvariety of $\cX$. Then we have a decomposition $\cZ=\cZ_1 \cup \cZ_2$ where $\cZ_1=\cX_{W'}$ for some closed subset $W'$ of $W$ and $\cZ_2$ is a bounded C3 subvariety.
\end{proposition}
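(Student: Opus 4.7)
\textit{Plan.} I would imitate the proof of Proposition~\ref{prop:decomposition-fX} by noetherian induction on $W$. If $\cZ$ is bounded, take $W' = \emptyset$ and $\cZ_2 = \cZ$. Otherwise, the goal is to produce a non-nilpotent $a \in A$ and an integer $n \ge 0$ such that $\cZ \subseteq \cX_{V(a)} \cup \cX^{\le n}$. Granted such a pair, write
\[
\cZ = (\cZ \cap \cX_{V(a)}) \cup (\cZ \cap \cX^{\le n}).
\]
Since $\cX^{\le n}$ is a C3 subvariety (as noted in the subsection) and intersections of C3 subvarieties are C3 (C1 and C2 are clearly preserved by intersection, as is Zariski-closedness), the piece $\cZ \cap \cX^{\le n}$ is a bounded C3 subvariety. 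The other piece is a C3 subvariety supported over the proper closed subset $V(a) \subsetneq W$, to which the inductive hypothesis applies, yielding $\cZ \cap \cX_{V(a)} = \cX_{W'} \cup \cZ_2'$ with $W' \subseteq V(a)$ closed and $\cZ_2'$ bounded. Combining gives $\cZ = \cX_{W'} \cup \cZ_2$ for $\cZ_2 = \cZ_2' \cup (\cZ \cap \cX^{\le n})$, as required.

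The bulk of the work is producing the pair $(a,n)$. I would reduce this, via a Chevalley/spreading-out argument, to the following fiber-wise claim: over any field $K$, a C3 subvariety $\cZ$ of $\cX_K$ is either bounded or equal to $\cX_K$. Given this, for each generic point $\eta$ of $W$ with $\cZ_\eta \ne \cX_{k(\eta)}$ the fiber is bounded by some $n_\eta$, so with $\lambda_0 = (\infty,1^{n_\eta})$ we have $(\cZ_{[\lambda_0]})_\eta = \emptyset$; since $\cZ_{[\lambda_0]}$ is locally closed in $\cX_{\lambda_0} = \bA^{n_\eta+1}_W$, Chevalley's theorem applied to its projection to $W$ exhibits $a \in A$ non-nilpotent with $(\cZ_{[\lambda_0]})|_{D(a)} = \emptyset$, which is equivalent to $\cZ|_{D(a)} \subseteq \cX^{\le n_\eta}|_{D(a)}$. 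The remaining generic points $\eta$ (with $\cZ_\eta = \cX_{k(\eta)}$) give closures contained in $W'$ via closedness of $\cZ$, and are absorbed into the $\cX_{W'}$ summand; the argument is then iterated (staying within the noetherian induction) to handle the other irreducible components of $W$.

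The key lemma over a field is where the real content lies. To prove it I would use the wqo structure of $(\wt{\Lambda}, \preceq)$ (Proposition~\ref{prop:min-are-finite}): the set $\{\mu : \cZ_\mu \ne \cX_\mu\}$ is upward-closed in $\preceq$, since equality $\cZ_\mu = \cX_\mu$ is preserved under combining parts (by C1 applied to the principal surjection) and under decreasing weights (by Proposition~\ref{prop:C2-descending}). If nonempty, this set therefore has finitely many $\preceq$-minimal elements $\mu_0^1, \ldots, \mu_0^k$. Choose nonzero polynomials $f^j$ vanishing on $\cZ_{\mu_0^j}$. Using Proposition~\ref{prop:C2-map} for the many injections $g\colon \mu_0^j \hookrightarrow (\infty,1^r)$ forces $f^j(\alpha_g(x)) = 0$ for every $x \in \cZ_{(\infty,1^r)}$; after averaging over the $\Aut$-action we may assume $f^j$ is symmetric in its finite-weight arguments, so it must vanish at every subtuple indexed by an appropriate subset of $[r]$. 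A pigeonhole/Vandermonde-style argument then shows that for $r$ sufficiently large, these simultaneous vanishing conditions are incompatible with $x$ having $r+1$ distinct coordinates; hence $\cZ_{[(\infty,1^r)]} = \emptyset$ for large $r$, giving the desired boundedness.

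The main obstacle will be the combinatorial step in the key lemma, namely verifying that the vanishing conditions extracted from the $\mu_0^j$ cannot all be satisfied on a distinct-coordinate tuple in $\cZ_{(\infty,1^r)}$ for $r \gg 0$. Everything else is bookkeeping around the induction and the translation between $\cZ$ and the varieties $\cZ_{[\lambda]}$.
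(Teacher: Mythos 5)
Your high-level architecture is genuinely different from the paper's. The paper introduces the \emph{ever-present locus} $W' \subset W$ (points $w$ with $\cX_{\{w\}} \subset \cZ$), shows it is closed via Lemmas~\ref{lem:bd3} and~\ref{lem:bd4}, and proves directly that a C3 subvariety with no ever-present points is bounded (Lemma~\ref{lem:bd2}); that lemma is obtained, for unbounded $\cZ$, by passing to the inverse limit of the ideals $\fa_I$ defining $\cZ'_I$ after localizing away from the discriminants and applying the extension--contraction result, Proposition~\ref{prop:extension-contraction}. You instead run noetherian induction on $W$ and reduce to the generic fiber via Chevalley, banking everything on the ``key lemma over a field'' (a proper C3 subvariety of $\cX_K$ is bounded), which is precisely the $W = \Spec K$ case of Lemma~\ref{lem:bd2}. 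The two arguments therefore converge there, and the Chevalley/spreading-out wrapper is a reasonable (if somewhat heavier) alternative to the paper's Lemma~\ref{lem:bd4}; note, though, that when $W$ has several irreducible components you cannot naively multiply the $a_\eta$ (the product may be nilpotent), and a component whose generic fiber is full must be absorbed into $W'$ via something like Lemma~\ref{lem:bd3} — both repairable, but needing care.

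The serious gap is in your sketch of the key lemma, and I do not believe the wqo-plus-Vandermonde argument closes it. You want to extract nonzero polynomials $f^j$ vanishing on $\cZ_{\mu_0^j}$ for the $\preceq$-minimal elements $\mu_0^j$ of $S = \{\mu : \cZ_\mu \neq \cX_\mu\}$, pull them back along injections $\mu_0^j \hookrightarrow (\infty,1^r)$, and derive a contradiction. But such injections typically do not exist in the category of compositions: any finite part of $\mu_0^j$ of weight $>1$ must land in the single $\infty$-slot of $(\infty,1^r)$, so if there is more than one such part there is no injection, and if there is exactly one the resulting $\alpha_g$ sees only the $\infty$-coordinate. (You can in fact show a minimal $\mu_0^j$ has exactly one infinite part, using Proposition~\ref{prop:continuity}, but that does not remove finite parts of size $>1$.) Even in the best case $\mu_0^j = (\infty,1^s)$, the interpolation step delivers only that the $\infty$-coordinate $x_1$ of any $x \in \cZ_{(\infty,1^r)}$ is confined to a finite set — which is just the statement $\cZ_{(\infty)} \neq \bA^1$ again, not a contradiction with $\cZ_{[(\infty,1^r)]} \neq \emptyset$. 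You flag this step yourself as ``where the real content lies,'' and I agree, but the content is not merely combinatorial: the paper's proof of Lemma~\ref{lem:bd2} genuinely uses the algebra in Proposition~\ref{prop:extension-contraction} (whose substance is the discriminant computation of Proposition~\ref{prop:discriminant}) to show that the limiting ideal extends from $A[u]$, and it is this that produces a full fiber inside $\cZ$. Without an input of that kind your sketch stalls at exactly the place the paper calls in the discriminants.
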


Here $\cX_{W'}$ denotes the subset of $\cX$ given by $\cX_{W',\lambda}=\bA_{W'}^{\langle \lambda \rangle}$. We require several lemmas. We say that a $K$-point $w \in W(K)$ is {\bf ever-present} in $\cZ$ if $\cZ$ contains $\cX_{\{w\}}$. This notion is invariant under field extension, and thus we can speak of ever-present points of $\cZ$.

\begin{lemma} \label{lem:bd1}
	Let $K$ be a field, and let $w$ be a $K$-point of $W$. Let $\cZ$ be a C3 subvariety of $\cX$. Suppose that there is an element $a \in K$ such that $\cZ_{(\infty,1^n)}(K)$ contains the $K$-points in $\{(w,a)\} \times \bA^n$ for all $n$. Then $w$ is ever-present in $\cZ$. 
\end{lemma}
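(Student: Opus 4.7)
The plan is to show directly that $\cZ_\lambda(K)$ contains $\{w\} \times K^{\ell(\lambda)}$ for every $\infty$-composition $\lambda$; by the field-extension invariance of ever-presence noted in the definition, this will be enough to conclude $\cX_{\{w\}} \subset \cZ$.

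First I would use the C1 property to propagate the hypothesis to any $\infty$-composition $\mu$ with a unique infinite part. Letting $n$ denote the total finite weight of such a $\mu$, there is a canonical principal surjection $p \colon (\infty, 1^n) \psurj \mu$ that sends the $\infty$-index to the $\infty$-index of $\mu$ and bundles the $n$ unit indices into the prescribed finite parts. The preimage under $\alpha_p$ of $\{(w,a)\} \times \bA^n$ is exactly $\{(w,a)\} \times \bA^{\ell(\mu)-1}$, since $\alpha_p$ just repeats each coordinate according to how $p$ bundles unit indices. The C1 identity $\cZ_\mu = \alpha_p^{-1}(\cZ_{(\infty,1^n)})$ then gives
\[
\cZ_\mu(K) \supset \{(w,a)\} \times K^{\ell(\mu)-1}.
\]

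Next I would apply the C2 property in its approximability formulation. Reorder $\lambda$ so that its $s$ infinite parts come first, writing $\lambda = (\infty, \ldots, \infty, m_1, \ldots, m_{r-s})$, and fix an arbitrary point $x = (w, c_1, \ldots, c_r) \in \cX_\lambda(K)$ over $w$. For each $N \ge \max_j m_j$, set
\[
\mu_N = (\infty, \underbrace{N, \ldots, N}_{s}, m_1, \ldots, m_{r-s}),
\]
an $\infty$-composition of length $r+1$ with a unique infinite part. By the previous paragraph, the point $\tilde x = (w, a, c_1, \ldots, c_r) \in \cX_{\mu_N}(K)$ lies in $\cZ_{\mu_N}$. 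Let $f \colon \langle \lambda \rangle \to \langle \mu_N \rangle$ be the injection $i \mapsto i+1$ that skips the first (infinite) index of $\mu_N$. The size bound $\mu_{N,f(i)} \ge \min(\lambda_i, N)$ is immediate from the construction, and by design $\alpha_f(\tilde x) = x$. Hence $x$ is $N$-approximable for all sufficiently large $N$, and therefore approximable by $\cZ$, so the C2 property forces $x \in \cZ_\lambda$.

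The key conceptual point, and where a naive first attempt can get stuck, is that the hypothesis rigidly pins the infinite coordinate of every available witness in $\cZ_{\mu_N}$ to equal $a$. The maneuver that unblocks this is to introduce enough auxiliary $N$-sized parts in $\mu_N$ so that, under the approximating injection $f$, the forced $a$-coordinate sits at the one index that $\alpha_f$ projects away, while the infinite parts of $\lambda$ pick up their desired (arbitrary) values $c_1, \ldots, c_s$ from the $N$-parts of $\mu_N$. Once this arrangement is set up the remaining verifications are routine, and the passage from the $K$-point statement to ever-presence is handled by the field-extension invariance built into the definition.
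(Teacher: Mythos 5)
Your proof is correct and follows the paper's two-stage strategy: use C1 to push the $(\infty,1^n)$ hypothesis to every $\infty$-composition with a single infinite part, then use C2 to handle compositions with several infinite parts by adding an auxiliary index that absorbs the forced value $a$. The only (minor) difference is organizational: the paper reaches the general case in two steps, first invoking Proposition~\ref{prop:continuity} to convert finite $N$-parts into infinite parts and then Proposition~\ref{prop:C2-map} to project away the extra $a$-coordinate, whereas you fold both into a single direct application of the approximability definition; this is a clean repackaging of the same ideas rather than a genuinely different route.
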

\begin{proof}
	Suppose $\lambda=(\lambda_1,\lambda_2,\ldots,\lambda_r)$ is an $\infty$-partition with $\lambda_1=\infty$ and $\lambda_i<\infty$ for $i>1$. We can then find a principal surjection $(\infty,1^n) \to \lambda$ with $n=\lambda_2+\cdots+\lambda_r$. Since $\cZ$ is C1, we have $\cZ_{\lambda}=\cZ_{(\infty,1^n)} \cap \cX_{\lambda}$, and so $\cZ_{\lambda}(K)$ contains the $K$-points in $\{(w,a)\} \times \bA^{r-1}$.
	
	Now suppose that $\lambda=(\lambda_1, \ldots, \lambda_r)$ is an $\infty$-partition with $\lambda_1=\cdots=\lambda_s=\infty$ and $\lambda_{s+1}<\infty$. Let $\lambda^n=(\infty, n, \ldots, n, \lambda_{s+1}, \ldots, \lambda_r)$, so that $\lambda$ is the limit of the $\lambda^n$'s. Since $\cZ$ is C2, we have $\cZ_{\lambda}=\bigcap_{n \ge 1} \cZ_{\lambda^n}$ (Proposition~\ref{prop:continuity}). Since each $\cZ_{\lambda^n}(K)$ contains the $K$-points in $\{(w,a)\} \times \bA^{r-1}$, so does $\cZ_{\lambda}$.
	
	Keeping $\lambda$ as above, let $\mu=(\infty, \lambda_1, \ldots, \lambda_r)$. We have a natural injection $\lambda \to \mu$. By Proposition~\ref{prop:C2-map}, we see that $\cZ_{\lambda}$ contains the image of $\cZ_{\mu}$ under the projection $\cX_{\mu} \to \cX_{\lambda}$. However, any $K$-point of the form $(w, b_1, \ldots, b_r) \in \cX_{\lambda}(K)$ is the image of the point $(w, a,b_1,\ldots,b_r) \in \cX_{\mu}(K)$, which belongs to $\cZ_{\mu}(K)$. We thus see that $\cZ_{\lambda}(K)$ contains the $K$-points in $\{w\} \times \bA^{\ell(\lambda)}$, as claimed.
\end{proof}

\begin{lemma} \label{lem:bd2}
Let $\cZ$ be a C3 subvariety of $\cX$ with no ever-present point. Then $\cZ$ is bounded.
\end{lemma}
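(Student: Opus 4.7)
The plan is to prove the contrapositive by connecting the hypothesis to the $\fS$-ideal $I_\infty \subset R' := A[\xi_0][\xi_i]_{i\ge 1}$ (with $\fS$ permuting the $\xi_i$ for $i \ge 1$) defined by $I_\infty = \bigcup_n I(D_n)$, where $D_n := \cZ_{(\infty,1^n)} \subset W \times \bA^{n+1}$ has coordinates $(\xi_0, \xi_1, \ldots, \xi_n)$; the inclusions $I(D_n) \subset I(D_{n+1})$ come from Proposition~\ref{prop:C2-map} applied to the obvious injection, and $\fS$-stability from C1. For each $n \ge 1$, let $D_n^{\flat} \subset W \times \bA^1$ be the Zariski closed locus of pairs $(w, a)$ over which the fiber of $D_n$ is all of $\bA^n$; concretely $D_n^{\flat} = V(J_n)$ where $J_n \subset A[\xi_0]$ is the ideal generated by the $A[\xi_0]$-coefficients (as polynomials in $\xi_1, \ldots, \xi_n$) of elements of $I(D_n)$. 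These form a descending chain, and Lemma~\ref{lem:bd1} applied to any putative point of $\bigcap_n D_n^{\flat}$ would yield an ever-present point, contradicting the hypothesis. Noetherianity of $W \times \bA^1$ then gives $D_N^{\flat} = \emptyset$, i.e., $J_N = A[\xi_0]$, for some $N$.

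The next goal is to deduce that $\Delta_n \in I_\infty$ for some $n$. Let $S' := R'[(\xi_i - \xi_j)^{-1}]_{i \ne j,\, i,j \ge 1}$; the analogue of Proposition~\ref{prop:extension-contraction} with base ring $A[\xi_0]$ (verbatim proof using Proposition~\ref{prop:discriminant}) gives a bijection between ideals of $A[\xi_0]$ and $\fS$-ideals of $S'$. I claim that every $A[\xi_0]$-coefficient of every $f \in I_\infty$ lies in the contraction $\wt{J}' := I_\infty \cdot S' \cap A[\xi_0]$: for each variable $\xi_k$ appearing in $f$, the $\fS$-translates of $f$ obtained by swapping $\xi_k$ with sufficiently many fresh indices yield a system of elements of $I_\infty$ whose Vandermonde matrix has determinant a product of $(\xi_i - \xi_j)$ factors, invertible in $S'$; inverting extracts the coefficients with respect to $\xi_k$ as elements of $I_\infty \cdot S'$, and since $I_\infty \cdot S'$ is itself $\fS$-stable one can iterate this procedure over the finitely many variables of $f$, ending with a coefficient in $A[\xi_0] \cap I_\infty \cdot S' = \wt{J}'$. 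Combined with $J_N \subset \wt{J}'$ and $J_N = A[\xi_0]$ this forces $\wt{J}' = A[\xi_0]$, hence $I_\infty \cdot S' = S'$. Clearing denominators produces an element $D \in I_\infty$ that is a unit of $S'$, so $D = u \cdot \prod (\xi_i - \xi_j)^{e_{ij}}$ with $u \in A[\xi_0]^{\times}$. The iterated leading coefficient of $D$ (in the sense of the proof of Proposition~\ref{prop:discriminant}) is $\pm u$ and hence a unit, so that proposition applied to $D \in I_\infty$ yields $\Delta_n \in I_\infty$ for some $n$.

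Finally, $\Delta_n \in I(D_m)$ for every sufficiently large $m$, so $\Delta_n(\xi_1, \ldots, \xi_n)$ vanishes on $D_m$. The $\fS_m$-symmetry of $D_m$ (from C1) then forces $\Delta_n(\xi_J) = 0$ on $D_m$ for every $n$-subset $J \subset \{1, \ldots, m\}$, bounding the number of distinct values among $\xi_1, \ldots, \xi_m$ on $D_m$ by $n-1$, and therefore the total width (including $\xi_0$) by $n$; the C1 diagonal embedding handles smaller $m$, and Proposition~\ref{prop:C2-descending} applied to the comparison $(\infty, 1^{\ell(\lambda)-1}) \le \lambda$ upgrades the bound to $\cZ_\lambda \subset \cX^{\le n}_\lambda$ for every $\infty$-partition $\lambda$, proving $\cZ \subset \cX^{\le n}$ and hence that $\cZ$ is bounded. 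The main obstacle is the middle paragraph's Vandermonde extraction: each step shifts coefficients from $I_\infty$ into the larger ideal $I_\infty \cdot S'$, so one must iterate through all variables using $\fS$-translates to fresh indices while verifying that the argument remains inside $I_\infty \cdot S'$ throughout; the reduction to $D_N^{\flat} = \emptyset$ and the final discriminant/symmetry calculation are then comparatively routine.
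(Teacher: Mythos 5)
Your proof is correct, but it takes a genuinely different route from the paper's. Both use the same raw materials---the chain $D_n=\cZ_{(\infty,1^n)}$ over $W\times\bA^1$, Lemma~\ref{lem:bd1}, and the discriminant / extension--contraction machinery over the base $A[\xi_0]$---but they are organized differently. The paper argues by contrapositive: assuming $\cZ$ unbounded, each $\cZ'_I=\cZ_{\lambda_I}\cap\cX'_I$ is non-empty, so the direct-limit ideal $\fa\subset S$ is proper; Proposition~\ref{prop:extension-contraction} produces a proper ideal $\fb\subset A[u]$, a point of $V(\fb)$ is chosen, and Lemma~\ref{lem:bd1} then yields an ever-present point. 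You instead argue directly: the no-ever-present hypothesis plus Noetherianity of $A[\xi_0]$ forces some $D_N^\flat=V(J_N)$ to be empty, so $J_N=A[\xi_0]$; the Vandermonde extraction (with fresh indices and $\fS$-translates, and using that $I_\infty\cdot S'$ remains $\fS$-stable so the iteration stays inside it) pushes $J_N$ into the contraction $\widetilde J'=(I_\infty S')\cap A[\xi_0]$, giving $I_\infty S'=S'$; clearing denominators puts a product of $(\xi_i-\xi_j)$'s in $I_\infty$, whose iterated leading coefficient over $A[\xi_0]$ is a unit, so Proposition~\ref{prop:discriminant} yields $\Delta_n\in I_\infty$; finally symmetry of each $\cZ_{(\infty,1^m)}$, the C1 diagonal embeddings for small $m$, and Proposition~\ref{prop:C2-descending} bound every $\cZ_\lambda$ inside $\cX_\lambda^{\le n}$. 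Your version is longer---the Vandermonde step and the terminal width computation do work the paper sidesteps via the contrapositive---but in exchange it is more constructive, exhibiting an explicit discriminant and thus an explicit $n$ with $\cZ\subset\cX^{\le n}$. Both arguments are sound.
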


\begin{proof}
We prove the contrapositive. Thus suppose that $\cZ$ is unbounded, and we will produce an ever-present point. It suffices to treat the case where $W$ is affine, so say $W=\Spec(A)$.

We first claim that $\cZ_{[\infty,1^n]}$ is non-empty for all $n \ge 0$. Indeed, let $n$ be given. Since $\cZ$ is unbounded, there is an $\infty$-composition $\lambda$ with $\ell(\lambda) \ge n$ such that $\cZ_{[\lambda]}$ is non-empty. Pick an injection $f \colon (\infty,1^n) \to \lambda$, which is possible. Then $\alpha_f$ maps $\cZ_{[\lambda]}$ into $\cZ_{[\infty,1^n]}$, and so the latter is non-empty, as claimed.

Put $W'=W \times \bA^1$. Let $I$ be a finite subset of $[\infty]$, and let $\lambda_I$ be the $\infty$-composition on $I \cup \{\ast\}$ that is~1 on $I$ and $\infty$ on $\ast$. Put $\cX_I=\cX_{\lambda_I}$, which we identify with $W' \times \bA^I$, and let $\cX'_I$ be the open subvariety $W' \times \bA^{[I]}$. Put $\cZ_I=\cZ_{\lambda_I}$ and $\cZ'_I=\cZ_I \cap \cX'_I$. Let $R_I=A[u,\xi_i]_{i \in I}$ and $S_I=R_I[(\xi_i-\xi_j)^{-1}]$. We identify $A[u]$, $R_I$, and $S_I$ with the coordinate rings of $W'$, $\cX_I$, and $\cX'_I$. Let $\fa_I \subset S_I$ be the ideal corresponding to $\cZ'_I$. Since $\cZ'_I$ is non-empty, the ideal $\fa_I$ is not the unit ideal.

Suppose that $J$ is a second finite subset of $[\infty]$ containing $I$. We have a natural injection of $\infty$-compositions $\lambda_I \to \lambda_J$ that induces a projection map $\cX_J \to \cX_I$ which carries $\cX'_J$ into $\cX'_I$, and hence $\cZ'_J$ into $\cZ'_I$. The map $\cX_J \to \cX_I$ corresponds to the canonical inclusion of rings $R_I \to R_J$. We thus see that, under this inclusion, $\fa_I$ is contained in $\fa_J$.

Let $R=A[u,\xi_i]_{i \in [\infty]}$ and let $S=R[(\xi_i-\xi_j)^{-1}]_{i \neq j}$. We identify $R$ and $S$ with the direct limits of $R_I$ and $S_I$, taken over all finite subsets $I$ of $[\infty]$. Let $\fa \subset S$ be the direct limit of the $\fa_I$'s. This is not the unit ideal: indeed, if $\fa$ contained~1 then some $\fa_I$ would contain~1, which it does not. By Proposition~\ref{prop:extension-contraction}, we see that $\fa$ is the extension of the ideal $\fb=\fa^c$ of $A[u]$.

Let $(w,a)$ be a $K$-point of $V(\fb) \subset W \times \bA^1$. Then $(w,a)$ determines a map $A[u] \to K$ whose kernel contains $\fb$.  Let $b=(b_1,b_2,\ldots)$ be any sequence of distinct elements of $K$. We then have a homomorphism $\phi_b \colon S \to \bk$ given by mapping an element of $A[u]$ to its image in $K$ and $\xi_i$ to $b_i$, and the kernel of $\phi_b$ contains $\fa$. It follows that $\ker(\phi_b \vert_{S_I})$ contains $\fa_I$, and so $(a,b_1,\ldots,b_n)$ is a $K$-point of $\cZ'_I$. Since this holds for all choices of distinct $b_1, \ldots, b_n$, we see that the set of $K$-points in $\cZ_I$ contains $\{(w,a)\} \times \bA^{[I]}$, and thus contains its closure $\{(w,a)\} \times \bA^I$. By Lemma~\ref{lem:bd1}, we see that  $w$ is an ever-present  $K$-point in $\cZ$. Thus $\cZ$ has an ever-present point.
\end{proof}

\begin{lemma} \label{lem:bd3}
Let $Z$ be a closed subvariety of $W \times \bA^n$. Let $W'$ be the set of points $w \in W$ such that $Z$ contains $\{w\} \times \bA^n$. Then $W'$ is closed.
\end{lemma}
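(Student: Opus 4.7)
The plan is to reduce to the affine case and then exhibit $W'$ as the vanishing locus of an explicit ideal in $\cO_W$. Since the conclusion is local on $W$, I would first assume $W=\Spec(A)$, so that $Z$ is cut out by an ideal $I\subset A[u_1,\ldots,u_n]$. Every $f\in I$ then has a unique expansion $f=\sum_{\alpha\in\bN^n} c_\alpha(f)\,u^\alpha$ with $c_\alpha(f)\in A$, and I would define
\begin{displaymath}
J=\langle\, c_\alpha(f) \mid f\in I,\ \alpha\in\bN^n\,\rangle\subset A.
\end{displaymath}
The central claim is that $W'=V(J)$, which immediately gives closedness.

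To verify the claim, I would argue pointwise. Fix $w\in W$ with residue field $\kappa=\kappa(w)$ and corresponding prime $\fp_w\subset A$. The underlying space of $\{w\}\times\bA^n$ is $\bA^n_\kappa$, and the set-theoretic intersection with $Z$ equals the underlying space of the scheme-theoretic fiber $V(I\cdot\kappa[u_1,\ldots,u_n])$. Since $\bA^n_\kappa$ is reduced, this fiber fills $\bA^n_\kappa$ if and only if $I\cdot\kappa[u_1,\ldots,u_n]=0$. Because $\{u^\alpha\}$ is a $\kappa$-basis of $\kappa[u_1,\ldots,u_n]$, vanishing of $I\cdot\kappa[u]$ is equivalent to $c_\alpha(f)\mapsto 0$ in $\kappa$ for every $f\in I$ and every $\alpha$, i.e.\ to $c_\alpha(f)\in\fp_w$ for all such $f,\alpha$, i.e.\ to $J\subset\fp_w$. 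Hence $w\in W'\iff w\in V(J)$, as desired.

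There is essentially no obstacle here; the result is a standard consequence of the freeness of the polynomial ring. The only subtlety worth flagging is the interpretation of ``$Z$ contains $\{w\}\times\bA^n$'' as a set-theoretic statement about scheme-theoretic points: this is where reducedness of $\bA^n_\kappa$ is used, so that full set-theoretic containment forces the ideal cutting out the fiber to be zero (not merely contained in the nilradical). An alternative route, if preferred, is to observe that $p\colon W\times\bA^n\to W$ is flat of finite presentation and hence open, so $W\setminus W'=p\bigl((W\times\bA^n)\setminus Z\bigr)$ is open; but the coefficient-ideal approach is more elementary and keeps the argument in the algebraic spirit of the paper.
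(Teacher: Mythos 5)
Your proof is correct and takes essentially the same approach as the paper: reduce to the affine case, expand the defining equations of $Z$ in the $\bA^n$-coordinates, and observe that $W'$ is cut out by the ideal of coefficients. The only differences are cosmetic — you work with the full ideal $I$ rather than a finite set of generators and flag the reducedness of $\bA^n_\kappa$ explicitly, whereas the paper states the same argument more informally.
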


\begin{proof}
It suffices to treat the case where $W$ is affine, so say $W \subset \bA^m$. Thus $Z$ is a closed subvariety of $\bA^m \times \bA^n$. Let $f_i(\xi_1, \ldots, \xi_m, \eta_1, \ldots, \eta_n)=0$, for $1 \le i \le r$, be equations definining $Z$. If $x=(x_1, \ldots, x_m)$ is a point of $\bA^m$, then $Z$ contains $\{x\} \times \bA^n$ if and only if the polynomials $f_i(x_1, \ldots, x_m, \eta_1, \ldots, \eta_n)$ vanish identically, i.e., all coefficients vanish. Thus this locus is defined by a system of polynomial equations, and thus closed.
\end{proof}

\begin{lemma} \label{lem:bd4}
Let $\cZ$ be a C3 subvariety of $\cX$ and let $W' \subset W$ be the set of ever-present points. Then $W'$ is closed.
\end{lemma}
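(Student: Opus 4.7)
The plan is to express $W'$ as a countable intersection of closed subsets, one for each $n \ge 0$, coming from the finite-dimensional varieties $\cZ_{(\infty,1^n)}$. The engine for closedness is Lemma~\ref{lem:bd3}, and the engine for passing from the test condition on the $(\infty,1^n)$'s back to ever-presence on all $\infty$-compositions is Lemma~\ref{lem:bd1}.

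Concretely, identify $\cX_{(\infty,1^n)}$ with $W \times \bA^{n+1}$ and, for each $n \ge 0$, let
\[
W_n = \{\, w \in W : \cZ_{(\infty,1^n)} \supset \{w\} \times \bA^{n+1} \,\}.
\]
Since $\cZ$ is C3, $\cZ_{(\infty,1^n)}$ is Zariski closed in $W \times \bA^{n+1}$, and Lemma~\ref{lem:bd3} tells us $W_n$ is closed in $W$. I would then claim $W' = \bigcap_{n \ge 0} W_n$; the intersection of closed sets is closed, so this finishes the proof.

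For the equality, the forward inclusion is direct from the definition: if $w$ is ever-present then $\cZ \supset \cX_{\{w\}}$, and specializing at the $\infty$-composition $(\infty,1^n)$ gives $\cZ_{(\infty,1^n)} \supset \cX_{\{w\},(\infty,1^n)} = \{w\} \times \bA^{n+1}$, so $w \in W_n$. For the reverse inclusion, suppose $w \in \bigcap_n W_n$. Let $K$ be any extension of the residue field $k(w)$ and pick any $a \in K$. Then for every $n$, $\{(w,a)\} \times \bA^n \subset \{w\} \times \bA^{n+1} \subset \cZ_{(\infty,1^n)}$, so $\cZ_{(\infty,1^n)}(K)$ contains the $K$-points in $\{(w,a)\} \times \bA^n$. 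Lemma~\ref{lem:bd1} applies directly to give that $w$ is ever-present.

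The only subtle step is the reverse inclusion, and it is not really an obstacle: it is precisely what Lemma~\ref{lem:bd1} was set up to handle, since membership in every $W_n$ supplies exactly the hypothesis of that lemma (in fact a stronger one, as we have the entire fiber $\{w\} \times \bA^{n+1}$ rather than just one slice $\{(w,a)\} \times \bA^n$). So once one thinks of cutting $W'$ out by the $(\infty,1^n)$-test, the proof is essentially mechanical.
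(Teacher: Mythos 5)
Your proof is correct, but it takes a slightly different route than the paper's. The paper defines $W_{\lambda}$ for every $\infty$-composition $\lambda$ (as the locus where $\cZ_{\lambda}$ contains the whole fiber $\cX_{\{w\},\lambda}$), uses Lemma~\ref{lem:bd3} to see each $W_\lambda$ is closed, and then observes that $W' = \bigcap_{\lambda} W_\lambda$ is a tautology from the definition of ``ever-present''; closedness follows since an arbitrary intersection of closed sets is closed, with no appeal to Lemma~\ref{lem:bd1} at all. You instead intersect only over the test family $\lambda = (\infty,1^n)$, and you genuinely need Lemma~\ref{lem:bd1} to bootstrap these conditions back up to ever-presence. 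Both are sound. Your version buys a countable intersection, which is unnecessary here since we are only after topological closedness, and it spends Lemma~\ref{lem:bd1} where the paper does not (the paper reserves it for Lemma~\ref{lem:bd2}); the paper's version is shorter because the equality $W' = \bigcap_{\lambda} W_\lambda$ holds by definition with nothing left to check. In short: valid, marginally more machinery than required, but the core engine (Lemma~\ref{lem:bd3} plus intersection of closed sets) is the same.
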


\begin{proof}
For an $\infty$-composition $\lambda$, let $W_{\lambda} \subset W$ be the set of points $w$ such that $\cZ_{\lambda}$ contains $\cX_{\{w\}, \lambda}$. By Lemma~\ref{lem:bd3}, we see that $W_{\lambda}$ is closed. Since $W'=\bigcap W_{\lambda}$, with the intersection over all $\infty$-compositions $\lambda$, we see that $W'$ is also closed.
\end{proof}

\begin{proof}[Proof of Proposition~\ref{prop:bd}]
Let $\cZ \subset \cX$ be a given C3-subvariety. Let $W' \subset W$ be the set of ever-present points, which is closed by Lemma~\ref{lem:bd4}, and put $\cZ_1=\cX_{W'} \subset \cZ$. Let $U=W \setminus W'$, and write $U=U_1 \cup \cdots \cup U_r$ with each $U_i$ affine, which is possible since $W$ is noetherian. Let $\cZ'_i=\cZ \cap \cX_{U_i}$. It is clear that $\cZ'_i$ is a C3-subvariety of $\cX_{U_i}$ that contains no ever-present points, and is therefore bounded by Lemma~\ref{lem:bd2}. Thus there is some $n$ such that $\cZ'_i$ is contained in $\cX^{\le n}_{U_i}$ for all $1 \le i \le r$. Let $\cZ_2=\cZ \cap \cX^{\le n}$, which is a bounded C3-subvariety of $\cZ$. Since $\cZ_2$ contains $\cZ'_i$ for all $i$, we have $\cZ=\cZ_1 \cup \cZ_2$, as required.
\end{proof}

\subsection{The noetherian property for $\cX$}

The purpose of this section is to prove that $\cX$ is noetherian, in the following sense:

\begin{proposition} \label{prop:noeth-C3}
The descending chain condition holds for C3 subvarieties of $\cX$.
\end{proposition}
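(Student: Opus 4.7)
The plan is to reduce to the case of bounded C3 subvarieties and then pass to $\fS$-stable Zariski closed subsets of $\fX$, where Cohen's theorem provides the descending chain condition.

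For the reduction, given a descending chain $\cZ^1 \supset \cZ^2 \supset \cdots$ of C3 subvarieties, the ever-present loci $W'_i \subset W$ are Zariski closed (Lemma~\ref{lem:bd4}) and form a descending chain, hence stabilize by noetherianity of $W$; after passing to a tail I may assume $W'_i = W'$ is constant. Writing $U = W \setminus W'$ and covering $U$ by finitely many affine opens (possible since $W$ is noetherian), Proposition~\ref{prop:bd} and Lemma~\ref{lem:bd2} imply that each $\cZ^i \cap \cX_{U_j}$ is a bounded C3 subvariety of $\cX_{U_j}$, with uniform bound $n$ inherited from $\cZ^1 \cap \cX_{U_j}$. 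Thus it suffices to show DCC for bounded C3 subvarieties over an affine noetherian base.

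For the bounded case (bound $n$), fix for each $\infty$-partition $\lambda$ with $\ell(\lambda) \le n$ a partition $\cU_\lambda$ of $[\infty]$ of type $\lambda$, and write $\iota_\lambda\colon \cX_\lambda \hookrightarrow \fX$ for the resulting closed immersion onto $\fX_{\cU_\lambda}$. Define
\[
\Psi(\cZ) := \overline{\textstyle\bigcup_{\sigma \in \fS,\ \ell(\lambda) \le n} \sigma \cdot \iota_\lambda(\cZ_\lambda)}^{\,\mathrm{Zar}} \subset \fX_{\le n} = V(\fd_{n+1}).
\]
This $\Psi(\cZ)$ is $\fS$-stable and Zariski closed, and $\cZ \mapsto \Psi(\cZ)$ is inclusion-preserving. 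The crux is the injectivity of $\Psi$, equivalently $\cZ = \Phi(\Psi(\cZ))$. The inclusion $\cZ \subset \Phi(\Psi(\cZ))$ is immediate. For the reverse, if $x \in \Phi(\Psi(\cZ))_\lambda \cap \cX_{[\lambda]}$, then $\iota_\lambda(x) \in \Psi(\cZ)$; since $\Psi(\cZ)$ is $\Pi$-closed (Proposition~\ref{prop:Zariski-is-coarser}), one approximates $\iota_\lambda(x)$ in the $\Pi$-topology by points $\iota_{\cV_k}(z_k)$ with $z_k \in \cZ_{[\mu_k]}$. Because $x$ has distinct coordinates, for every $m$ the partitions $\cV_k$ and $\cU_\lambda$ must eventually agree on $[m]$; this produces, for any chosen $N$, an injection $f\colon \langle\lambda\rangle \hookrightarrow \langle\mu_k\rangle$ of $\infty$-compositions satisfying $\mu_{k,f(j)} \ge \min(\lambda_j, N)$ and $\alpha_f(z_k) = x$, witnessing $N$-approximability of $x$ by $\cZ$. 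The C2 property then forces $x \in \cZ_\lambda$.

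By this injectivity, any strictly descending chain of bounded C3 subvarieties yields a strictly descending chain of $\fS$-stable Zariski closed subsets of $\fX_{\le n}$, which Cohen's theorem applied to $R/\fd_{n+1}$ rules out. The main obstacle is the approximation step in the injectivity proof: a point of $\Psi(\cZ)$ a priori only lies in the Zariski closure of the defining union, and Zariski closure can strictly exceed the $\Pi$-closure. Making the approximation rigorous requires showing that the Zariski and $\Pi$ closures of the defining union agree on each stratum $\fX_\cU$ after intersecting with $\fX_{\le n}$, a fact that should follow from the $\fS^{\rm big}$-stability of the $\Pi$-closure (Proposition~\ref{prop:Pi-closure-is-big-stable}) combined with the stratified structure of the defining union by type.
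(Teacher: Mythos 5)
Your approach is to transport the problem to $\fX$ and invoke Cohen's theorem. That is a natural idea, but as written there is a genuine gap, and in fact the gap is essentially circular with the logical structure of the paper.

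The step you yourself flag is the real issue: a point of your $\Psi(\cZ) := \overline{\bigcup_{\sigma,\lambda}\sigma\cdot\iota_\lambda(\cZ_\lambda)}^{\mathrm{Zar}}$ only lies a priori in the Zariski closure of the defining union, and Zariski closure can be strictly larger than $\Pi$-closure (Proposition~\ref{prop:Zariski-is-coarser} gives containment in only one direction). The $\Pi$-approximation argument you run afterward therefore starts from a hypothesis you have not established. Your closing sentence says the needed equality of closures on each stratum "should follow" from $\fS^{\rm big}$-stability and stratification — but that assertion is precisely the statement that $\Psi(\cZ)$ (the union, before closure) is Zariski closed, which is the hard half of Theorem~\ref{thm:C3-corr}. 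In the paper, that closedness is obtained from Proposition~\ref{prop:C3-plus}, which in turn rests on the classification Theorem~\ref{thm:classification-cZ}, which rests on \emph{this very} Proposition~\ref{prop:noeth-C3}. So the route you propose for the bounded case quietly presupposes what it is trying to prove. The difficulty is not cosmetic: $\cZ^+_{1^m}$ is the image of the Zariski closed set $\cZ_{(1^m,\infty)}$ under a coordinate projection, and projections of closed sets are not closed without further argument; no purely formal appeal to $\fS^{\rm big}$-stability or to the stratification of $\fX_{\le n}$ by type gives this.

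The paper's own proof avoids the correspondence with $\fX$ entirely and works directly on the $\cX$ side. In the bounded case it proceeds order-theoretically: $(\Lambda,\preceq)$ is a wqo (Proposition~\ref{prop:min-are-finite}), hence ideals of $\Lambda$ satisfy ACC; one forms the increasing chain of ideals $\Lambda_n$ recording where the chain $\{\cZ^k_{[\mu]}\}_k$ has stabilized for all $\mu\succeq\lambda$ by stage $n$, uses Proposition~\ref{prop:continuity} (continuity of C2 subsets along chains $\lambda^1\le\lambda^2\le\cdots$) and Zorn to find a maximal element of the complement when the chain has not yet stabilized everywhere, and derives a contradiction. The unbounded reduction via ever-present points is the same in both your proposal and the paper. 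If you want to salvage your approach, you would have to give an independent, non-circular proof that $\Psi(\cZ)$ is Zariski closed for bounded C3 $\cZ$ — essentially re-proving a key chunk of Sections 5–7 — at which point the combinatorial argument the paper gives is substantially shorter.
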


%\begin{lemma} \label{lem:type-locus}
%Let $\cZ' \subset \cZ$ be  C3-subvarieties of $\cX$. If $\cZ_{[\mu]} = \cZ'_{[\mu]}$ for all $\mu$, then $\cZ = \cZ'$. Moreover, if $\cZ''$ is a C3-subvarieties of $\cX$ such that $\cZ_{[\mu]} \subset \cZ''_{[\mu]}$ for all $\mu$, then $\cZ \subset \cZ''$.
%\end{lemma}
%
%\begin{proof}
%Let $z$ be an element of  $\cZ_{\mu}$. Then $z$ determines a unique principal surjection $g \colon \mu \to \ol{\mu}$ such that $z \in \alpha_g(\cZ_{[\ol{\mu}]})$. Since $\cZ_{[\ol{\mu}]} = \cZ'_{[\ol{\mu}]}$, we see that $z \in \alpha_g(\cZ'_{[\ol{\mu}]}) \subset \cZ'_{\mu}$. This completes the proof of the first assertion. The proof of the second assertion is similar.
%\end{proof}

\begin{proof}
Recall that an ideal $I$ of a poset $(P, \preceq)$ is a subset such that $x \in I, x \preceq y \implies y \in I$. Let $\Lambda$ be the poset $\infty$-partitions with respect to $\preceq$. Being a subposet of $\wt{\Lambda}$,  we see that $\Lambda$ is a wqo (Proposition~\ref{prop:min-are-finite}). It follows that the poset of ideals of $\Lambda$ under inclusion satisfies ACC; see \cite[\S 2]{catgb}.

We first deal with the bounded case. Let	$\cZ^1 \supset \cZ^2 \supset \cdots$ be a decreasing chain of bounded C3-subvarieties of $\cX$. Let $\Lambda_n$ be the set given by
\begin{displaymath}
\Lambda_n = \{ \lambda \in \Lambda \colon \forall \mu \succeq \lambda, \cZ^k_{[\mu]} \text{ stabilizes for } k \ge n\}.
\end{displaymath}
Then $\Lambda_1 \subset \Lambda_2 \subset \cdots$ is an increasing chain of ideals of $\Lambda$, and so it must stabilize. Let $N$ be large enough such that $\Lambda_n = \Lambda_N$ for all $n \ge N$. Suppose, if possible, $\Lambda \setminus \Lambda_N \neq \emptyset$. We claim that $\Lambda \setminus \Lambda_N$ has a maximal element:

To see this let $\wt{\Lambda}_n$ be the set given by
\begin{displaymath}
\wt{\Lambda}_n = \{ \lambda \in \Lambda \colon \cZ^k_{[\lambda]} \text{ stabilizes for } k \ge n\}.
\end{displaymath}
Note that $\wt{\Lambda}_n$ may not be an ideal of $\Lambda$. Moreover, the set $\Lambda \setminus \wt{\Lambda}_N$ is nonempty as $\Lambda \setminus \Lambda_N$ is nonempty. We first show that $\Lambda \setminus \wt{\Lambda}_N$ has a maximal element. By Zorn's lemma, it suffices to show that every increasing chain $\lambda^1 \preceq \lambda^2 \preceq \cdots$ in $\Lambda \setminus \wt{\Lambda}_N$ has an upper bound in $\Lambda \setminus \wt{\Lambda}_N$. Since $\cZ^1$ is bounded, there exists a $d$ such that $\ell(\lambda) >d$ implies $\lambda \in \Lambda^1 \subset \Lambda_N \subset \wt{\Lambda}_N$. This shows that $\ell(\lambda^i) \le d$ for all $i$. Suppose that the limiting value of $\ell(\lambda^i)$ is some $d' \le d$. We may as well assume that $\ell(\lambda^i) = d'$ for all $i$. Then we can regard each $\lambda^i$ as an $\infty$-weighting on the set $[d']$, and so we have $\lambda^1 \le \lambda^2 \le \cdots$. Let $\lambda \in \Lambda$ be the limit of these $\infty$-compositions. By noetherianity, $\cZ^N_{\lambda^1} \supset \cZ^N_{\lambda^2} \supset \cdots$ stabilizes for each $k$. So by the continuity of $\cZ^N$, we see that $\cZ^N_{[\lambda]} = \cZ^N_{[\lambda^i]}$ for $i \gg 0$. If $\lambda$ were in $\wt{\Lambda}_N$, then for such an $i$ and all $n \ge N$ we would have
\begin{displaymath}
\cZ^n_{[\lambda^i]} \supset \cZ^n_{[\lambda]} = \cZ^N_{[\lambda]} =   \cZ^N_{[\lambda^i]} \supset  \cZ^n_{[\lambda^i]},
\end{displaymath}
where the first equality comes from $\lambda \in \wt{\Lambda}_N$, and the second equality is true as $i \gg 0$. This shows that $\cZ^n_{\lambda^i} = \cZ^N_{\lambda^i}$ for all $n \ge N$ (by Proposition~\ref{prop:C1-contain}), which contradicts $\lambda^i \in \Lambda \setminus \wt{\Lambda}_N$. Thus $\lambda \in \Lambda \setminus \wt{\Lambda}_N$. By Zorn's lemma, there exists a maximal element, say $\lambda$, in  $\Lambda \setminus \wt{\Lambda}_N$. It is clear that $\lambda$ is also a maximal element in $\Lambda \setminus \Lambda_N$. This proves the claim.

Now let $\lambda$ be a maximal element in $\Lambda \setminus \Lambda_N$. By noetherianity, there exists an $N'$ such that $\cZ^1_{\lambda} \supset \cZ^2_{\lambda} \supset \cdots$ stabilizes for $n \ge N'$. Since $\lambda \notin \Lambda_N$, we must have $N' > N$. But then $\lambda' \in \Lambda_{N'} \setminus \Lambda_{N}$, contradicting the definition of $N$. Thus $\Lambda = \Lambda_N$. By the previous lemma, the increasing chain $\cZ^1 \supset \cZ^2 \supset \cdots$ stabilizes for $n \ge N$. This completes the proof in the bounded case.

In general, let $\cZ^1 \supset \cZ^2 \supset \cdots$ be a decreasing chain of (non necessarily bounded) C3-subvarieties of $\cX = \cX_W$. Let $\cX_{W_i}$ be the ever-present part of $\cZ^i$ as in Proposition~\ref{prop:bd}. Then $W_1 \supset W_2 \supset \cdots$ is a decreasing chain of closed subvarities of $W$. By noetherianity of $W$, this decreasing chain stabilizes. So we may as well assume that $W_i = W'$ for all $i \ge 1$. Set $U = W \setminus W'$, and let $\cY^i = \cZ^i \cap \cX_U$. By Lemma~\ref{lem:bd2}, $\cY^i$ is bounded for each $i$. By the bounded case, the decreasing chain $\cY^1 \supset \cY^2 \supset \cdots$ stabilizes. Since $\cZ^i = \cX_{W'} \cup \cY^i$, the decreasing  chain $\cZ^1 \supset \cZ^2 \supset \cdots$ also stabilizes, completing the proof.
\end{proof}

The following corollary follows immediately from the proposition:

\begin{corollary}
Every C3 subvariety is a finite union of irreducible C3 subvarieties.
\end{corollary}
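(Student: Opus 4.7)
The plan is to apply the standard noetherian induction to the poset of C3 subvarieties of $\cX$, using Proposition~\ref{prop:noeth-C3} as the noetherianity input. Let $\mathcal{S}$ be the collection of C3 subvarieties of $\cX$ that cannot be written as a finite union of irreducible C3 subvarieties, and assume for contradiction that $\mathcal{S}$ is non-empty.

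By the descending chain condition provided by Proposition~\ref{prop:noeth-C3}, every non-empty collection of C3 subvarieties has a minimal element with respect to inclusion. (Indeed, an infinite strictly decreasing sequence would be forbidden by the DCC, and a routine use of dependent choice produces such a sequence from any collection with no minimal element.) Hence we may pick a minimal $\cZ \in \mathcal{S}$. Since $\cZ \in \mathcal{S}$, in particular $\cZ$ is not itself irreducible (otherwise $\cZ$ would already be a finite union of irreducible C3 subvarieties). By the definition of $G$-irreducibility applied in the present context, this means there exist proper C3 subvarieties $\cZ^1, \cZ^2 \subsetneq \cZ$ with $\cZ = \cZ^1 \cup \cZ^2$.

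By the minimality of $\cZ$ in $\mathcal{S}$, neither $\cZ^1$ nor $\cZ^2$ lies in $\mathcal{S}$, so each of them is a finite union of irreducible C3 subvarieties. Taking the union of these two finite decompositions expresses $\cZ$ itself as a finite union of irreducible C3 subvarieties, contradicting $\cZ \in \mathcal{S}$. Therefore $\mathcal{S}$ is empty, which is exactly the claim of the corollary.

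The only real step with any content is the existence of a minimal element in $\mathcal{S}$ via Proposition~\ref{prop:noeth-C3}; the rest is the familiar two-line noetherian induction. There is no true obstacle here, which is consistent with the paper presenting the result as an immediate corollary.
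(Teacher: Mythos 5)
Your argument is the standard Noetherian induction from the descending chain condition, which is exactly what the paper has in mind when it says the corollary "follows immediately" from Proposition~\ref{prop:noeth-C3}. The only point you quietly elide is the empty case (the empty C3 subvariety has no proper subsets, but is the union of the empty family of irreducibles), which is the usual harmless edge case in this kind of argument.
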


\subsection{The classification theorem}

The following is the classification theorem for irreducible C3 subvarieties of $\cX$.

\begin{theorem}
\label{thm:classification-cZ}
Let $\cZ$ be an irreducible C3 subvariety of $\cX$.
\begin{enumerate}
\item If $\cZ$ is bounded then $\cZ=\Gamma_{\lambda}(Z)$ for a unique $\infty$-partition $\lambda$ and a unique $\Aut(\lambda)$-irreducible subvariety $Z$ of $\cX_{[\lambda]}$.
\item If $\cZ$ is  unbounded, then $\cZ = \cX_{W'}$ for a unique irreducible closed subset $W' \subset W$. 
\end{enumerate}
\end{theorem}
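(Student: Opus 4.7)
The plan is to handle the two cases separately. For case (b), I would invoke the boundedness decomposition of Proposition~\ref{prop:bd} to write $\cZ=\cX_{W'}\cup \cZ_2$ with $\cZ_2$ bounded and $W'\subset W$ closed. Since $\cZ$ is unbounded while $\cZ_2$ is bounded, $\cX_{W'}\not\subset \cZ_2$, and irreducibility of $\cZ$ (using that both $\cX_{W'}$ and $\cZ_2$ are C3 subvarieties) forces $\cZ=\cX_{W'}$. A splitting $W'=W'_1\cup W'_2$ into proper closed subsets would give $\cX_{W'}=\cX_{W'_1}\cup \cX_{W'_2}$, so $W'$ must be irreducible. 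Uniqueness is immediate, since $W'$ is recovered as the image of $\cZ_{(\infty)}=\bA^1_{W'}$ under the structural projection to $W$.

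For case (a), set $\Lambda_{\cZ}=\{\lambda:\cZ_{[\lambda]}\neq\emptyset\}$, viewed as a subset of the wqo $(\Lambda,\preceq)$. The first step is to show that $\Lambda_{\cZ}$ has a $\preceq$-maximum. Boundedness bounds the length of elements of $\Lambda_{\cZ}$ uniformly, so any ascending $\preceq$-chain, after passing to a tail, reduces to a coordinate-wise monotone sequence of fixed length $d'$. Proposition~\ref{prop:continuity} identifies the coordinate-wise supremum $\lambda^{\infty}$ with $\cZ_{\lambda^{\infty}}=\bigcap_i \cZ_{\lambda^i}$; intersecting with the common open locus $\cX_{[\lambda^{\infty}]}=\cX_{[\lambda^i]}$ and using noetherianity, the chain of strata stabilizes, showing $\lambda^{\infty}\in\Lambda_{\cZ}$. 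Zorn then yields a $\preceq$-maximal element above every point of $\Lambda_{\cZ}$, and the wqo property (Proposition~\ref{prop:min-are-finite}) ensures there are only finitely many maximal elements $\lambda^{1},\ldots,\lambda^{k}$.

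For each $\lambda^{i}$, the stratum $\cZ_{[\lambda^{i}]}$ is Zariski-closed in $\cX_{[\lambda^{i}]}$ and $\Aut(\lambda^{i})$-stable (applying the C1 equality to isomorphisms $\lambda^{i}\to\lambda^{i}$), so Theorem~\ref{thm:gamma} produces a C3 subvariety $\cZ^{i}:=\Gamma_{\lambda^{i}}(\cZ_{[\lambda^{i}]})\subset\cZ$. The central claim is the equality $\cZ=\bigcup_{i}\cZ^{i}$. By Proposition~\ref{prop:C1-contain} this reduces to checking the inclusion on each $[\mu]$-stratum; since any $\mu$ with $\cZ_{[\mu]}\neq\emptyset$ satisfies $\mu\preceq\lambda^{i}$ for some $i$, one needs $\cZ_{[\mu]}\subset \Gamma_{\lambda^{i}}(\cZ_{[\lambda^{i}]})_{[\mu]}$ when $\mu\preceq\lambda^{i}$. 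The surjection $\langle\lambda^{i}\rangle\to\langle\mu\rangle$ witnessing $\mu\preceq\lambda^{i}$ provides the raw data for a correspondence $\mu\dashrightarrow\lambda^{i}$, but transporting an arbitrary $\mu$-point of $\cZ$ into the image of $\cZ_{[\lambda^{i}]}$ requires a careful C2-approximation argument: approximate the point by $\cZ$-points in richer strata, lift along the chosen correspondence to $\cZ_{[\lambda^{i}]}$, and invoke Zariski closure inside $\Gamma_{\lambda^{i}}$. This lifting step is the main technical obstacle.

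Once the union equality is in hand, irreducibility of $\cZ$ singles out a single $i$ with $\cZ=\cZ^{i}$. A direct analysis of correspondences shows that $\Gamma_{\lambda}(Z)_{[\mu]}$ can be nonempty only when $\mu\preceq\lambda$ (a distinct-coordinate point in $\cX_{[\mu]}$ arising from a correspondence $\mu\dashrightarrow\lambda$ forces the witnessing order relation), which therefore forces $\Lambda_{\cZ}^{\max}=\{\lambda^{i}\}$; set $\lambda=\lambda^{i}$ and $Z=\cZ_{[\lambda]}$. The $\Aut(\lambda)$-irreducibility of $Z$ is immediate: a decomposition $Z=Z_{1}\cup Z_{2}$ into proper $\Aut(\lambda)$-stable closed subsets would yield $\cZ=\Gamma_{\lambda}(Z_{1})\cup\Gamma_{\lambda}(Z_{2})$, contradicting irreducibility. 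Uniqueness of the pair $(\lambda,Z)$ follows from these intrinsic characterizations: $\lambda$ is the $\preceq$-maximum of $\Lambda_{\cZ}$, and $Z=\cZ_{[\lambda]}$ agrees with the Zariski closure of its $\Aut(\lambda)$-orbit by Proposition~\ref{prop:gamma-aux}.
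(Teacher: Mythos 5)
Your handling of case (b) is correct (it is essentially the paper's one line: invoke Proposition~\ref{prop:bd} and irreducibility), and the Zorn/continuity/wqo argument that $\Lambda_{\cZ}$ has finitely many $\preceq$-maximal elements is sound and matches the paper. The genuine gap is the ``central claim'' $\cZ=\bigcup_i\cZ^i$ with $\cZ^i=\Gamma_{\lambda^i}(\cZ_{[\lambda^i]})$. As you pose it, this is a statement made \emph{before} irreducibility is used, and in that generality it is false. For instance take $\lambda=(\infty,1)$, $Z_1=\{(0,1)\}\subset\cX_{[\lambda]}$, $\mu=(\infty)$, $Z_2=\{5\}\subset\cX_{[\mu]}$, and $\cZ=\Gamma_\lambda(Z_1)\cup\Gamma_\mu(Z_2)$. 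Then $\Lambda_{\cZ}$ has a single maximal element, namely $\lambda$, and $\cZ_{[\lambda]}=\{(0,1)\}$, so your prescription gives $\cZ^1=\Gamma_\lambda(\{(0,1)\})$; but $\cZ^1_{(\infty)}=\{0\}$ while $\cZ_{(\infty)}=\{0,5\}$. So no ``careful C2-approximation'' or lifting argument can establish the inclusion $\cZ_{[\mu]}\subset\Gamma_{\lambda^i}(\cZ_{[\lambda^i]})_{[\mu]}$ stratum by stratum: a point of $\cZ_{[\mu]}$ need not be in the image of $\cZ_{[\lambda^i]}$ under any correspondence, precisely because the piece of $\cZ$ carrying that point can have a strictly smaller top stratum. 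Irreducibility must be woven \emph{into} the argument, not applied afterward.

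The paper avoids this in two moves. First, it replaces your $\cZ^i$ by the minimal C3 subvariety $\hat\cZ^i$ with $\hat\cZ^i_{\mu^i}=\cX_{\mu^i}$ (the \emph{full} affine space, not $\cZ_{[\mu^i]}$); the containment $\cZ\subset\bigcup_i\hat\cZ^i$ is then trivial because every point of $\cZ$ has type $\preceq$ some maximal $\mu^i$, and irreducibility forces $\cZ\subset\hat\cZ^j$, giving a unique maximal element $\mu$. Second, to prove $\cZ=\Gamma_\mu(\cZ_{[\mu]})$, the paper runs a separate Zorn/wqo argument on the set $\Lambda_{\cZ}\setminus\Lambda'$ where the two subvarieties disagree, encloses $\cZ$ in $\Gamma_\mu(\cZ_{[\mu]})\cup\bigcup_j\hat\cZ^{j}$ (again with full $\hat\cZ^{j}$'s, and now the $\mu^j$'s are maximal in $\Lambda_{\cZ}\setminus\Lambda'$, hence strictly below $\mu$, so $\hat\cZ^j_{[\mu]}=\emptyset$), and then applies irreducibility. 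Your proposal's remaining steps ($\Aut(\lambda)$-irreducibility of $Z$ via $\Gamma_\lambda(Z_1\cup Z_2)=\Gamma_\lambda(Z_1)\cup\Gamma_\lambda(Z_2)$, and the intrinsic characterizations for uniqueness) are fine once this equality is in hand.
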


\begin{proof}
Part (b) follows immediately from Proposition~\ref{prop:bd}. For Part (a), let $\Lambda_{\cZ}$ be the set of partitions $\lambda$ such that $\cZ_{[\lambda]}$ is nonempty. Since $\cZ$ is bounded, there exists an $N$ such that the number of parts in any $\lambda \in \Lambda_{\cZ}$ is bounded by $N$. It follows that if $\lambda^1 \preceq \lambda^2 \preceq \cdots$ is a chain in $\Lambda_{\cZ}$, then the number of parts in $\lambda^n$ is eventually independent of $n$. Ignoring the first few entries, we can think of each of $\lambda^n$ to be an $\infty$-composition on the same underlying set. By Proposition~\ref{prop:continuity}, we see that the limit $\lambda$ of $\lambda^n$ is also in $\Lambda_{\cZ}$. By Zorn's lemma, every element in $\Lambda_{\cZ}$ is contained in a maximal element. By Proposition~\ref{prop:min-are-finite}, there are no infinite antichains in  $(\Lambda_{\cZ}, \preceq)$. So there are finitely many maximal elements in $\Lambda_{\cZ}$, call them $\mu^1, \ldots, \mu^n$. Let $\cZ^i$ be the minimal C3-subvariety of $\cX$ such that $\cZ^i_{\mu^i} = \cX_{\mu^i}$. By Theorem~\ref{thm:gamma}, we see that $\cZ^i_{\lambda} = \cX_{\lambda}$ for each $\lambda \preceq \mu_i$. So $\cZ \subset \bigcup_{1 \le i \le n} \cZ^i$. By irreducibility, there exists an $i$ such that $\cZ \subset \cZ^i$. Since $\cZ^i_{[\lambda]}$ is nonempty if and only if $\lambda \preceq \mu^i$, it follows that $\mu^i$ is the unique maximal element of $\Lambda_{\cZ}$. Set $\mu\coloneq  \mu_i, Z \coloneq \cZ_{[\mu]}$, and let $\cZ'$ be the minimal C3-subvariety of $\cX$ such that $\cZ'_{\mu} \supset Z$ (which is as described in the Theorem~\ref{thm:gamma}).  By minimality, we have $\cZ' \subset \cZ$. We claim that $\cZ' = \cZ$. 
	
To see this, let $\Lambda' = \{\lambda \in \Lambda_{\cZ} \colon \cZ'_{[\lambda]} = \cZ_{[\lambda]} \}$. Let $\lambda^1 \preceq \lambda^2 \preceq \cdots$ be a chain in $\Lambda_{\cZ} \setminus \Lambda'$. By boundedness of $\cZ$, the number of parts in $\lambda^n$ is eventually constant in $n$. Ignoring the first few entries, we can think of each of $\lambda^n$ to be an $\infty$-composition on the same underlying set. By noetherianity (Proposition~\ref{prop:noeth-C3}), we see that the chains $\cZ_{\lambda^1} \supset \cZ_{\lambda^2} \supset \cdots$ and $\cZ'_{\lambda^1} \supset \cZ'_{\lambda^2} \supset \cdots$ stabilize. So Proposition~\ref{prop:continuity} applied to $\cZ$ and $\cZ'$, we see that for large enough $N$, we have $\cZ_{[\lambda^N]} = \cZ_{[\lambda]}$ and $\cZ'_{[\lambda^N]} = \cZ'_{[\lambda]}$. It follows that $\cZ'_{[\lambda]} = \cZ'_{[\lambda^N]} \subsetneq \cZ_{[\lambda^N]} = \cZ_{[\lambda]}$, and so $\lambda \in \Lambda_{\cZ} \setminus \Lambda'$. By Zorn's lemma, every element in $\Lambda_{\cZ} \setminus \Lambda'$ is contained in a maximal element. By Proposition~\ref{prop:min-are-finite}, there are no infinite antichains in  $(\Lambda, \preceq)$. So there are finitely many maximal elements in $\Lambda_{\cZ} \setminus \Lambda'$, call them $\mu^1, \ldots, \mu^n$. As in the previous paragraph, let $\cZ^i$ be the minimal C3-subvariety of $\cX$ such that $\cZ^i_{\mu^i} = \cX_{\mu^i}$. Let $\cZ'' = \cZ \subset \cZ' \cup \bigcup_{i=1}^n \cZ^i$. Our construction implies that $\cZ_{[\lambda]} \subset \cZ''_{[\lambda]}$ for each $\lambda$. By Proposition~\ref{prop:C1-contain}, we see that $\cZ \subset \cZ'' =  \cZ' \cup \bigcup_{i=1}^n \cZ^i$. Since $\cZ^i_{[\mu]} =\emptyset$, the irreducibility of $\cZ$ implies that $\cZ \subset \cZ'$. This establishes the claim.
	
Finally, suppose $Z = \cZ_{[\mu]}$ can be written as $Z = \bigcup_{i=1}^n Z_n$ where each $Z_n$ is $\Aut(\mu)$-stable Zariski closed $\Aut(\mu)$-irreducible subset of $\cX_{[\mu]}$. Let $\cZ^i$ be the irreducible C3-subvariety corresponding to $Z_i$. Then by Theorem~\ref{thm:gamma}, we see that $\cZ = \bigcup_{i=1}^n \cZ^i$. By irreducibility of $\cZ$ again, we see that $\cZ = \cZ^i$ for some $i$. This finishes the proof.
\end{proof}

\section{Extending from $\cX$ to $\cX^+$}

In the next section, we study the correspondence between subvarieties of $\fX$ and subvarieties of $\cX$. To carry this out, we extend $\cX$ to also include finite compositions. We now study this construction.

Given a non-empty generalized composition $\lambda$, we let $\cX^+_{\lambda}=\bA^{[\lambda]}_W$. If $\lambda$ is infinite then $\cX^+_{\lambda}$ is simply $\cX_{\lambda}$. We now consider subsets of $\cX^+$. We define the conditions C1, C2, and C3 exactly as for $\cX$. The elementary results about subsets of $\cX$ extend to subsets of $\cX^+$ without difficulty. Given a subset $\cZ$ of $\cX^+$, we let $\cZ \cap \cX$ be the subset of $\cX$ it induces, i.e., $(\cZ \cap \cX)_{\lambda}=\cZ_{\lambda}$ for $\infty$-compositions $\lambda$. We now define a construction in the opposite direction.

Let $\cZ$ be a subset of $\cX$. Define a subset $\cZ^+$ of $\cX^+$ as follows. For an $\infty$-composition $\lambda$ we put $\cZ^+_{\lambda}=\cZ_{\lambda}$. Now suppose $\lambda$ is a finite non-empty composition. Let $\lambda^+=\lambda \cup \{\infty\}$, and let $f \colon \lambda \to \lambda^+$ be the standard inclusion. We define $\cZ^+_{\lambda}=\alpha_f(\cZ_{\lambda^+})$. Note that $\cZ^+$ extends $\cZ$, that is, $\cZ^+ \cap \cX=\cZ$. We now study this construction in more detail.

\begin{lemma} \label{lem:plus-finite}
Let $\cZ$ be a C2 subset of $\cX$ and let $f \colon \lambda \to \mu$ be a map of finite partitions. Then $\alpha_f(\cZ^+_{\mu}) \subset \cZ^+_{\lambda}$. If $f$ is a principal surjection then $\cZ^+_{\mu}=\alpha_f^{-1}(\cZ^+_{\lambda})$.
\end{lemma}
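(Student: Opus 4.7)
The plan is to reduce both claims to the already-established behavior of C2 subsets on $\cX$ (i.e.\ on $\infty$-compositions) by adjoining an extra infinite part. Given $f \colon \lambda \to \mu$ of finite compositions, I would extend $f$ to $f^+ \colon \lambda^+ \to \mu^+$ by sending the new infinite part $*_\lambda$ of $\lambda^+$ to the new infinite part $*_\mu$ of $\mu^+$. The inequality $(f^+)_*\lambda^+ \le \mu^+$ is immediate from $f_*\lambda \le \mu$ together with $\lambda^+_{*_\lambda} = \mu^+_{*_\mu} = \infty$, and $f^+$ is a principal surjection whenever $f$ is. The standard inclusions $\iota_\lambda \colon \lambda \hookrightarrow \lambda^+$ and $\iota_\mu \colon \mu \hookrightarrow \mu^+$ satisfy $\iota_\mu \circ f = f^+ \circ \iota_\lambda$, hence $\alpha_f \circ \alpha_{\iota_\mu} = \alpha_{\iota_\lambda} \circ \alpha_{f^+}$.

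For the first assertion I would simply chase the definition $\cZ^+_\mu = \alpha_{\iota_\mu}(\cZ_{\mu^+})$ through this commutative square: by Proposition~\ref{prop:C2-map} applied to the $\infty$-composition map $f^+$, we have $\alpha_{f^+}(\cZ_{\mu^+}) \subset \cZ_{\lambda^+}$, and therefore
\begin{displaymath}
\alpha_f(\cZ^+_\mu) = \alpha_f(\alpha_{\iota_\mu}(\cZ_{\mu^+})) = \alpha_{\iota_\lambda}(\alpha_{f^+}(\cZ_{\mu^+})) \subset \alpha_{\iota_\lambda}(\cZ_{\lambda^+}) = \cZ^+_\lambda.
\end{displaymath}

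For the second assertion, the inclusion $\cZ^+_\mu \subset \alpha_f^{-1}(\cZ^+_\lambda)$ is automatic from the first. The reverse inclusion is where the real content lies. Given $x$ with $\alpha_f(x) \in \cZ^+_\lambda$, I would lift $\alpha_f(x)$ to some $y \in \cZ_{\lambda^+}$ with $\alpha_{\iota_\lambda}(y) = \alpha_f(x)$, and then manufacture $z \in \cX_{\mu^+}$ by setting $z_j = x_j$ for $j \in \langle \mu \rangle$ and $z_{*_\mu} = y_{*_\lambda}$. A coordinate-by-coordinate check, using $f^+(*_\lambda) = *_\mu$ and that $f^+$ agrees with $f$ on $\langle \lambda \rangle$, then shows $\alpha_{f^+}(z) = y$. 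Since $f^+$ is a principal surjection and $\cZ$ satisfies C1, this forces $z \in \alpha_{f^+}^{-1}(\cZ_{\lambda^+}) = \cZ_{\mu^+}$, and $\alpha_{\iota_\mu}(z) = x$ gives $x \in \cZ^+_\mu$.

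No step poses a real obstacle; the one place demanding minor care is the choice of the extra coordinate of $z$. Borrowing it from the $*_\lambda$-slot of the auxiliary lift $y$ is exactly what is needed to ensure $\alpha_{f^+}(z)$ matches $y$ everywhere, including above the new infinite part. The fact that $f^+$ is still a principal surjection — needed to invoke C1 in the last step — is what makes passing through $\lambda^+, \mu^+$ work at all, and this is why we extend by sending $*_\lambda$ to $*_\mu$ rather than to any finite part.
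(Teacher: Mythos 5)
Your proof is correct and follows essentially the same route as the paper: extend $f$ to $f^+\colon\lambda^+\to\mu^+$ by sending the extra infinite part to the extra infinite part, use the commutative square with the two projections $\alpha_{\iota_\lambda},\alpha_{\iota_\mu}$ together with Proposition~\ref{prop:C2-map} for the containment, and then, for the converse when $f$ is a principal surjection, lift $\alpha_f(x)$ to $\cZ_{\lambda^+}$, manufacture the corresponding point over $\mu^+$, and invoke the C1 property of $\cZ$ via the principal surjection $f^+$. The only superficial difference is notational: the paper writes $\cX_{\lambda^+}=\cX_\lambda\times\bA^1$ and works with pairs $(x,y)$, while you write the extra coordinate explicitly as $z_{*_\mu}=y_{*_\lambda}$, but the construction is identical.
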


\begin{proof}
Let $f^+ \colon \lambda^+ \to \mu^+$ be the induced map. Note that $\cX_{\lambda^+}=\cX_{\lambda} \times \bA^1$ and similarly for $\cX_{\mu^+}$, and $\alpha_{f^+}=\alpha_f \times \id$. We have a commutative diagram
\begin{displaymath}
\xymatrix@C=4em{
\cX_{\mu} \times \bA^1 \ar[r]^{\alpha_f \times \id} \ar[d]_q & \cX_{\lambda} \times \bA^1 \ar[d]^p \\
\cX_{\mu} \ar[r]^{\alpha_f} & \cX_{\lambda} }
\end{displaymath}
where $p$ and $q$ are the projection maps. Now, suppose $x \in \cZ^+_{\mu}$. Since $\cZ^+_{\mu}=q(\cZ_{\mu^+})$, it follows that there is some $y \in \bA^1$ such that $(x,y) \in \cZ_{\mu^+}$. By Proposition~\ref{prop:C2-map}, we see that $\alpha_{f^+}(x,y)=(\alpha_f(x), y)$ belongs to $\cZ_{\lambda^+}$. Thus $p(\alpha_f(x),y)=\alpha_f(x)$ belongs to $\cZ^+_{\lambda}=p(\cZ_{\lambda^+})$.

Now suppose that $f$ is a principal surjection. Then $f^+$ is as well. Suppose $x \in \cX^+_{\mu}$ and $\alpha_f(x) \in \cZ^+_{\lambda}$. Then there is some $y \in \bA^1$ such that $(\alpha_f(x),y) \in \cZ_{\lambda^+}$. Thus $\alpha_{f^+}(x,y) \in \cZ_{\lambda^+}$ and so $(x,y) \in \cZ_{\mu^+}$ since $\cZ$ is C1. Hence $x=q(x,y)$ belongs to $\cZ^+_{\mu}$. This completes the proof.
\end{proof}

\begin{proposition}
Let $\cZ$ be a C2 subset of $\cX$. Then $\cZ^+$ is a C2 subset of $\cX^+$. Moreover, if $\cZ'$ is any C2 subset of $\cX^+$ such that $\cZ' \cap \cX$ contains $\cZ$ then $\cZ'$ contains $\cZ^+$.
\end{proposition}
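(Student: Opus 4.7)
The plan is to verify that $\cZ^+$ satisfies C1 and C2, then establish minimality.

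\textbf{C1 property.} Let $f\colon \lambda \psurj \mu$ be a principal surjection of generalized compositions. Since $f_*(\lambda)=\mu$ preserves total weight, either both $\lambda,\mu$ are $\infty$-compositions or both are finite. In the first case $\cZ^+_\lambda=\cZ_\lambda$ and $\cZ^+_\mu=\cZ_\mu$, so the C1 property of $\cZ$ applies. In the second case, it is the content of Lemma~\ref{lem:plus-finite}.

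\textbf{C2 property.} Suppose $x\in\cX^+_\lambda$ is approximable by $\cZ^+$. I handle two cases depending on whether $\lambda$ is infinite or finite. For each $N$, pick a witness: a generalized composition $\mu$, an injection $f\colon\langle\lambda\rangle\to\langle\mu\rangle$ with $\mu_{f(i)}\ge\min(\lambda_i,N)$, and $y\in\cZ^+_\mu$ with $x=\alpha_f(y)$. If $\mu$ is finite, by definition of $\cZ^+_\mu$ there exists $t\in\bA^1$ with $(y,t)\in\cZ_{\mu^+}$; replacing $\mu$ by $\mu^+$ (which is an $\infty$-composition) and extending $f$ unchanged turns the witness into an $N$-approximation by $\cZ$. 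If $\mu$ is infinite, then $\mu$ already lives in $\cX$ and $y\in\cZ_\mu$, giving directly an $N$-approximation by $\cZ$.

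If $\lambda$ is infinite, the above shows $x$ is $N$-approximable by $\cZ$ for every $N$, hence $x\in\cZ_\lambda=\cZ^+_\lambda$ by C2 of $\cZ$. If $\lambda$ is finite, the goal is to produce some $t$ with $(x,t)\in\cZ_{\lambda^+}$. Here I fix $N$ larger than $\max_i\lambda_i$ so that $\min(\lambda_i,N)=\lambda_i$, and I extend the injection $f\colon\lambda\to\mu$ (replacing $\mu$ by $\mu^+$ if $\mu$ was finite, as above) to a map $\tilde f\colon \lambda^+\to\mu$ by sending the extra index $*$ to some $j\in\langle\mu\rangle$ with $\mu_j=\infty$; such $j$ exists since the (possibly augmented) $\mu$ is an $\infty$-composition. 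One checks that $\tilde f$ is a valid map of $\infty$-compositions, and that $\alpha_{\tilde f}(y)=(x,y_j)$ when $y$ is the witness above. By Proposition~\ref{prop:C2-map} applied to $\cZ$, this point belongs to $\cZ_{\lambda^+}$, so $x\in\cZ^+_\lambda$.

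\textbf{Minimality.} Let $\cZ'$ be a C2 subset of $\cX^+$ with $\cZ'\cap\cX\supset\cZ$. For $\infty$-compositions $\lambda$, $\cZ^+_\lambda=\cZ_\lambda\subset\cZ'_\lambda$ by hypothesis. For a finite composition $\lambda$, let $f\colon\lambda\to\lambda^+$ be the standard inclusion, which is a map of generalized compositions. The analog of Proposition~\ref{prop:C2-map} for $\cX^+$ yields $\alpha_f(\cZ'_{\lambda^+})\subset\cZ'_\lambda$. Therefore
\[
\cZ^+_\lambda=\alpha_f(\cZ_{\lambda^+})\subset\alpha_f(\cZ'_{\lambda^+})\subset\cZ'_\lambda.
\]

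The main obstacle is the case analysis for C2 when $\lambda$ is finite: one must correctly augment both the source and the target compositions to pass between approximations in $\cX^+$ and in $\cX$, and verify that the extended map remains a valid map of $\infty$-compositions so that Proposition~\ref{prop:C2-map} can be applied. The other ingredients are bookkeeping.
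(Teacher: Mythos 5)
Your proof is correct and follows essentially the same approach as the paper's. The paper first establishes that $\cZ^+$ is closed under injections of generalized partitions (by factoring through the standard inclusion $\lambda\to\lambda^+$) and then uses this fact in the finite-$\lambda$ case of the approximability check; you fold the same idea directly into the C2 verification by constructing the extended map $\tilde f\colon\lambda^+\to\mu$ and invoking Proposition~\ref{prop:C2-map} for $\cZ$ on $\cX$, which is a harmless reorganization rather than a different route.
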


\begin{proof}
We first note that $\cZ$ is C1. Indeed, suppose that $f \colon \lambda \to \mu$ is a principal surjection. If $\lambda$ is infinite, so is $\mu$, and then $\alpha_f^{-1}(\cZ^+_{\lambda})=\cZ^+_{\mu}$ since $\cZ$ is C1. If $\lambda$ is finite, so is $\mu$, and then $\alpha_f^{-1}(\cZ^+_{\lambda})=\cZ^+_{\mu}$ by Lemma~\ref{lem:plus-finite}.

Now suppose that $f \colon \lambda \to \mu$ is an injection of generalized partitions. We claim that $\alpha_f(\cZ^+_{\mu}) \subset \cZ^+_{\lambda}$. If $\lambda$ is infinite then so is $\mu$, and this follows from Proposition~\ref{prop:C2-map}. Thus suppose $\lambda$ is finite. If $\mu$ is finite then the claim follows from Lemma~\ref{lem:plus-finite}. Thus suppose that $\mu$ is infinite. We can then factor $f$ as $g \circ h$ where $h \colon \lambda \to \lambda^+$ is the standard inclusion and $g \colon \lambda^+ \to \mu$ is any extension of $f$ (simply map $\infty \in \lambda^+$ to any element of $\mu^{-1}(\infty)$). We have
\begin{displaymath}
\alpha_f(\cZ^+_{\mu})=\alpha_h(\alpha_g(\cZ_{\mu})) \subset \alpha_h(\cZ_{\lambda^+}) = \cZ^+_{\lambda}
\end{displaymath}
where in the second step we used Proposition~\ref{prop:C2-map}. This establishes the claim. Combined with the first paragraph, we see that $\cZ^+$ is closed under correspondences.

%Pick $i \in \langle \mu \rangle$ with $\mu_i=\infty$; note that $i \not\in \im(f)$. We can thus extend $f$ to an injection of generalized partitions $g \colon \lambda^+ \to \mu$ by mapping $\infty \in \lambda^+$ to $i \in \mu$. We now have a commutative diagram
%\begin{displaymath}
%\xymatrix@C=4em{
%\cX_{\mu} \ar[r]^{\alpha_g} \ar[rd]_{\alpha_f} & \cX_{\lambda^+} \ar[d]^p \\
%& \cX_{\lambda} }
%\end{displaymath}
%where $p$ is the projection map. We thus see that $\alpha_f(\cZ^+_{\mu})=p(\alpha_g(\cZ_{\mu})) \subset p(\cZ_{\lambda^+}) = \cZ^+_{\lambda}$. Here we have used that $\cZ^+_{\mu}=\cZ_{\mu}$ and $\alpha_g(\cZ_{\mu}) \subset \cZ_{\lambda^+}$ \Acom{ref}.

Now suppose $x \in \cX^+_{\lambda}$ is approximable by $\cZ^+$. We show $x \in \cZ^+_{\lambda}$. First suppose that $\lambda$ is finite. Let $N$ be larger than all parts of $\lambda$. Let $\mu$ be a generalized composition with $\langle \lambda \rangle \subset \langle \mu \rangle$ and $\mu_i \ge \min(N,\lambda_i)$ for all $i \in \langle \lambda \rangle$ such that $x \in \alpha_f(\cZ^+_{\mu})$ where $f \colon \langle \lambda \rangle \to \langle \mu \rangle$ is the inclusion. By our choice of $N$, we have $\mu_i \ge \lambda_i$ for all $i$, and so $f \colon \lambda \to \mu$ is a map of generalizaed partitions. Thus $\alpha_f$ carries $\cZ^+_{\mu}$ into $\cZ^+_{\lambda}$, and so $x \in \cZ^+_{\lambda}$, as required.

Now suppose $\lambda$ is infinite. We claim that $x$ is in fact approximable by $\cZ$. Let $N$ be given. Since $x$ is $N$-approximable by $\cZ^+$ we can find a generalized composition $\mu$ and an inclusion $f \colon \langle \lambda \rangle \to \langle \mu \rangle$ such that $\mu_{f(i)} \ge \min(N, \lambda_i)$ for all $i \in \langle \lambda \rangle$ and $x \in \alpha_f(\cZ_{\mu})$. If $\mu$ is infinite then this shows that $x$ is $N$-approximable by $\cZ$. Thus suppose $\mu$ is finite. Let $\nu=\mu \cup \{\infty\}$ and let $g \colon \mu \to \nu$ be the standard inclusion. By definition, $\cZ_{\mu}=\alpha_g(\cZ_{\nu})$, and so $x \in \alpha_{g \circ f}(\cZ_{\nu})$. Since $g \circ f$ is injective and $\nu_{g(f(i))}=\mu_{f(i)} \ge \lambda_i$ for all $i \in \langle \lambda \rangle$, this shows that $x$ is $N$-approximable by $\cZ$. As this holds for all $N$, the claim follows. Since $\cZ$ is C2, we conclude that $x \in \cZ_{\lambda}$, and thus $\cZ^+$ is C2.

Finally, suppose that $\cZ'$ is a C2 subset of $\cX^+$ such that $\cZ' \cap \cX$ contains $\cZ^+$. Let $\lambda$ be a non-empty composition, and let $f \colon \lambda \to \lambda^+$ be the standard inclusion. Then $\cZ'_{\lambda^+}$ contains $\cZ_{\lambda^+}$ by assumption. Thus $\alpha_f(\cZ'_{\lambda^+})$ contains $\alpha_f(\cZ_{\lambda^+})=\cZ^+_{\lambda}$. Since $\cZ'$ is C2, we have $\alpha_f(\cZ'_{\lambda^+}) \subset \cZ'_{\lambda}$. Thus $\cZ'_{\lambda}$ contains $\cZ^+_{\lambda}$. If $\lambda$ is an $\infty$-composition then $\cZ'_{\lambda}$ contains $\cZ^+_{\lambda}=\cZ_{\lambda}$ by assumption. Thus $\cZ'$ contains $\cZ^+$. This completes the proof.
\end{proof}

\begin{proposition} \label{prop:plus-gamma-circ}
Let $Z$ be a subset of $\cX_{\lambda}$ for some $\infty$-composition $\lambda$ and let $\cZ=\Gamma^{\circ}_{\lambda}(Z)$. Then for any non-empty generalized composition $\mu$, we have
\begin{displaymath}
\cZ^+_{\mu}
= \bigcup_{f \colon \mu \dashrightarrow \lambda} \alpha_f(Z).
\end{displaymath}
\end{proposition}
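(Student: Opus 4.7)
The plan is to verify the equality in two cases, depending on whether $\mu$ is an $\infty$-composition or a finite composition. The $\infty$-composition case is immediate: by the definition of $(-)^+$ we have $\cZ^+_\mu = \cZ_\mu$, and $\cZ_\mu = \Gamma^{\circ}_\lambda(Z)_\mu$ is precisely $\bigcup_{f \colon \mu \dashrightarrow \lambda} \alpha_f(Z)$ by construction. So the real content is the case where $\mu$ is a non-empty finite composition.

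In that case, let $g \colon \mu \to \mu^+$ denote the standard inclusion, where $\mu^+ = \mu \cup \{\infty\}$. Unravelling definitions one gets
$$\cZ^+_\mu = \alpha_g(\cZ_{\mu^+}) = \bigcup_{h \colon \mu^+ \dashrightarrow \lambda} \alpha_g(\alpha_h(Z)),$$
so the task reduces to matching correspondences $\mu^+ \dashrightarrow \lambda$ (after pushing forward along $\alpha_g$) with correspondences $\mu \dashrightarrow \lambda$.

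For the containment $\supset$, I would start with a correspondence $f = (f_1 \colon \rho \psurj \mu, f_2 \colon \rho \to \lambda)$ and build $h = (h_1 \colon \rho^+ \psurj \mu^+, h_2 \colon \rho^+ \to \lambda)$, where $\rho^+ = \rho \cup \{\infty\}$, by extending $f_1$ via $\infty \mapsto \infty$ and $f_2$ by sending the new $\infty$ to any chosen $j \in \langle \lambda \rangle$ with $\lambda_j = \infty$; such $j$ exists because $\lambda$ is an $\infty$-composition. A direct coordinate check shows $\alpha_f(Z) \subset \alpha_g(\alpha_h(Z))$: given $z \in Z$ with $\alpha_{f_1}(x) = \alpha_{f_2}(z)$, the point $y \in \cX_{\mu^+}$ agreeing with $x$ on $\langle \mu \rangle$ and equal to $z_j$ at the extra coordinate satisfies $\alpha_{h_1}(y) = \alpha_{h_2}(z)$ and $\alpha_g(y) = x$. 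For the reverse containment $\subset$, given $h = (h_1 \colon \sigma \psurj \mu^+, h_2 \colon \sigma \to \lambda)$ I would restrict to the sub-composition $\rho$ with underlying set $h_1^{-1}(\langle \mu \rangle) \subset \langle \sigma \rangle$ and weights inherited from $\sigma$, and let $f_1, f_2$ be the restrictions of $h_1, h_2$. Since $h_1$ is a principal surjection, so is $f_1 \colon \rho \psurj \mu$, and $f_2 \colon \rho \to \lambda$ clearly remains a map of generalized compositions. Then $\alpha_g(\alpha_h(Z)) \subset \alpha_f(Z)$ follows because the relation $\alpha_{h_1}(y) = \alpha_{h_2}(z)$, read on coordinates $r \in \langle \rho \rangle \subset \langle \sigma \rangle$, is exactly $\alpha_{f_1}(\alpha_g(y)) = \alpha_{f_2}(z)$.

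There is no real obstacle here; the whole argument is bookkeeping with indices. The only place the $\infty$-composition hypothesis on $\lambda$ is actually used is in the $\supset$ direction, to provide an index $j$ with $\lambda_j = \infty$ to which the new $\infty \in \langle \rho^+ \rangle$ can be routed.
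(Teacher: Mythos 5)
Your proof is correct, and it reaches the same conclusion by a somewhat more hands-on route than the paper. Where you explicitly build the matching correspondence in each direction and check agreement coordinate by coordinate, the paper instead leans on earlier machinery: for the inclusion $\supset$ it invokes the preceding proposition that $\cZ^+$ is a C2 subset of $\cX^+$ (hence closed under correspondences), so that $Z\subset\cZ^+_\lambda$ immediately gives $\alpha_f(Z)\subset\cZ^+_\mu$ for every $f\colon\mu\dashrightarrow\lambda$; and for the inclusion $\subset$ it writes $\cZ^+_\mu=\alpha_g(\cZ_{\mu^+})=\bigcup_{f\colon\mu^+\dashrightarrow\lambda}\alpha_g(\alpha_f(Z))$ and then absorbs each term into $\alpha_h(Z)$ by taking $h$ to be a composition $f\circ g$ via Proposition~\ref{prop:comp-corr}. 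Your argument trades that machinery for two explicit constructions: extending $(f_1,f_2)$ to $(h_1,h_2)$ on $\rho^+=\rho\cup\{\infty\}$ by routing the new index to a part of $\lambda$ of weight $\infty$, and conversely restricting $h$ to $\rho=h_1^{-1}(\langle\mu\rangle)$, both of which you verify correctly (including that $h_1$ remains a principal surjection, that $h_2$ remains a map of generalized compositions since the target part has infinite weight, and that the restricted $f_1$ is still a principal surjection because $h_1$ was). The payoff of your version is that it is self-contained for this statement and does not rely on the paper's blanket remark that the $\cX$-machinery "extends to $\cX^+$ without difficulty"; the cost is that it is longer and reproves, in this special case, what the composition-of-correspondences lemma and the C2 property of $\cZ^+$ already package up. Your closing observation — that the $\infty$-composition hypothesis on $\lambda$ is used only to supply an index $j$ with $\lambda_j=\infty$ in the $\supset$ direction — is an accurate and worthwhile diagnostic that the paper does not spell out.
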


\begin{proof}
If $\mu$ is infinite, this is simply the definition of $\Gamma_{\lambda}^{\circ}$. Suppose $\mu$ is finite. Since $\cZ^+$ is closed under compositions and $Z \subset \cZ^+_{\lambda}$, we have $\alpha_f(Z) \subset \cZ_{\mu}$ for any $f \colon \mu \dashrightarrow \lambda$. Let $g \colon \mu \to \mu^+$ be the standard inclusion. Then
\begin{displaymath}
\cZ^+_{\mu}=\alpha_g(\cZ_{\mu^+}) = \bigcup_{f \colon \mu^+ \dashrightarrow \lambda} \alpha_g(\alpha_f(Z)) \subset \bigcup_{h \colon \mu \dashrightarrow \lambda} \alpha_h(Z).
\end{displaymath}
The first equality is the definition of $\cZ^+_{\mu}$, the second follows from the definition of $\cZ_{\mu^+}$, and the third follows by taking $h$ to be a composition $f \circ g$ and applying Proposition~\ref{prop:comp-corr}. This completes the proof.
\end{proof}

\begin{proposition} \label{prop:corr-preserves-closure-alt}
Let $\lambda$ be an $\infty$-composition and let $Z$ be an $\End(\lambda)$-stable Zariski closed subset of $\cX_{\lambda}$. Let $f \colon \mu \dashrightarrow \lambda$ be a correspondence of generalized compositions. Suppose $\vert \mu \vert$ is larger than the sum of all finite parts of $\lambda$. Then $\alpha_f(Z)$ is Zariski closed in $\cX^+_{\mu}$.
\end{proposition}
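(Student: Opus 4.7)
My plan is to imitate the proof of Proposition~\ref{prop:corr-preserves-closure}, tracking carefully where the $\infty$-composition hypothesis on $\mu$ was used there, and replacing it with the size hypothesis $\vert\mu\vert > \sum \{\lambda_j : \lambda_j < \infty\}$ at exactly that point.

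First I would reduce to the injective case. Write $f = (f_1 \colon \rho \psurj \mu,\, f_2 \colon \rho \to \lambda)$, and factor $f_2 = g \circ q$ with $q \colon \rho \psurj \sigma$ a principal surjection and $g \colon \sigma \hookrightarrow \lambda$ an injection. Since $f_1$ and $q$ are principal surjections, $\vert\sigma\vert = \vert\rho\vert = \vert\mu\vert$, so the size hypothesis carries over to $\sigma$. Moreover $\alpha_f(Z) = \alpha_{f_1}^{-1}(\alpha_q(\alpha_g(Z)))$, and since $\alpha_{f_1}$ is a morphism and $\alpha_q$ is a closed immersion (multi-diagonal), it suffices to show that $\alpha_g(Z)$ is Zariski closed in $\cX^+_{\sigma} = \bA^{\langle \sigma \rangle}_W$.

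Now for the injective case, I would run the retraction trick from the original proof. Because $g$ is a map of generalized compositions, $\vert\sigma\vert \le \sum_{i \in \langle \sigma \rangle} \lambda_{g(i)}$; combined with the hypothesis that $\vert\sigma\vert$ exceeds the sum of all finite parts of $\lambda$, there must exist some $i_0 \in \langle \sigma \rangle$ with $\lambda_{g(i_0)} = \infty$. Define $g^* \colon \langle \lambda \rangle \to \langle \sigma \rangle$ by $g^*(j) = g^{-1}(j)$ if $j \in \im(g)$ and $g^*(j) = i_0$ otherwise, so that $g^* \circ g = \id$. The composition $h = g \circ g^*$ fixes $\im(g)$ and sends everything else to $g(i_0)$; since $\lambda_{g(i_0)} = \infty$, the constraint $\sum_{j \in h^{-1}(k)} \lambda_j \le \lambda_k$ holds for every $k$, so $h$ is an endomorphism of the $\infty$-composition $\lambda$. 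Then exactly as in Proposition~\ref{prop:corr-preserves-closure}, $\End(\lambda)$-stability of $Z$ gives $\alpha_g(Z) = \alpha_{g^*}^{-1}(Z)$: the forward inclusion uses $\alpha_{g^*} \circ \alpha_g = \alpha_h$ which preserves $Z$, and the reverse uses $\alpha_g \circ \alpha_{g^*} = \id$. Since $\alpha_{g^*}$ is a scheme morphism, this preimage description exhibits $\alpha_g(Z)$ as Zariski closed.

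The main obstacle, and indeed the whole point of the hypothesis, is the existence of the infinite part $\lambda_{g(i_0)}$ in the image of $g$: this is precisely what makes $h = g \circ g^*$ a valid endomorphism of $\lambda$ (so that $\End(\lambda)$-stability can be applied) and what makes $g^*$ well-defined as a set map even though $\sigma$ is finite while $\lambda$ is infinite. Note that $g^*$ itself need not be a map of compositions; we only use it as a function between index sets, and the resulting $\alpha_{g^*}$ is automatically a scheme morphism between affine spaces. Once this injective case is in hand, the reduction in the first paragraph finishes the proof with no extra input.
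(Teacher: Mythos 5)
Your proof is correct and takes essentially the same approach as the paper: the paper simply notes that the size hypothesis forces some $i \in \im(f)$ with $\lambda_i = \infty$ in the injective step, and then declares that the proof of Proposition~\ref{prop:corr-preserves-closure} carries over; you have spelled out exactly that argument, including the correct observation that principal surjections preserve $\vert\cdot\vert$ so the hypothesis transfers to the injective piece $\sigma \to \lambda$.
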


\begin{proof}
If $f$ is an injective map then there must be some $i \in \im(f)$ with $\lambda_i=\infty$ by our assumption on $\vert \mu \vert$. With this observation in hand, the proof of Proposition~\ref{prop:corr-preserves-closure} can be carried over verbatim.
\end{proof}

\begin{proposition} \label{prop:C3-plus}
Suppose $A$ is noetherian, and let $\cZ$ be a C3 subvariety of $\cX$. Then $\cZ^+_{\mu}$ is Zariski closed in $\cX^+_{\mu}$ for all generalized compositions $\mu$ with $\vert \mu \vert \gg 0$.
\end{proposition}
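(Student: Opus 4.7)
My strategy is to reduce to the irreducible case, dispose of the unbounded components directly, and for the bounded components combine the explicit formula for $\cZ^+$ with the finite-good-correspondence reduction and the closedness statement for individual correspondences.

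By Proposition~\ref{prop:noeth-C3} (and its corollary), the noetherian hypothesis on $A$ lets us decompose $\cZ$ as a finite union $\cZ = \cZ^1 \cup \cdots \cup \cZ^N$ of irreducible C3 subvarieties. The operation $(-)^+$ preserves unions, and a finite union of Zariski closed subsets of $\cX^+_\mu$ is Zariski closed, so it is enough to prove the statement for each $\cZ^i$ separately; the final bound on $|\mu|$ is then the maximum of the individual bounds.

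By the classification theorem (Theorem~\ref{thm:classification-cZ}), each $\cZ^i$ is either of the form $\cX_{W'}$ for an irreducible closed $W' \subset W$ (unbounded case), or of the form $\Gamma_\lambda(Z) = \Gamma^\circ_\lambda(Z^e)$ for an $\infty$-partition $\lambda$ and an $\Aut(\lambda)$-irreducible closed subvariety $Z \subset \cX_{[\lambda]}$ (bounded case). In the unbounded case $(\cZ^i)^+_\mu = \bA^{\langle \mu \rangle}_{W'}$, which is Zariski closed in $\cX^+_\mu = \bA^{\langle \mu \rangle}_W$ for every $\mu$ with no restriction on $|\mu|$. In the bounded case, Proposition~\ref{prop:plus-gamma-circ} applied to $Z^e$ gives
\begin{displaymath}
(\cZ^i)^+_\mu = \bigcup_{f \colon \mu \dashrightarrow \lambda} \alpha_f(Z^e).
\end{displaymath}
I then invoke the argument of Proposition~\ref{prop:finite-union} to reduce this union to the finite collection of \emph{good} correspondences, and invoke Proposition~\ref{prop:corr-preserves-closure-alt} to conclude that each summand is Zariski closed once $|\mu|$ exceeds the sum $e(\lambda)$ of the finite parts of $\lambda$; since $Z^e$ is $\End(\lambda)$-stable and Zariski closed by construction, the hypotheses of that proposition are met.

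The main obstacle, such as it is, is that Proposition~\ref{prop:finite-union} is stated for $\Gamma^\circ_\lambda(Z)_\mu$ with $\mu$ an $\infty$-composition, whereas here $\mu$ is an arbitrary generalized composition. The argument, however, only uses numerical comparisons between $\mu_i$ and $e(\lambda)$: if $\mu_i > e(\lambda)$ then since $\sum_{j \in g_1^{-1}(i)} \tau_j = \mu_i$ while the total $\tau$-weight landing in the finite parts of $\lambda$ under $g_2$ is at most $e(\lambda)$, some $j \in g_1^{-1}(i)$ must satisfy $\lambda_{g_2(j)} = \infty$. This is exactly what was needed to construct a good correspondence dominating the given one, and it works verbatim when $\mu$ is a finite composition. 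Finiteness of the set of good correspondences $\mu \dashrightarrow \lambda$ likewise requires no change. Taking the threshold $|\mu| > \max_i e(\lambda_i)$ across the bounded components (and ignoring the unbounded ones entirely) gives the required uniform bound.
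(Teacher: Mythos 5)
Your proof is correct and follows essentially the same route as the paper: reduce to the irreducible case (via Proposition~\ref{prop:noeth-C3} and its corollary), dispatch the unbounded components, and for the bounded ones combine Proposition~\ref{prop:plus-gamma-circ} with Proposition~\ref{prop:corr-preserves-closure-alt}. The one place you take a small detour is in showing that the union $\bigcup_{f \colon \mu \dashrightarrow \lambda} \alpha_f(Z^e)$ is finite: you argue via the good-correspondence reduction of Proposition~\ref{prop:finite-union}, checking that that argument carries over to finite $\mu$. The paper's proof sidesteps this entirely with a more elementary observation: once $\mu$ is a finite composition, any correspondence $(f_1 \colon \rho \psurj \mu, f_2 \colon \rho \to \lambda)$ has $\vert \rho \vert = \vert \mu \vert < \infty$, so there are only finitely many $\rho$ up to isomorphism and hence only finitely many correspondences $\mu \dashrightarrow \lambda$ at all—no reduction to good correspondences is needed. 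Your route is valid but does extra work; the paper's is shorter because the ambient finiteness of $\mu$ makes the union finite for free.
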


\begin{proof}
Since $\cZ \mapsto \cZ^+$ is compatible with finite unions, it suffices to treat the case where $\cZ$ is irreducible. If $\cZ$ is unbounded then $\cZ=\cX_{W'}$ for some $W' \subset W$, and one finds that $\cZ^+=\cX^+_{W'}$. In this case, $\cZ^+$ is a C3 subvariety of $\cX^+$. Now suppose that $\cZ$ is bounded. Then $\cZ=\Gamma_{\lambda}(Z)$ for some $\infty$-composition $\lambda$ and some $\End(\lambda)$-stable closed subvariety $Z$ of $\cX_{\lambda}$. By Proposition~\ref{prop:plus-gamma-circ}, we have
\begin{displaymath}
\cZ^+_{\mu} = \bigcup_{f \colon \mu \dashrightarrow \lambda} \alpha_f(Z).
\end{displaymath}
Since $\mu$ is finite, there are (up to isomorphism) only finitely many principal surjections $\rho \to \lambda$, and thus only finitely many choices for $f$. Thus the above union is finite. If $\vert \mu \vert$ is sufficiently large, then $\alpha_f(Z)$ is Zariski closed by Proposition~\ref{prop:corr-preserves-closure-alt}. The result follows.
\end{proof}

\section{The correspondence between $\fX$ and $\cX$}
\label{sec:correspondence}

\subsection{Setup}

Regard $1^{\infty}$ as a composition on the index set $\langle 1^{\infty} \rangle = [\infty]$ taking the value~1 on each point. Suppose that $\lambda$ is a generalized composition. A {\bf map} $f \colon 1^{\infty} \to \lambda$ is a function $f \colon [\infty] \to \langle \lambda \rangle$ such that $\# f^{-1}(i) \le \lambda_i$ for all $i \in \langle \lambda \rangle$. We say that such a map $f$ is a {\bf principal surjection} if $\lambda_i=\# f^{-1}(i)$ for all $i \in \langle \lambda \rangle$. Given a map $f$, we let $\alpha_f \colon \cX_{\lambda} \to \fX$ be the associated map of schemes. If $f$ is a principal surjection then $\alpha_f$ is a multi-diagonal map, and thus a closed immersion.

Let $\fZ$ be a $\fS^{\rm big}$-subset of $\fX_{\fin}$. We define a subset $\cZ=\Phi(\fZ)$ of $\cX$ as follows. Given an $\infty$-composition $\lambda$, choose a principal surjection $f \colon 1^{\infty} \psurj \lambda$, and put $\cZ_{\lambda}=\alpha_f^{-1}(\fZ)$. Note that if $g \colon 1^{\infty} \psurj \lambda$ is a second principal surjection then $f=g \circ \sigma$ for some $\sigma \in \fS^{\rm big}$, and so $\alpha_f^{-1}(\fZ)=\alpha_g^{-1}(\fZ)$ since $\fZ$ is $\fS^{\rm big}$-stable. Thus $\cZ$ is well-defined.

Now let $\cZ$ be a subset of $\cX$. We define a subset $\fZ=\Psi(\cZ)$ of $\fX_{\fin}$ by $\fZ=\bigcup_{f \colon 1^{\infty} \psurj \lambda} \alpha_f(\cZ_{\lambda})$. Here the union is taken over all $\infty$-compositions $\lambda$ and all principal surjections $f \colon 1^{\infty} \psurj \lambda$. It is clear that $\fZ$ is $\fS^{\rm big}$-stable.

In the remainder of this section, we see that $\Phi$ and $\Psi$ give bijective correspondence between certain classes of subsets of $\cX$ and $\fX$.

\subsection{The correspondence on C1 subsets}

The following result is the most basic of our correspondences between $\fX$ and $\cX$.

\begin{proposition} \label{prop:C1-corr}
The constructions $\Phi$ and $\Psi$ define mutually inverse bijections
\begin{displaymath}
\xymatrix{
\{ \text{$\fS^{\rm big}$-subsets of $\fX_{\fin}$} \} \ar@<3pt>[r] &
\{ \text{C1 subsets of $\cX$} \} \ar@<3pt>[l] }
\end{displaymath}
\end{proposition}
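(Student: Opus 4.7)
The plan is to establish the four claims: (i) $\Phi(\fZ)$ is C1, (ii) $\Psi(\cZ) \subset \fX_{\fin}$ is $\fS^{\rm big}$-stable, (iii) $\Psi \circ \Phi = \id$, and (iv) $\Phi \circ \Psi = \id$ on C1 subsets. The main engine is the observation that principal surjections compose and that for any principal surjection $f$, the map $\alpha_f$ is a closed immersion (so injective on points).

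For (i), given a principal surjection $g \colon \lambda \psurj \mu$ and a principal surjection $f \colon 1^\infty \psurj \lambda$, the composite $g \circ f$ is again a principal surjection $1^\infty \psurj \mu$, so $\cZ_\mu = \alpha_{g \circ f}^{-1}(\fZ) = \alpha_g^{-1}(\alpha_f^{-1}(\fZ)) = \alpha_g^{-1}(\cZ_\lambda)$. For (ii), any $x \in \alpha_f(\cZ_\lambda)$ with $f \colon 1^\infty \psurj \lambda$ has at most $\ell(\lambda)$ distinct coordinates, so is finitary; and $\sigma \cdot \alpha_f(y) = \alpha_{f \circ \sigma^{-1}}(y)$, where $f \circ \sigma^{-1}$ is again a principal surjection, giving $\fS^{\rm big}$-stability. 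For (iii), the inclusion $\Psi(\Phi(\fZ)) \subseteq \fZ$ is immediate from $\alpha_f(\alpha_f^{-1}(\fZ)) \subseteq \fZ$; for the reverse, given a finitary $K$-point $x \in \fZ$ with distinct values $a_1, \ldots, a_r$ on subsets $\cU_k \subset [\infty]$, let $\lambda$ be the $\infty$-composition on $\{1,\ldots,r\}$ with $\lambda_k = \#\cU_k$, define $f \colon [\infty] \to \{1,\ldots,r\}$ by $i \mapsto k$ when $i \in \cU_k$, and take $y = (a_1, \ldots, a_r)$; then $f$ is a principal surjection, $\alpha_f(y) = x$, so $y \in \cZ_\lambda$ and $x \in \Psi(\cZ)$.

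The substantive step is (iv). Set $\cZ' = \Phi(\Psi(\cZ))$; for a fixed principal surjection $g \colon 1^\infty \psurj \lambda$, we have $\cZ'_\lambda = \alpha_g^{-1}(\Psi(\cZ))$, and the inclusion $\cZ_\lambda \subseteq \cZ'_\lambda$ is clear. For the reverse, suppose $y \in \cZ'_\lambda$; then $\alpha_g(y) = \alpha_h(z)$ for some $\infty$-composition $\mu$, principal surjection $h \colon 1^\infty \psurj \mu$, and $z \in \cZ_\mu$. I will construct an ``approximate pullback'' $\nu$ by setting
\[
\langle \nu \rangle = \{ (a,b) \in \langle \lambda \rangle \times \langle \mu \rangle \mid \exists\, i \in [\infty] \text{ with } g(i)=a,\, h(i)=b \}, \qquad \nu_{(a,b)} = \#\{ i \mid g(i)=a,\, h(i)=b \}.
\]
Then the two projections $p_\lambda \colon \nu \psurj \lambda$ and $p_\mu \colon \nu \psurj \mu$ are principal surjections (the fiber sums are precisely $\lambda_a$ and $\mu_b$). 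The relation $\alpha_g(y) = \alpha_h(z)$ forces $y_a = z_b$ whenever $(a,b) \in \langle \nu \rangle$, so the assignment $u_{(a,b)} := y_a = z_b$ defines a well-defined point $u \in \cX_\nu$ with $\alpha_{p_\lambda}(u) = y$ and $\alpha_{p_\mu}(u) = z$. Applying C1 to $p_\mu$ gives $u \in \alpha_{p_\mu}^{-1}(\cZ_\mu) = \cZ_\nu$; applying C1 to $p_\lambda$ gives $y = \alpha_{p_\lambda}(u) \in \cZ_\lambda$, as required.

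The main obstacle is the construction of the composition $\nu$ and the verification that the diagonal point $u$ it supports lies in $\cZ_\nu$; the rest is formal. This $\nu$ serves as a makeshift fiber product (recall Example~\ref{ex:no-fiber-product} notes that genuine fiber products do not exist), and its existence is what makes the C1 property strong enough to reconstruct $\cZ$ from the single finitary locus $\Psi(\cZ)$.
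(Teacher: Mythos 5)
Your proof is correct, and its essential step differs from the paper's. Parts (i)--(iii) are routine and broadly parallel what the paper does (the paper packages (ii)--(iii) using the surjection $\pi$ from a disjoint union of copies of $\cX_\lambda$, but the content is the same as your coordinate-by-coordinate verification). The real divergence is in (iv): the paper first invokes Proposition~\ref{prop:C1-contain} to reduce the containment $\cZ' \subset \cZ$ to the distinct-coordinate loci $\cZ'_{[\lambda]} \subset \cZ_{[\lambda]}$, and then uses a lemma asserting that a point of $\Psi(\cZ)$ of type $\lambda$ has a preimage in $\cZ_{[\lambda]}$ under some principal surjection $1^\infty \psurj \lambda$; the key fact is that two points of $\cX_{[\lambda]}$ with the same image in $\fX$ must differ by an automorphism of $\lambda$. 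You instead work at an arbitrary point $y \in \cZ'_\lambda$ (no reduction to distinct coordinates) and build, from the two principal surjections $g \colon 1^\infty \psurj \lambda$ and $h \colon 1^\infty \psurj \mu$, an explicit set-theoretic fiber product $\nu$ with principal surjections $p_\lambda, p_\mu$ to $\lambda$ and $\mu$, verify that the induced ``diagonal'' point $u \in \cX_\nu$ pulls back both $y$ and $z$, and then apply C1 along $p_\mu$ then $p_\lambda$. This is a genuinely self-contained route that avoids both Proposition~\ref{prop:C1-contain} and the separate lemma; the trade-off is that you must construct and verify the properties of $\nu$ by hand. Your observation about why this does not contradict Example~\ref{ex:no-fiber-product} is apt: the non-existence of fiber products concerns general cospans, whereas cospans of two principal surjections with apex $1^\infty$ do admit a set-theoretic pullback with the properties you actually use.

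One caution on notation: in your last two sentences you write $\alpha_{p_\lambda}(u) = y$, $\alpha_{p_\mu}(u) = z$, and ``$u \in \alpha_{p_\mu}^{-1}(\cZ_\mu) = \cZ_\nu$,'' but in the paper's conventions $\alpha_f$ for $f \colon S \to T$ goes from $\bA^T$ to $\bA^S$, so $\alpha_{p_\lambda}$ and $\alpha_{p_\mu}$ are closed immersions $\cX_\lambda \to \cX_\nu$ and $\cX_\mu \to \cX_\nu$, not projections. The correct statements are $u = \alpha_{p_\lambda}(y) = \alpha_{p_\mu}(z)$, that C1 applied to $p_\mu$ gives $\alpha_{p_\mu}^{-1}(\cZ_\nu) = \cZ_\mu$ and hence $u = \alpha_{p_\mu}(z) \in \cZ_\nu$, and that C1 applied to $p_\lambda$ gives $y \in \alpha_{p_\lambda}^{-1}(\cZ_\nu) = \cZ_\lambda$. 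With these directional corrections the argument is exactly right.
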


\begin{proof}
Let $\fZ$ be a $\fS^{\rm big}$-subset of $\fX$ and let $\cZ=\Phi(\fZ)$. We start by verifying that $\cZ$ is indeed a C1 subset. Let $f \colon \lambda \psurj \mu$ be a principal surjection. By definition of $\Phi$, we have $\cZ_{\mu} = \alpha_g^{-1}(\fZ)$ for a principal surjection $g \colon 1^{\infty} \psurj \mu$. Clearly, $g$ factors through $h$, so we can write $g = fh$ for a principal surjection $h \colon 1^{\infty} \psurj \lambda$. Thus $\cZ_{\mu}=\alpha_g^{-1}(\fZ)= \alpha_f^{-1} (\alpha_h^{-1} (\fZ)) = \alpha_f^{-1}(\cZ_{\lambda})$, as required.
	
We now verify that $\fZ=\Psi(\cZ)$.  Consider the map $\pi \colon \coprod_{\lambda, f \colon 1^{\infty} \psurj \lambda } \cX_{\lambda} \to \fX_{\fin}$, where the disjoint union is taken over all isomorphism classes of $\infty$-compositions $\lambda$ and all principal surjections $f \colon 1^{\infty} \psurj \lambda$ (the map $\pi$ on the $(\lambda, f)$-coordinate is given by $\alpha_f$). Putting $Z=\coprod_{\lambda, f \colon 1^{\infty} \psurj \lambda } \cZ_{\lambda}$, we have $Z=\pi^{-1}(\fZ)$ and $\pi(Z)=\Psi(\cZ)$, by definition, and $\pi(\pi^{-1}(\fZ))=\fZ$ since $\pi$ is surjective. Thus $\fZ = \pi(\pi^{-1}(\fZ)) = \pi(Z) = \Psi(\cZ)$.

Now let $\cZ$ be a C1 subset of $\cX$, and put $\cZ'=\Phi(\Psi(\cZ))$. We must show $\cZ=\cZ'$. Again putting $Z=\coprod_{\lambda, f \colon 1^{\infty} \psurj \lambda } \cZ_{\lambda}$ and reasoning as above, we see that $Z \subset \pi^{-1}(\pi (Z)) = \pi^{-1}(\Psi(\cZ))$. Moreover, $\pi^{-1}(\Psi(\cZ))_{(\lambda, f)} = \Phi(\Psi(\cZ))_{\lambda}$, by definition.  Thus $\cZ \subset \cZ'$, so we simply have to establish the reverse inclusion. Let $\lambda$ be an $\infty$-composition and let $z \in \cZ'_{[\lambda]}$ be given. Let $f \colon 1^{\infty} \psurj \lambda$  be a principal surjection. Then $\alpha_f(z)$ belongs to $\Psi(\cZ)$ and has type $\lambda$ (see \S \ref{ss:type}). Thus, by the following lemma, we have $\alpha_f(z)=\alpha_{f'}(z')$ for some $z' \in \cZ_{[\lambda]}$ and some principal surjection $f' \colon 1^{\infty} \psurj \lambda$. Since $z$ and $z'$ have distinct coordinates, we must have $z=\sigma z'$ for some automorphism of $\lambda$. Since $\cZ_{[\lambda]}$ is stable under $\sigma$, we thus have $z \in \cZ_{[\lambda]}$. Thus $\cZ' \subset \cZ$ by Proposition~\ref{prop:C1-contain}.
\end{proof}

\begin{lemma}
Let $\cZ$ be a C1-subset of $\cX$, let $y \in \Psi(\cZ)$, and let $\lambda$ be an $\infty$-weighting. Suppose $y$ has type $\lambda$. Then there exists $z \in \cZ_{[\lambda]}$ and a principal surjection $f \colon 1^{\infty} \psurj \lambda$ such that $y=\alpha_f(z)$.
\end{lemma}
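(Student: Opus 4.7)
The plan is to unwind the definition of $\Psi$ to present $y$ as coming from some point in $\cZ_{\mu}$ for some $\infty$-composition $\mu$, then collapse $\mu$ to land in $\cX_{[\mu']}$ for a refined composition $\mu'$, and finally observe that $\mu' \cong \lambda$ as compositions because the multiplicities of the distinct coordinates of $y$ are exactly the parts of $\lambda$ (this is what "type $\lambda$" means, cf.\ \S\ref{ss:type}).

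First I would write $y = \alpha_g(w)$, where $g \colon 1^{\infty} \psurj \mu$ is a principal surjection and $w \in \cZ_{\mu}$, using the definition of $\Psi(\cZ)$. Next, define an equivalence relation $\sim$ on $\langle \mu \rangle$ by $i \sim j$ iff $w_i = w_j$. Let $\langle \mu' \rangle = \langle \mu \rangle/{\sim}$, let $p \colon \langle \mu \rangle \to \langle \mu' \rangle$ be the quotient, and set $\mu' = p_*(\mu)$, so that $p \colon \mu \psurj \mu'$ is a principal surjection. By construction $w = \alpha_p(w')$ for a unique $w' \in \cX_{[\mu']}$. Since $\cZ$ is C1, $w \in \cZ_{\mu} = \alpha_p^{-1}(\cZ_{\mu'})$ forces $w' \in \cZ_{\mu'}$, and since $w' \in \cX_{[\mu']}$, in fact $w' \in \cZ_{[\mu']}$. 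A direct check shows that $p \circ g \colon 1^{\infty} \to \mu'$ is again a principal surjection and $y = \alpha_{p \circ g}(w')$.

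Now I would compare $\mu'$ with $\lambda$. The distinct coordinate values of $y$ are indexed by $\langle \mu' \rangle$, and the number of times the value $w'_j$ appears among the coordinates of $y$ equals $\#(p\circ g)^{-1}(j) = \mu'_j$. Since $y$ has type $\lambda$, the multiset $\{\mu'_j\}_{j \in \langle \mu' \rangle}$ coincides with the multiset of parts of $\lambda$. Hence there is an isomorphism of compositions $\sigma \colon \lambda \to \mu'$, i.e.\ a bijection $\sigma \colon \langle \lambda \rangle \to \langle \mu' \rangle$ with $\mu'_{\sigma(i)} = \lambda_i$.

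Finally set $z = \alpha_\sigma(w') \in \cX_\lambda$ and $f = \sigma^{-1} \circ p \circ g \colon 1^{\infty} \to \lambda$. Since $\sigma$ is an isomorphism (hence a principal surjection), the C1 property gives $z \in \cZ_\lambda$, and since $w'$ has distinct coordinates so does $z$, so $z \in \cZ_{[\lambda]}$. The equality $\#f^{-1}(k) = \mu'_{\sigma(k)} = \lambda_k$ shows $f$ is a principal surjection. A short coordinate-wise computation, using $z_i = w'_{\sigma(i)}$, yields $(\alpha_f(z))_n = z_{f(n)} = w'_{p(g(n))} = (\alpha_{p \circ g}(w'))_n = y_n$, completing the proof. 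The only mildly subtle point is verifying that $p \circ g$ remains a principal surjection, which is an elementary computation of fiber sizes; otherwise the argument is a sequence of bookkeeping steps.
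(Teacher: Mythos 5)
Your proof is correct and takes essentially the same approach as the paper: unwind $\Psi$, produce a principal surjection from $\mu$ onto a composition with injective fibers, and apply the C1 axiom to push membership in $\cZ$ down to the level with distinct coordinates. The paper is a bit more economical --- it extracts $z \in \cX_{[\lambda]}$ directly from the type decomposition of $y$ and then matches coordinates with $z'$ to produce the unique principal surjection $h \colon \mu \psurj \lambda$, whereas you first form the intrinsic collapse $\mu'$ of $w$ and then identify $\mu' \cong \lambda$, but the underlying mechanism is identical.
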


\begin{proof}
By definition of $\Psi$, there is a principal surjection $g \colon 1^{\infty} \psurj \mu$ such that $y = \alpha_g(z')$ for some $z' \in \cZ_{\mu}$. Since $y$ has type $\lambda$, we can find a principal surjection $f \colon 1^{\infty} \psurj \lambda$ such that $y = \alpha_f(z)$ for some $z \in \cX_{[\lambda]}$. Since $\alpha_f(z)=\alpha_g(z')$ and $z$ has distinct coordinates, there is a unique principal surjection $h \colon \mu \psurj \lambda$ such that $z' = \alpha_h(z)$. Since $\cZ$ is C1, we see that $z \in \cZ_{[\lambda]}$, which completes the proof.
\end{proof}

\subsection{The correspondence on C2 subsets}

For a subset $\cZ$ of $\cX$, the subset $\Psi(\cZ)$ of $\fX$ is defined as a union. If $\cZ$ is C2 it can also be described as an intersection:

\begin{proposition} \label{prop:C2-Psi}
Let $\cZ$ be a C2 subset of $\cX$. Then
\begin{displaymath}
\Psi(\cZ) = \fX_{\fin} \cap \bigcap_{f \colon [n] \to [\infty]} \alpha_f^{-1}(\cZ^+_{1^n}),
\end{displaymath}
where the intersection is over all $n \in \bN$ and all injections $f$. In fact, for fixed $N$, it suffices to intersect over $n \ge N$.
\end{proposition}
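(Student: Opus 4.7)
The plan is to prove the two inclusions separately; the forward one is routine while the reverse is the substantive content, and the refinement about $n \ge N$ is an easy postscript. The workhorse is the characterization of approximable points in a C2 subset together with the fact (established earlier) that $\cZ^+$ is C2 in $\cX^+$.

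\textbf{Forward inclusion.} Given $y \in \Psi(\cZ)$, write $y = \alpha_{\phi}(z)$ for some principal surjection $\phi \colon 1^{\infty} \psurj \lambda$ and some $z \in \cZ_{\lambda}$; automatically $y \in \fX_{\fin}$. For any injection $g \colon [n] \to [\infty]$, the composite $h = \phi \circ g \colon [n] \to \langle \lambda \rangle$ satisfies $\#h^{-1}(j) \le \lambda_j$ (using injectivity of $g$ and that $\phi$ is a principal surjection), so $h$ is a map of generalized compositions $1^n \to \lambda$, and $\alpha_g(y) = \alpha_h(z)$. The $\cX^+$-analogue of Proposition~\ref{prop:C2-map} then gives $\alpha_h(z) \in \cZ^+_{1^n}$.

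\textbf{Reverse inclusion.} Suppose $y \in \fX_{\fin}$ with $\alpha_g(y) \in \cZ^+_{1^n}$ for every injection $g$. Let $\lambda = (\lambda_1, \ldots, \lambda_r)$ be its type with $\lambda_1 = \cdots = \lambda_s = \infty$ and $\lambda_k < \infty$ for $k > s$, and write $y_j = a_{k(j)}$ for the class $\cU_k \subset [\infty]$ containing $j$. Put $z = (a_1, \ldots, a_r) \in \cX_{\lambda}$, so $y = \alpha_{\phi}(z)$ for the principal surjection $\phi(j) = k(j)$. It suffices to prove $z \in \cZ_{\lambda}$, and since $\cZ$ is C2, it suffices to show $z$ is $N$-approximable for every $N$. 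Fix $N$, set $n = sN + \lambda_{s+1} + \cdots + \lambda_r$, and choose $g \colon [n] \to [\infty]$ picking $N$ indices from each infinite class and all $\lambda_k$ indices from each finite one. By hypothesis there exists $b$ with $(b, \alpha_g(y)) \in \cZ_{(1^n)^+}$. Let $\mu$ be the $\infty$-composition on $\{*\} \cup [r]$ with $\mu_* = \infty$, $\mu_k = N$ for $k \le s$, and $\mu_k = \lambda_k$ for $k > s$, and let $\pi \colon (1^n)^+ \psurj \mu$ be the principal surjection fixing $*$ and sending the chosen indices in $\cU_k$ to $k$. Then $\alpha_{\pi}(b, z) = (b, \alpha_g(y))$, so by C1 we have $(b, z) \in \cZ_{\mu}$. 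The inclusion $f(k) = k$ of $\langle \lambda \rangle$ into $\langle \mu \rangle$ satisfies $\mu_{f(k)} \ge \min(\lambda_k, N)$ and $\alpha_f(b, z) = z$, witnessing the $N$-approximability of $z$.

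For the refinement: given $m$ and an injection $f_0 \colon [m] \to [\infty]$, extend $f_0$ to an injection $f \colon [\max(m, N)] \to [\infty]$. By hypothesis $\alpha_f(y) \in \cZ^+_{1^{\max(m,N)}}$, and Lemma~\ref{lem:plus-finite} applied to the inclusion $1^m \hookrightarrow 1^{\max(m,N)}$ shows the induced projection carries $\cZ^+_{1^{\max(m,N)}}$ into $\cZ^+_{1^m}$; hence $\alpha_{f_0}(y) \in \cZ^+_{1^m}$. The main obstacle lies in the reverse inclusion, specifically in selecting $g$, $\mu$, and $\pi$ so that the C1 property combines with the hypothesis to furnish $(b, z) \in \cZ_{\mu}$ and thereby an $N$-approximation of $z$.
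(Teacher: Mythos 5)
Your proof is correct and follows essentially the same strategy as the paper's: the forward inclusion is the same factorization $\alpha_f(x)=\alpha_{g\circ f}(y)$ combined with the $\cX^+$-analogue of Proposition~\ref{prop:C2-map}, and for the reverse inclusion both arguments reduce to showing $N$-approximability of the witness $z$ by using the hypothesis at a single carefully chosen large $n$ and pushing down by C1. The one cosmetic difference is that the paper phrases the key intermediate step as $y\in\cZ^+_{\mu}$ for $\mu$ a \emph{finite} partition of length $r$ (obtained from $\lambda$ by truncating infinite parts to $N$) and then invokes the C2 property of $\cZ^+$, whereas you work directly in $\cX$ by appending an $\infty$-weighted slot $\ast$ so that the witness $(b,z)$ lands in $\cZ_{\mu}$ for an $\infty$-composition $\mu$ of length $r+1$; these are equivalent by the definition of $\cZ^+_{\mu}$, and you are simply unwinding that definition rather than invoking the package. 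You also spell out the $n\ge N$ refinement via Lemma~\ref{lem:plus-finite}, which the paper leaves implicit in the structure of its reverse inclusion; this is a small but welcome addition.
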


\begin{proof}
Write $\fZ$ for the right side in the above formula. Let $x \in \Psi(\cZ)$. Then $x=\alpha_g(y)$ for some $\infty$-partition $\lambda$, some $y \in \cZ_{\lambda}$, and some principal surjection $g \colon 1^{\infty} \psurj \lambda$. Let $f \colon [n] \to [\infty]$ be an injection. Then $\alpha_f(x)=\alpha_f(\alpha_g(y))=\alpha_{g \circ f}(y)$. Since $g \circ f$ defines a map of generalized partitions $1^n \to \lambda$, we see that $\alpha_{g \circ f}(y) \in \cZ_{1^n}$. Thus $x \in \fZ$.

Now suppose that $x \in \fZ$. Since $x$ is finitary, we can write $x=\alpha_g(y)$ for some principal surjection $g \colon 1^{\infty} \psurj \lambda$ and some $y \in \cX_{\lambda}$. Now, for any injection $f \colon [n] \to [\infty]$, we have $\alpha_f(x)=\alpha_{g \circ f}(y) \in \cZ_{1^n}$. It is easy to see that any map of generalized partitions $h \colon 1^n \to \lambda$ can be factored as $g \circ f$ for some injection $f \colon [n] \to [\infty]$, and so we see that $\alpha_h(y) \in \cZ_{1^n}$ for all such $h$.

We claim that $y$ is approximable by $\cZ$. Indeed, let $N$ be given; assume $N$ is greater than all the finite parts of $\lambda$. Let $\mu$ be the generalized partition obtained by replacing all infinite parts of $\lambda$ with $N$. Choose a principal surjection $h \colon 1^n \psurj \mu$. Then the function $h$ defines a map of partitions $h \colon 1^n \to \lambda$, and so $\alpha_h(y) \in \cZ_{1^n}$ by the previous paragraph. Since $h$ is a principal surjection to $\mu$ and $\cZ$ is C1, it follows that $y \in \cZ_{\mu}$. This verifies the claim. Since $\cZ$ is C2, it follows that $y \in \cZ_{\lambda}$. Thus $x=\alpha_g(y)$ belongs to $\Psi(Z)$. This completes the proof.
\end{proof}

The following proposition is the next level of our correspondence between $\fX$ and $\cX$.

\begin{proposition} \label{prop:C2-corr}
The constructions $\Phi$ and $\Psi$ define mutually inverse bijections
\begin{displaymath}
\xymatrix{
\{ \text{$\Pi$-closed $\fS$-subsets of $\fX_{\fin}$} \} \ar@<3pt>[r] &
\{ \text{C2 subsets of $\cX$} \}  \ar@<3pt>[l] }
\end{displaymath}
\end{proposition}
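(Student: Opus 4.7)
The plan is to leverage Proposition~\ref{prop:C1-corr}. Any $\Pi$-closed $\fS$-subset of $\fX_{\fin}$ is automatically $\fS^{\rm big}$-stable by Proposition~\ref{prop:Pi-closure-is-big-stable}, so Proposition~\ref{prop:C1-corr} already gives inverse bijections between these subsets and their images under $\Phi$, which are a subclass of C1 subsets. It therefore suffices to prove the two refined compatibilities: (i) if $\cZ$ is C2 then $\Psi(\cZ)$ is $\Pi$-closed in $\fX_{\fin}$, and (ii) if $\fZ$ is $\Pi$-closed then $\Phi(\fZ)$ is C2.

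For (i), I would appeal to Proposition~\ref{prop:C2-Psi}, which writes $\Psi(\cZ) = \fX_{\fin} \cap \bigcap_{n, f} \alpha_f^{-1}(\cZ^+_{1^n})$, with the intersection running over all injections $f \colon [n] \to [\infty]$. Given a net $\{x_\alpha\}$ in $\Psi(\cZ)$ that $\Pi$-converges to some $x \in \fX_{\fin}$, fix a single injection $f$. The first $\max f$ coordinates of $x_\alpha$ (together with the $W$-coordinate) agree with those of $x$ eventually, so $\alpha_f(x_\alpha) = \alpha_f(x)$ for $\alpha$ large; hence $\alpha_f(x_\alpha) \in \cZ^+_{1^n}$ forces $\alpha_f(x) \in \cZ^+_{1^n}$. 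This holds for every $(n,f)$, so $x \in \Psi(\cZ)$.

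The main obstacle is (ii). Suppose $x \in \cX_\lambda$ is approximable by $\cZ = \Phi(\fZ)$ and fix a principal surjection $g \colon 1^\infty \psurj \lambda$; I must show the finitary point $\alpha_g(x)$ lies in $\fZ$. Approximability supplies, for each $N$, an $\infty$-composition $\mu^N$, an injection $\psi_N \colon \langle\lambda\rangle \hookrightarrow \langle\mu^N\rangle$ with $\mu^N_{\psi_N(i)} \ge \min(\lambda_i, N)$, and a point $y^N \in \cZ_{\mu^N}$ with $\alpha_{\psi_N}(y^N) = x$; for any principal surjection $h_N \colon 1^\infty \psurj \mu^N$, the image $\alpha_{h_N}(y^N)$ lies in $\fZ$. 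The crux is choosing $h_N$ so that $\alpha_{h_N}(y^N) \to \alpha_g(x)$ in the $\Pi$-topology. Fixing enumerations of each $S_i = g^{-1}(i) \subseteq [\infty]$, I would define $h_N$ by sending the first $\min(\lambda_i, N, \mu^N_{\psi_N(i)})$ elements of $S_i$ to $\psi_N(i)$ (for each $i \in \langle\lambda\rangle$) and distributing the remaining (infinitely many) elements of $[\infty]$ among the other parts of $\mu^N$ to turn $h_N$ into a principal surjection; this distribution is possible because $\mu^N$, being an $\infty$-composition, has an infinite part absorbing any leftovers. For any fixed $j \in [\infty]$, writing $i = g(j)$, the choice ensures $h_N(j) = \psi_N(i)$ for all $N$ larger than both $\lambda_i$ (if finite) and the index of $j$ within $S_i$, whence $(\alpha_{h_N}(y^N))_j = y^N_{\psi_N(i)} = x_i = (\alpha_g(x))_j$. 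Together with the trivial stability of the $W$-coordinate, this establishes the required $\Pi$-convergence, and $\Pi$-closedness of $\fZ$ then forces $\alpha_g(x) \in \fZ$.
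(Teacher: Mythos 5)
Your proof is correct and follows essentially the same route as the paper: part (i) is the paper's argument unpacked at the level of nets (the paper invokes continuity of $\alpha_f$ for the $\Pi$- and discrete topologies, which is the same observation), and part (ii) is the paper's argument made explicit, with your careful construction of the principal surjections $h_N$ filling in the step the paper states as "we may choose a principal projection $h$ such that $\alpha_h(y)$ agrees with $\alpha_g(x)$ on $[N]$."
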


\begin{proof}
Suppose $\cZ$ is a C2 subset of $\cX$. Given an injection $f \colon [n] \to [\infty]$, the map $\alpha_f \colon \fX \to \cX_{1^n}$ is continuous for the $\Pi$-topology on $\fX$ and the discrete topology on $\cX_{1^n}$. It follows that $\alpha_f^{-1}(\cZ^+_{1^n})$ is $\Pi$-closed. Thus $\Psi(\cZ)$ is $\Pi$-closed by Proposition~\ref{prop:C2-Psi}.

Conversely, suppose that $\fZ$ is a $\Pi$-closed $\fS$-subset of $\fX_{\fin}$. Then $\fZ$ is $\fS^{\rm big}$-stable (Proposition~\ref{prop:Pi-closure-is-big-stable}), and so $\cZ=\Phi(\fZ)$ is a well-defined C1 subset of $\cX$ by Proposition~\ref{prop:C1-corr}. We now show that $\cZ$ is C2. By Proposition~\ref{prop:C1-corr}, $\cZ$ is C1. Now suppose that $x \in \cX_{\lambda}$ is approximable by $\cZ$, and let $g \colon 1^{\infty} \psurj \lambda$ be a principal surjection. Then for a positive integer $N$, there exists an $\infty$-composition $\mu$ and an injective function $f \colon \langle \lambda \rangle \to \langle \mu \rangle$ such that $\mu_{f(i)} \ge \min(\lambda_i, N)$ for all $i \in \langle \lambda \rangle$ and $y \in \cZ_{\mu}$ such that  $z = \alpha_f(y)$. Thus we may choose a principal projection $h \colon 1^{\infty} \psurj \mu$ such that $\alpha_h(y)$ agrees with $\alpha_g(x)$ on $[N]$. Since $N$ is arbitrary and $\alpha_h(y) \in \fZ$, we see that $\alpha_g(x)$ is in the $\Pi$-closure of $\fZ$. Since $\fZ$ is $\Pi$-closed, $\alpha_g(x) \in \fZ$. It follows that $x \in \cZ$, proving that $\cZ$ is C2. 
\end{proof}

\subsection{The correspondence on C3 subvarieties}

The following theorem is the final level of our correspondence between $\fX$ and $\cX$, and the most important.

\begin{theorem} \label{thm:C3-corr}
Suppose $A$ is noetherian. Then the constructions $\Phi$ and $\Psi$ induce mutually inverse bijections
\begin{displaymath}
\xymatrix{
\{ \text{Zariski closed $\fS$-subsets of $\fX_{\fin}$} \}
\ar@<3pt>[r] &
\{ \text{C3 subvarieties of $\cX$} \} 
\ar@<3pt>[l] }
\end{displaymath}
\end{theorem}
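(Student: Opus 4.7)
The strategy is to piggy-back on the C2/$\Pi$-closed correspondence already proved in Proposition~\ref{prop:C2-corr}, and simply check that $\Phi$ and $\Psi$ restrict correctly to the Zariski-closed and C3 subclasses. Indeed, every Zariski closed $\fS$-subset of $\fX_{\fin}$ is $\Pi$-closed by Proposition~\ref{prop:Zariski-is-coarser}, and every C3 subvariety of $\cX$ is C2 by definition, so both classes appearing in the theorem sit inside the classes of Proposition~\ref{prop:C2-corr}. Thus it suffices to verify that $\Phi$ sends Zariski closed subsets to C3 subvarieties and that $\Psi$ sends C3 subvarieties to Zariski closed subsets.

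For the direction Zariski closed $\mapsto$ C3, let $\fZ$ be a Zariski closed $\fS$-subset of $\fX_{\fin}$. For any $\infty$-composition $\lambda$ and any principal surjection $g \colon 1^{\infty} \psurj \lambda$, the associated multi-diagonal $\alpha_g \colon \cX_{\lambda} \to \fX$ is a morphism of schemes, hence Zariski continuous. It follows that $\Phi(\fZ)_{\lambda} = \alpha_g^{-1}(\fZ)$ is Zariski closed in $\cX_{\lambda}$, so $\Phi(\fZ)$ is a C3 subvariety of $\cX$.

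For the direction C3 $\mapsto$ Zariski closed, let $\cZ$ be a C3 subvariety. Proposition~\ref{prop:C2-Psi} provides the description
\[
\Psi(\cZ) \;=\; \fX_{\fin} \,\cap\, \bigcap_{n \ge N}\, \bigcap_{f \colon [n] \hookrightarrow [\infty]} \alpha_f^{-1}\bigl(\cZ^+_{1^n}\bigr),
\]
valid for any fixed $N$. By Proposition~\ref{prop:C3-plus}, I may choose $N$ large enough so that $\cZ^+_{1^n}$ is Zariski closed in $\cX^+_{1^n}$ for every $n \ge N$. Each $\alpha_f \colon \fX \to \cX^+_{1^n}$ is a morphism of schemes (coordinate projection to a finite subset of variables), so $\alpha_f^{-1}(\cZ^+_{1^n})$ is Zariski closed in $\fX$. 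Arbitrary intersections of Zariski closed sets remain Zariski closed, so $\Psi(\cZ)$ equals the intersection of $\fX_{\fin}$ with a Zariski closed subset of $\fX$, and is therefore a Zariski closed subset of $\fX_{\fin}$.

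With both directions of well-definedness in place, the mutual bijection is just the restriction of the bijection of Proposition~\ref{prop:C2-corr}. The main obstacle in this proof is entirely absorbed into Proposition~\ref{prop:C3-plus}: getting $\cZ^+_{1^n}$ to be Zariski closed for large $n$ required the good-correspondence machinery of Propositions~\ref{prop:finite-union} and~\ref{prop:corr-preserves-closure-alt}, together with finiteness of the number of good correspondences. Once that is in hand, the present theorem reduces to a clean combination of the Zariski-vs-$\Pi$ comparison with the C2-level correspondence already established.
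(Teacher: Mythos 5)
Your proposal is correct and follows essentially the same route as the paper: reduce to the C2/$\Pi$-closed bijection of Proposition~\ref{prop:C2-corr}, check that $\Phi$ lands in C3 subvarieties via continuity of the multi-diagonal maps, and check that $\Psi$ lands in Zariski closed sets via Proposition~\ref{prop:C2-Psi} combined with Proposition~\ref{prop:C3-plus}. The emphasis you place on Proposition~\ref{prop:C3-plus} as absorbing the main difficulty is accurate.
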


\begin{proof}
Suppose $\fZ$ is a Zariski closed $\fS$-subset of $\fX_{\fin}$, and let $\cZ=\Phi(\fZ)$. Since $\fZ$ is $\Pi$-closed (Proposition~\ref{prop:Zariski-is-coarser}), it follows that $\cZ$ is a C2 subset of $\cX$ (Proposition~\ref{prop:C2-corr}). By definition, $\cZ_{\lambda}=\alpha_f(\fZ)$ where $f \colon 1^{\infty} \psurj \lambda$ is a principal surjection. Since $\alpha_f$ is a map of schemes, it is continuous for the Zariski topology, and so $\cZ_{\lambda}$ is a closed subset of $\cX_{\lambda}$. Thus $\cZ$ is a C3 subvariety of $\cX$.

Conversely, suppose that $\cZ$ is a C3 subvariety of $\cX$. Then $\Psi(\cZ)$ is C2 by Proposition~\ref{prop:C2-Psi}. By Proposition~\ref{prop:C3-plus}, $\cZ^+_{1^n}$ is Zariski closed for $n \gg 0$. Thus by Proposition~\ref{prop:C2-Psi}, we see that $\Psi(\cZ)$ is Zariski closed, as each $\alpha_f^{-1}(\cZ^+_{1^n})$ is Zariski closed. This completes the proof.
\end{proof}

\section{Symmetric subvarieties of $\fX$}

\textit{We assume $A$ is noetherian throughout this section}

\subsection{Classification of symmetric subvarieties of $\fX$}

For a subset $Z$ of $\cX_{\mu}$ define
\begin{displaymath}
\Theta^{\circ}_{\mu}(Z) = \bigcup_{f \colon 1^{\infty} \to \mu} \alpha_f(Z),
\end{displaymath}
where the union is over all maps; this is a subset of $\fX$. It is clear that $\Theta^{\circ}_{\mu}(Z)$ is stable under $\fS^{\rm big}$. We also put $\Theta_{\mu}(Z)=\Theta^{\circ}_{\mu}(Z^e)$.

\begin{proposition}
\label{prop:Theta}
We have $\Theta^{\circ}_{\mu}(Z)=\Psi(\Gamma_{\mu}^{\circ}(Z))$.
\end{proposition}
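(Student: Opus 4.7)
The plan is to unfold both sides into unions of images of $Z$ under compositions of affine-space maps, and then match them up by (i) factoring an arbitrary map $1^\infty\to\mu$ through its image, and (ii) lifting a principal surjection through a principal surjection. Throughout, I use that a correspondence $g=(g_1\colon\rho\psurj\lambda,g_2\colon\rho\to\mu)$ acts by $\alpha_g(Z)=\alpha_{g_1}^{-1}(\alpha_{g_2}(Z))$, and that $\Psi(\cZ)=\bigcup_{\lambda,h}\alpha_h(\cZ_\lambda)$ with $h\colon1^\infty\psurj\lambda$ principal surjections.

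For the inclusion $\Theta^\circ_\mu(Z)\subset\Psi(\Gamma^\circ_\mu(Z))$, let $x=\alpha_f(y)$ with $y\in Z$ and $f\colon 1^\infty\to\mu$ an arbitrary map. Define $\langle\lambda\rangle\subset\langle\mu\rangle$ to be the image $f([\infty])$, set $\lambda_i=\#f^{-1}(i)$ for $i\in\langle\lambda\rangle$, and factor $f=\iota\circ h$, where $h\colon[\infty]\to\langle\lambda\rangle$ is the surjection and $\iota\colon\langle\lambda\rangle\hookrightarrow\langle\mu\rangle$ the inclusion. Since $|\lambda|=\infty$ and $\langle\lambda\rangle$ is finite, $\lambda$ is an $\infty$-composition; $h$ is a principal surjection $1^\infty\psurj\lambda$ by construction; and $\iota$ is a map $\lambda\to\mu$ since $\lambda_i=\#f^{-1}(i)\le\mu_i$. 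Taking the correspondence $g=(\id_\lambda,\iota)\colon\lambda\dashrightarrow\mu$, one has $\alpha_g(Z)=\alpha_\iota(Z)$, so $x=\alpha_h(\alpha_\iota(y))\in\alpha_h(\Gamma^\circ_\mu(Z)_\lambda)\subset\Psi(\Gamma^\circ_\mu(Z))$.

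For the reverse inclusion, suppose $x=\alpha_h(z)$ with $h\colon1^\infty\psurj\lambda$ a principal surjection and $z\in\Gamma^\circ_\mu(Z)_\lambda$, so that $z\in\alpha_g(Z)$ for some correspondence $g=(g_1\colon\rho\psurj\lambda,g_2\colon\rho\to\mu)$; by definition this means $\alpha_{g_1}(z)=\alpha_{g_2}(y)$ for some $y\in Z$. The key step is to lift $h$ through $g_1$: for each $i\in\langle\lambda\rangle$, the fiber $h^{-1}(i)\subset[\infty]$ has cardinality $\lambda_i=\sum_{j\in g_1^{-1}(i)}\rho_j$, so choose a partition of $h^{-1}(i)$ into pieces indexed by $j\in g_1^{-1}(i)$ of sizes $\rho_j$. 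This yields a principal surjection $h'\colon 1^\infty\psurj\rho$ with $g_1\circ h'=h$. Set $f\coloneq g_2\circ h'\colon[\infty]\to\langle\mu\rangle$; then for each $k\in\langle\mu\rangle$,
\[
\#f^{-1}(k)=\sum_{j\in g_2^{-1}(k)}\#h'^{-1}(j)=\sum_{j\in g_2^{-1}(k)}\rho_j\le\mu_k,
\]
where the last inequality is precisely the condition that $g_2$ is a map of compositions. Thus $f$ is a map $1^\infty\to\mu$, and
\[
\alpha_f(y)=\alpha_{h'}(\alpha_{g_2}(y))=\alpha_{h'}(\alpha_{g_1}(z))=\alpha_h(z)=x,
\]
so $x\in\Theta^\circ_\mu(Z)$.

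The main (minor) obstacle is arranging the combinatorial lift $h'$ of $h$ along $g_1$ and verifying that the resulting composite $g_2\circ h'$ satisfies the weight inequality defining a map of compositions; this is where the definition of a principal surjection (weights add exactly under fibers) and of a map of compositions (weights add to at most the target) play off each other. Once $h'$ is in hand, the identities chain together cleanly and give the two inclusions.
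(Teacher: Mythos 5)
Your proof is correct and follows essentially the same two-step route as the paper: both directions unfold $\Theta^\circ_\mu$ and $\Psi\circ\Gamma^\circ_\mu$ into unions, using for $\subset$ the image-factorization $f=\iota\circ h$ of a map $1^\infty\to\mu$ through a principal surjection followed by an injection, and for $\supset$ the lift of a principal surjection $1^\infty\psurj\lambda$ through the principal surjection $g_1\colon\rho\psurj\lambda$ underlying the correspondence. The only difference is that you spell out the construction of the lift $h'$ and verify the fiber-count inequality making $g_2\circ h'\colon1^\infty\to\mu$ a map of compositions, steps the paper leaves implicit.
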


\begin{proof}
Applying the definitions, we have
\begin{displaymath}
\Theta^{\circ}_{\mu}(Z)=\bigcup_{f \colon 1^{\infty} \to \mu} \alpha_f(Z), \qquad
\Psi(\Gamma_{\mu}^{\circ}(Z)) = \bigcup_{\lambda} \bigcup_{g \colon 1^{\infty} \psurj \lambda} \bigcup_{h \colon \lambda \dashrightarrow \mu} \alpha_g(\alpha_h(Z)).
\end{displaymath}
Since any map $f \colon 1^{\infty} \to \mu$ can be written as a composite $h \circ g$ with $g \colon 1^{\infty} \psurj \lambda$ and $h \colon \lambda \to \mu$, we see that $\Theta^{\circ}_{\mu}(Z) \subset \Psi(\Gamma_{\mu}^{\circ}(Z))$. Conversely, suppose $x \in \Psi(\Gamma_{\mu}^{\circ}(Z))$, and let $\lambda$, $g$, and $h$ be such that $x \in \alpha_g(\alpha_h(Z))$. Let $h=(h_1 \colon \rho \psurj \lambda, h_2 \colon  \rho \to \mu)$. We have $\alpha_h(Z)=\alpha_{h_1}^{-1}(\alpha_{h_2}(Z))$ by definition. Thus we have $x=\alpha_g(y)$ for some $y \in \cX_{\lambda}$ with $\alpha_{h_1}(y) \in \alpha_{h_2}(Z)$, i.e., $\alpha_{h_1}(y)=\alpha_{h_2}(z)$ for some $z \in Z$. Consider the following diagram:
\begin{displaymath}
\xymatrix{
1^{\infty} \ar@{>..>}[rr]^e \ar@{>->}[rd]_g && \rho \ar@{>->}[ld]^{h_1} \ar[rd]^{h_2} \\
& \lambda && \mu }
\end{displaymath}
One easily sees that one can find a principal surjection $e$ making the triangle commute. Let $f=h_2 \circ e$. We have 
\begin{displaymath}
x=\alpha_g(y)=\alpha_e(\alpha_{h_1}(y))=\alpha_e(\alpha_{h_2}(z))=\alpha_f(z)
\end{displaymath}
and so $x \in \Theta^{\circ}_{\mu}(Z)$. This completes the proof.
\end{proof}

It follows from the above proposition that $\Theta_{\mu}(Z)=\Psi(\Gamma_{\mu}(Z))$. We can now classify the $\fS$-irreducible subvarieties of $\fX$:

\begin{theorem} \label{thm:classification-fZ}
We have a bijection
\begin{displaymath}
\left\{ \parbox{18em}{pairs $(\mu,Z)$ with $\mu$ an $\infty$-partition and $Z$ an $\Aut(\mu)$-irreducible closed subset of $\cX_{[\mu]}$} \right\}
\leftrightarrow
\left\{ \parbox{12em}{$\fS$-irreducible Zariski closed bounded subsets of $\fX$} \right\}
\end{displaymath}
given by $(\mu,Z) \mapsto \Theta_{\mu}(Z)$. Moreover, $\Theta_{\mu}(Z)$ is the Zariski closure of
\begin{displaymath}
\bigcup_{f \colon 1^{\infty} \psurj \mu} \alpha_f(Z),
\end{displaymath} where $f$ varies over principal surjections.
\end{theorem}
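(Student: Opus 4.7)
The plan is to derive the theorem by composing the $\fX/\cX$ correspondence of Theorem~\ref{thm:C3-corr} with the classification of irreducible C3 subvarieties in Theorem~\ref{thm:classification-cZ}(a), using Proposition~\ref{prop:Theta} as the bridge between the two $\Theta$ constructions. Unwinding definitions gives
\[
\Theta_{\mu}(Z)=\Theta^{\circ}_{\mu}(Z^e)=\Psi(\Gamma^{\circ}_{\mu}(Z^e))=\Psi(\Gamma_{\mu}(Z)),
\]
so the map $(\mu,Z)\mapsto\Theta_{\mu}(Z)$ factors as $(\mu,Z)\mapsto\Gamma_{\mu}(Z)\mapsto\Psi(\Gamma_{\mu}(Z))$. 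Theorem~\ref{thm:classification-cZ}(a) already identifies the first arrow with a bijection onto the set of bounded irreducible C3 subvarieties of $\cX$, so the remaining task is to show that $\Psi$ restricts to a bijection between bounded irreducible C3 subvarieties of $\cX$ and $\fS$-irreducible Zariski closed bounded subsets of $\fX$.

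For this restriction, I first observe that both $\Phi$ and $\Psi$ respect finite unions directly from their definitions, so the bijection of Theorem~\ref{thm:C3-corr} matches irreducibility on the two sides. Next I verify the boundedness correspondence. If $\cZ$ is bounded, say $\cZ_{[\mu]}=\emptyset$ for $\ell(\mu)>n$, then any point $x\in\cZ_{\lambda}$ has type $\mu\preceq\lambda$ with $\cZ_{[\mu]}\ne\emptyset$ (use the C1 condition applied to the principal surjection $\lambda\psurj\mu$ that collapses equal coordinates of $x$), and in particular $\ell(\mu)\le n$; hence the image of $x$ under any $\alpha_f$ with $f\colon 1^{\infty}\psurj\lambda$ takes at most $n$ distinct values, showing $\Psi(\cZ)\subset\fX_{\le n}$. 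Conversely, if $\Psi(\cZ)\subset\fX_{\le n}$ and $z\in\cZ_{[\lambda]}$ with $\ell(\lambda)>n$, then for any principal surjection $g\colon 1^{\infty}\psurj\lambda$ the point $\alpha_g(z)$ lies in $\Psi(\cZ)$ but has width $\ell(\lambda)>n$, a contradiction. Combining this with Theorem~\ref{thm:classification-cZ}(a) gives the first half of the theorem.

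For the ``moreover'' clause, let $\fY$ be the Zariski closure in $\fX$ of $U=\bigcup_{f\colon 1^{\infty}\psurj\mu}\alpha_f(Z)$. The set $U$ is $\fS$-stable, since composing a principal surjection on the left with $\sigma\in\fS$ produces another principal surjection $1^{\infty}\psurj\mu$, and $U$ is contained in $\Theta_{\mu}(Z)$, which is Zariski closed by Theorem~\ref{thm:C3-corr}; hence $\fY\subset\Theta_{\mu}(Z)$. For the reverse inclusion I pass through $\Phi$: fixing one principal surjection $g\colon 1^{\infty}\psurj\mu$, the inclusion $\alpha_g(Z)\subset\fY$ gives $Z\subset\alpha_g^{-1}(\fY)=\Phi(\fY)_{\mu}$, and since $\fY$ is a bounded Zariski closed $\fS$-subset of $\fX$, $\Phi(\fY)$ is a C3 subvariety. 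The minimality clause of Theorem~\ref{thm:gamma} then forces $\Gamma_{\mu}(Z)\subset\Phi(\fY)$, and applying $\Psi$ produces $\Theta_{\mu}(Z)=\Psi(\Gamma_{\mu}(Z))\subset\fY$.

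The step I expect to require the most care is the boundedness compatibility, which silently relies on the fact (readily extracted from the C1 condition and the classification of types) that $\cZ_{\lambda}$ is covered by the images of the strata $\cZ_{[\mu]}$ for $\mu\preceq\lambda$; after this, the theorem is essentially a formal consequence of the earlier correspondence and gamma-construction results together with the identity $\Theta_{\mu}=\Psi\circ\Gamma_{\mu}$.
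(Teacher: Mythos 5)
Your proposal is correct and follows essentially the same route as the paper's proof: both derive the bijection by composing Theorem~\ref{thm:classification-cZ} with Theorem~\ref{thm:C3-corr} via the identity $\Theta_{\mu}=\Psi\circ\Gamma_{\mu}$ from Proposition~\ref{prop:Theta}, and both handle the ``moreover'' clause by passing through $\Phi$ and invoking the minimality clause of Theorem~\ref{thm:gamma}. You make explicit the irreducibility- and boundedness-compatibility checks (the covering of $\cZ_{\lambda}$ by images of the strata $\cZ_{[\mu]}$, $\mu\preceq\lambda$, via C1) that the paper leaves implicit in its citations; these are the right verifications and they are correct.
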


\begin{proof}
The first statement follows from Theorem~\ref{thm:classification-cZ}, Theorem~\ref{thm:C3-corr} and the previous proposition. For the second statement, let $\fZ$ be the Zariski closure of
\begin{displaymath}
\bigcup_{f \colon 1^{\infty} \psurj \mu} \alpha_f(Z),
\end{displaymath}
where $f$ varies over principal surjections. It is clear that $\Theta_{\mu}(Z)$ contains $\fZ$. By Proposition~\ref{prop:C1-corr}, we see that $\Phi(\fZ)_{\lambda}$ contains $Z$. So by Theorem~\ref{thm:gamma}, we see that $\Phi(\fZ)$ contains $\Gamma_{\lambda}(Z)$. It follows that $\fZ $ contains $\Theta_{\mu}(Z)$ (Theorem~\ref{thm:C3-corr}), completing the proof.
\end{proof}

The inverse to the correspondence in the above theorem is described in the introduction. From the above theorem, we obtain a complete classification of closed $\fS$-subsets of $\fX$:

\begin{corollary}
Let $\fZ$ be a Zariski closed $\fS$-subset of $\fX = \fX_W$. Then there exists a closed subset $W'$ of $W$, finitely many $\infty$ partitions $\mu^1, \mu^2, \ldots, \mu^n$, and $\Aut(\mu^i)$-irreducible closed subvarieties $Z_i$ of $\cX_{[\mu^i]}$ for $ 1 \le i \le n$ such that
\begin{displaymath}
\fZ = \cX_{W'} \cup \bigcup_{1 \le i \le n} \Theta_{\mu^i}(Z_i).
\end{displaymath}
\end{corollary}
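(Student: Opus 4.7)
The plan is to chain together the main structural results of the paper. First I would invoke Proposition~\ref{prop:decomposition-fX} to split $\fZ$ as $\fX_{W'} \cup \fZ'$, where $W' \subset W$ is a closed subset and $\fZ' \subset \fX$ is a closed bounded $\fS$-subset. The first summand already has the desired form (matching the $\cX_{W'}$ term in the statement), so the entire task reduces to analyzing $\fZ'$.

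Next, since $\fZ'$ is bounded it lies inside $\fX_{\fin}$, so I would apply Theorem~\ref{thm:C3-corr} to obtain a C3 subvariety $\cZ = \Phi(\fZ')$ of $\cX$ with $\Psi(\cZ)=\fZ'$. Boundedness of $\fZ'$ translates directly to boundedness of $\cZ$ in the sense of Proposition~\ref{prop:bd} (concretely, if $\fZ' \subset \fX_{\le n}$, then $\cZ_{[\lambda]}$ is empty for $\ell(\lambda) > n$, since a principal surjection $1^{\infty} \psurj \lambda$ produces a point of width $\ell(\lambda)$ in $\fZ'$). By the corollary following Proposition~\ref{prop:noeth-C3}, I can decompose $\cZ = \cZ^1 \cup \cdots \cup \cZ^n$ into finitely many irreducible C3 subvarieties, each of which is bounded since it is contained in $\cZ$.

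Then for each $i$, Theorem~\ref{thm:classification-cZ}(a) provides an $\infty$-partition $\mu^i$ and an $\Aut(\mu^i)$-irreducible closed subvariety $Z_i \subset \cX_{[\mu^i]}$ with $\cZ^i = \Gamma_{\mu^i}(Z_i)$. Applying $\Psi$ and noting that $\Psi$ is defined by a union over principal surjections, so it commutes with finite unions, I obtain
\[
\fZ' = \Psi(\cZ) = \bigcup_{i=1}^n \Psi(\cZ^i) = \bigcup_{i=1}^n \Psi(\Gamma_{\mu^i}(Z_i)) = \bigcup_{i=1}^n \Theta_{\mu^i}(Z_i),
\]
where the final equality is Proposition~\ref{prop:Theta}. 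Combining this with the first decomposition yields the claimed expression for $\fZ$.

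In essence, every significant obstacle has already been handled: noetherianity of $\cX$, the boundedness dichotomy, the classification of irreducible C3 subvarieties, and the $\Phi/\Psi$ bijection. The only point that requires a moment's care is checking that boundedness of $\fZ'$ as a subset of $\fX$ matches boundedness of $\cZ$ as a C3 subvariety, and that $\Psi$ distributes over the finite union coming from the irreducible decomposition; both are immediate from the definitions, so the proof is essentially an assembly of previously proved pieces.
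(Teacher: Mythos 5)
Your proof is correct, and the essential ingredients match the paper's, but you take the decomposition-into-irreducibles step on the $\cX$ side whereas the paper takes it on the $\fX$ side. The paper's own proof invokes Proposition~\ref{prop:decomposition-fX} to peel off $\cX_{W'}$, then Corollary~\ref{cor:fX-components} to split the remaining bounded piece $\fZ'$ into finitely many $\fS$-irreducible Zariski closed subsets of $\fX$, and finally applies Theorem~\ref{thm:classification-fZ} to each such piece directly to read off $\Theta_{\mu^i}(Z_i)$. You instead pass $\fZ'$ through $\Phi$ to a bounded C3 subvariety $\cZ$, decompose $\cZ$ into irreducible C3 subvarieties (using the corollary to Proposition~\ref{prop:noeth-C3}), classify each piece by Theorem~\ref{thm:classification-cZ}(a), and then push back through $\Psi$ and Proposition~\ref{prop:Theta}. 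Both routes are valid; the difference is that the paper's Theorem~\ref{thm:classification-fZ} already packages the $\Phi/\Psi$ translation into a single bijection, so quoting it is shorter, while your argument effectively re-performs that translation inline. As you note, the two minor verifications (that boundedness is preserved under $\Phi$, and that $\Psi$ distributes over finite unions) are immediate.
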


\begin{proof}
This follows from the theorem, Proposition~\ref{prop:decomposition-fX}, and Corollary~\ref{cor:fX-components}
\end{proof}

\subsection{The locus $\fX_{\lambda}$}

Let $\lambda$ be an $\infty$-partition. We let $\fX_{[\lambda]}$ be the set of all points of type $\lambda$ (see \S \ref{ss:type}). This locus can also be described as follows:
\begin{displaymath}
\fX_{[\lambda]} = \bigcup_{f \colon 1^{\infty} \psurj \lambda} \alpha_f(\cX_{[\lambda]}),
\end{displaymath}
where the union is taken over all principal surjections $f$. We define $\fX_{\lambda}$ to be the Zariski closure of $\fX_{[\lambda]}$. These sets are perhaps the most fundamental examples of Zariski closed $\fS$-subsets of $\fX$. We now study them in more detail.

\begin{proposition}
Let $\lambda$ be an $\infty$-partition. Then
\begin{displaymath}
\fX_{\lambda}=\Theta_{\lambda}(\cX_{[\lambda]})= \bigcup_{\mu \preceq \lambda} \fX_{[\mu]}.
\end{displaymath}
\end{proposition}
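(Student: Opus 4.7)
The plan is to prove the chain of equalities in two stages, first handling $\fX_\lambda = \Theta_\lambda(\cX_{[\lambda]})$ directly from the preceding classification theorem, and then identifying either side with the union $\bigcup_{\mu \preceq \lambda} \fX_{[\mu]}$ via a combinatorial analysis of maps $1^\infty \to \lambda$. The first equality is essentially immediate: the second statement of Theorem~\ref{thm:classification-fZ}, applied to $Z = \cX_{[\lambda]}$, identifies $\Theta_\lambda(\cX_{[\lambda]})$ as the Zariski closure of $\bigcup_{f \colon 1^\infty \psurj \lambda} \alpha_f(\cX_{[\lambda]})$, which by definition is $\overline{\fX_{[\lambda]}} = \fX_\lambda$.

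Next I would simplify the left-hand side further. Since $\cX_{[\lambda]}$ is an open subvariety of $\cX_\lambda = \bA^{\langle\lambda\rangle}$ (and is nonempty), its Zariski closure is $\cX_\lambda$; because $\cX_\lambda$ is trivially $\End(\lambda)$-stable, we conclude $(\cX_{[\lambda]})^e = \cX_\lambda$. By Proposition~\ref{prop:Theta} this gives
\begin{displaymath}
\Theta_\lambda(\cX_{[\lambda]}) = \Theta^\circ_\lambda(\cX_\lambda) = \bigcup_{f \colon 1^\infty \to \lambda} \alpha_f(\cX_\lambda),
\end{displaymath}
reducing the remaining equality to a purely combinatorial identification of the right-hand side with $\bigcup_{\mu \preceq \lambda} \fX_{[\mu]}$.

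For the $(\subseteq)$ direction, take $x = \alpha_f(z)$ with $f \colon 1^\infty \to \lambda$ and $z \in \cX_\lambda$. Then $x_k = z_{f(k)}$, so $x$ takes only finitely many values, i.e., is finitary with some type $\mu$. The level sets of $x$ (which have sizes $\mu_j$) are unions of the fibers $f^{-1}(i)$, grouped according to which coordinates of $z$ happen to coincide. Since $\#f^{-1}(i) \le \lambda_i$, the type $\mu$ is obtained from $\lambda$ by combining parts (along the grouping) and decreasing (possibly discarding $i$ with $f^{-1}(i)$ empty or merging with extra slack), so $\mu \preceq \lambda$, as desired.

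The $(\supseteq)$ direction is the main obstacle and requires a careful construction. Given $\mu \preceq \lambda$ and $x \in \fX_{[\mu]}$, write $x = \alpha_g(y)$ with $g \colon 1^\infty \psurj \mu$ principal and $y \in \cX_{[\mu]}$. By definition of $\preceq$, there is a surjection $\pi$ from a subset $S \subseteq \langle\lambda\rangle$ onto $\langle\mu\rangle$ satisfying $\mu_j \le \sum_{i \in \pi^{-1}(j)} \lambda_i$. For each $j \in \langle\mu\rangle$, partition the fiber $g^{-1}(j) \subset [\infty]$ (of size $\mu_j$) into disjoint blocks $B_{j,i}$ indexed by $i \in \pi^{-1}(j)$ with $\#B_{j,i} \le \lambda_i$; this is possible by the size inequality. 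Define $f \colon [\infty] \to \langle\lambda\rangle$ by $f(k) = i$ for $k \in B_{j,i}$, and send the leftover indices $[\infty] \setminus \bigsqcup_{j,i} B_{j,i}$ arbitrarily into $\langle\lambda\rangle \setminus S$ respecting the $\lambda_i$ bound. Define $z \in \cX_\lambda$ by $z_i = y_{\pi(i)}$ for $i \in S$ and arbitrarily otherwise. Then $f$ is a valid map $1^\infty \to \lambda$, and a direct check shows $\alpha_f(z) = x$. The delicate part will be managing the block-partition combinatorics and the leftover indices cleanly; once done, it places $x$ in $\Theta^\circ_\lambda(\cX_\lambda)$, completing the proof.
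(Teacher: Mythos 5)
Your proof is correct and follows the same overall skeleton as the paper: observe that $\cX_{[\lambda]}^e = \cX_\lambda$, identify $\Theta_\lambda(\cX_{[\lambda]})$ with $\bigcup_{\mu\preceq\lambda}\fX_{[\mu]}$, and separately identify it with $\fX_\lambda$ via the ``moreover'' clause of Theorem~\ref{thm:classification-fZ} together with the displayed description of $\fX_{[\lambda]}$ as $\bigcup_{f\colon 1^\infty\psurj\lambda}\alpha_f(\cX_{[\lambda]})$. The paper, however, compresses the middle step to a one-line appeal to Proposition~\ref{prop:Theta}, whereas you unwind the union $\bigcup_{f\colon 1^\infty\to\lambda}\alpha_f(\cX_\lambda)$ directly and verify both containments against the type stratification; this is a genuine service, since Proposition~\ref{prop:Theta} only gives $\Theta^\circ_\lambda = \Psi\circ\Gamma^\circ_\lambda$ and still leaves exactly the combinatorics that you carry out.

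Two small corrections. First, the display
$\Theta_\lambda(\cX_{[\lambda]}) = \Theta^\circ_\lambda(\cX_\lambda) = \bigcup_{f\colon 1^\infty\to\lambda}\alpha_f(\cX_\lambda)$
is obtained purely from the definitions $\Theta_\lambda(Z)=\Theta^\circ_\lambda(Z^e)$ and $\Theta^\circ_\lambda(Z) = \bigcup_f\alpha_f(Z)$, not from Proposition~\ref{prop:Theta}; that proposition is invoked implicitly only through Theorem~\ref{thm:classification-fZ}. Second, in the $(\supseteq)$ direction the ``leftover indices'' you worry about do not arise: $g\colon 1^\infty\psurj\mu$ is a principal surjection, so $[\infty]=\bigsqcup_{j\in\langle\mu\rangle}g^{-1}(j)$ and the blocks $B_{j,i}$ already account for every index. (You also omit the trivial check that the $W$-coordinate of $z$ is taken equal to that of $y$, so that $\alpha_f(z)$ and $x$ agree on the base.) With those cleanups the argument is complete.
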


\begin{proof}  Clearly, $\cX_{[\lambda]}^e = \cX_{\lambda}$. So by Proposition~\ref{prop:Theta}, we see that $\Theta_{\lambda}(\cX_{[\lambda]}) = \bigcup_{\mu \preceq \lambda} \fX_{[\mu]}$, where $\fX_{[\mu]} \subset \fX$ denote the set of all points of type $\mu$. By Theorem~\ref{thm:classification-fZ}, it follows that $\Theta_{\lambda}(\cX_{[\lambda]})$ is Zariski closed, and is the Zariski closure of $\fX_{[\lambda]}$. Thus $\Theta_{\lambda}(\cX_{[\lambda]}) = \fX_{\lambda}$. 
\end{proof}

\begin{corollary}
Suppose $W$ is irreducible. Then $\fX_{\lambda}$ is $\fS$-irreducible and corresponds to $(\lambda, \cX_{[\lambda]})$ under the bijection in Theorem~\ref{thm:classification-fZ}
\end{corollary}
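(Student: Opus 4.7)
The plan is to combine the preceding proposition (identifying $\fX_{\lambda}$ with $\Theta_{\lambda}(\cX_{[\lambda]})$) with the classification bijection of Theorem~\ref{thm:classification-fZ}. To invoke that bijection with the pair $(\lambda, \cX_{[\lambda]})$, the only substantive thing to check is that $\cX_{[\lambda]}$ is indeed an $\Aut(\lambda)$-irreducible closed subset of itself.

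First I would verify that $\cX_{[\lambda]}$ is irreducible as a scheme when $W$ is irreducible. By definition $\cX_{[\lambda]} = \bA^{[\langle \lambda \rangle]}_W$ is an open subscheme of $\bA^{\langle \lambda \rangle}_W = W \times \bA^{\ell(\lambda)}$. Since $W$ is irreducible and $\bA^n$ is geometrically irreducible over $\Spec(\bZ)$, the product $W \times \bA^{\ell(\lambda)}$ is irreducible, and so is any nonempty open subscheme of it; thus $\cX_{[\lambda]}$ is irreducible. Irreducibility is a stronger property than $G$-irreducibility for any group $G$ acting by homeomorphisms, so in particular $\cX_{[\lambda]}$ is $\Aut(\lambda)$-irreducible. (It is also trivially closed in itself, so the hypotheses of Theorem~\ref{thm:classification-fZ} are met.)

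Next I would apply Theorem~\ref{thm:classification-fZ} to the pair $(\lambda, \cX_{[\lambda]})$: this gives that $\Theta_{\lambda}(\cX_{[\lambda]})$ is a Zariski closed, $\fS$-irreducible, bounded subset of $\fX$ corresponding precisely to $(\lambda, \cX_{[\lambda]})$ under the bijection. By the preceding proposition $\fX_{\lambda} = \Theta_{\lambda}(\cX_{[\lambda]})$, so $\fX_{\lambda}$ is $\fS$-irreducible and corresponds to $(\lambda, \cX_{[\lambda]})$ under the bijection.

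There is essentially no obstacle here: the whole statement is a direct corollary of Theorem~\ref{thm:classification-fZ} combined with the preceding proposition, and the only input that has to be supplied by hand is the irreducibility of $\cX_{[\lambda]}$, which follows from the irreducibility of $W$ together with the irreducibility of finite-dimensional affine space. One minor point worth being careful about is ensuring that $\cX_{[\lambda]}$ is nonempty so that the open subscheme argument applies; but this is automatic since $\bA^{[n]}_W$ is dense in $\bA^n_W$ (over any base $W$).
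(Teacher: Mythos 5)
Your proof is correct and follows the route the paper intends: the only content is that $\cX_{[\lambda]}$ is nonempty, closed in itself, and $\Aut(\lambda)$-irreducible (indeed irreducible, since a nonempty open subscheme of $\bA^{\ell(\lambda)}_W$ with $W$ irreducible is irreducible), after which the preceding proposition and Theorem~\ref{thm:classification-fZ} give the conclusion directly.
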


We now investigate the defining equations for $\fX_{\lambda}$. Let $\alpha$ be a (finite) partition and let $T$ be a tableau of shape $\alpha$ with distinct entries in $[\infty]$. We define $h_T$ to be the product of $\xi_i-\xi_j$ over those $i,j \in [\infty]$ that appear in $T$ in distinct rows. We write $h_{\alpha}$ for any element $h_T$ with $T$ of shape $\alpha$. Note that if $T$ and $T'$ have shape $\alpha$ then $h_T$ and $h_{T'}$ are in the same $\fS$-orbit, and so $h_{\alpha}$ is well-defined up to the action of $\fS$; in particular, $\Langle h_{\alpha} \Rangle$ is well-defined. If $\alpha \preceq \beta$ then $\Langle h_{\beta} \Rangle \subset \Langle h_{\alpha} \Rangle$. (Recall from \S \ref{ss:noeth} that $\Langle S \Rangle$ denotes the $\fS$-ideal of $R$ generated by $S$.)

We define $I_{\lambda}$ to be the ideal generated by the elements $h_{\alpha}$ with $\alpha \npreceq \lambda$. To be precise, let $S$ be the set of partitions $\alpha$ such that $\alpha \npreceq \lambda$. Then $I_{\lambda}=\Langle h_{\alpha} \Rangle_{\alpha \in S}$. Note that if $\alpha \preceq \beta$ are elements of $S$ then $h_{\beta}$ already belongs to $\Langle h_{\alpha} \Rangle$, and so can be omitted from the list of generators of $I_{\lambda}$. In other words, $I_{\lambda}$ is generated by the $h_{\alpha}$ with $\alpha \in S$ a minimal element. By Proposition~\ref{prop:min-are-finite} and \cite[Proposition~2.1]{catgb},  there are finitely many minimal elements in $S$. 	Suppose $\mu$ is such a minimal element. Let $\ell$ be the number of parts in $\lambda$, and let $e$ be the sum of the finite parts of $\lambda$. Then it is clear that $\mu$ has at most $\ell +1$ parts, and each part is at most $e+1$. In particular, we can find explicit upper bounds on the degree of generation and the number of generators for $I_{\lambda}$.

\begin{theorem} \label{thm:type-loci}
We have $\fX_{\lambda}=V(I_{\lambda})$.
\end{theorem}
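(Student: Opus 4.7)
The plan is to reduce everything to a combinatorial description of when $h_T(x) \ne 0$. The key lemma to prove first is: \emph{for a finitary point $x$ of type $\mu$ and a finite partition $\alpha$, there exists a tableau $T$ of shape $\alpha$ with distinct entries in $[\infty]$ such that $h_T(x) \ne 0$ if and only if $\alpha \preceq \mu$.} The mechanism is that $h_T(x) \ne 0$ exactly when the coloring $i \mapsto x_i$ of the entries of $T$ puts each value (color) into a single row; this in turn is possible iff one can group the parts of $\mu$ into bins indexed by the rows of $\alpha$ (plus a discard bin) so that the total weight in bin $k$ is at least $\alpha_k$ --- and this is precisely the definition of $\alpha \preceq \mu$. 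The contrapositive is the form I will actually use: if $\alpha \npreceq \mu$, then $h_T$ vanishes identically on $\fX_{[\mu]}$ for every $T$ of shape $\alpha$.

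Granting the lemma, the inclusion $\fX_\lambda \subseteq V(I_\lambda)$ is immediate. Any generator $h_\alpha$ of $I_\lambda$ has $\alpha \npreceq \lambda$, so for each $\mu \preceq \lambda$, transitivity of $\preceq$ yields $\alpha \npreceq \mu$, and hence $h_\alpha$ vanishes on $\fX_{[\mu]}$. All $\fS$-translates of such generators vanish as well by $\fS$-stability of $\fX_{[\mu]}$, so $I_\lambda$ vanishes on $\bigcup_{\mu \preceq \lambda} \fX_{[\mu]}$ and therefore on its Zariski closure $\fX_\lambda$.

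For the reverse inclusion, I would first observe that $V(I_\lambda) \subseteq \fX_\fin$. With $n = \ell(\lambda)+1$, the partition $(1^n)$ has more nonzero parts than any $\nu \preceq \lambda$ can have (combining and decreasing never increases the number of parts), so $(1^n) \npreceq \lambda$, and hence an $\fS$-translate of $\Delta_n = h_{(1^n)}$ belongs to $I_\lambda$. Thus $V(I_\lambda) \subseteq V(\fd_n) = \fX_{\le n-1} \subseteq \fX_\fin$. Now let $x$ be a finitary point of type $\mu$ with $\mu \npreceq \lambda$, write $\mu = (\infty,\mu_2,\ldots,\mu_s)$, and let $e$ be the sum of the finite parts of $\lambda$. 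I would set $\alpha = (N,\mu_2,\ldots,\mu_s)$ for some $N > \max(e,\mu_2)$. The relation $\alpha \preceq \mu$ is clear by decreasing $\infty$ to $N$. For the other direction, any witness of $\alpha \preceq \lambda$ must, because $N > e$, assign $\lambda_1 = \infty$ to the first bin (no combination of finite parts of $\lambda$ can reach $N$); the remaining finite parts of $\lambda$ would then have to cover $\mu_2,\ldots,\mu_s$, which is precisely the condition $\mu \preceq \lambda$. Since this fails, $\alpha \npreceq \lambda$. The key lemma then supplies some $T$ of shape $\alpha$ with $h_T(x) \ne 0$; but $h_T \in I_\lambda$, contradicting $x \in V(I_\lambda)$.

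The main obstacle is the combinatorial key lemma itself, particularly the clean verification that row-disjoint colorings of a tableau of shape $\alpha$ by the $x$-values correspond bijectively to bin-partitions of the parts of $\mu$ witnessing $\alpha \preceq \mu$. Scheme-theoretic points add no essential difficulty: replacing such a point by a $K$-point in its residue field preserves both the type and the (non)vanishing of every $h_T$, so the arguments above apply uniformly.
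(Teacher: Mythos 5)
Your overall strategy mirrors the paper's: establish a combinatorial criterion for when some $h_T$ of shape $\alpha$ is nonzero at a finitary point of type $\mu$ (this is exactly the paper's Proposition~\ref{prop:lambda-filling} combined with the proposition computing $V(\Langle h_\alpha\Rangle)$), and then reduce the $\infty$-partition order $\preceq$ to its restriction to finite partitions (which the paper isolates as Proposition~\ref{prop:finite-partition}). The easy direction and the finitariness observation are fine.

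However, your construction of the witnessing finite partition $\alpha$ in the reverse inclusion has a genuine gap: you write $\mu = (\infty, \mu_2, \ldots, \mu_s)$ and then set $\alpha = (N, \mu_2, \ldots, \mu_s)$ with $N > \max(e, \mu_2)$, which tacitly assumes $\mu$ has a single infinite part. An $\infty$-partition can have several infinite parts (e.g.\ $(\infty,\infty,2)$), in which case your $\alpha$ is not a finite partition and $\max(e,\mu_2)$ is undefined. Likewise, the step ``the remaining finite parts of $\lambda$ would then have to cover $\mu_2,\ldots,\mu_s$'' implicitly assumes $\lambda$ also has a single infinite part. The correct construction (as in the paper's proof of Proposition~\ref{prop:finite-partition}) is to choose $e$ larger than the sum of the finite parts of \emph{both} $\mu$ and $\lambda$, and replace \emph{every} infinite part of $\mu$ by $e$ to form $\alpha$; one then argues that a good $\lambda$-filling of this $\alpha$ must, in each row of length $e$, use an infinite part of $\lambda$, giving an injection from the infinite parts of $\mu$ into those of $\lambda$, after which the finite parts are handled separately. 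Your argument needs to be reworked along these lines to handle the general case. A secondary, smaller issue: to conclude from $V(I_\lambda)\subseteq\bigcup_{\mu\preceq\lambda}\fX_{[\mu]}$ that $V(I_\lambda)\subseteq\fX_\lambda$, you need the identity $\fX_\lambda=\bigcup_{\mu\preceq\lambda}\fX_{[\mu]}$; the paper establishes this in the proposition immediately preceding the theorem, and you should cite it explicitly rather than treating the union as tautologically equal to the closure of $\fX_{[\lambda]}$.
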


We need a few preliminary results.  Let $\lambda$ and $\mu$ be generalized partitions. A {\bf $\lambda$-filling} of $\mu$ is a tableau of shape $\mu$ with entries in positive integers such that the number $i$ appears at most $\lambda_i$ times. We say that a filling is {\bf good} if each number appears in at most one row.

\begin{proposition}
\label{prop:lambda-filling}
$\mu$ admits a good $\lambda$-filling if and only if $\mu \preceq \lambda$.
\end{proposition}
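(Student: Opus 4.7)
The proof should be straightforward and essentially unpack the definitions. The key observation is that a good $\lambda$-filling of $\mu$ is essentially the same data as a witness to the relation $\mu \preceq \lambda$, where ``combining parts'' of $\lambda$ corresponds to ``all entries of a given row of $\mu$ drawn from the same block of indices''.

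For the forward direction, I would start with a good $\lambda$-filling $T$ of $\mu$. For each row index $j \in \langle \mu \rangle$, let $B_j \subset \langle \lambda \rangle$ denote the set of numbers appearing in row $j$ of $T$. The goodness condition says exactly that the $B_j$ are pairwise disjoint, so they form a partition of a subset of $\langle \lambda \rangle$ indexed by $\langle \mu \rangle$. Counting cells in row $j$, if $n_i$ denotes the number of occurrences of $i$ in row $j$, then
\[
\mu_j \;=\; \sum_{i \in B_j} n_i \;\le\; \sum_{i \in B_j} \lambda_i,
\]
the inequality coming from the $\lambda$-filling condition $n_i \le \lambda_i$. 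This is precisely the assertion that $\mu$ is obtained from $\lambda$ by combining the parts of $\lambda$ lying in each $B_j$ (and discarding indices outside $\bigsqcup_j B_j$) and then decreasing, i.e.\ $\mu \preceq \lambda$.

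For the converse, suppose $\mu \preceq \lambda$. Unpacking the definition, we obtain pairwise disjoint subsets $B_j \subset \langle \lambda \rangle$, indexed by $j \in \langle \mu \rangle$, satisfying $\mu_j \le \sum_{i \in B_j} \lambda_i$ for every $j$. I would then construct a good $\lambda$-filling of $\mu$ row by row: for each $j$, fill the $\mu_j$ cells of row $j$ using entries from $B_j$, using each $i \in B_j$ at most $\lambda_i$ times. Since the total available capacity $\sum_{i \in B_j} \lambda_i$ is at least $\mu_j$ (with the usual convention that $\infty$ dominates anything), such a filling exists; in the case $\mu_j = \infty$ one uses that either some $\lambda_i$ with $i \in B_j$ is infinite, or $B_j$ itself is infinite (which cannot occur as $\langle \lambda \rangle$ is finite), so some $\lambda_i = \infty$ and we can use $i$ infinitely often. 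Disjointness of the $B_j$'s guarantees that each number is used in at most one row, so the filling is good.

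No step seems genuinely difficult; the only mild subtlety is book-keeping with infinite parts, but the definition of $\preceq$ is already set up to handle these uniformly by allowing $\infty$ in the ``combined'' sums. The statement is really just a reformulation of $\preceq$ in the language of fillings.
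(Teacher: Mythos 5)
Your proof is correct and takes essentially the same approach as the paper: both arguments match rows of the tableau to blocks of $\langle\lambda\rangle$, with goodness encoding disjointness of the blocks and the $\lambda$-filling condition giving the inequality $\mu_j \le \sum_{i \in B_j}\lambda_i$. The paper's version factors the relation $\mu \preceq \lambda$ through an intermediate partition $\lambda'$ (decrease first, then combine), whereas you write the block-sum inequality directly, but the underlying bookkeeping is the same.
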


\begin{proof}
First suppose $ \mu \preceq \lambda$. Clearly, $\lambda$ admits a good $\lambda$-filling $T$, namely, the one obtained by filling the $i$th row of the $\lambda$-shaped Young diagram with $i$. By definition, $\mu$ is obtained from $\lambda$ by combining some of the parts and then decreasing some of the parts. By doing the same operations on  $T$, we obtain a good $\lambda$-filling of $\mu$. 
	
Conversely, suppose $T$ is a good $\lambda$-filling of  $\mu$. Suppose $i$ appears $\lambda'_i$ times in  $T$. Then $(\lambda'_1, \lambda'_2, \ldots)$ define a generalized partition (possibly after rearranging to make sure $\lambda'_i \ge \lambda'_{i+1}$). Since $T$ is good, we see that $\mu$ can be obtained by combining some of the parts of $\lambda'$. So $\mu \preceq \lambda'$. Since $T$ is a $\lambda$-filling, we have $\lambda'_i \le \lambda_i$ for all $i$. So $\lambda'$ can be obtained from $\lambda$ by decreasing some of the parts. Thus $\lambda' \preceq \lambda$. We conclude that $\mu \preceq \lambda$, finishing the proof.
\end{proof}

\begin{proposition}
	\label{prop:finite-partition}
	Let $\mu$ and $\lambda$ be generalized partitions. The following are equivalent:
	\begin{enumerate}
		\item $\mu \preceq \lambda$
		\item $\alpha \preceq \mu$ implies $\alpha \preceq \lambda$ for all finite partitions $\alpha$.
	\end{enumerate}
\end{proposition}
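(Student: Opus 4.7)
The direction (a) $\Rightarrow$ (b) is immediate from the transitivity of $\preceq$, so the plan focuses on (b) $\Rightarrow$ (a). My approach is to reduce the infinite problem to finitely many finite cases via truncation and close with the pigeonhole principle, using Proposition~\ref{prop:lambda-filling} as a bridge between $\preceq$ and good fillings.

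For each integer $n$ at least as large as the largest finite part of $\mu$, let $\mu^{[n]}$ denote the finite partition obtained from $\mu$ by replacing every infinite part by $n$; the choice of $n$ guarantees that $\mu^{[n]}$ is still non-increasing and hence a genuine finite partition. Then $\mu^{[n]} \le \mu$, so $\mu^{[n]} \preceq \mu$, and hypothesis (b) gives $\mu^{[n]} \preceq \lambda$. Proposition~\ref{prop:lambda-filling} then yields a good $\lambda$-filling $T_n$ of $\mu^{[n]}$. Since $T_n$ is good, each label $k \in [\ell(\lambda)]$ appears in at most one row, so $T_n$ determines a function $\phi_n \colon [\ell(\lambda)] \to [\ell(\mu)] \cup \{\bot\}$ sending $k$ to the row that contains it (or to $\bot$ if $k$ is unused). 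There are only finitely many such functions, so pigeonhole produces a single $\phi$ with $\phi_n = \phi$ for infinitely many $n$.

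Setting $L_i = \phi^{-1}(i)$, the existence of $T_n$ forces $\sum_{k \in L_i} \lambda_k \ge \mu^{[n]}_i$ for each $i$ and each $n$ in the pigeonhole subsequence. For $\mu_i < \infty$ this yields $\sum_{k \in L_i} \lambda_k \ge \mu_i$; for $\mu_i = \infty$ it yields $\sum_{k \in L_i} \lambda_k \ge n$ for infinitely many $n$, forcing some $k \in L_i$ with $\lambda_k = \infty$. In either case, I can assemble a good $\lambda$-filling of $\mu$ by filling row $i$ with labels drawn from $L_i$ (distributing multiplicities respecting the $\lambda_k$ for finite rows, and using one infinite-valued label to fill an infinite row). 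Proposition~\ref{prop:lambda-filling} then concludes $\mu \preceq \lambda$. I do not anticipate any genuine obstacle; the only points that require care are verifying that $\mu^{[n]}$ is a valid non-increasing partition for large $n$ and unpacking the definition of a good $\lambda$-filling to extract and then reassemble the data $\phi_n$, both routine.
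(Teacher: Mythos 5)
Your proof is correct, and the reduction to finite truncations via Proposition~\ref{prop:lambda-filling} is the same skeleton as the paper's; the difference lies in how the two arguments pass back from the truncation to $\mu$ itself. The paper truncates once, choosing a single $e$ larger than the sum of the finite parts of \emph{both} $\mu$ and $\lambda$, and then reads the required data off a single good $\lambda$-filling $T$ of the truncated partition $\alpha$: any row of $T$ of length $e$ must contain a label $j$ with $\lambda_j = \infty$ (otherwise its cells would be covered by labels from the finite parts of $\lambda$, totalling fewer than $e$), and by goodness the assignment row $\mapsto j$ is injective, so one can replace each initially-infinite row of $\mu$ wholesale with its chosen infinite label and keep the finite rows of $T$ unchanged. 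Your version truncates at every sufficiently large $n$ and uses pigeonhole on the finitely many ``row-assignment'' maps $\phi_n \colon [\ell(\lambda)] \to [\ell(\mu)] \cup \{\bot\}$ to stabilize a single $\phi$, then argues on the fibers $L_i = \phi^{-1}(i)$: the resulting inequalities $\sum_{k \in L_i} \lambda_k \ge \mu^{[n]}_i$ force an infinite-valued $k$ in each $L_i$ with $\mu_i = \infty$, and suffice to cover each finite row. Both reassembly steps are sound. The pigeonhole route avoids having to calibrate the truncation against $\lambda$ (your $n$ only needs to exceed the largest finite part of $\mu$), but pays for it with a compactness-style argument where the paper needs only one cleverly-chosen $\alpha$; for this combinatorial statement the paper's single-truncation construction is the more economical of the two.
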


\begin{proof}
Clearly, (a) $\Rightarrow$ (b).  Conversely, suppose (b) holds. Pick a number $e$ larger than the sum of the finite parts of $\mu$ and $\lambda$. Let $d_1, d_2$ be the number of infinite parts of $\mu, \lambda$. Let $\alpha$ be the finite partition obtained from $\mu$ by decreasing all infinite parts of $\mu$ to $e$. Clearly, $\alpha \preceq \mu$. By (b), we have $\alpha \preceq \lambda$. By the previous proposition there exists a $\lambda$-good filling $T$ of $\alpha$. Since the size of the $i$th row of $T$ is larger than the sum of finite parts of $\lambda$, for each $i \le d_1$, we can find an element $j_i$ in the $i$th row of $T$ such that $\lambda_{j_i} = \infty$. By goodness of $T$, there is a well-defined injective correspondence $\iota \colon [d_1] \to [d_2]$ given by $i \mapsto j_i$. Now define a $\lambda$ filling $T'$ of the Young diagram of shape $\mu$ as follows:
\begin{itemize}
\item For each $i \le d_1$, fill row $i$ with $\iota(i)$.
\item For each $i > d_1$ fill row $i$ of $T'$ exactly as in $T$.
\end{itemize}
It is clear that $T'$ is a good $\lambda$-filling of $\mu$. Thus by the previous proposition, we have $\mu \preceq \lambda$. This completes the proof of the first assertion.
\end{proof}

\begin{proposition}
We have $V(\Langle h_{\alpha} \Rangle) = \bigcup_{\alpha \npreceq \lambda} \fX_{[\lambda]}$.
\end{proposition}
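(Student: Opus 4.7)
The plan is to compute $V(\Langle h_\alpha \Rangle)$ pointwise using the type stratification $\fX_{\fin}=\bigsqcup_\lambda \fX_{[\lambda]}$, with Proposition~\ref{prop:lambda-filling} providing the essential combinatorial input. First I would check that $V(\Langle h_\alpha \Rangle)\subset \fX_{\fin}$, so that every point of the left-hand side has a well-defined type. Since $h_\alpha$ is a product of differences $\xi_i-\xi_j$, some coefficient of $h_\alpha$ equals $\pm 1$, a unit in $A$. Proposition~\ref{prop:discriminant} therefore places an honest discriminant $\Delta_n$ inside $\Langle h_\alpha \Rangle$ for some $n$. Applying every $\sigma\in\fS$ and intersecting the resulting vanishing loci forces every point of $V(\Langle h_\alpha \Rangle)$ to have width at most $n-1$, hence to be finitary.

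The core of the argument is a dictionary between tableaux and $\lambda$-fillings. Fix an $\infty$-partition $\lambda$ and an $x\in \fX_{[\lambda]}$ with color classes $\cU_1,\ldots,\cU_r\subset[\infty]$ of sizes $\lambda_1,\ldots,\lambda_r$. For a tableau $T$ of shape $\alpha$ with distinct entries from $[\infty]$, I would define the associated filling $T_x$ of $\alpha$ by coloring each cell with the unique $k$ such that the entry of that cell lies in $\cU_k$. Since the entries of $T$ are pairwise distinct, each color $k$ appears at most $\lambda_k$ times, so $T_x$ is a $\lambda$-filling in the sense of Proposition~\ref{prop:lambda-filling}. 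Conversely, any good $\lambda$-filling of $\alpha$ arises this way: for each cell of color $k$, choose a distinct element of $\cU_k$, which is possible because color $k$ is used at most $\lambda_k=\#\cU_k$ times, and the resulting tableau has globally distinct entries since the $\cU_k$ are pairwise disjoint. The essential observation is that $h_T(x)\ne 0$ if and only if no two entries of $T$ in distinct rows lie in a common $\cU_k$, that is, if and only if $T_x$ is a \emph{good} $\lambda$-filling.

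Combining these facts with Proposition~\ref{prop:lambda-filling} gives the chain: $x\in V(\Langle h_\alpha \Rangle)$ iff $h_T(x)=0$ for every $T$, iff $T_x$ fails to be good for every $T$, iff no good $\lambda$-filling of $\alpha$ exists, iff $\alpha\npreceq\lambda$. Taking the union over all $\lambda$ and invoking the first paragraph yields the desired equality. The only real potential obstacle is the lifting step in the dictionary---guaranteeing that every good $\lambda$-filling can be promoted to a tableau with \emph{globally} distinct entries---but this is immediate from pairwise disjointness of the $\cU_k$ together with the size bound $\#\cU_k=\lambda_k$ encoded in the definition of $\lambda$-filling.
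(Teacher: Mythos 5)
Your core argument --- matching tableaux over $[\infty]$ to fillings of $\alpha$ via the color classes $\cU_k$, then observing that $h_T(x)\ne 0$ precisely when the induced filling is a \emph{good} $\lambda$-filling, and closing with Proposition~\ref{prop:lambda-filling} --- is exactly the route the paper takes, and it is correct. You also sensibly make explicit the preliminary reduction $V(\Langle h_\alpha\Rangle)\subset\fX_{\fin}$, which the paper leaves tacit.

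However, your justification of that preliminary step does not quite work as stated. You note that some coefficient of $h_\alpha$ equals $\pm1$ and then invoke Proposition~\ref{prop:discriminant} as though it lets you conclude $\Delta_n\in\Langle h_\alpha\Rangle$. But that proposition merely asserts the existence of \emph{some} nonzero coefficient $c$ with $c\,\Delta_n\in\fa$; it does not let you pick which coefficient, and the coefficients of $h_\alpha$ are not all units. For instance, with $\alpha=(2,2)$ and a tableau $T$ whose rows are $\{1,2\}$ and $\{3,4\}$, one has $h_T=(\xi_1-\xi_3)(\xi_1-\xi_4)(\xi_2-\xi_3)(\xi_2-\xi_4)$, in which the monomial $\xi_1\xi_2\xi_3\xi_4$ has coefficient $2$; over $A=\bZ$ the proposition might only hand you $2\Delta_n\in\Langle h_\alpha\Rangle$. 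The repair is simpler than what you attempted and needs no appeal to Proposition~\ref{prop:discriminant}: if $T$ has entries $\{1,\ldots,N\}$, then $h_T$ divides $\Delta_N$ outright (the complementary factor being the product of $\xi_i-\xi_j$ over pairs lying in a common row), so $\Delta_N\in\Langle h_\alpha\Rangle$ and hence $V(\Langle h_\alpha\Rangle)\subset\fX_{\le N-1}\subset\fX_{\fin}$. With that substitution your argument is complete and coincides with the paper's.
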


\begin{proof}
Let $x \in \fX$ be a finitary point of type $\lambda$, and let $\cU$ be the partition of $[\infty]$ induced by $x$, as in \S \ref{ss:type}. Let $U_i$ be the part of $\cU$ of size $\lambda_i$.  We claim that $x \in V(\Langle h_{\alpha} \Rangle)$ if and only if $\alpha$ does not have a good $\lambda$-filling. To see this, first suppose that $x \notin V(\Langle h_{\alpha} \Rangle)$. Then there exists a tableau $T$ of shape $\alpha$, and a $K$-point $y$ above $x$ such that $h_T(y) \neq 0$. Let $T_{i,j}$ be the $j$th entry in row $i$ of $T$, and let $1 \le n_{i,j} \le \ell(\lambda)$ be the unique number such that $T_{i,j} \in U_{n_{i,j}}$. Let $T'$ be the tableau of shape $\alpha$ whose $j$th entry in row $i$ is $n_{i,j}$. Since $h_T(y) \neq 0$, we conclude that $T'$ is a good $\lambda$-filling of $\alpha$. Conversely, suppose that there exists a good $\lambda$-filling $T'$ of $\alpha$. Suppose $i$ appears $\mu_i$ times in $T'$. We know that $\mu_i \le \lambda_i$, and so we can choose a subset $U'_i$ of $U_i$ of size $\mu_i$. For each $i$, we replace instances of $i$ with distinct elements of $U'_i$ in $T'$ to obtain a tableau $T$. Goodness of $T$ shows that $h_T(y) \neq 0$ for any $K$-point $y$ above $x$. This proves the claim.
	
By the claim and Proposition~\ref{prop:lambda-filling}, we conclude that $\fX_{[\lambda]} \subset V(\Langle h_{\alpha} \Rangle) \iff \alpha \npreceq \lambda$. The result now follows immediately.
\end{proof}

\begin{proof}[Proof of Theorem~\ref{thm:type-loci}]  We have already seen that $\fX_{\lambda} = \bigcup_{\mu \preceq \lambda} \fX_{[\mu]}$. Thus it suffices to show that $V(I_{\lambda})  = \bigcup_{\mu \preceq \lambda} \fX_{[\mu]}$. We have
\begin{displaymath}
V(I_{\lambda}) = \bigcap_{\alpha \npreceq \lambda} V(\Langle h_{\alpha} \Rangle) = \bigcap_{\alpha \npreceq \lambda} \bigcup_{\alpha \npreceq \mu} \fX_{[\mu]}.
\end{displaymath}
We thus see that $V(I_{\lambda})=\bigcup_{\mu \in S} \fX_{[\mu]}$ where $S$ is the set of partitions $\mu$ for which $\alpha \npreceq \lambda$ implies $\alpha \npreceq \mu$. Taking the contrapositive, we see that $\mu \in S$ if and only if $\alpha \preceq \mu$ implies $\alpha \preceq \lambda$. By Proposition~\ref{prop:finite-partition}, this condition is  equivalent to $\mu \preceq \lambda$. Thus $S$ consists of those $\mu$ for which $\mu \preceq \lambda$, and so $V(I_{\lambda}) = \bigcup_{\mu \preceq \lambda} \fX_{[\mu]}$.
\end{proof}

\subsection{Example A}

Suppose $\lambda=(\infty, n)$ with $1 \le n < \infty$. Then the set of minimal partitions which are not below $\lambda$ is given by $\{ (1,1,1), (n+1, n+1)  \}$. Thus the ideal $I_{\lambda}$ is generated by the $\fS$-orbits of the polynomials: \begin{align*} h_1 &= (\xi_1-\xi_2)(\xi_2-\xi_3)(\xi_3-\xi_1)\\
	h_2 &= \prod_{0 \le k, l \le n} (\xi_{n+1 -k} - \xi_{2n+2 - l}),
\end{align*} and $V(I_{\lambda})$ consists of points of types $(\infty, a)$ satisfying $0 \le a \le n$. In the case when $n =1$, we see that the degrees of $h_1, h_2$ are strictly larger than 2, but the ideal generated by the orbits of $h_1$ and $(\xi_1-\xi_2)(\xi_3-\xi_4)$ also cuts out points of  types $(\infty)$ and $(\infty, 1)$. Since $I_{\lambda}$ does not contain $(\xi_1-\xi_2)(\xi_3-\xi_4)$ for degree reasons, we conclude that $I_{(\infty, 1)}$ is not radical.

%\rohit{We had proven that the ideal generated by a 3-discriminant and a matching is radical (also just the $n$-discriminant). I think that those proofs are very interesting. Do you think we should include it either here or in svar somewhere?} \Acom{I think we should mention it here, but not include a proof, this paper is already long enough. I don't know where it'd be best to include a proof... maybe we could write a little note just on that case and some of the other things we figured out (like characterizing the image of $R/(\Delta_3) \to \prod \bC[x,y]$ using the conditions on derivatives).} \rohit{I agree, we should write a separate note on those results as they are interesting themselves.}

\subsection{Example B}

Suppose $\lambda=(\infty,\infty,2,1)$. Then the set of minimal partitions which are not below $\lambda$ is given by \[ \{ (1,1,1,1,1), (2,2,2,2), (3,3,3,1), (4,4,4) \}.  \] Thus the ideal $I_{\lambda}$ is generated by the $\fS$-orbits of the following polynomials: \begin{align*} h_1 &= \prod_{1 \le i < j \le 5} (\xi_i - \xi_j)\\
	h_2 &= \prod_{1 \le i < j \le 4} \prod_{0 \le k, l \le 1}
(\xi_{2i-k} - \xi_{2j -l})	 \\
	h_3 &= \left(\prod_{1 \le i \le 9 } (\xi_i - \xi_{10})\right) \prod_{1 \le i < j \le 3} \prod_{0 \le k, l \le 2}
	(\xi_{3i-k} - \xi_{3j -l})\\
	h_4 &= \prod_{1 \le i < j \le 3} \prod_{0 \le k, l \le 3}
	(\xi_{4i-k} - \xi_{4j -l}).
\end{align*}
Moreover, $V(I_{\lambda})$ consists of points of types $(\infty, a, b, c)$ satisfying $b+c \le 3$ and $0 \le c \le b \le a$.

\subsection{Equations for general irreducible subvarieties}

We now determine equations for general $\fS$-irreducible subvarieties of $\fX$. Fix an $\infty$-partition $\lambda$ and an $\Aut(\lambda)$-irreducible closed subvariety $Z$ of $\cX_{[\lambda]}$. Let $e$ be the sum of the finite parts of $\lambda$. For a generalized partition $\mu$, we let $\mu^-$ to be the finite partition obtained by reducing all parts larger than $e+1$ to $e+1$, and we let $\mu^s$ to be the unique maximal  partition such that $(\mu^s)^- = \mu^-$. Then $\Lambda_{\le \lambda}^- \coloneq \{ \mu^- \colon  \mu \le \lambda \}$ is a finite set. For each $\mu \in \Lambda_{\le \lambda}^-$, fix a tableau $T_{\mu}$ of shape $\mu$ with distinct entries in $[\infty]$ and a map $\iota_{\mu} \colon A[t_1, \ldots, t_{\ell(\mu^s)}] \to R$ such that for each $i$,  $\iota_{\mu} (t_i) \in \xi_{j}$ for some $j$ in row $i$ of $T_{\mu}$. We set
\begin{displaymath}
I_{\lambda}(Z) = I_{\lambda}  + \sum_{\mu \in \Lambda_{\le \lambda}^-} \Langle h_{T_{\mu}} \iota_{\mu} (I(\cZ_{\mu})) \Rangle
\end{displaymath}
where $\cZ = \Gamma_{\lambda}(Z)^+$. Our main result is the following:

\begin{theorem}
\label{thm:defining-equations}
We have $\Theta_{\lambda}(Z) = V(I_{\lambda}(Z))$.
\end{theorem}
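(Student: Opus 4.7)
The plan is to prove the two inclusions of $\Theta_\lambda(Z)=V(I_\lambda(Z))$ separately, exploiting the correspondence-based characterisation of $\cZ_\mu$ from Proposition~\ref{prop:plus-gamma-circ}. For $\Theta_\lambda(Z)\subseteq V(I_\lambda(Z))$, the containment in $V(I_\lambda)=\fX_\lambda$ is Theorem~\ref{thm:type-loci}, so the only new content is the vanishing of the generators $h_{T_\mu}\cdot\iota_\mu(f)$ with $f\in I(\cZ_\mu)$ on $\Theta_\lambda(Z)$. Because these are polynomials, vanishing on a Zariski-dense subset suffices; by Theorem~\ref{thm:classification-fZ} the set $\bigcup_{g\colon 1^\infty\psurj\lambda}\alpha_g(Z)$ is dense in $\Theta_\lambda(Z)$, so it is enough to verify vanishing at generic points of the form $x=\alpha_g(z)$ with $z\in Z$. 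Assuming $h_{T_\mu}(x)\ne 0$, the indices in distinct rows of $T_\mu$ must map under $g$ to distinct positions of $\lambda$; using the bound $\mu_k\le e+1$ built into $\Lambda^-_{\le\lambda}$, together with the freedom to choose $j_k$ within each row, one constructs (via the approximate fibre product of Proposition~\ref{prop:pullback}) a correspondence $f\colon\mu\dashrightarrow\lambda$ witnessing that $(x_{j_1},\ldots,x_{j_{\ell(\mu)}})\in\alpha_f(Z^e)\subseteq\cZ_\mu$, so $\iota_\mu(f)$ vanishes at $x$.

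For the reverse inclusion, suppose $x\in V(I_\lambda(Z))$. Since $I_\lambda\subseteq I_\lambda(Z)$, Theorem~\ref{thm:type-loci} forces $x\in\fX_\lambda$, so $x$ is finitary of some type $\nu\preceq\lambda$. Write $x=\alpha_g(z)$ where $g\colon 1^\infty\psurj\nu$ is the principal surjection determined by the equivalence classes of coordinates of $x$ and $z\in\cX_{[\nu]}$ is the tuple of distinct values; the $\Phi$--$\Psi$ correspondence of Theorem~\ref{thm:C3-corr} then reduces the goal to proving $z\in\cZ_\nu$. To extract this, I would pick $\mu\in\Lambda^-_{\le\lambda}$ whose $\mu^s$ matches $\nu$ up to the $e+1$ cutoff, and build a tableau $T$ of shape $\mu$ whose rows sit at indices of $x$ realising its distinct values, giving $h_T(x)\ne 0$ and identifying the point $(x_{j_1},\ldots,x_{j_{\ell(\mu)}})$ with a projection of $z$. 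The hypothesis then forces this point to lie in $V(I(\cZ_\mu))$, which equals $\cZ_\mu$ by Proposition~\ref{prop:C3-plus} for $|\mu|>e$; tracing through the correspondence description of $\cZ_\mu$ and the descending inclusion $\cZ_{\mu^s}\subseteq\cZ_\mu$ (Proposition~\ref{prop:C2-descending}) then yields $z\in\cZ_{\mu^s}\subseteq\cZ_\nu$.

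The main obstacle is the combinatorial bookkeeping aligning the finite-partition equations indexed by $\Lambda^-_{\le\lambda}$ with the $\infty$-composition behaviour of $\Gamma_\lambda(Z)$. The truncation bound $e+1$ is crucial: a row of size $e+1$ in $T_\mu$ cannot fit inside the finite-weight positions of $\lambda$ (total weight $e$), forcing at least one index to sit over the infinite-weight part, and this is what upgrades an injective index map into a valid correspondence. The trickiest case is when $\nu\preceq\lambda$ arises from combining parts rather than merely decreasing them (so $\nu\not\le\lambda$), where one must verify the equations nonetheless capture $\nu$ via an appropriate $\mu\in\Lambda^-_{\le\lambda}$ whose $\mu^s$ dominates $\nu$; I expect this to be the technical heart of the reverse direction, and it should mirror the combinatorial reasoning of Proposition~\ref{prop:finite-partition}.
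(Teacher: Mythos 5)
Your overall plan — two inclusions, using Theorem~\ref{thm:type-loci} for the $I_\lambda$ part and then matching the finite-partition equations to the $\infty$-partition behaviour — is essentially the paper's. The one auxiliary fact you gesture at but do not isolate is what the paper records as Lemma~\ref{lem:equations-fin}: for an $\infty$-partition $\mu\preceq\lambda$, the natural inclusion $\mu^-\to\mu$ induces an equality $\Gamma_\lambda(Z)_\mu=\Gamma_\lambda(Z)^+_{\mu^-}$ (proved by lifting a good correspondence to $\mu^-$ to one to $\mu$ when $\mu=\mu^s$). This is precisely the content of your ``tracing through the correspondence description of $\cZ_\mu$'' and your detour through $\mu^s$ followed by $\cZ_{\mu^s}\subseteq\cZ_\nu$; the paper applies the lemma with $\mu$ equal to the type $\nu$ of the point directly, which removes the $\mu^s$ step. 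I would encourage you to state and prove this identification outright since without it ``$V(I(\cZ_{\mu^-}))=\cZ_{\mu^-}$'' (which does follow from Proposition~\ref{prop:C3-plus}, as $|\nu^-|\ge e+1$) tells you about the wrong index; the lemma is what bridges $\mu^-$ and $\mu$.

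Two smaller points. First, your appeal to Proposition~\ref{prop:pullback} in the forward direction is misplaced: that proposition completes commutative squares for composing correspondences, whereas what is needed here is a direct construction of a correspondence $\mu\dashrightarrow\lambda$ from the combinatorics of the principal surjection $g\colon 1^\infty\psurj\lambda$ and the tableau $T_\mu$ (the right model is the fibre-refinement argument in the proof of Proposition~\ref{prop:finite-union}: one refines each row of $T_\mu$ by the $g$-fibres to obtain an intermediate composition $\tau$ with a principal surjection $\tau\psurj\mu$ and a genuine map $\tau\to\lambda$). Working on the Zariski-dense subset $\bigcup_g\alpha_g(Z)$ is a legitimate and mildly cleaner variant of the paper's argument on arbitrary $K$-points, so that part of your plan is fine. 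Second, you correctly flag as the delicate spot the case where $\nu\preceq\lambda$ by combining parts, so that a finite part of $\nu$ can exceed $e$; this is exactly where one must check that $\nu^-$ still lies in the indexing set $\Lambda^-_{\le\lambda}$, and that the bound $e+1$ forces the chosen row index to land over an infinite part of $\lambda$. That verification deserves an explicit lemma rather than a remark.
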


\begin{lemma}
	\label{lem:equations-fin}
Let $\mu \preceq \lambda$ be an $\infty$-partition. The map $\alpha_g \colon \Gamma_{\lambda}(Z)_{\mu}  \to \Gamma_{\lambda}(Z)^+_{\mu^-} $ induced by the natural inclusion $g \colon \mu^- \to \mu$ is an isomorphism. 
\end{lemma}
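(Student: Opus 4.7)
The plan is as follows. Since the underlying index set of $\mu^-$ equals that of $\mu$, the natural inclusion $g \colon \mu^- \to \mu$ is the identity on underlying sets, and so $\alpha_g$ is the identity map of $\bA^{\langle \mu \rangle}$. The lemma therefore reduces to the set-theoretic identity
\[ \Gamma_{\lambda}(Z)_{\mu} \;=\; \Gamma_{\lambda}(Z)^+_{\mu^-} \]
inside this common ambient affine space. Both sides are Zariski closed (the left by Theorem~\ref{thm:gamma}, the right by Proposition~\ref{prop:C3-plus}, whose hypothesis $\vert \mu^- \vert$ exceeds the sum of finite parts of $\lambda$ holds because $\vert \mu^- \vert \ge e+1 > e$). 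So an isomorphism of schemes will follow from equality of the two underlying sets.

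The forward inclusion is essentially immediate. The identity on underlying sets defines a map of generalized compositions $g \colon \mu^- \to \mu$, because $\mu^-_i = \min(\mu_i, e+1) \le \mu_i$ for every~$i$. Since $\Gamma_{\lambda}(Z)^+$ is a C2 subset of $\cX^+$ and therefore closed under correspondences (the analog of Proposition~\ref{prop:corr-closed} for $\cX^+$), we obtain $\alpha_g(\Gamma_{\lambda}(Z)^+_{\mu}) \subset \Gamma_{\lambda}(Z)^+_{\mu^-}$; since $\mu$ is an $\infty$-composition, $\Gamma_{\lambda}(Z)^+_{\mu} = \Gamma_{\lambda}(Z)_{\mu}$, and since $\alpha_g$ is the identity the forward inclusion follows.

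For the reverse inclusion I will use the union formula of Proposition~\ref{prop:plus-gamma-circ}, which gives
\[ \Gamma_{\lambda}(Z)^+_{\mu^-} \;=\; \bigcup_{f \colon \mu^- \dashrightarrow \lambda} \alpha_f(Z^e). \]
It suffices to show that each summand $\alpha_f(Z^e)$ also appears as a summand in the analogous formula for $\Gamma_{\lambda}(Z)_{\mu}$, by promoting $f$ to a correspondence $\mu \dashrightarrow \lambda$ whose underlying maps of sets are unchanged. Write $f=(f_1 \colon \rho \psurj \mu^-,\, f_2 \colon \rho \to \lambda)$. For each $i \in \langle \mu \rangle$ with $\mu_i = \infty$, we have $\mu^-_i = e+1$, so the total $\rho$-weight of the fiber $f_1^{-1}(i)$ equals $e+1$; since $f_2 \colon \rho \to \lambda$ is a map of compositions, at most $e$ of that weight can land over the finite parts of $\lambda$, so there must exist some $j^*_i \in f_1^{-1}(i)$ with $\lambda_{f_2(j^*_i)} = \infty$. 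Defining $\sigma_{j^*_i} = \infty$ for each such $i$ and $\sigma_j = \rho_j$ otherwise produces a generalized composition $\sigma$ for which $(f_1 \colon \sigma \psurj \mu,\, f_2 \colon \sigma \to \lambda)$ is a correspondence $g \colon \mu \dashrightarrow \lambda$: the principal surjection condition onto $\mu$ holds because fibers over finite parts of $\mu$ are unchanged while fibers over infinite parts now have total weight $\infty$, and $f_2$ remains a map of compositions because each enlargement occurred at an index mapping into an infinite part of $\lambda$. Since the underlying set-maps are unchanged, $\alpha_g = \alpha_f$ and hence $\alpha_f(Z^e) = \alpha_g(Z^e) \subset \Gamma_{\lambda}(Z)_{\mu}$.

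The main obstacle, and the genuine content of the argument, is this combinatorial extension of correspondences. Its correctness rests on the precise choice of truncation threshold $e+1$ in the definition of $\mu^-$: this threshold is calibrated exactly so that every fiber $f_1^{-1}(i)$ lying above an infinite part $\mu_i = \infty$ carries one unit of weight beyond what could possibly be absorbed by the finite parts of $\lambda$, guaranteeing the crucial witness $j^*_i$ with $\lambda_{f_2(j^*_i)} = \infty$.
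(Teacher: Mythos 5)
Your overall strategy matches the paper's: reduce to a set-theoretic equality inside the common ambient affine space, expand $\Gamma_{\lambda}(Z)^+_{\mu^-}$ via the union-over-correspondences formula, and promote each correspondence $\mu^- \dashrightarrow \lambda$ to a correspondence $\mu \dashrightarrow \lambda$ with the same underlying set-maps. The idea of locating, in each fiber of weight $e+1$, a witness index landing over an infinite part of $\lambda$ is exactly the combinatorial heart of the paper's argument (via its notion of ``good'' correspondences).

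However, there is a genuine gap in your promotion step. You only modify the weighting $\sigma$ at indices $j^*_i$ for those $i$ with $\mu_i = \infty$, and then assert that ``fibers over finite parts of $\mu$ are unchanged'' so the principal-surjection condition onto $\mu$ holds. But if $\mu_i$ is finite and $\mu_i > e+1$, the fiber $f_1^{-1}(i)$ has $\rho$-weight $\mu^-_i = e+1 \ne \mu_i$, so $f_1 \colon \sigma \to \mu$ is \emph{not} a principal surjection. Such $\mu$ genuinely occur under the hypothesis: e.g.\ with $\lambda=(\infty,\infty,1)$ (so $e=1$), the $\infty$-partition $\mu=(\infty,1000)$ satisfies $\mu \preceq \lambda$ (combine an $\infty$ with the $1$, then decrease the resulting $\infty$ to $1000$) but has a finite part $1000>e+1$. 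The paper closes this gap by first reducing to the case $\mu = \mu^s$, where every part of $\mu$ is either $\le e$ or $\infty$ (the reduction works because $\mu \le \mu^s$ gives $\Gamma_{\lambda}(Z)_{\mu^s} \subset \Gamma_{\lambda}(Z)_{\mu}$ by Proposition~\ref{prop:C2-descending}, so surjectivity for $\mu^s$ implies it for $\mu$). Alternatively you could patch your construction directly: for each finite $\mu_i$ with $\mu_i > e+1$ the same weight argument produces a witness $j^*_i \in f_1^{-1}(i)$ with $\lambda_{f_2(j^*_i)}=\infty$, and setting $\sigma_{j^*_i}=\rho_{j^*_i}+(\mu_i - e - 1)$ restores the principal-surjection condition while preserving $f_2 \colon \sigma \to \lambda$ since the increase occurs over an infinite part of $\lambda$. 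Either fix is short, but as written the reverse inclusion is not established in full generality.
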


\begin{proof}
Since $f$ is an isomorphism on the underlying set, we see that $\alpha_g$ is injective. To prove surjectivity, it suffices to treat the case when $\mu = \mu^s$. For that suppose $z \in \Gamma_{\lambda}(Z)^+_{\mu^-}$, then there exists a good correspondence $f = (f_1 \colon \rho \psurj \mu^-, f_2 \colon \rho \to \lambda)$ such that $z \in \alpha_f(Z^e)$ (We note here that Proposition~\ref{prop:finite-union} is valid if $\mu$ is finite). But since $f$ is good and $\mu = \mu^s$, it induces a correspondence $f^s = (f^s_1 \colon \rho^s \psurj \mu, f^s_2 \colon \rho^s \to \lambda)$. Since the underlying maps for $f_i$ is same as that of $f^s_i$ for $i =1,2$, we see that $z \in \alpha_{f^s}(Z^e)$. This shows that $z \in \Gamma_{\lambda}(Z)_{\mu}$, completing the proof. 
\end{proof}

\begin{proof}[Proof of Theorem~\ref{thm:defining-equations}]
First suppose $x \in V(I_{\lambda}(Z))$. Since $I_{\lambda}(Z)$ contains $I_{\lambda}$, we see by Theorem~\ref{thm:type-loci} that $x$ is of type $\mu$ for some $\mu \preceq \lambda$. Let $y$ be a $K$-point above $x$, we claim that $y$ is a $K$-point of $\Theta_{\lambda}(Z)$. Since $y$ is of type $\mu$, by replacing $y$ by $\sigma y$ for some $\sigma \in \fS$, we can assume that $h_{T_{\mu^-}}(y) \neq 0$. Thus we see that $\iota_{\mu^-}(f)(y) = 0$ for each $f \in I(\cZ_{\mu^-})$. By the previous lemma, we have $\cZ_{\mu^-} = \cZ_{\mu}$. Thus $\iota_{\mu^-}(f)(y) = 0$ for each $f \in I(\cZ_{\mu})$. We conclude that $y$ is a $K$-point of $\alpha_{g}(\cZ_{\mu}) $ where $g \colon 1^{\infty} \to \mu$ is any principal surjection such that $g(j) = i$ for each $j$ in row $i$ of $T_{\mu^-}$. Since $\alpha_{g}(\cZ_{\mu}) \subset \Psi(\cZ) = \Theta_{\lambda}(Z) $, it follows that $y$ is a $K$-point of $\Theta_{\lambda}(Z)$. Thus $x \in \Theta_{\lambda}(Z)$.

Conversely, suppose $x \in \Theta_{\lambda}(Z)$, and let $y$ be a $K$-point above $x$. Since $\Theta_{\lambda}(Z)$ is contained in $\cX_{\lambda}$, we see by Theorem~\ref{thm:type-loci} that $f(y) = 0$ for all $y \in I_{\lambda}$. Now suppose $h_{T_{\mu}}(\sigma y)$ is nonzero for some $\mu \in \Lambda_{\le \lambda}^-$ and $\sigma \in \fS$. Then $\sigma y$ is of type $\rho$ such that $\rho^- = \mu$, and there is a principal surjection $g \colon 1^{\infty} \psurj \rho$ such that $g(j) = i$ for each $j$ in row $i$ of $T_{\mu}$ and $y \in \alpha_g(\cZ_{\rho})$. It follows that $\iota_{\mu}(f)(y) = 0$ for each $f \in I(\cZ_{\rho})$. Since $I(\cZ_{\rho}) \supset I(\cZ_{\mu})$, we see that $y$ is a $K$-point of $V(I_{\lambda}(Z))$. Thus $x$ is in $V(I_{\lambda}(Z))$,  completing the proof.
\end{proof}

\subsection{Example C} \label{ss:example-C}

Suppose $\lambda = (\infty, \infty)$, and let $Z \subset \cX_{[\lambda]}$ be the 0-dimensional closed subvariety $\{(0,1), (1,0)\}$. Clearly, $\Aut(\lambda) = \fS_2$ and  $Z$ is $\Aut(\lambda)$-irreducible. We have $e = 0$, and so  $\Lambda_{\le \lambda}^- = \{(1,1), (1)  \}$. It is easy to check that  $\cZ_{(1,1)} = \{(0,1), (1,0), (0,0), (1,1) \}$ and $\cZ_{(1)} = \{(0), (1)   \}$. These are cut out by $\langle t_1(t_1-1),t_2(t_2-1) \rangle$ and $\langle t_1 (t_1 -1) \rangle$ respectively. Fixing $T_{(1,1)}$ with labels 1,2 and $T_{(1)}$ with label 1, we conclude that $I_{\lambda(Z)}$ is generated by the $\fS$-orbits of \begin{align*}
 & (\xi_1 - \xi_2)(\xi_2 - \xi_3)(\xi_3 - \xi_1) \\
 & (\xi_1 - \xi_2)\xi_1(\xi_1 - 1) \\
& (\xi_1 - \xi_2)\xi_2(\xi_2 - 1) \\
& \xi_1(\xi_1 - 1).
\end{align*}
In fact, the first three elements are redundant, as they belong to the $\fS$-ideal generated by the fourth element. These equations implies that $\Theta_{\lambda}(Z)$ consists of points whose coordinates take the values 0 or 1.

\subsection{Containments}

We have parametrized the $\fS$-irreducible closed subsets of $\fX$ in terms of pairs $(\lambda, Z)$. We now show how to determine containment on the $\fX$ side in terms of this parametrization.

\begin{proposition}
	Let $Z_1$ be an $\Aut(\mu)$-irreducible closed subvariety of $\cX_{[\mu]}$, and let $Z_2$ be an $\Aut(\lambda)$-irreducible closed subvariety of $\cX_{[\lambda]}$. Then $\Theta_{\mu}(Z_1) \subset \Theta_{\lambda}(Z_2)$ if and only if
\begin{displaymath}	
Z_1 \subset \bigcup_{\substack{f \colon \mu \dashrightarrow \lambda \\ \text{$f$ is good}}}  \alpha_f(Z_2^e).
\end{displaymath}
\end{proposition}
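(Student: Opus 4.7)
The plan is to reduce the claim, via the dictionary between $\fX$ and $\cX$, to a statement about C3 subvarieties, and then to evaluate the relevant C3 subvariety at $\mu$ using Proposition~\ref{prop:finite-union}.

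First I would pass from $\fX$ to $\cX$. By Proposition~\ref{prop:Theta} (and the sentence following it) we have $\Theta_{\mu}(Z_1)=\Psi(\Gamma_{\mu}(Z_1))$ and $\Theta_{\lambda}(Z_2)=\Psi(\Gamma_{\lambda}(Z_2))$. Both $\Gamma_{\mu}(Z_1)$ and $\Gamma_{\lambda}(Z_2)$ are C3 subvarieties (Theorem~\ref{thm:gamma}), so by the bijectivity in Theorem~\ref{thm:C3-corr}, and the fact that $\Psi$ and $\Phi$ are each manifestly monotone, the containment $\Theta_{\mu}(Z_1)\subset \Theta_{\lambda}(Z_2)$ is equivalent to $\Gamma_{\mu}(Z_1)\subset \Gamma_{\lambda}(Z_2)$.

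Next I would reduce the latter containment to one happening at level $\mu$. In one direction, if $\Gamma_{\mu}(Z_1)\subset\Gamma_{\lambda}(Z_2)$ then $\Gamma_{\mu}(Z_1)_{\mu}\subset\Gamma_{\lambda}(Z_2)_{\mu}$; since by Theorem~\ref{thm:gamma} the subvariety $\Gamma_{\mu}(Z_1)$ satisfies $Z_1\subset\Gamma_{\mu}(Z_1)_{\mu}$, we obtain $Z_1\subset\Gamma_{\lambda}(Z_2)_{\mu}$. Conversely, if $Z_1\subset\Gamma_{\lambda}(Z_2)_{\mu}$, then $\Gamma_{\lambda}(Z_2)$ is a C3 subvariety whose $\mu$-stratum contains $Z_1$, and the minimality clause of Theorem~\ref{thm:gamma} forces $\Gamma_{\mu}(Z_1)\subset\Gamma_{\lambda}(Z_2)$.

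Finally I would compute $\Gamma_{\lambda}(Z_2)_{\mu}$ explicitly. By definition $\Gamma_{\lambda}(Z_2)=\Gamma^{\circ}_{\lambda}(Z_2^e)$, and by Proposition~\ref{prop:finite-union} we have
\[
\Gamma^{\circ}_{\lambda}(Z_2^e)_{\mu}=\bigcup_{\substack{f\colon\mu\dashrightarrow\lambda\\ f\text{ good}}}\alpha_f(Z_2^e).
\]
Combining this with the previous two steps yields the desired equivalence. Note that $Z_1\subset\cX_{[\mu]}\subset\cX_{\mu}$, so the inclusion $Z_1\subset\bigcup_{f\text{ good}}\alpha_f(Z_2^e)$ can be read inside either $\cX_{\mu}$ or $\cX_{[\mu]}$ without change.

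There is no real obstacle: the heavy lifting (closure under correspondences, the minimality of $\Gamma_{\mu}$, and the reduction to a finite union over good correspondences) is already done in Theorem~\ref{thm:gamma}, Theorem~\ref{thm:C3-corr}, and Proposition~\ref{prop:finite-union}. The only thing to be careful about is the direction $\Gamma_{\mu}(Z_1)\subset\Gamma_{\lambda}(Z_2)\implies Z_1\subset\Gamma_{\lambda}(Z_2)_{\mu}$, which uses the (tautological but essential) fact that $Z_1$ sits inside $\Gamma_{\mu}(Z_1)_{\mu}$; the $\Aut(\mu)$- and $\Aut(\lambda)$-irreducibility hypotheses are not actually needed for the equivalence itself.
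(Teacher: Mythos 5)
Your proof is correct and follows essentially the same route as the paper's: transfer the containment from $\fX$ to $\cX$ via $\Theta_\mu = \Psi \circ \Gamma_\mu$ and Theorem~\ref{thm:C3-corr}, reduce to a containment at level $\mu$ via the minimality clause of Theorem~\ref{thm:gamma}, and then compute $\Gamma_\lambda(Z_2)_\mu$ as a finite union over good correspondences using Proposition~\ref{prop:finite-union}. The only cosmetic difference is that the paper phrases the intermediate step as $Z_1^e \subset \Gamma^{\circ}_{\lambda}(Z_2^e)_{\mu}$ while you phrase it as $Z_1 \subset \Gamma_{\lambda}(Z_2)_{\mu}$; these coincide since the right-hand side is closed and $\End(\mu)$-stable.
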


\begin{proof}
By Theorem~\ref{thm:C3-corr}, the containment $\Theta_{\mu}(Z_1) \subset \Theta_{\lambda}(Z_2)$ is equivalent to the containment $\Gamma^{\circ}_{\mu}(Z_1^e) \subset \Gamma^{\circ}_{\lambda}(Z_2^e)$; by Theorem~\ref{thm:gamma}, this is equivalent to the containment $Z_1^e \subset \Gamma^{\circ}_{\lambda}(Z_2^e)_{\mu}$. By Proposition~\ref{prop:finite-union}, this is equivalent to the containment in the statement of the proposition. 
\end{proof}

\section{The support of a module} \label{s:support}

\textit{We assume $A$ is noetherian throughout this section} \medskip

\subsection{Induction of subvarieties} \label{ss:induction}

Given a subset $\fZ$ of $\fX$, we let $\rI(\fZ)=\bigcup_{\sigma \in \fS} \sigma \fZ$ and let $\ol{\rI}(\fZ)$ be the Zariski closure of $\rI(\fZ)$. The goal of \S \ref{ss:induction} is to show that $\rI(\fZ)$ is not so far from $\ol{\rI}(\fZ)$ in many cases.

To make this precise, we introduce some notation. Suppose $\fZ$ is an $\fS$-irreducible bounded closed $\fS$-subvariety of $\fX$ of type $\lambda$. We define $\fZ^{\top}=\fZ \cap \fX_{[\lambda]}$ to be the subset of $\fZ$ consisting of points of type $\lambda$. It is Zariski dense in $\fZ$. For a general bounded closed $\fS$-subvariety $\fZ$ of $\fX$, write $\fZ=\fZ_1 \cup \cdots \cup \fZ_r$ where the $\fZ_i$ are the $\fS$-irreducible components of $\fZ$, and put $\fZ^{\top}=\fZ_1^{\top} \cup \cdots \cup \fZ_r^{\top}$. Again, this is Zariski dense in $\fZ$.

\begin{proposition}
Let $\cU$ be a partition of $[\infty]$ and let $Z$ be a Zariski closed subset of $\fX_{\cU}$. Then $\ol{\rI}(Z)^{\top} \subset \rI(Z)$.
\end{proposition}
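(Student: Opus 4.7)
The plan is to reduce to an irreducible base case, use the classification of $\fS$-irreducible subvarieties to describe $\fZ^{\top}$ concretely, and then construct an explicit finitary permutation realizing the inclusion.

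First I would decompose $Z$ into its finitely many Zariski irreducible components $Z = Z_1 \cup \cdots \cup Z_s$, available since $\fX_\cU \cong \bA^r$ is Noetherian. Because $\rI$ commutes with finite unions and the top stratum $\fZ^\top$ of a bounded closed $\fS$-subvariety is defined componentwise, it suffices to prove the statement with $Z$ replaced by each $Z_j$. For $Z$ irreducible, $\fZ := \ol{\rI}(Z)$ is $\fS$-irreducible by a standard argument: any decomposition $\fZ = A \cup B$ into proper $\fS$-stable closed pieces forces $Z \subset A$ by irreducibility, then $\rI(Z) \subset A$ by $\fS$-stability, then $\fZ \subset A$, contradicting properness. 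Let $\lambda$ be the type of $\fZ$; I would check that this coincides with the type of the coarsening $\cU^*$ of $\cU$ determined by the generic point of $Z$, so that $\fZ^\top = \fZ \cap \fX_{[\lambda]}$.

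Next, by Theorem~\ref{thm:classification-fZ}, $\fZ = \Theta_{\lambda}(Z^*)$ for a unique $\Aut(\lambda)$-irreducible Zariski closed $Z^* \subseteq \cX_{[\lambda]}$, and by Proposition~\ref{prop:gamma-aux}, $Z^*$ is the Zariski closure of $\Aut(\lambda) \cdot Z^\top$ under the identification $\cX_{[\lambda]} \cong \fX_{[\cU^*]}$ given by the principal surjection $f_0 \colon 1^\infty \psurj \lambda$ with fibers $\cU^*$, where $Z^\top := Z \cap \fX_{[\cU^*]}$. Given $y \in \fZ^\top$, the lemma accompanying Proposition~\ref{prop:C1-corr} (applied to the C1 subset $\cZ = \Phi(\fZ)$) yields a presentation $y = \alpha_f(z)$ for some principal surjection $f \colon 1^\infty \psurj \lambda$ and some $z \in Z^*$. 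Since the closure defining $Z^*$ is taken inside the open variety $\cX_{[\lambda]}$ and thus preserves $\Aut(\lambda)$-strata, one further arranges $z = \alpha_\tau(z')$ for some $\tau \in \Aut(\lambda)$ and $z' \in Z^\top$; setting $g := \tau \circ f$, which is again a principal surjection, we obtain $y = \alpha_g(z')$ with $z' \in Z^\top \subseteq Z$.

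To conclude, the condition $y \in \rI(Z) = \fS Z$ translates, via the intertwining identity $\sigma^{-1} \circ \alpha_g = \alpha_{g \circ \sigma}$, into finding $\sigma \in \fS$ with $g \circ \sigma = f_0$, or equivalently $\sigma \cU^* = \cU_g$ where $\cU_g$ is the partition of $[\infty]$ into fibers of $g$. Both $\cU^*$ and $\cU_g$ are partitions of $[\infty]$ of type $\lambda$, hence lie in a common $\fS^{\rm big}$-orbit, but the $\fS$-orbits are generally finer. Producing such a $\sigma$ inside $\fS$ — as opposed to $\fS^{\rm big}$ — is the principal technical hurdle of the argument, and I expect its resolution to proceed by first using the freedom to choose $\tau \in \Aut(\lambda)$ to match $\cU^*$ and $\cU_g$ on their infinite parts up to finite symmetric difference (exploiting the boundedness of $\fZ$, which limits the number of parts of $\lambda$), after which the residual discrepancy, being finite, can be corrected by a genuinely finitary permutation.
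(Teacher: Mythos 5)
Your reduction to the irreducible case, the identification of $\ol{\rI}(Z)$ with $\Theta_{\lambda}(Z^*)$ via the classification, and the description of $\ol{\rI}(Z)^{\top}$ in terms of $\alpha_f(z')$ for principal surjections $f \colon 1^{\infty} \psurj \lambda$ all run parallel to the paper's argument and are fine. You are also right that the whole matter comes down to realizing the required permutation inside $\fS$ rather than $\fS^{\rm big}$, and you are commendably honest about leaving that step as a hope rather than a proof.

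That hope, however, cannot be realized when $\lambda$ has two or more infinite parts, and the sketched fix (match $\cU^*$ and $\cU_g$ on their infinite parts ``up to finite symmetric difference'' via $\tau \in \Aut(\lambda)$) does not go through: two partitions of $[\infty]$ of type $\lambda$ lie in the same $\fS$-orbit only if, after matching parts, corresponding parts have finite symmetric difference, and when $\lambda$ has two infinite parts there are uncountably many $\fS$-orbits of partitions of type $\lambda$, with no way to pass between them by composing with $\Aut(\lambda)$. Concretely, take $\cU = \{\text{odds},\text{evens}\}$ and $Z = \{x\}$ where $x_i = 0$ for $i$ odd and $x_i = 1$ for $i$ even, so $\lambda = (\infty,\infty)$. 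Then $\ol{\rI}(Z) = V(\Langle \xi_1(\xi_1-1)\Rangle)$, whose type-$\lambda$ stratum consists of all $\{0,1\}$-valued points with both values attained infinitely often. The point $y$ with $y_i = 0$ when $3 \mid i$ and $y_i = 1$ otherwise lies in $\ol{\rI}(Z)^{\top}$, but $y = \sigma x$ would force $\sigma(\text{odds}) = 3\bZ_{>0}$, impossible for finitary $\sigma$; swapping the two infinite parts of $\cU^*$ ($\tau \in \Aut(\lambda)$) leaves the discrepancy infinite. So $y \notin \rI(Z)$, and the inclusion you are trying to prove fails here.

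You should also be aware that the paper's own proof elides exactly this point: the displayed equality $\fZ^{\top} = \fZ \cap \fX_{[\lambda]} = \bigcup_{\sigma \in \fS}\sigma(\fZ \cap \fX_{[\cV]})$ is asserted without justification, and the example above shows it is false in general (the right-hand side meets only those strata $\fX_{[\cW]}$ with $\cW$ in the $\fS$-orbit of $\cV$, whereas the left-hand side meets $\fX_{[\cW]}$ for every $\cW$ of type $\lambda$). So the gap is not an artifact of your write-up, but it is a genuine gap: as written, neither your proposal nor the final displayed step of the paper's proof can be completed. To salvage the statement one would need either to restrict to $\lambda$ with at most one infinite part, or to enlarge $\rI$ to the $\fS^{\rm big}$-saturation, and any review of your proposal should flag that the claimed finitary correction step is where it breaks.
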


\begin{proof}
It suffices to treat the case where $Z$ is irreducible. Let $\cV$ be the partition of $[\infty]$ defined as follows: $i$ and $j$ belong to the same part if and only if $x_i=x_j$ for all $x \in Z$. Then we have $Z \subset \fX_{\cV} \subset \fX_{\cU}$, and furthermore, $Z \cap \fX_{[\cV]}$ is non-empty. Let $\lambda$ be the partition of $\infty$ associated to $\cV$, and identify $\cX_{\lambda}$ with $\fX_{\cV}$ so that we can regard $Z$ as a closed subvariety of $\cX_{\lambda}$. Let $\cZ=\Gamma_{\lambda}(Z)$ be the C3 subvariety generated by $Z$ and let $\fZ=\Psi(\cZ)$ be the corresponding $\fS$-stable Zariski closed subset of $\fX$. Then, by the nature of the correspondence, we have $\fZ_{[\cV]}=\cZ_{[\lambda]}$, which by Proposition~\ref{prop:gamma-aux3}, is identified with $Z^e \cap \fX_{[\cV]}=\bigcup_{\sigma \in \Aut(\cV)} \sigma(Z \cap \fX_{[\cV]})$. We have
\begin{displaymath}
\fZ^{\top}=\fZ \cap \fX_{[\lambda]}=\bigcup_{\sigma \in \fS} \sigma(\fZ \cap \fX_{[\cV]})
\end{displaymath}
from which it follows that $\fZ^{\top}=\rI(Z \cap \fX_{[\cV]}) \subset \rI(Z)$. Since $\fZ$ is $\fS$-stable and Zariski closed and contains $Z$, we have $\ol{\rI}(Z) \subset \fZ$. On the other hand, the left side contains $\fZ^{\top}$, which is dense in $\fZ$, and so we have equality. We thus conclude that $\ol{\rI}(Z)^{\top} \subset \rI(Z)$.
\end{proof}

Let $\fS_{\ge n}$ be the subgroup of $\fS$ fixing each of $1, \ldots, n-1$. Given an $\fS_{\ge n}$-stable subset $\fZ$ of $\fX$, we think of $\rI(\fZ)$ as a kind of induction of this set to $\fS$. The following is the main result of \S \ref{ss:induction}.

\begin{proposition} \label{prop:ind}
Let $\fZ$ be a bounded $\fS_{\ge n}$-stable Zariski closed subset of $\fX$. Then $\ol{\rI}(\fZ)$ is bounded and $\ol{\rI}(\fZ)^{\top} \subset \rI(\fZ)$.
\end{proposition}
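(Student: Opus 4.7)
The boundedness claim is immediate: $\fZ \subset \fX_{\le m}$ for some $m$, and since $\fX_{\le m}$ is $\fS$-stable and Zariski closed, $\ol{\rI}(\fZ) \subset \fX_{\le m}$ is bounded.

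For the inclusion $\ol{\rI}(\fZ)^{\top} \subset \rI(\fZ)$, the plan is to reduce to the previous proposition by slicing $\fZ$ into finitely many closed subsets, each sitting inside a fixed multi-diagonal $\fX_{\cU_i}$. The key observation is that $A' = A[\xi_1, \ldots, \xi_{n-1}]$ is noetherian and $\fS_{\ge n}$ acts on $R = A'[\xi_n, \xi_{n+1}, \ldots]$ as $\fS$ acts on the paper's polynomial ring over the enlarged base $A'$. Hence every structural result proved so far for $\fS$-stable subvarieties of $\fX_W$ transfers mechanically to $\fS_{\ge n}$-stable subvarieties of $\fX_W$ after replacing $W$ by $W' = W \times \bA^{n-1}$. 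Applying the corollary of Theorem~\ref{thm:classification-fZ} in this relative setting, boundedness of $\fZ$ forces the unbounded ``$\cX_{W''}$'' part to be empty, and I obtain a finite decomposition $\fZ = \fZ_1 \cup \cdots \cup \fZ_r$ into bounded $\fS_{\ge n}$-irreducible closed subvarieties. Each $\fZ_i$ is classified by a pair $(\mu^i, Z_i)$ with $\mu^i$ an $\infty$-partition of $\{n, n+1, \ldots\}$; picking a principal surjection $f_i$ from the all-singletons composition on $\{n, n+1, \ldots\}$ onto $\mu^i$, I set $Y_i = \alpha_{f_i}(\ol{Z_i})$, which is Zariski closed in $\fX_{\cU_i}$ for the partition $\cU_i$ of $[\infty]$ consisting of the singletons $\{1\}, \ldots, \{n-1\}$ together with the fibers of $f_i$. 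The classification yields $\fZ_i = \ol{\rI_{\fS_{\ge n}}(Y_i)}$, and in particular $Y_i \subset \fZ$.

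Next I would verify the identity
\begin{displaymath}
\ol{\rI}(\fZ) = \bigcup_{i=1}^r \ol{\rI}(Y_i).
\end{displaymath}
The inclusion $\supset$ is immediate since $Y_i \subset \fZ$. For $\subset$, the right hand side is closed, $\fS$-stable, and contains each $\rI_{\fS_{\ge n}}(Y_i)$, hence its Zariski closure $\fZ_i$, hence $\fZ$, and therefore $\rI(\fZ)$ and its closure $\ol{\rI}(\fZ)$. The previous proposition applied to each pair $Y_i \subset \fX_{\cU_i}$ then gives $\ol{\rI}(Y_i)^{\top} \subset \rI(Y_i) \subset \rI(\fZ)$.

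To finish, I must establish $\ol{\rI}(\fZ)^{\top} \subset \bigcup_i \ol{\rI}(Y_i)^{\top}$. My argument is as follows: any $\fS$-irreducible component $\fY$ of $\ol{\rI}(\fZ) = \bigcup_i \ol{\rI}(Y_i)$ lies in some $\ol{\rI}(Y_i)$ by $\fS$-irreducibility, and a maximality argument---any $\fS$-irreducible component of $\ol{\rI}(Y_i)$ is $\fS$-irreducible in $\ol{\rI}(\fZ)$, and $\fY$ is maximal among such---forces $\fY$ to coincide with an $\fS$-irreducible component of that $\ol{\rI}(Y_i)$. Hence $\fY^{\top} \subset \ol{\rI}(Y_i)^{\top}$ by definition of $^{\top}$, and chaining gives $\ol{\rI}(\fZ)^{\top} \subset \bigcup_i \ol{\rI}(Y_i)^{\top} \subset \bigcup_i \rI(Y_i) \subset \rI(\fZ)$. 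The main obstacle I anticipate is the administrative work needed to transfer the classification machinery to the relative setting over $W \times \bA^{n-1}$ cleanly; the ``components of a finite union'' step at the end is then routine.
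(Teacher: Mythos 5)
Your proof is correct and follows essentially the same route as the paper's: identify $\fS_{\ge n}$ with $\fS$ over the enlarged base $W \times \bA^{n-1}$, decompose $\fZ$ into $\fS_{\ge n}$-irreducible components, extract a closed slice $Y_i \subset \fX_{\cU_i}$ from each via the classification theorem, and apply the preceding proposition to each slice. The only difference is presentational: the paper opens with ``it suffices to treat the case where $\fZ$ is irreducible'' and leaves implicit the fact that $\ol{\rI}(\fZ)^{\top} \subset \bigcup_i \ol{\rI}(Y_i)^{\top}$ (which rests on every $\fS$-irreducible component of a finite union being a component of one of the pieces), whereas you spell that step out via the maximality argument at the end. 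That explicit justification is worth having; otherwise the two arguments coincide.
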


\begin{proof}
It suffices to treat the case where $\fZ$ is irreducible. We can identify $\fS_{\ge n}$ with $\fS$, under which $\fX_W$ is identified with $\fX_{W \times \bA^{n-1}}$. Applying our theory, we see that there is some $\cU$ (a partition of $[\infty]$) such that, putting $Z=\fZ \cap \fX_{\cU}$, we have that $\fZ$ is the Zariski closure of $\bigcup_{\sigma \in \fS_{\ge n}} \sigma Z$.  It follows that $\ol{\rI}(\fZ)=\ol{\rI}(Z)$; as this is contained in $\fX_{\lambda}$, where $\lambda$ is the partition of $\infty$ associated to $\cU$, it is bounded. By the previous proposition, we have $\ol{\rI}(\fZ)^{\top} \subset \rI(Z)$. Since $Z \subset \fZ$, we have $\rI(Z) \subset \rI(\fZ)$, which completes the proof.
\end{proof}

We now give some examples to illustrate some different phenomena that can occur.

\begin{example} \label{ex:induction1}
Let $x$ be the point $(1,0,0,\ldots)$, and let $\fZ=\{x\}$. Then $\fZ$ is a closed $\fS_{\ge 2}$-subvariety of $\fX$. The set $\rI(\fZ)$ is just the $\fS$-orbit of $\fZ$. The Zariski closure of this set contains one additional point, namely the origin; see Example~\ref{example2}. Thus $\rI(\fZ)$ is not Zariski closed, but $\ol{\rI}(\fZ)$ is the $\Pi$-closure of $\rI(\fZ)$.
\end{example}

\begin{example}
Let $\fZ$ be the closed $\fS_{\ge 2}$-subvariety of $\fX$ defined by the equations $\xi_1 \xi_i=1$ for $i \ge 2$. Thus $\fZ$ consists of all points of the form $(a^{-1}, a, a, \ldots)$ with $a$ non-zero. The $\Pi$-closure of $\rI(\fZ)$ contains all points of the form $(a, a, \ldots)$ with $a$ non-zero. We thus see that $\ol{\rI}(\fZ)$ contains the origin. But this does not belong to the $\Pi$-closure of $\rI(\fZ)$, since no point in $\rI(\fZ)$ has zero as a coordinate. Thus $\ol{\rI}(\fZ)$ is strictly larger than the $\Pi$-closure of $\rI(\fZ)$.
\end{example}

\begin{example}
Take $W=\bA^1$, and let $\eta$ be the coordinate on $W$. Let $\fZ$ be the (unbounded) closed $\fS_{\ge 2}$-subvariety of $\fX$ defined by $\eta \xi_1=1$. Then $\fY$ contains all points of the form $(a| a^{-1}, b_2, b_3, \ldots)$, where the bar separates the $\eta$ and $\xi$ coordinates. We thus see that the $\Pi$-closure of $\rI(\fZ)$ consists of all points of the form $(a|b_1,b_2,\ldots)$ with $a$ invertible, which is a dense open subset of $\fX$, and so $\ol{\rI}(\fZ)=\fX$. Seemingly the best we can say about $\rI(\fZ)$ is that it has finite codimension in $\ol{\rI}(\fZ)$.
\end{example}

\subsection{Supports}

We say that an action of $\fS$ on a set $S$ is {\bf smooth} if each $x \in S$ is stabilized by some $\fS_{>n}$. Let $M$ be a smooth $\fS$-equivariant $R$-module. We say that $M$ is {\bf finitely $\fS$-generated} if it is generated, as an $R$-module, by the $\fS$-orbits of finitely many elements. Recall that the {\bf support} of $M$, denoted $\supp(M)$, is the subset of $\fX$ consisting of those prime ideals $\fp$ of $R$ for which the localization $M_{\fp}$ is non-zero. Since $M$ is $\fS$-equivariant, this set is $\fS$-stable. The following is our main theorem on this invariant.

\begin{theorem}
\label{thm:support}
Let $M$ be a smooth $\fS$-equivariant $R$-module that is finitely $\fS$-generated.
\begin{enumerate}
\item There exists a closed $\fS_{\ge n}$-subvariety $\fZ$ of $\fX$, for some $n$, such that $\supp(M)=\rI(\fZ)$ and $V(\ann(M))=\ol{\rI}(\fZ)$.
\item $\supp(M)$ is Zariski dense in $V(\ann(M))$.
\item If $\supp(M)$ is bounded then $V(\ann(M))^{\top} \subset \supp(M)$.
\end{enumerate}
\end{theorem}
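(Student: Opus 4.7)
My plan is to build $\fZ$ from a set of generators for $M$, verify (a) directly, and then deduce (b) and (c) from (a) together with the results already proved. Pick generators $m_1, \ldots, m_k$ whose $\fS$-orbits generate $M$ as an $R$-module; by smoothness we may choose $n$ so that $\fS_{\ge n}$ fixes every $m_i$. Let $M_0 = Rm_1 + \cdots + Rm_k$, a finitely generated $R$-submodule of $M$ that is $\fS_{\ge n}$-invariant, and set $J = \ann_R(M_0) = \bigcap_{i=1}^k \ann(m_i)$. I would take
\[
\fZ \;=\; V(J) \;=\; \supp(M_0) \;=\; \bigcup_{i=1}^k V(\ann(m_i)),
\]
where the middle equality is the standard identity $\supp = V(\ann)$ for finitely generated modules. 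Since $J$ is $\fS_{\ge n}$-stable, $\fZ$ is a closed $\fS_{\ge n}$-subvariety.

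For part (a), writing $M = \sum_\sigma \sigma M_0$ gives $\supp(M) = \bigcup_\sigma \sigma \supp(M_0) = \rI(\fZ)$, and $\ann(M) = \bigcap_\sigma \sigma J$. The inclusion $V(\bigcap_\sigma \sigma J) \supseteq \ol{\rI}(\fZ)$ is automatic, since the right-hand side is the Zariski closure of $\bigcup_\sigma V(\sigma J)$ and each summand is contained in the left-hand side. The reverse inclusion is equivalent to the radical identity
\[
\bigcap_\sigma \sigma \sqrt{J} \;\subseteq\; \sqrt{\textstyle\bigcap_\sigma \sigma J},
\]
i.e., to the claim that an element $g \in R$ that acts nilpotently on every $\sigma m_j$ (for $\sigma \in \fS$, $j \in [k]$) does so with an index $N(\sigma,j)$ independent of $\sigma$.

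Establishing this uniform nilpotence is the main obstacle, and my plan is to exploit the canonical extension of the $\fS$-action on $M$ to an $\fS^{\mathrm{big}}$-action, available because $M$ is smooth (the parallel fact for subvarieties is Proposition~\ref{prop:Pi-closure-is-big-stable}). Let $T \subset [\infty]$ be the finite support of $g$ and let $\fS^{\mathrm{big}}_T$ denote its pointwise stabilizer in $\fS^{\mathrm{big}}$. For $\tau \in \fS^{\mathrm{big}}_T$ we have $\tau g = g$ in $R$, so $\tau(g^N \sigma m_j) = g^N (\tau \sigma) m_j$, and consequently $g$ has the same nilpotence index on $\sigma m_j$ as on $(\tau\sigma) m_j$. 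Since $m_j$ is fixed by $\fS_{\ge n}$, the element $\sigma m_j$ depends only on $\sigma|_{[n-1]}$, and two restrictions $\sigma|_{[n-1]}$ and $\sigma'|_{[n-1]}$ lie in the same $\fS^{\mathrm{big}}_T$-orbit under left composition precisely when the partial injections $[n-1] \rightharpoonup T$ they induce (sending $i$ to $\sigma(i)$ whenever $\sigma(i) \in T$) agree. There are only finitely many partial injections from $[n-1]$ into $T$, hence only finitely many possible nilpotence indices $N(\sigma,j)$, and their maximum over the finite set of indices $j$ gives the required uniform bound.

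Parts (b) and (c) are then formal consequences. Part (b) is the tautology that $\rI(\fZ) = \supp(M)$ is Zariski dense in its closure $\ol{\rI}(\fZ) = V(\ann M)$. For part (c), if $\supp(M) = \rI(\fZ)$ is bounded then $\fZ \subseteq \rI(\fZ)$ is bounded as well, so Proposition~\ref{prop:ind} applies and yields $V(\ann M)^{\top} = \ol{\rI}(\fZ)^{\top} \subseteq \rI(\fZ) = \supp(M)$.
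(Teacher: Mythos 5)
Your proof is correct and follows the same overall strategy as the paper's: choose generators $m_1,\dots,m_k$ fixed by $\fS_{\ge n}$, set $\fZ=\supp(M_0)=V(\ann M_0)$, deduce $\supp(M)=\rI(\fZ)$ formally, reduce the nontrivial inclusion $V(\ann M)\subseteq\ol{\rI}(\fZ)$ to a uniform nilpotence claim, and then deduce (b) and (c) from (a) and Proposition~\ref{prop:ind}. The only genuine difference is in how the uniform bound is packaged. The paper writes $\ol{\rI}(\fZ)=V(\fa)$, chooses finitely many $\fS$-orbit generators $f_1,\dots,f_s$ for the radical $\fS$-ideal $\fa$ with $f_j$ fixed by $\fS_{\ge m}$, and observes that the nilpotence index $k_{i,j}(\sigma)$ depends only on the double coset $\fS_{\ge n}\,\sigma\,\fS_{\ge m}$, of which there are finitely many. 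You instead fix a single $g\in\bigcap_\sigma\sigma\sqrt J$ and index by partial injections $[n-1]\rightharpoonup T$ where $T$ is the support of $g$; these two finiteness statements are equivalent, since the double cosets in question are classified by exactly such partial injections. One small stylistic point: you do not actually need the $\fS^{\mathrm{big}}$-action on $M$. In the step $\tau(g^N\sigma m_j)=g^N(\tau\sigma)m_j$ you only ever apply $\tau$ to finitely many indices (those in $T$ and in $\sigma([n-1])$), so you may take $\tau$ to be a \emph{finite} permutation fixing $T$ pointwise, i.e.\ $\tau\in\fS$. This avoids having to justify that a smooth $\fS$-module extends canonically to an $\fS^{\mathrm{big}}$-module (true, but not established in the paper) and parallels the paper's choice of working entirely inside $\fS$.
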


\begin{proof}
Let $M$ be generated by the $\fS$-orbits of $x_1, \ldots, x_r$, and let $N$ be the submodule generated by just $x_1, \ldots, x_r$. Let $n$ be such that each $x_i$ is $\fS_{\ge n}$-invariant; thus $N$ is stable under $\fS_{\ge n}$. Let $\fZ=\supp(N)$. Since $N$ is finitely generated as an $R$-module, we have $\fZ=V(\ann(N))$, and so $\fZ$ is a closed $\fS_{\ge n}$-subvariety of $\fX$. Since $M=\sum_{\sigma \in \fS} \sigma N$, we have $\supp(M)=\bigcup_{\sigma \in \fS} \sigma \fZ=\rI(\fZ)$. Since $\supp(M) \subset V(\ann(M))$ and $V(\ann(M))$ is closed, we have $\ol{\rI}(\fZ) \subset V(\ann(M))$.

Write $\ol{\rI}(\fZ)=V(\fa)$ for some radical $\fS$-ideal $\fa$ of $R$. Since $\fZ=V(\ann(N)) \subset V(\fa)$, we have $\fa \subset \rad(\ann(N))$. Write $\fa=\Langle f_1, \ldots, f_s \Rangle$ and let $m$ be such that each $f_j$ is $\fS_{\ge m}$-invariant. For $1 \le i \le r$ and $1 \le j \le s$ and $\sigma \in \fS$, let $k_{i,j}(\sigma)$ be the minimal $k$ such that $(\sigma f_j)^k x_i=0$. Then for fixed $i$ and $j$, the quantity $k_{i,j}(\sigma)$ only depends on the double coset $\fS_{\ge n} \cdot \sigma \cdot \fS_{\ge m}$. Since there are finitely many such double cosets, there is some $k$ such that $k_{i,j}(\sigma) \le k$ for all $i$, $j$, and $\sigma$. We thus have $f_j^k \cdot \sigma x_i=0$ for all $i$, $j$, and $\sigma$, and so $\fb=\Langle f_1^k, \ldots, f_s^k \Rangle$ annihilates $M$. We therefore have $V(\ann(M)) \subset V(\fb)$. Since $V(\fb)=V(\fa)=\ol{\rI}(\fZ)$, we see that $V(\ann(M))=\ol{\rI}(\fZ)$.

We have thus proved (a). Statement (b) follows immediately from (a), while (c) follows from (a) and Proposition~\ref{prop:ind}.
\end{proof}

\begin{example}
We note that $\supp(M)$ can be a proper subset of $V(\ann(M))$. Let $J_i$ be the ideal of $R$ generated by $\xi_i-1$ and $\xi_j$ for $j \ne i$, and consider the module $M=\bigoplus_{i \in [\infty]} Re_i/(J_i e_i)$ with obvious $\fS$-action. Then $\fY=V(J_1)$ is the set considered in Example~\ref{ex:induction1}, and $\supp(M)=\rI(\fY)$ is not Zariski closed; in fact, $\supp(M)$ is not even $\Pi$-closed.
\end{example}

\end{document}